\newcommand{\abs}[1]{\left|{#1}\right|}
\newcommand{\GL}{\operatorname{GL}}
\newcommand{\PGL}{\operatorname{PGL}}
\newcommand{\SL}{\operatorname{SL}}
\newcommand{\Sp}{\operatorname{Sp}}
\newcommand{\SO}{\operatorname{SO}}
\newcommand{\Ad}{\operatorname{Ad}}
\newcommand{\swrz}{\mathcal{S}}
\newcommand{\sprod}[2]{\left\langle{#1},{#2}\right\rangle}
\newcommand{\gen}{\operatorname{gen}}
\newcommand{\A}{\mathbb{A}}
\newcommand{\Z}{\mathbb{Z}}
\newcommand{\C}{\mathbb{C}}
\newcommand{\N}{\mathbb{N}}
\renewcommand{\H}{\mathcal{H}}
\newcommand{\bs}{\backslash}
\newcommand{\diag}{\operatorname{diag}}
\newcommand{\p}{\mathfrak{p}}
\newcommand{\vol}{\operatorname{vol}}
\newcommand{\proj}{\operatorname{p}}
\newcommand{\triv}{{\bf{1}}}
\newcommand{\J}{\mathcal{J}}
\renewcommand{\O}{\mathcal{O}}
\newcommand{\G}{\mathcal{G}}
\newcommand{\liesl}{{\mathfrak{sl}}}
\newcommand{\sm}[4]{\left(\begin{smallmatrix}{#1}&{#2}\\{#3}&{#4}\end{smallmatrix}\right)}
\newcommand{\rel}{{\operatorname{rel}}}
\renewcommand{\c}{\mathfrak{c}}
\newcommand\scalemath[2]{\scalebox{#1}{\mbox{\ensuremath{\displaystyle #2}}}}
\newcommand{\sq}{\mathfrak{s}}
\newcommand{\val}{{\operatorname{val}}}
\newcommand{\Span}{\operatorname{Span}}
\newcommand{\Kl}{\mathcal{K}}
\newcommand{\Cu}{\mathcal{C}}
\newtheorem{theorem}{Theorem}[section]
\newtheorem{lemma}[theorem]{Lemma}
\newtheorem{proposition}[theorem]{Proposition}
\newtheorem{corollary}[theorem]{Corollary}
\newtheorem{conjecture}[theorem]{Conjecture}
\theoremstyle{remark}
\newtheorem{remark}[theorem]{Remark}
\begin{document}

\title{On the Cubic Shimura lift to $\PGL(3)$:  The Fundamental Lemma}

\author{Solomon Friedberg}
\address{Department of Mathematics, Boston College, Chestnut Hill, MA 02467}
\email{solomon.friedberg@bc.edu}

\author{Omer Offen}
\address{Department of Mathematics, Brandeis University, Waltham MA 02453}
\email{offen@brandeis.edu}

\thanks{This work was supported by the NSF, grant numbers DMS-1801497 and DMS-2100206, by a Simons Fellowship in Mathematics, award number 816862 (Friedberg) and by a Simons Collaboration Grant, award number 709678 (Offen).}

\date{\today}
\subjclass[2020]{Primary 11F70; Secondary 11F27; 11F67; 11F72; 22E50; 22E55}
\keywords{Shimura correspondence, metaplectic group, cubic cover, relative trace formula, minimal representation, period}

\begin{abstract} The classical Shimura correspondence lifts automorphic representations on the double cover of $\SL_2$ to 
automorphic representations on $\PGL_2$.   Here we take key steps towards establishing a relative trace formula
that would give a new global Shimura lift, from the triple cover of $\SL_3$ to $\PGL_3$, and also
 characterize the image of the lift.  The characterization would be through the nonvanishing of a certain global
 period involving a function in the space of the automorphic minimal representation $\Theta_{\SO_8}$ for split $\SO_8(\A)$,
 consistent with a 2001 conjecture of Bump, Friedberg and Ginzburg.
 In this paper, we first 
analyze a global distribution on $\PGL_3(\A)$ involving this period and show that it is a sum of factorizable orbital integrals.
The same is true for the Kuznetsov distribution attached to the triple cover of $\SL_3(\A)$.
We then match the corresponding local orbital integrals for the unit
 elements of the spherical Hecke algebras; that is, we establish the Fundamental Lemma.  

\end{abstract}

\maketitle

\goodbreak 

\tableofcontents

\goodbreak

\section{Introduction} \label{intro}

The classical Shimura correspondence \cite{MR0332663} lifts automorphic representations on the double cover of $\SL_2$ to automorphic representations on $\PGL_2$.  Niwa \cite{MR0364106} (working classically) and 
Waldspurger \cite{MR577010} (working adelically) showed that the map may be obtained as a theta lifting. Waldspurger 
also showed that the image of the lift is characterized by the nonvanishing of a certain period (the integral of an automorphic form over a cycle). Jacquet \cite{MR983610} then used his relative trace formula, 
which compares distributions on two different groups, one involving the relevant period, to give another proof of these facts.

Let  $n\geq2$ and let $G$ be a split connected reductive algebraic group over a global field $F$ with a full
set of $n$-th roots of unity $\mu_n\subset F$.  Let  $\A=\A_F$ be the ring of adeles of $F$. 
Then one may define an $n$-fold cover $\widetilde{G}^{(n)}(\mathbb{A})$ of the adelic points of $G$, and it is natural to ask if there is an analogue of the Shimura map for $\widetilde{G}^{(n)}(\mathbb{A})$. 
The local Shimura correspondence was investigated by Savin \cite{MR929534}, who proved (in generality) an isomorphism of Iwahori Hecke algebras.
Based on this work, for Cartan type A one expects a global functorial lift from genuine cuspidal automorphic representations on $\widetilde{\SL}_r^{(n)}(\A)$ to automorphic
representations on $\PGL_r(\A)$ if $n$ divides $r$, and a lift to automorphic representations on $\SL_r(\A)$ if $(n,r)=1$. (See also
 Bump, Friedberg and Ginzburg \cite{MR1882041}, Section 1, for a discussion of this.)   Such lifts have been studied since the 1980s,  but progress in establishing a global Shimura
 correspondence has been obtained only in the cases $n=2$ or $r=2$.
In this paper we establish the Fundamental Lemma for a relative trace formula
that will give the global Shimura lift in the case $n=r=3$; moreover this project will
characterize the image of the lift by means of a period involving a function in the space of the automorphic minimal representation for $\SO_8(\A)$, confirming a 2001 conjecture of Bump, Friedberg and Ginzburg.

To put this work on context, recall that 
Ginzburg, Rallis and Soudry \cite{MR1439552} observed that $\SL_2$ and its 3-fold cover $\widetilde{\SL}_2^{(3)}$ form a dual pair in the 3-fold cover of the exceptional group $G_2$, and used this to establish a 
lifting of genuine cuspidal automorphic representations on $\widetilde{\SL}_2^{(3)}(\A)$ to automorphic representations on $\SL_2(\A)$ and to determine its image.
Let $\text{Sym}^3:\SL_2\to \Sp_4$ be the symmetric cube map, and $\Theta_{\Sp_4}$ be the theta representation on the metaplectic double cover of $\Sp_4(\mathbb{A})$.  This cover
splits on the image of $\text{Sym}^3$. Then they showed that an irreducible
cuspidal automorphic representation $\tau$ of $\SL_2(\A)$ is in the image of the rank-one cubic Shimura map if and only if the period integral
\begin{equation}\label{SL2-period}
\int_{\SL_2(F)\backslash \SL_2(\A)} \varphi(g)\theta(\text{Sym}^3(g))\,dg
\end{equation}
is nonzero for some $\varphi$ in the space of $\tau$ and some $\theta$ in the space of $\Theta_{\Sp_4}$. Another proof of these results was given by Mao and Rallis \cite{MR1729444} via a relative trace formula.
However, for higher degree covers or higher rank special linear groups there is no similar dual pair.

By comparing orbital integrals, Flicker \cite{MR567194} succeeded in establishing
a Shimura-type correspondence for the $n$-fold cover of $\GL_2(\A)$ (there is more than one cover, and he treated a specific one); in this generality there is no known general characterization of the image by periods.
Kazhdan and Patterson \cite{MR840303} and Flicker and Kazhdan \cite{MR876160} studied these orbital integrals for higher rank general linear groups, but were not able to obtain sufficient control to establish a correspondence.
As for double covers,  Mao \cite{MR1605813} established
 the fundamental lemma for a relative trace formula for the double cover of $\GL_3$, and the case of the double cover of $\GL_n$ has recently been treated by Do 
by first establishing the equal characteristic case  \cite{MR3323346} and then using model theoretic methods to move to characteristic zero \cite{MR4176834}.
  
Here we are concerned with the conjectured Shimura map from the cubic cover of $\SL_3$ to $\PGL_3$.  
The question of characterizing its image was considered by
 Bump, Friedberg and Ginzburg \cite{MR1882041}.   
 Let $\Theta_{\SO_8}$ be the 
 automorphic minimal representation on the split special orthogonal group $\SO_8(\A)$. This representation
was constructed by Ginzburg, Rallis and Soudry \cite{MR1469105} as a multi-residue of a Borel Eisenstein series on $\SO_8$.  Let $\Ad$ denote the adjoint representation
 $\Ad:\PGL_3\to \SO_8$ (see Subsection~\ref{The Ad embedding} below). 
 Supposing that a Shimura lift exists in this case, they then conjectured
 \begin{conjecture}[Bump, Friedberg, Ginzburg]\label{BFG-conjecture} Let $\pi$ be an irreducible cuspidal automorphic representation of $\PGL_3(\A)$.  Then $\pi$ is in the image of the cubic Shimura correspondence from 
 $\widetilde{\SL}_3^{(3)}(\A)$ if and only if the period
\begin{equation}\label{PGL3-period}
\int_{\PGL_3(F)\backslash \PGL_3(\A)} \varphi(g)\,\theta(\Ad(g))\,dg
\end{equation}
 is nonzero for some $\varphi$ in the space of $\pi$ and some $\theta$ in $\Theta_{\SO_8}$.
 \end{conjecture}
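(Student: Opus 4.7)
The plan follows Jacquet's relative trace formula paradigm, comparing two global distributions attached to the two sides of the conjectured correspondence. On the $\PGL_3$ side, a bilinear distribution is built from the square of the period in~\eqref{PGL3-period}, pairing a cuspidal matrix coefficient for $\pi$ with $\theta(\Ad(\cdot))\overline{\theta'(\Ad(\cdot))}$ for $\theta,\theta'\in\Theta_{\SO_8}$. On the metaplectic side, one uses the Kuznetsov distribution on $\widetilde{\SL}_3^{(3)}(\A)$ attached to two Whittaker functionals, which isolates the genuine cuspidal spectrum. The target identity is that, under appropriate matching of test functions, these two distributions coincide; combined with isolation of individual cuspidal $\pi$ via the spherical Hecke algebra, the identity forces vanishing of~\eqref{PGL3-period} on $\pi$ to be equivalent to the absence of any genuine cusp form on $\widetilde{\SL}_3^{(3)}(\A)$ whose Shimura lift is $\pi$.

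I would proceed in four stages. First, unfold each distribution geometrically into a sum, indexed by a common set of rational orbits, of products of local orbital integrals; on the $\PGL_3$ side this uses the Fourier expansion of $\Theta_{\SO_8}$ from~\cite{MR1469105} together with the explicit structure of the $\Ad$-embedding, and on the metaplectic side it follows from Bruhat unfolding of the Kuznetsov formula adapted to the cubic cover. Second, set up a bijection between regular orbits on the two sides that identifies the invariants of the orbital integrals. Third, prove matching of local orbital integrals at matching orbits for matching local test functions, up to explicit transfer factors. The case of the unit element of the spherical Hecke algebra at unramified places is the Fundamental Lemma established in this paper; the remaining places should then be treated by a Waldspurger-type reduction to the Lie algebra. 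Fourth, equate the spectral sides and extract individual $\pi$ via the spherical Hecke algebra and strong multiplicity one for $\PGL_3$.

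The main obstacle is expected to be the combination of regularization and smooth transfer. The minimal representation $\Theta_{\SO_8}$ is not rapidly decreasing, so its pullback by $\Ad$ produces fiber integrals over $\PGL_3$ that require a compatible truncation, and the geometric unfolding must be carried out against this truncation without losing factorization of orbital integrals. Smooth transfer at ramified and archimedean places typically demands techniques beyond the Fundamental Lemma itself, and the presence of the cubic cover introduces third-root-of-unity factors that complicate both the transfer factors and the local harmonic analysis on $\widetilde{\SL}_3^{(3)}$. Overcoming these analytic obstacles, rather than the combinatorial orbit matching or the final spectral extraction, is expected to be the principal difficulty in completing the program.
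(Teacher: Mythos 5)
There is a genuine gap here, and it is a large one: the statement you are addressing is stated in the paper as a \emph{conjecture}, and the paper does not prove it. What you have written is a program outline, not a proof. The paper itself carries out only the first three of your four stages, and only partially: it geometrizes the two distributions (Propositions~\ref{eq unfold} and~\ref{eq unfold2}), determines and matches the relevant orbits, and proves the Fundamental Lemma for the \emph{unit element} of the spherical Hecke algebra (Theorem~\ref{thm main} and Theorem~\ref{degenerate equality}). Everything else in your plan --- smooth transfer at ramified and archimedean places, the matching for the full Hecke algebra, the regularization and convergence of the spectral sides, the spectral comparison and extraction of individual $\pi$, and indeed the very existence of the cubic Shimura lift (which is itself an output of the completed trace formula, not an input) --- remains unestablished both in the paper and in your proposal. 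Naming these as ``expected obstacles'' does not discharge them; until they are overcome the conjecture is open, and your text cannot be accepted as a proof.

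Two further points where your sketch diverges from the paper's actual setup and would likely lead you astray. First, your $\PGL_3$-side distribution is built from ``the square of the period,'' pairing $\theta(\Ad(\cdot))$ against $\overline{\theta'(\Ad(\cdot))}$. The distribution the paper uses, $I(f,\Theta)$ in \eqref{PGL3-distribution}, has the period against $\Theta_{\SO_8}$ on one side and a Whittaker character $\psi(n)$ on the other, precisely so that its spectral expansion produces period-times-Whittaker terms that can be compared with the Whittaker-times-Whittaker terms of the Kuznetsov distribution $J(f')$; a period-times-period distribution does not have the right shape for this comparison. Second, you propose to unfold via ``the Fourier expansion of $\Theta_{\SO_8}$,'' but the paper explicitly identifies the absence of such a usable expansion as the central obstruction (the minimal representation is not directly a Weil representation), and circumvents it by realizing $\Theta_{\SO_8}$ as the theta lift of the trivial representation of $\SL_2(\A)$ via the dual pair $(\SL_2,\SO_8)\subset\Sp_{16}$, introducing an extra integration over $[\SL_2]$ before unfolding. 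That device is what makes the factorizable orbital integrals of Theorem~\ref{I-relevant-orbits} possible, and your sketch is missing it.
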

Those authors presented two pieces of evidence for this conjecture, the first from finite fields, and the second by supposing 
that $\pi$ was not cuspidal but rather an Eisenstein series induced from cuspidal data $\tau$ on $\GL_2(\A)$, formally unfolding
the period \eqref{PGL3-period} in this case, and showing that the resulting integral is nonvanishing for some choice of data if and only if the period \eqref{SL2-period} for $\tau$ is nonvanishing for some choice of data.  

The study of periods in the context of Langlands type correspondences is a main theme of contemporary research; see for example
Sakellaridis and Venkatesh \cite{MR3764130}.  However, the extension to metaplectic covers is not well developed.  In particular, 
we do not know of an 
 extension of Conjecture~\ref{BFG-conjecture} to higher rank special linear groups or higher degree covers.
 
 Here we take key steps towards establishing the existence of the global Shimura 
 correspondence with $n=r=3$ and towards characterizing its image as in Conjecture~\ref{BFG-conjecture} above.  
 Our approach, following Jacquet, Mao and Rallis, and Mao, is to establish 
a comparison of relative trace formulas.
For an algebraic group $H$ defined over $F$ we denote its automorphic quotient space by $[H]=H(F)\bs H(\A)$. 
For an affine variety $X$ defined over $F$ denote by $\swrz(X(\A))$ the space of Schwartz-Bruhat functions on $X(\A)$.
The action of $f\in \swrz(H(\A))$ on $L^2([H])$
obtained by averaging the right regular representation $$R(f)\phi(h)=\int_{H(\A)} \phi(hy) f(y)\,dy,$$ is realized by the kernel function
$$\sum_{\gamma\in G(F)} f(h_1^{-1}\gamma h_2),\quad h_1,h_2\in [H].$$
The relative trace formula compares two distributions obtained as 
integrals of these kernel functions for different groups, one integral involving the period and a Whittaker character and the other involving solely
Whittaker characters.

Let $G=\PGL(3)$ considered as an algebraic group defined over $F$ and $N$ the standard maximal unipotent subgroup of $G$ realized as the 
group of upper triangular $3\times 3$ unipotent matrices.  Fix a non-trivial character $\psi$ of $F\bs \A$. 
By abuse of notation, we further denote by $\psi$ the generic character $\psi(n)=\psi(n_{1,2}+n_{2,3})$ of $[N]$.
For $\Theta\in \Theta_{\SO_8}$ we consider the distribution $I(\Theta)$ on $G(\A)$ defined by
\begin{equation}\label{PGL3-distribution}
I(f,\Theta)= \int_{{[N]}} \int_{[G]} \Big\{\sum_{\gamma\in G(F)} f(g^{-1}\gamma n) \Big\}\Theta(\Ad(g)) \psi(n) \ dg\ dn, \ \ \ f\in \swrz(G(\A)).
\end{equation}

Let $G'(\A)$ be the adelic $3$-fold metaplectic cover of $\SL_3(\A)$. It is a central extension of $\SL_3(\A)$
satisfying the exact sequence
\[
1\rightarrow \mu_3 \rightarrow G'(\A)\rightarrow \SL_3(\A)\rightarrow 1.
\] 
(Since $G'(\A)$ is not the adelic points of an algebraic group this is an abuse of notation.)
Denote by $\swrz(G'(\A))$ the space of genuine (that is, $\mu_3$-equivariant) Schwartz-Bruhat functions on $G'(\A)$.
There is a splitting of $\SL_3(F)$ in $G'(\A)$ and we denote by $G'(F)$ its image. 
The group $N(\A)$ also splits in $G'(\A)$ and we continue to denote by $N(\A)$ the image of this splitting. 
The Kuznetsov trace formula is realized by the distribution on $G'(\A)$ defined by
\begin{equation}\label{metaplectic-distribution}
J(f')=\int_{[N]}\int_{[N]} \Big\{\sum_{\gamma \in G'(F)} f'(n_1\gamma n_2)\Big\}\psi(n_1n_2)\ dn_1\ dn_2, \ \ \ f'\in \swrz(G'(\A)).
\end{equation}
Our goal is to study a comparison between the two distributions $I(f,\Theta)$ and $J(f')$. 

The first step is to \emph{geometerize} the distribution $I(\Theta)$, that is, express it as a sum (over some geometric orbits) of distributions that are factorizable. 
In most relative trace formulas this step is straightforward: one writes the sums using the Bruhat decomposition or some variant and unfolds to get a sum over double cosets of adelic 
orbital integrals, with only
the relevant double cosets contributing. For factorizable test functions, each side is then factorizable.

The obstacle in our case is the automorphic minimal representation that appears in the distribution \eqref{PGL3-distribution}. In contrast to the rank one case, this
representation is not directly obtained from the Weil representation, so there is no easy way to express the functions in $\Theta_{\SO_8}$ as sums that may then be unfolded.
To address this, we make use of the dual pair $(\SL_2,\SO_8)$ inside $\Sp_{16}$, and realize the automorphic minimal representation $\Theta_{\SO_8}$ as the
theta lift of the trivial representation from $\SL_2(\A)$
to $\SO_8(\A)$.  This realization is due to Ginzburg, Rallis and Soudry \cite[Theorem 6.9]{MR1469105}; their proof
relies on the work of Kudla and Rallis \cite{MR1289491} (or in a classical language over $\mathbb{Q}$, the work of Deitmar and Krieg \cite{MR1128710}).
When applied to the groups at hand, these works establish that the suitably regularized theta lift of an Eisenstein series on $\SL_2$ is an Eisenstein series on $\SO_8$;
then taking a residue, one obtains the desired realization of $\Theta_{\SO_8}$. Since we are aiming for a factorizable integral, it is perhaps surprising that
it is helpful to introduce an additional integration over an automorphic quotient $[\SL_2]$.  Nonetheless, it turns out that this
realization allows us to do an unfolding and to express the distribution
$I(f,\Theta)$ as a sum of factorizable orbital integrals.

 An analogous geometric expansion for $J(f')$ follows directly from the Bruhat decomposition on $G'$ 
(and is well-known).  
This allows us to establish a 
one-to-one correspondence for the relevant orbits of the two distributions.  To effect a global comparison, 
we must then achieve a comparison of the corresponding local orbital integrals for each family of relevant orbits up to a transfer factor whose product
over all places is $1$.  

The main local result of this paper is a matching of these orbital integrals for the unit element of the spherical Hecke algebra, 
that is, the Fundamental Lemma for this relative trace formula.
The calculations are rather elaborate, but as in prior work on relative trace formulas that give Shimura correspondences, there is an algebraic fact that is key.
In Jacquet's work on the Shimura correspondence, this was the computation of a certain Sali\'e sum; this fact was also 
key in Iwaniec's work \cite{MR870736} on the same topic
with an eye towards analytic number theoretic applications.  For the work of Mao and Rallis, key use was made of an identity relating a 
cubic exponential sum to a Kloosterman
sum with a cubic character that is due to Duke and Iwaniec \cite{MR1210520} (as these authors remark, the analogous fact at a real 
archimedean place is Nicholson's formula
for the Airy integral!).  The proof of that result in turn is based on the Davenport-Hasse relation.  Their result applies when the additive 
character is of conductor 1, that is, to exponential
sums of the form
$$\sum_{x\bmod p} \exp\left(2 \pi i \frac{ax^3+bx}{p}\right)$$
 where $p$ in our computation is the cardinality of the residue field. These sums are shown to be equal to certain Kloosterman sums modulo $p$ with a cubic character.
 Remarkably, we observe that a similar relation is true for exponential sums that involve additive characters of higher conductor (Corollary~\ref{cor di}). 
 For higher conductor, the proof
 is based on the method of stationary phase. 
 
 In our work the orbital integrals for the big cell frequently reduce to integrals of pairs of Kloosterman integrals with cubic characters.  
 These integrals appear intractable. However, 
 using this identity twice to make
 them into integrals of pairs of cubic exponential integrals, we are able to effect the desired comparison.
 It also sometimes happens that we encounter Kloosterman integrals without a cubic character.  In this case for conductor 1 the result of
 Duke-Iwaniec is not applicable, but another identity, which
expresses the characteristic function of the local condition 
 $(ab^{-1})^3\equiv -1 \bmod \p$ in terms of a sum
of three cubic exponential integrals, is used instead (Lemma \ref{lem i=1 3val}).
  
We describe the contents of this paper. The first part of this paper gives the geometric expansions of the distributions we have described above.
Our first task is to express the distribution $I(f,\Theta)$ as a sum of factorizable orbital integrals. 
 In Section~\ref{sec unfold}, we formulate a general unfolding principle and recall the definition of relevant orbits. Then,  in Section~\ref{sec-unfolding-I},
 we use this principle and the theta lift to $\SO_8$ to unfold the distribution $I(f,\Theta)$ 
 and to express it
 as a sum of factorizable orbital integrals over relevant orbits  (Proposition~\ref{eq unfold}). In the following Section~\ref{relevant-I-orbits}, we 
 work with flag varieties to
determine these relevant orbits explicitly (Theorem~\ref{I-relevant-orbits}) and also write the orbital integrals for each relevant orbit in a form amenable for computation.  
  We also need the geometric expansion for the Kuznetsov distribution  $J(f')$ and this is presented in Section~\ref{J-distribution-cals}.
  This completes the reduction of the comparison of spectral sides of the global distributions to a comparison of a collection of explicitly given geometric local distributions.

The second part of this paper establishes the Fundamental Lemma.  We begin with the big cell comparison,  which is stated in Theorem~\ref{thm main},
and whose proof occupies the following four sections.  In Section~\ref{yummy-ingredients}, we develop the necessary local ingredients, including the comparison
of cubic exponential and Kloosterman integrals, which is given (as a consequence of prior computations that are closely related to the method
of stationary phase) in Subsection~\ref{cubic-comparison-123}.  The companion computation of cubic integrals when Duke-Iwaniec does not apply 
is in Subsection~\ref{cubic-additional-123}.
The orbital integral $J(a,b)$ attached to the big cell is then evaluated in Section~\ref{The-integral-J(a,b)}.  This is the piece of the calculation that
requires computations in the metaplectic group. Specifically, if $\O$ denotes the local ring of integers then
the embedding of $\SL_3(\O)$ in the local covering group is given by means of a section
that must be computed in order to work with the unit element in the corresponding Hecke algebra. 
In doing so we make use of an algorithm for computing the local two-cocycle (defined by Matsumoto \cite{MR240214})
that is given in
Bump and Hoffstein \cite{MR904946}.  This leads to the complicated orbital integrals involving pairs of Kloosterman
integrals that we then evaluate.  The
orbital integral $I(a,b)$ is evaluated in Section~\ref{The-integral-I(a,b)}.  The comparison is then completed in the brief Section~\ref{The-big-cell-comparison-is-done}.
The final section, Section~\ref{Comparison for smaller cells}, matches the remaining orbital integrals with an explicit transfer factor. For the two one-parameter families
of relevant orbits, the
comparison once again requires replacing a Kloosterman integral by a cubic exponential integral, but this time only once.

\section*{\it{Part I. Global Theory: The Geometric Expansions}}\label{Global theory}
\section{An unfolding principle}\label{sec unfold}

Let $F$ be a global field, $\A=\A_F$ the ring of adeles of $F$,  
$X$ an affine $F$-variety and $G$ an affine algebraic $F$-group acting on $X$ on the right. 
We denote the action by $(x,g)\mapsto x^g$, $x\in X$, $g\in G$. Let $\delta_G$ be the modular quasicharacter of $G(\A)$ and $[G]=G(F)\bs G(\A)$. 

For $\xi\in X(F)$ we denote by $G_\xi$ the stabilizer of $\xi$ in $G$, an affine algebraic group defined over $F$.
We say that a function $\c:X(\A)\times G(\A)\rightarrow \C$ is an \emph{automorphic cocycle} if it satisfies the following two conditions:
\begin{itemize}
\item $\c(\xi,\gamma)=1$, $\xi\in X(F)$, $\gamma\in G(F)$;
\item $\c(\xi, gh)=\c(\xi,g)\c(\xi^g,h)$, $\xi\in X(F)$, $g,\,h\in G(\A)$. 
\end{itemize}
This implies that $g\mapsto \c(\xi,g)$ restricts to an automorphic character of $G_\xi(\A)$ for every $\xi\in X(F)$. 

Assume that $\c$ is an automorphic cocycle such that for every $\xi\in X(F)$ 
\begin{equation}\label{eq int cond}
\int_{[G_\xi]} \abs{\c(\xi,h)}\delta_G(h)\delta_{G_\xi}(h)^{-1}\ dh<\infty.
\end{equation}
Consequently,  
\begin{equation}\label{eq char int}
\int_{[G_\xi]} \c(\xi,h)\delta_G(h)\delta_{G_\xi}(h)^{-1}\ dh=\begin{cases} \vol([G_\xi]) & (\c(\xi,\cdot)\delta_G)|_{G_\xi(\A)}\equiv \delta_{G_\xi} \\ 0 & \text{otherwise.} \end{cases}
\end{equation}
\begin{remark}\label{rmk nec cond} Note that if $[G_\xi]$ is compact for every $\xi\in X(F)$ then the assumption \eqref{eq int cond} clearly holds. If $G(\A)$ and $G_\xi(\A)$ are unimodular and $\c(\xi,\cdot)|_{G_\xi(\A)}$ is a unitary character then for \eqref{eq int cond} to hold it is enough to assume that $[G_\xi]$ is of finite volume.
\end{remark}
\begin{remark}\label{rmk twist cocyc}
Note that if $\c:X(\A)\times G(\A)\rightarrow \C$ is an automorphic cocycle and $\chi$ is an automorphic character of $G(\A)$ then the function $\c_\chi$ defined by $\c_\chi(x,g)=\chi(g)\c(x,g)$ is also an automorphic cocycle. 
\end{remark}

We say that $\xi\in X(F)$ is \emph{relevant} if $\c(\xi,\cdot)\delta_G\delta_{G_\xi}^{-1}$ is trivial on $G_\xi(\A)$. 
By our assumptions, for $\xi\in X(F)$, $\gamma\in G(F)$ and $h\in G_\xi(\A)$ we have $\c(\xi^\gamma,\gamma^{-1}h\gamma)=\c(\xi, h)$. It is also clear that $\delta_G(\gamma^{-1}h\gamma)=\delta_G(h)$ and $\delta_{G_{\xi^\gamma}}(\gamma^{-1}h\gamma)=\delta_{G_\xi}(h)$. Therefore $\xi$ is relevant if and only if $\xi^\gamma$ is relevant. We denote by $(X/G)$ a set of representatives for the $G(F)$-orbits in $X(F)$ and by $(X/G)_\rel$ the subset of relevant elements in $(X/G)$. 
We note the following formal unfolding principle. At this generality we ignore convergence issues, hence the term principle. For $\phi\in \swrz(X(\A))$, formally, we have
\begin{equation}\label{eq unfolding}
\int_{[G]} \sum_{\xi\in X(F)} \phi(\xi^g) \c(\xi,g)\ dg =\sum_{\xi\in (X/G)_\rel} \vol([G_\xi]) \int_{G_\xi(\A)\bs G(\A)} \phi(\xi^g)\c(\xi,g)\ dg.
\end{equation}
Indeed, let $(X/G)$ be a set of representatives for the $G(F)$-orbits in $X(F)$. Then 
\[
\sum_{\xi\in X(F)} \phi(\xi^g) \c(\xi,g)=\sum_{\xi\in (X/G)}\sum_{\gamma\in G_\xi(F)\bs G(F)} \phi(\xi^{\gamma g}) \c(\xi^\gamma,g)=\sum_{\xi\in (X/G)}\sum_{\gamma\in G_\xi(F)\bs G(F)} \phi(\xi^{\gamma g}) \c(\xi,\gamma g).
\]
On the left hand side of \eqref{eq unfolding} we exchange the order of integration over $[G]$ and summation over $(X/G)$ and then unfold $\gamma$ with $g$ to obtain
\[
\sum_{\xi\in (X/G)} \int_{G_\xi(F)\bs G(\A)} \phi(\xi^g)\c(\xi,g)\ dg.
\]
Integrating in stages this equals
\begin{multline*}
\sum_{\xi\in (X/G)} \int_{G_\xi(\A)\bs G(\A)} \phi(\xi^g)   \int_{[G_\xi]}\c(\xi,hg)\delta_G(h) \delta_{G_\xi}(h)^{-1}\ dh\ dg=\\ \sum_{\xi\in (X/G)} \int_{G_\xi(\A)\bs G(\A)} \phi(\xi^g)\c(\xi,g) \ dg \int_{[G_\xi]}\c(\xi,h)\delta_G(h) \delta_{G_\xi}(h)^{-1} \ dh.
\end{multline*}
It follows from \eqref{eq char int} that this equals the right hand side of \eqref{eq unfolding}.

\section{The geometrization of the distribution \texorpdfstring{$I(f,\Theta)$}{I(f,theta)}}\label{sec-unfolding-I}

Throughout the rest of Part I we suppose that the global field $F$ contains a 
primitive cube root of unity $\rho$ (that is, there exits $\rho\in F$ such that $\rho^2+\rho+1=0$), and we set
$\mu_3=\langle\rho\rangle$.  Our goal is to geometrize the distribution $I(f,\Theta)$ given by \eqref{PGL3-distribution}.

\subsection{The Adjoint embedding}\label{The Ad embedding}
For $n\geq2$ let $w_n$ denote the $n\times n$ antidiagonal matrix $w_n=(\delta_{i,n+1-j})\in \GL_n$.
We realize the Adjoint representation of $\PGL(3)$ as the map $\PGL(3)\to \GL(\liesl(3))$ defined by conjugation.  Considering $\liesl(3)$ as a quadratic
space with respect to the bilinear form $<x,y>=\text{Tr}(xy)$, the image lies in the special orthogonal group $\SO(\liesl(3))$.
With respect to the basis
$$\{e_{1,3},e_{2,3},e_{1,2},e_{1,1}-e_{2,2},e_{2,2}-e_{3,3},e_{2,1},e_{3,2},e_{3,1}\}$$
of $\liesl(3)$ where the $e_{i,j}$ are the standard elementary matrices, $\SO(\liesl(3))$ is isomorphic to $\SO(J)$ where
$$J=\begin{pmatrix}&&&w_3\\&2&-1&\\&-1&2&\\w_3&&&\end{pmatrix}.
$$

Since $F$ contains the cube roots of unity, the group $\SO(J)$ is split over $F$ and may be conjugated to $\SO_8:=\SO(w_8)$; indeed 
if $g_0=-\tfrac13 \left(\begin{smallmatrix}1-\rho&2+\rho\\-(1+2\rho)&1+2\rho\end{smallmatrix}\right)$ then
$^t g_0\left( \begin{smallmatrix}2&-1\\-1&2\end{smallmatrix} \right)g_0=\left(\begin{smallmatrix}&1\\1&\end{smallmatrix}\right)$.
In this way we realize the adjoint map as a homomorphism $\Ad: \PGL(3)\rightarrow \SO_8$. This is the map that is used in the period \eqref{PGL3-period}.
Restricted to $N(\A)$ this leads to
the expression
\begin{multline*}\text{Ad}\begin{pmatrix}1&x&z\\&1&y\\&&1\end{pmatrix}=\\
\begin{pmatrix}1&x&-y&-(xy+\rho z)&-(xy+\rho^2z)&x(xy-z)&-zy&z(xy-z)\\&1&0&\rho^2 y&\rho y&xy-z&-y^2&y(xy-z)\\
&&1&x&x&-x^2&z&-zx\\&&&1&0&-x&-\rho y&\rho xy+\rho^2 z\\&&&&1&-x&-\rho^2y&\rho^2xy+\rho z\\&&&&&1&0&y\\&&&&&&1&-x\\&&&&&&&1
\end{pmatrix}.
\end{multline*}
It was also explicated in \cite{MR1882041}.

\subsection{First steps}
Since $\Theta$ is automorphic and $\Ad$ is defined over $F$ we may combine the integral over $[G]$ with the sum over $G(F)$ to obtain
\[
I(f,\Theta)=\int_{[N]}\int_{G(\A)} f(g^{-1}n) \Theta(\Ad(g)) \psi(n)\ dg\ dn
\]
and after the change of variables $g\mapsto ng$ we obtain that 
\begin{equation}\label{eq crude form I}
I(f,\Theta)=\int_{[N]} \{\int_{G(\A)} f(g^{-1})\Theta(\Ad(n)\Ad(g))\ dg\}\psi(n)\ dn=\int_{[N]} \Theta_f(\Ad(n))\psi(n)\ dn
\end{equation}
where $\Theta_f\in \Theta_{\SO_8}$ is defined by $\Theta_f(x)=\int_{G(\A)} f(g^{-1}) \Theta(x\Ad(g))\ dg$.

This means, that in order to obtain a geometric expansion, we are reduced to showing that
\[
\int_{[N]}\Theta(\Ad(n))\psi(n)\ dn
\] 
is a sum of factorizable integrals.

As noted in the Introduction, by \cite[Theorem 6.9]{MR1469105}, the minimal representation $\Theta_{\SO_8}$ can be expressed as a theta lift using the Weil representation of the 
metaplectic double cover $\widetilde{\Sp}_{16}(\A)$ of $\Sp_{16}(\A)$.  We now make this explicit, and use this to show the desired expansion.
To use the theta lift, we follow Deitmar and Krieg (\cite{MR1128710}, Section 2) and Kudla-Rallis (\cite{MR1289491}, Section 5) by imposing a condition on $f$ (see Subsection~\ref{theta-lift-done-here} below) at one archimedean place.
With this condition, the theta integral of the Eisenstein series is convergent, and so the computation applies.

\subsection{On the Weil representation}
We realize the group $\Sp_{16}$ as the group of automorphisms preserving the alternating matrix $\sm{0}{w_8}{-w_8}{0}$. For $g\in \GL_n$ let $g^*=w_n {}^t g^{-1} w_n$.

For a matrix $Y\in M_n(\A)$ let ${}_tY=w_n {}^tY w_n$. The equality $Y={}_tY$ means that $Y$ is symmetric with respect to the second diagonal, i.e., $Y=(y_{i,j})$ where $y_{i,j}=y_{n+1-j,n+1-i}$ (or equivalently, that $w_n Y$ is symmetric).
It follows from the standard explicit formulas for the Weil representation $\omega_\psi$ of $\widetilde{\Sp}_{16}(\A)$ (see e.g. \cite{MR2848523}) that for $x\in \A^8$ and $\phi\in \swrz(\A^8)$ we have
\[
\omega_\psi\sm{g}{}{}{g^*} \phi(x)=\phi(xg), \ \ \ g\in \SL_8(\A)\]
and
\[
\omega_\psi\sm{I_8}{Y}{}{I_8}\phi(x)=\psi(xw_8 {}^tY\ {}^t x)\phi(x)=\psi(xYw_8\ {}^t x)\phi(x),\ \ \ Y={}_t Y\in M_8(\A). 
\]
Combined this gives
\begin{equation}\label{eq weil}
\omega_\psi(\sm{I_8}{Y}{}{I_8} \sm{g}{}{}{g^*})\phi(x)=\psi(x  Yw_8\,  {}^t x)\phi(xg).
\end{equation}
Let $P=M\ltimes U$ be the Siegel parabolic subgroup of $\Sp_{16}$ with unipotent radical $U=\{\sm{I_8}{Y}{}{I_8}: {}_tY=Y\in M_8\}$ isomorphic to the vector space of symmetric matrices in $M_8$ and $M=\{\diag(g,g^*):g\in \GL_8\}$. Let $P^\circ=M^\circ \ltimes U$ where $M^\circ= \{\diag(g,g^*):g\in \SL_8\}$ and let $\proj_M:P\rightarrow M$ be the projection map to the Levi subgroup. 
We consider the right action of $P^\circ$ on affine $8$-space $\mathbb{G}_a^8$ defined by
\[
x^p=xg, \ \ \ x\in \mathbb{G}_a^8,\ p\in P^\circ,\ \proj_M(p)=\diag(g,g^*)
\]
and the function $\c:\A^8 \times P^\circ(\A)\rightarrow \C$ defined by
\[
\c(x,p)=\psi(xYw_8 \,{}^t x), \ \ \ x\in \A^8,\ p=\sm{I_8}{Y}{}{I_8}\proj_M(p)\in P^\circ(\A).
\]
We can rewrite \eqref{eq weil} as
\begin{equation}\label{eq weil cocycle form}
\omega_\psi(p)\phi(x)=\c(x,p)\phi(x^p), \ \ \ x\in \A^8,\ p\in P^\circ(\A).
\end{equation}
We denote by $P^\circ_\xi$ the stabilizer in $P^\circ$ of $\xi\in F^8$ (considered as an algebraic group over $F$) and by $P^\circ(\A)_x$ the stabilizer in $P^\circ(\A)$ of $x\in \A^8$. Note that
\[
\c(\xi,\gamma)=1, \ \ \ \xi\in F^8, \gamma\in P^\circ(F).
\]

\begin{lemma}\label{lem cocyc}
The function $\c$ satisfies the cocycle condition
\[
\c(x,p_1p_2)=\c(x,p_1)\c(x^{p_1},p_2),\ \ \ x\in \A^8,\,p_1,\,p_2\in P^\circ(\A).
\]
In particular, it is an automorphic cocycle. 
\end{lemma}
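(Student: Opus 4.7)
The plan is to verify the cocycle identity by an elementary computation that decomposes each $p_i$ into its unipotent and Levi parts, and then reduces the identity to a short manipulation with the symmetric-in-the-second-diagonal convention.  The ``in particular'' clause will follow from the cocycle identity once we recall the vanishing $\c(\xi,\gamma)=1$ for $\xi\in F^8$, $\gamma\in P^\circ(F)$, which was observed in the paragraph preceding the lemma (it is immediate from $\xi Y w_8\,{}^t\xi\in F$ and $\psi|_F\equiv 1$).

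First I would factor $p_i=u_i m_i$ with $u_i=\sm{I_8}{Y_i}{}{I_8}\in U(\A)$ (so ${}_tY_i=Y_i$) and $m_i=\diag(g_i,g_i^*)\in M^\circ(\A)$.  Using $g_1^{*-1}=w_8\,{}^tg_1\,w_8$, conjugation gives
\[
m_1 u_2 m_1^{-1}=\sm{I_8}{Y'}{}{I_8},\qquad Y':=g_1 Y_2\, w_8\,{}^tg_1\,w_8,
\]
and one checks (using ${}_tY_2=Y_2$ and $w_8^2=I_8$) that ${}_tY'=Y'$, so this is again an element of $U(\A)$.  Combined with $(g_1g_2)^*=g_2^*g_1^*$, this rewrites the product in the Levi-unipotent normal form
\[
p_1p_2=\sm{I_8}{Y_1+Y'}{}{I_8}\diag(g_1g_2,(g_1g_2)^*).
\]

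Next, the definition of $\c$ gives
\[
\c(x,p_1p_2)=\psi\bigl(x(Y_1+Y')w_8\,{}^tx\bigr)=\psi(xY_1w_8\,{}^tx)\,\psi(xY'w_8\,{}^tx).
\]
The first factor is $\c(x,p_1)$ by inspection.  For the second factor, I expand
\[
xY'w_8\,{}^tx=xg_1 Y_2\,w_8\,{}^tg_1\,w_8\cdot w_8\,{}^tx=(xg_1)Y_2\,w_8\,{}^t(xg_1)=x^{p_1}Y_2\,w_8\,{}^t(x^{p_1}),
\]
so that the second factor equals $\c(x^{p_1},p_2)$.  This yields the cocycle identity.

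I do not expect any real obstacle: the identity is essentially a rewriting of the fact that the formulas \eqref{eq weil cocycle form} define a genuine representation of $P^\circ(\A)$ (the Siegel parabolic $P^\circ$ splits in $\widetilde{\Sp}_{16}(\A)$ because its Levi is $\SL_8$ rather than $\GL_8$), and equivalently is a direct matrix computation.  The only care needed is the bookkeeping with the second-diagonal symmetry and with the involution $g\mapsto g^*$.
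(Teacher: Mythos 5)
Your proof is correct and follows essentially the same route as the paper: both compute the Levi--unipotent normal form of $p_1p_2$, reduce to the identity $g_1Y_2(g_1^*)^{-1}w_8=g_1Y_2w_8\,{}^tg_1$, and split $\psi(x(Y_1+Y')w_8\,{}^tx)$ into the two required factors. The extra details you supply (checking ${}_tY'=Y'$ and recalling why $\c(\xi,\gamma)=1$ on rational points) are accurate and consistent with the paper's argument.
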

\begin{proof}
Write $p_i=\sm{I_8}{Y_i}{}{I_8} \sm{g_i}{}{}{g_i^*}$, $i=1,2$. Then $p_1p_2=\sm{I_8}{Y_1+g_1Y_2(g_1^*)^{-1}}{}{I_8} \sm{g_1g_2}{}{}{(g_1g_2)^*}$. Since $g_1Y_2(g_1^*)^{-1}w_8=g_1Y_2 w_8\,{}^t g_1$ it follows that 
\[
\c(x,p_1p_2)=\psi(x Y_1 w_8 \,{}^t x+ xg_1 Y_2w_8 \, {}^t (x g_1))=\c(x,p_1)\c(x g_1,p_2)=\c(x,p_1)\c(x^{p_1},p_2)
\] 
as required. 
\end{proof}

\subsection{On the theta lift}\label{theta-lift-done-here}
We fix an embedding of $\SL_2\times\SO_8$ in $\Sp_{16}$ such that $\SL_2\times \Ad(N)$ embeds in $P^\circ$ (we later explicate such an embedding)  
and denote by $(g,h)\mapsto [g,h]$ the associated embedding of $\SL_2(\A)\times \SO_8(\A)$ in $\widetilde\Sp_{16}(\A)$. Note that the action of the Weil representation directly shows that $P^\circ(\A)$ splits in $\widetilde\Sp_{16}(\A)$.
Let 
\[
\tilde\theta_\phi(g)=\sum_{x\in F^8}\omega_\psi(g)\phi(x),\ \ \ g\in \widetilde\Sp_{16}(\A).
\]

There is a subspace $\swrz^\circ(\A^8)$ of $\swrz(\A^8)$ (the image of $\swrz(\A^8)$ under the Casimir operator in the center of the universal enveloping algebra of the Lie algebra of $\SL_2$ at one fixed archimedean place) such that for $\phi\in \swrz^\circ(\A^8)$ we have that $g\mapsto \tilde\theta_\phi([g,h])$
is a rapidly decreasing automorphic function on $\SL_2(\A)$. For $\phi\in \swrz^\circ(\A^8)$ let
\[
\Theta_\phi(g)=\int_{[\SL_2]} \tilde\theta_\phi([h,g])\ dh, \ \ \ g\in \SO_8(\A).
\]
We then have
\[
\Theta_{\SO_8}=\{\Theta_\phi:\phi\in \swrz^\circ(\A^8)\}.
\]
We consider the distributions on $G(\A)$ defined by
\[
I(f,\phi)=\int_{[N]}\int_{G(\A)} f(g^{-1}) \int_{[\SL_2]}\tilde \theta_\phi([h,\Ad(ng)])\ dh\ dg\ \psi(n)\ dn, \ \  f\in \swrz(G(\A)), \ \  \phi\in \swrz^\circ(\A^8).
\]

Note that $I(f,\phi)=I(f,\Theta_\phi)$.
Let 
\[
\phi[f](x)=\int_{G(\A)} f(g^{-1})\omega_\psi([I_2,\Ad(g)])\phi(x)\ dg.
\]
Then
\begin{multline*}
I(f,\phi)=\int_{[N]}\int_{[\SL_2]} \tilde \theta_{\phi[f]}([h,\Ad(n)])\ dh\ \psi(n)\ dn=\\ \int_{[N]}\int_{[\SL_2]} \sum_{\xi\in F^8} \omega_\psi([h,\Ad(n)])\phi[f](\xi)\ dh\ \psi(n)\ dn.
\end{multline*}
We therefore need to unfold integrals of the form 
\begin{equation}\label{eye-of-phi}
I(\phi)=\int_{[N]}\int_{[\SL_2]} \sum_{\xi\in F^8} \omega_\psi(h,\Ad(n))\phi(\xi)\ dh\ \psi(n)\ dn.
\end{equation}
\subsection{The unfolding of $I$}
Let $H=\SL_2\times N$. For $h=(g,n)\in H(\A)$, by abuse of notation we write $[h]=[g,\Ad(n)]$ and we extend $\psi$ to an automorphic character of $H(\A)$ (still denoted by $\psi$) that is trivial on $\SL_2(\A)$.
With our choice of embedding of $H(\A)$ and by \eqref{eq weil cocycle form} we have
\[
I(\phi)=\int_{[H]} \sum_{\xi\in F^8} \phi(\xi^{[h]})\ \c_{\psi}(\xi,[h])\ dh
\]
where $\c_\psi(\xi,[g,\Ad(n)])=\psi(n)\c(\xi,[g,\Ad(n)])$.

Let $\Xi_\rel$ be a set of representatives for the $H(F)$-orbits in $F^8$ for which $\c_\psi(\xi,\cdot)|_{H_\xi(\A)}\equiv 1$. 
We will later see that $H_\xi(\A)$ is unimodular and $[H_\xi]$ has finite volume. Indeed, $H_\xi$ is unipotent unless $\xi=0$ in which case $H_\xi=H$. As a consequence of Lemma \ref{lem cocyc} and Remarks \ref{rmk nec cond} and \ref{rmk twist cocyc} we can apply the unfolding principle of Section~\ref{sec unfold} to obtain
\begin{proposition}\label{eq unfold} The distribution $I$ given in \eqref{eye-of-phi} is a sum of factorizable orbital integrals
\begin{equation*}
I(\phi)=\sum_{\xi\in \Xi_\rel} \O(\xi,\phi)
\end{equation*}
where for $\xi\in F^8$ relevant, the associated orbital integral is given by
\[
\O(\xi,\phi)= \int_{H_\xi(\A)\bs H(\A)} \phi(\xi^{[h]}) \c_\psi(\xi,[h])\ dh.
\]
Here the measures are normalized so that the volume of each compact quotient $[H_\xi]$ is one. \qed
\end{proposition}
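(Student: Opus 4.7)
The proof plan is to recognize Proposition \ref{eq unfold} as a direct application of the formal unfolding principle \eqref{eq unfolding} of Section \ref{sec unfold}, applied to the affine variety $X = \mathbb{G}_a^8$, with the group $H = \SL_2 \times N$ acting on $X(\A)$ through the embedding $h \mapsto [h] \in P^\circ(\A)$, and with the function $\c_\psi$ in the role of the cocycle. Three hypotheses must be verified: that $\c_\psi$ is an automorphic cocycle on $H$; that the integrability condition \eqref{eq int cond} holds at every $\xi \in F^8$; and that the relevance condition introduced in Section \ref{sec unfold} collapses, in the present setting, to the simpler condition $\c_\psi(\xi,\cdot)|_{H_\xi(\A)} \equiv 1$ used in the definition of $\Xi_\rel$.

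For the cocycle property, Lemma \ref{lem cocyc} already shows that $\c$ is an automorphic cocycle on $P^\circ$; pulling back along the group homomorphism $h \mapsto [h]$ yields a cocycle on $H$, and the further twist by $\psi$, which is an automorphic character of $H(\A)$ (trivial on the $\SL_2$-factor and equal to the chosen character on $N(\A)$), preserves the cocycle property by Remark \ref{rmk twist cocyc}. For the integrability condition, the stabilizer structure indicated in the text separates two cases: $H_0 = H$, which is unimodular with $[H]$ of finite volume; and, for $\xi \neq 0$, $H_\xi$ is a unipotent algebraic subgroup of $H$, since the $N$-factor is unipotent and the $\SL_2$-factor of the stabilizer under the Levi action of the dual pair is a proper subgroup that turns out to be unipotent in our realization. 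In either case $H_\xi$ is unimodular, $[H_\xi]$ has finite volume, and $\c_\psi(\xi,\cdot)|_{H_\xi(\A)}$ is a unitary character, so Remark \ref{rmk nec cond} supplies \eqref{eq int cond}. For the relevance condition, unimodularity of both $H$ and $H_\xi$ forces $\delta_H \equiv \delta_{H_\xi} \equiv 1$, and the relevance condition reduces to $\c_\psi(\xi,\cdot)|_{H_\xi(\A)}\equiv 1$, matching the definition of $\Xi_\rel$.

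Once the three hypotheses are in hand, \eqref{eq unfolding} yields directly the expansion of the proposition, with the stated normalization of measures making $\vol([H_\xi]) = 1$ for every $\xi$, so that the volume factor in \eqref{eq unfolding} disappears from the formula. The main obstacle is precisely the word \emph{formal} attached to the unfolding principle in Section \ref{sec unfold}: rigorously interchanging $\sum_{\xi \in F^8}$ with $\int_{[H]}$ requires an a priori absolute-convergence argument. This is where the restriction $\phi \in \swrz^\circ(\A^8)$ from Subsection \ref{theta-lift-done-here} plays its role, as it ensures that the theta function $\tilde\theta_\phi$ is rapidly decreasing on $[\SL_2]$; combined with the Schwartz-Bruhat decay of $\phi$ at the remaining places and the compactness of $[N]$, this allows one to dominate the integrand uniformly and upgrade the formal identity to an actual one.
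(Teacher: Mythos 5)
Your proposal is correct and follows essentially the same route as the paper: verify that $\c_\psi$ is an automorphic cocycle via Lemma~\ref{lem cocyc} and Remark~\ref{rmk twist cocyc}, note that $H_\xi$ is unipotent for $\xi\neq 0$ (and equals $H$ for $\xi=0$) so that Remark~\ref{rmk nec cond} gives the integrability condition and unimodularity collapses relevance to $\c_\psi(\xi,\cdot)|_{H_\xi(\A)}\equiv 1$, then invoke the unfolding principle of Section~\ref{sec unfold}. Your closing paragraph on upgrading the formal interchange of sum and integral using $\swrz^\circ(\A^8)$ is a sensible addition, but the paper treats this at the same formal level as the unfolding principle itself.
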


In order to compare the distributions $I$ and $J$, we must describe the quantities in this proposition precisely.  This is accomplished in Theorem~\ref{I-relevant-orbits} below.

\goodbreak

\section{Determination of the relevant orbits and orbital integrals for \texorpdfstring{$I$}~}{}\label{relevant-I-orbits}

\subsection{Explication of the set up}
Let $V=\Span(e,f)$ be the two dimensional symplectic space with $\sprod{e}{f}=1=-\sprod{f}{e}$ and $W=\Span (e_1,\dots, e_8)$ with $\sprod{e_i}{e_j}=\delta_{i,9-j}$. Fixing the basis $\{e,f\}$  identifies $\SL_2$ with $\Sp(V)$ and the basis $\{e_1,\dots,e_8\}$ identifies $\SO_8$ with $\SO(W)$. 
We also consider $V\otimes W$ as a symplectic space with the product form. Fixing the basis 
\[
\{e\otimes e_1 ,f\otimes e_1,\dots, e\otimes e_4,f\otimes e_4, -e\otimes e_5,f\otimes e_5,\dots,  -e\otimes e_8, f\otimes e_8\}
\] 
identifies $\Sp_{16}$ with $\Sp(V\otimes W)$. Via the tensor product embedding $\Sp(V) \times \SO(W) \hookrightarrow \Sp(V\otimes W)$ this defines an embedding of 
\[
\SL_2 \times \SO_8 \hookrightarrow \Sp_{16}.
\]  
We explicate this embedding.

Let $\lambda:\GL_2\rightarrow \GL_8$ be the embedding $\lambda(h)=\diag(h,h,h,h)$. Note that $\lambda({}^t h)={}^t\lambda(h)$ and $\lambda(h)^*=\lambda(h^*)$. 
The embedding of $\SL_2$ in $\Sp_{16}$ is then $h\mapsto \diag(\lambda(h),\lambda(h^*))$. 

The embedding of $\SO_8$ in $\Sp_{16}$ can be described as follows. Let $\imath:M_4\rightarrow M_8$ be the standard `tensor with $I_2$ map', that is $\imath(A)=(a_{i,j}I_2)$, 
$A=(a_{i,j})\in M_4$ and let $\jmath:M_4\rightarrow M_8$ be the twisted map $\jmath(A)=\lambda(\epsilon_2)\imath(A)=\imath(A)\lambda(\epsilon_2)$ where $\epsilon_2=\diag(1,-1)$. 
Then, the embedding of $\SO_8$ to $\Sp_{16}$ in terms of the $4\times 4$-blocks is given by
\[
\begin{pmatrix} a & b \\ c & d\end{pmatrix} \mapsto \begin{pmatrix} \imath(a) & \jmath(b) \\ \jmath(c) & \imath(d)\end{pmatrix}=\diag(\lambda(\epsilon_2),I_8)    \begin{pmatrix} \imath(a) & \imath(b) \\ \imath(c) & \imath(d)\end{pmatrix} \diag(\lambda(\epsilon_2),I_8).
\]
Furthermore, we remark that the group homomorphism $\lambda$ and the algebra homomorphism $\imath$ are such that $\lambda(\GL_2)$ and $\imath(M_4)$ commute with each other. 

Recall that $\Ad(N)$ consists of upper-triangular unipotent matrices in $\SO_8$.
For
\begin{equation}\label{eq n coord}
n=\begin{pmatrix} 1 & x & z \\  & 1 & y \\ & & 1 \end{pmatrix}\in N(\A)
\end{equation}
we write
\[
\Ad(n)=\begin{pmatrix} I_4 & s \\ & I_4 \end{pmatrix}\begin{pmatrix} u &  \\ & u^* \end{pmatrix}
\]
with $u=u(n)\in \GL_4$ upper-triangular unipotent and $s=s(n)\in M_4$ such that $sw_4$ is alternating.
We have
\[
[h,\Ad(n)]=\begin{pmatrix} I_8 & \jmath(s) \\ & I_8 \end{pmatrix}\begin{pmatrix} \imath(u) \lambda(h) &  \\ & \imath(u^*)\lambda(h^*) \end{pmatrix}, \ \ \ h\in \SL_2(\A),\,
n\in N(\A).
\]
Since $w_8=\lambda(w_2)\imath(w_4)$ we see that $\jmath(s)w_8=\imath(sw_4)\lambda(\epsilon_2w_2)$.
We also note that
\[
\xi^{[h,\Ad(n)]}=\xi \imath(u)\lambda(h).
\]
Since $h\epsilon_2w_2 \,{}^t h=\epsilon_2w_2 $ it follows that $x\jmath(s)w_8{}^t x=x\lambda(h)\jmath(s)w_8{}^t \lambda(h){}^t x$, $x\in \A^8$, $h\in \SL_2(\A)$ and therefore
\begin{equation}\label{eq hind}
\c(\xi,[h,\Ad(n)])=\c(\xi,[I_2,\Ad(n)]), \ \ \ n\in N(\A)
\end{equation}
is independent of $h\in \SL_2(\A)$. 

We notice from the explicit formula for $\Ad(n)$ in Section~\ref{The Ad embedding} that
\[
u=\begin{pmatrix} 1 &x &-y & -yx-z\rho\\ & 1 & &  y\rho^2 \\ & & 1 & x \\ & & &  1 \end{pmatrix}
\]
and 
\[
sw_4=\begin{pmatrix}  & (z-yx)y\rho^2 & zx\rho & -(yx+z\rho^2)\\ (yx-z)y\rho^2 &  & -(z+xy\rho^2) & y\rho\\ -zx\rho & z+xy\rho^2&  & x \\ yx+z\rho^2 & -y\rho & -x  &  \end{pmatrix}. 
\]
Writing $\xi=(\xi_1,\xi_2,\xi_3,\xi_4)\in F^8$ with $\xi_i\in F^2$, $i=1,2,3,4$ we therefore have
\begin{equation}\label{eq action}
\xi^{[h,\Ad(n)]}=(\xi_1 h,(\xi_2+x\xi_1)h, (\xi_3-y\xi_1)h,(\xi_4+x\xi_3+y\rho^2 \xi_2-(yx+z\rho)\xi_1)h).
\end{equation}
We also conclude that $\jmath(s)w_8$ equals
\[ \scalemath{0.83}{
\begin{pmatrix}  0 & 0 & 0 & (z-xy)y\rho^2  & 0 & xz\rho & 0 & -(xy+z\rho^2)\\ 																					                                                                                          0 & 0 & (xy-z)y\rho^2  & 0 & -xz\rho & 0 & xy+z\rho^2 & 0 
\\ 0 & (xy-z)y\rho^2 & 0 & 0 & 0 & -(xy\rho^2 +z) & 0 & y\rho 
 \\ (z-xy)y\rho^2 & 0 & 0 & 0&  xy\rho^2 +z & 0 & -y\rho & 0 
 \\ 0 & -xz\rho & 0 & xy\rho^2+z & 0 & 0 & 0 & x 
 \\  xz\rho & 0 & -(xy\rho^2+z) & 0 & 0 & 0 & -x & 0 
  \\ 0 & xy+z\rho^2  & 0 &-y\rho & 0 & -x & 0& 0 
  \\-(xy+z\rho^2) & 0 & y\rho & 0 & x & 0 & 0 & 0  \end{pmatrix}}
\]
and this gives us an explicit formula for $\c(\xi,[h,\Ad(n)])=\psi(\xi \jmath(s)w_8 \,{}^t\xi)$.

\subsection{Relevant orbits and their associated orbital integrals} 

We use the above notation and formulas in order to find an explicit set $\Xi_\rel$ of relevant orbit representatives, compute their stabilizer and explicate the associated factorizable orbital integrals. Let $e_1=(1,0)$, $e_2=(0,1)$ be the standard basis of $F^2$ and  $N_2=\{g\in \SL_2:e_2 g=e_2\}$. Whenever we write $x=(x_1,x_2,x_3,x_4)\in \A^8$ we mean that each $x_i\in \A^2$. In particular, we write $x_i=0$ if $x_i=(0,0)$.

The following elementary observation will be used repeatedly in the orbit analysis.
\begin{lemma}
Let $\{v_1,v_2\}$ be a basis of $F^2$. Then there exists $h\in \SL_2(F)$ and $a\in F^*$ such that $v_1 h=e_2$ and $v_2h=a e_1$.
\end{lemma}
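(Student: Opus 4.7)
The plan is to construct $h$ explicitly by matrix inversion. Let $M$ denote the $2\times 2$ matrix over $F$ whose rows are $v_1$ and $v_2$. Since $\{v_1,v_2\}$ is a basis of $F^2$, $M$ is invertible, so $d:=\det(M)\in F^*$.

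First I would rewrite the two desired identities $v_1h=e_2$ and $v_2h=ae_1$ as the single matrix equation
\[
M\,h=\begin{pmatrix} 0 & 1 \\ a & 0 \end{pmatrix},
\]
which forces $h=M^{-1}\begin{pmatrix} 0 & 1 \\ a & 0 \end{pmatrix}\in \GL_2(F)$ for any $a\in F^*$; the two row conditions hold by construction for any such $a$.

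Next I would pin down $a$ by the $\SL_2$ condition. Taking determinants of both sides of the matrix equation gives $d\cdot\det(h)=-a$, so setting $a:=-d\in F^*$ yields $\det(h)=1$, i.e.\ $h\in \SL_2(F)$. There is no substantive obstacle in the argument: once the problem is recast in matrix form, the entire proof reduces to a single determinant computation, and the only real content is the sign that must be absorbed into $a$ in order to make $\det(h)=1$. This is also why the lemma asserts the existence of some $a\in F^*$ rather than providing a canonical normalization.
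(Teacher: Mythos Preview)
Your proof is correct and is cleaner than the paper's. The paper proceeds in stages: it first moves $v_1$ to $e_2$ by an explicit sequence of elementary $\SL_2$-matrices (possibly applying $\sm{}{1}{-1}{}$ to ensure the second coordinate of $v_1$ is nonzero, then applying $\sm{b}{}{-a}{b^{-1}}$), and then, with $v_1=e_2$ fixed, applies an upper-triangular unipotent matrix to clear the second coordinate of $v_2$. Your approach bypasses these reductions by writing the desired conditions as a single matrix equation $Mh=\sm{0}{1}{a}{0}$ and reading off both $h$ and $a$ from a determinant computation. The paper's route has the minor advantage of exhibiting $h$ as a product of elementary matrices (which could in principle be useful if one needed explicit control over the factorization), but for the purposes of this lemma your argument is more direct and arguably more transparent about why such an $a$ must exist.
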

\begin{proof}
Applying $h=\sm{}{1}{-1}{}$ if necessary, we may assume without loss of generality that $v_1=(a,b)$ with $b\ne 0$. Then by applying $\sm{b}{}{-a}{b^{-1}}$ we may assume without loss of generality that $v_1=e_2$. Now write $v_2= (a,b)$.  Then $a\ne 0$ and applying $\sm{1}{-a^{-1}b}{}{1}$ the lemma follows.  
\end{proof}

For $\phi\in \swrz(\A^8)$ we let $\phi_{K_2}\in \swrz(\A^8)$ be defined by
\[
\phi_{K_2}(v)=\int_{K_2}\phi[v\lambda(k)]\ dk
\]
where $K_2$ is the standard maximal compact subgroup of $\SL_2(\A)$.

Let $\xi=(\xi_1,\xi_2,\xi_3,\xi_4)\in F^8$. It follows from \eqref{eq action} that
the following are invariants of $H(F)$-orbits. 
\begin{itemize}
\item $d_1(\xi)=\dim \langle \xi_1 \rangle$;
\item $d_{1,2}(\xi)=\dim \langle \xi_1,\xi_2 \rangle$;
\item $d_{1,3}(\xi)=\dim \langle \xi_1,\xi_3 \rangle$;
\item $d_{1,2,3}(\xi)=\dim \langle \xi_1,\xi_2,\xi_3 \rangle$;
\item $d_{1,2,3,4}(\xi)=\dim \langle \xi_1,\xi_2,\xi_3,\xi_4 \rangle$;
\end{itemize}

In the remainder of this Section, we will analyze the orbits case by case using these invariants.  The following result summarizes this analysis.

\begin{theorem}\label{I-relevant-orbits}
 A complete set of representatives for the $H(F)$-orbits that are relevant is given in Lemma~\ref{lem sl2} (orbits with $d_1=0$ and $d_{1,2}=d_{1,2,3,4}=1$);
Lemma~\ref{relevant-2} (orbits with $d_1=0$, $d_{1,2}=d_{1,2,3}=1$ and $d_{1,2,3,4}=2$); Lemma~\ref{relevant-47} (orbits with $d_1=1=d_{1,2,3,4}$); 
Lemma~\ref{relevant-3} (orbits with $d_1=1=d_{1,2}$ and $d_{1,2,3}=2$); Lemma~\ref{relevant-4} (orbits with
$d_1=1=d_{1,3}$ and $d_{1,2}=2$); and Lemma~\ref{relevant-5} (orbits with $d_1=1$ and $d_{1,2}=2=d_{1,3}$; these are the generic relevant orbits).  
\end{theorem}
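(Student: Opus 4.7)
The plan is to stratify $F^8$ by the five $H(F)$-invariants $(d_1,d_{1,2},d_{1,3},d_{1,2,3},d_{1,2,3,4})$ listed just before the theorem, and for each stratum to (a) find a canonical representative, (b) compute its stabilizer $H_\xi$, (c) evaluate the cocycle $\c_\psi$ on $H_\xi(\A)$, and (d) declare the stratum relevant iff that restriction is trivial. The enumeration is short because each invariant takes values in $\{0,1\}$ or $\{0,1,2\}$ and must satisfy the monotonicity constraints $d_1\le d_{1,2},d_{1,3}\le d_{1,2,3}\le d_{1,2,3,4}$, leaving only a handful of combinations. The six cases named in the theorem will appear as exactly the non-trivial combinations surviving the relevance test.

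\textbf{Reduction to canonical form.} In each stratum I would first apply the diagonal right $\SL_2(F)$-action to $(\xi_1,\xi_2,\xi_3,\xi_4)$ together with the elementary lemma preceding the theorem to move a basis of $\langle\xi_1,\xi_2,\xi_3\rangle$ into the standard vectors $e_1,e_2\in F^2$. Using the formula \eqref{eq action}, the $N(F)$-action then shifts $\xi_2\mapsto\xi_2+x\xi_1$, $\xi_3\mapsto\xi_3-y\xi_1$ and modifies $\xi_4$ by a polynomial in $x,y,z$; choosing $x,y$ kills the $\xi_1$-components of $\xi_2,\xi_3$ when $\xi_1\ne 0$, and choosing $z$ normalizes one further coordinate of $\xi_4$. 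The residual coordinates of $\xi_4$ become the parameters of the orbit, and the stabilizer $H_\xi$ is read off from which choices of $(h,n)$ preserve the normal form.

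\textbf{Relevance via the cocycle.} Since $H=\SL_2\times N$ is unimodular and the stabilizer $H_\xi$ is unipotent for $\xi\ne 0$, both modular quasicharacters in \eqref{eq char int} are trivial, so relevance is simply the condition $\c_\psi(\xi,\cdot)|_{H_\xi(\A)}\equiv 1$. By \eqref{eq hind} the $\SL_2$-direction of $\c$ is independent of $h$, so the condition reduces to an identity of additive characters on $H_\xi\cap N(\A)$ obtained by pairing the explicit matrix $\jmath(s)w_8$ displayed above with $\xi$ from both sides and multiplying by $\psi(n)$. For each stratum this becomes a short algebraic check: relevance holds precisely when the resulting polynomial in $(x,y,z)$ restricted to the stabilizer vanishes identically. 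The strata where it fails are discarded, leaving the six surviving families which match the lemmas cited in the theorem.

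\textbf{Main obstacle.} The delicate case is the generic one (Lemma~\ref{relevant-5}) with $d_1=1$, $d_{1,2}=2=d_{1,3}$: the $N(F)$-action here is free, so normalization produces a two-parameter family of representatives rather than a single orbit, and one must verify that relevance holds uniformly in both parameters. Lemmas~\ref{relevant-3} and~\ref{relevant-4} are the two partial degenerations of this case, and the $\xi_2\leftrightarrow\xi_3$ symmetry between them is broken by the asymmetric appearance of $\rho$ and $\rho^2$ in \eqref{eq action}, so each must be treated separately. The bookkeeping burden is thus concentrated on writing out, stratum by stratum, the quadratic form $\xi\,\jmath(s)w_8\,{}^t\xi$ and extracting the precise algebraic constraint that the cubic roots of unity impose; this is exactly the point at which the Adjoint embedding interacts with the cubic structure that drives the whole lift, and it is what makes the analysis of this theorem technical rather than conceptual.
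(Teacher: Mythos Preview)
Your proposal is correct and takes essentially the same approach as the paper: stratify by the invariants $(d_1,d_{1,2},d_{1,3},d_{1,2,3},d_{1,2,3,4})$, normalize each stratum via the $\SL_2(F)$-action and the explicit $N(F)$-action \eqref{eq action}, read off the stabilizer, and test relevance by evaluating $\c_\psi$ on it. The paper proceeds exactly this way, lemma by lemma, also handling the irrelevant strata (e.g.\ $d_{1,2}=0$; $d_1=0$, $d_{1,2}=1$, $d_{1,2,3}=2$; $d_1=1=d_{1,2,3}$, $d_{1,2,3,4}=2$) that you implicitly sweep into ``strata where it fails are discarded.''

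One small correction of emphasis: the generic case (Lemma~\ref{relevant-5}) is in fact the \emph{easiest} for relevance, not the most delicate. Since the stabilizer $H_\xi$ is trivial there, the condition $\c_\psi|_{H_\xi(\A)}\equiv 1$ is vacuous and every orbit in that two-parameter family is automatically relevant; there is nothing to verify ``uniformly in both parameters.'' The genuinely delicate strata are Lemmas~\ref{relevant-3} and~\ref{relevant-4}, where the stabilizer is a one-parameter unipotent group and the relevance condition cuts the two-parameter family of orbits down to a one-parameter family (imposing $\beta=-\tfrac12\rho^2\alpha^{-2}$ or $\beta=\tfrac12\rho\alpha^{-2}$ respectively). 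That is where the cubic roots of unity enter nontrivially, as you anticipated.
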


When the orbit is relevant, 
in these Lemmas we also write the associated orbital integral explicitly.  There are other $H(F)$-orbits, and as part of the proof we will show that they are not relevant.
Throughout this analysis we implicitly use the coordinates \eqref{eq n coord} for $n\in N(\A)$.

\subsubsection{Orbits with $d_{1,2}=0$ are irrelevant}

Suppose $d_{1,2}(\xi)=0$ and write $\xi=(0,0,\xi_3,\xi_4)$.
Note that 
\[
\c(\xi,[h,\Ad(n)])=2x \xi_3 \epsilon_2 w_2\,{}^t \xi_4
\]
is independent of $y$ (and $z$) and by \eqref{eq action} we have  
\[
\xi^{[h,\Ad(n)]}=(0,0, \xi_3 h,(\xi_4+x\xi_3)h).
\]
In particular, 
\[
\left\{(I_2,\begin{pmatrix} 1 & &  \\ & 1 & y \\ & & 1 \end{pmatrix}): y\in \A\right\}\subseteq H_\xi(\A)
\]
and therefore $\xi$ is not relevant.

\subsubsection{The $H(F)$-orbits with invariants $d_1=0$ and $d_{1,2}=d_{1,2,3,4}=1$}
\begin{lemma}\label{lem sl2}
A complete set of representatives for $H(F)$-orbits with invariants $d_1=0$ and $d_{1,2}=d_{1,2,3,4}=1$ is given by 
\[
\{(0,e_2,\alpha e_2,0):\alpha\in F\}.
\]
The vector $(0,e_2,\alpha e_2,0)$ is relevant if and only if $\alpha=\rho^2$.
The associated orbital integral is
\[
I((0,0,0,1,0,\rho^2,0,0),\phi)=\int_\A\int_{\A^*} \phi_{K_2}[(0,0,0,t,0,\rho^2t,0,xt)]\abs{t}^2\ d^*t\ \psi(x) \ dx.
\]
\end{lemma}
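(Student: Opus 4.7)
My plan is to proceed in three stages: orbit classification, relevance analysis, and explicit evaluation of the orbital integral.

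\emph{Orbit classification.} For $\xi$ in this invariant class, the condition $d_1(\xi)=0$ means $\xi_1=0$, so \eqref{eq action} simplifies to $\xi^{[h,\Ad(n)]}=(0,\xi_2 h,\xi_3 h,(\xi_4+x\xi_3+y\rho^2\xi_2)h)$, and the hypotheses $d_{1,2}=d_{1,2,3,4}=1$ force $\xi_3,\xi_4$ to be scalar multiples of the nonzero vector $\xi_2$. Using the transitivity of $\SL_2(F)$ on $F^2\setminus\{0\}$ I move $\xi_2$ to $e_2$, then write $\xi_3=\alpha e_2$ and $\xi_4=\gamma e_2$, and use the $N$-parameter $y$ (which shifts $\xi_4$ by $y\rho^2\xi_2$) to normalize $\gamma=0$. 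Uniqueness of $\alpha$ follows because the ratio $\xi_3/\xi_2$ is preserved by the common right $\SL_2$-action on both coordinates and is fixed by the $N$-action in the absence of $\xi_1$.

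\emph{Stabilizer and relevance.} With $\xi=(0,e_2,\alpha e_2,0)$, I would read the stabilizer directly from \eqref{eq action}: the pair $(h,n)\in H$ with $n$ parameterized by $(x,y,z)$ stabilizes $\xi$ iff $h\in N_2$ (to fix $e_2$) and $x\alpha+y\rho^2=0$, with $z$ free. The cocycle $\c(\xi,[h,\Ad(n)])=\psi(\xi\jmath(s)w_8\,{}^t\xi)$ is independent of $h$ by \eqref{eq hind}; only the entries $M_{4,4}$, $M_{4,6}$, $M_{6,4}$, $M_{6,6}$ of $M=\jmath(s)w_8$ contribute to $\xi M\,{}^t\xi$, and direct inspection of the displayed matrix shows each is zero. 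Hence $\c_\psi(\xi,[h,\Ad(n)])=\psi(x+y)$. Restricting this to the stabilizer relation $y=-x\alpha\rho$ (using $\rho^3=1$) gives $\psi(x(1-\alpha\rho))$, which is trivial on $\A$ iff $\alpha\rho=1$, that is, $\alpha=\rho^2$.

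\emph{Orbital integral.} For the relevant orbit $\alpha=\rho^2$ I parameterize $H_\xi(\A)\bs H(\A)$ using the Iwasawa decomposition $\SL_2(\A)=N_2(\A)A_2(\A)K_2$. Writing $h=n_2(b)a(t)k$ with $a(t)=\diag(t,t^{-1})$ and performing the standard substitution $t\mapsto t^{-1}$, the quotient $N_2(\A)\bs\SL_2(\A)$ is parameterized by $(t,k)\in\A^*\times K_2$ with measure $|t|^2\,d^*t\,dk$ and $e_2h=te_2k$. On the $N$-side, I choose the transversal $\{(x,0,0):x\in\A\}$ to the stabilizer $\{(x,-x,z)\}$, on which $\psi(n)=\psi(x)$. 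Substituting into \eqref{eq action} and integrating over $k\in K_2$ collapses the integrand to $\phi_{K_2}$ evaluated at the 8-tuple of the stated form, and the claimed expression follows after an elementary normalization of the transversal variable.

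\emph{Main obstacle.} The chief nontrivial point is the cocycle calculation: the vanishing of the four matrix entries of $\jmath(s)w_8$ is what decouples the Weil cocycle from the adic integration and reduces relevance to the clean linear condition $1-\alpha\rho=0$. Once that vanishing is verified, the remaining analysis is systematic bookkeeping in Iwasawa coordinates.
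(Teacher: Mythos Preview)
Your proposal is correct and follows essentially the same approach as the paper: normalize the orbit via $\SL_2$-transitivity and the $N$-shift of $\xi_4$, read off the stabilizer and the vanishing of the Weil cocycle from the explicit formulas for the action and for $\jmath(s)w_8$, deduce relevance from $\psi(x+y)$ on the stabilizer, and unfold the orbital integral by Iwasawa on $N_2\backslash\SL_2$. The paper's argument is identical in structure, differing only in that it writes the orbital integral down directly rather than naming an explicit transversal in $N$.
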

\begin{proof}
Let $\xi\in F^8$ be such that $d_1(\xi)=0$ and $d_{1,2}(\xi)=d_{1,2,3,4}(\xi)=1$. Then there exists $0\ne v\in F^2$ and $\alpha,\,\alpha'\in F$ such that $\xi=(0,v,\alpha v,\alpha' v)$. Since $\SL_2(F)$ acts transitively on non-zero vectors in $F^2$ there exists $h_0\in \SL_2(F)$ such that $vh_0=e_2$. Let 
\[
n_0=\begin{pmatrix}
1 &  & \\ & 1 & -\alpha' \rho \\ & & 1
\end{pmatrix};
\] 
then $\xi^{[h_0,\Ad(n_0)]}=(0,e_2,\alpha e_2,0)$.
Now for a general $g=(h,n)\in H(F)$ we have
\[
(0,e_2,\alpha e_2,0)^{[g]}=(0,e_2 h,\alpha e_2 h,(x\alpha+y\rho^2)e_2 h).
\]
This is of the form $(0,e_2,\alpha'e_2,0)$ for some $\alpha'\in F$ if and only if $h\in N_2(F)$ and $x\alpha+y\rho^2=0$. We conclude that $(0,e_2,\alpha e_2,0)$ and $(0,e_2,\alpha'e_2,0)$ are in the same $H(F)$-orbit if and only if $\alpha=\alpha'$ and furthermore 
\[
H_{(0,e_2,\alpha e_2,0)}=N_2 \times \{n\in N: x\alpha+y\rho^2=0\}.
\] 
We further observe that $\c((0,e_2,\alpha e_2,0),[g])=1$ for all $g\in H(\A)$. It follows that the orbit containing $(0,e_2,\alpha e_2,0)$ is relevant if and only if $\alpha=\rho^2$. 
Thus, for $\xi=(0,e_2,\rho^2 e_2,0)$ we have
\[
\O(\xi,\phi)=\int_\A \int_{N_2(\A)\bs \SL_2(\A)} \phi[(0,0,0,1,0,\rho^2,0,x)\lambda(h)]\ dh\ \psi(x) \ dx.
\]
The lemma now follows using the Iwasawa decomposition.
\end{proof}
\subsubsection{The $H(F)$-orbits with invariants $d_1=0$, $d_{1,2}=d_{1,2,3}=1$ and $d_{1,2,3,4}=2$}

\begin{lemma}\label{relevant-2}
A complete set of representatives for $H(F)$-orbits with invariants $d_1=0$, $d_{1,2}=d_{1,2,3}=1$ and $d_{1,2,3,4}=2$ is given by 
\[
\{(0,e_2,\alpha e_2,\beta  e_1):\alpha\in F,\beta\in F^*\}.
\]
The vector $(0,e_2,\alpha e_2,\beta e_1)$ is relevant if and only if $\alpha=\rho$ and $\beta=\frac12\rho^2$. The associated orbital integral is
\[
\O((0,0,0,1,0,\rho,\frac12\rho^2,0),\phi)=\int_{\A^*}\int_{\A}\phi_{K_2}[(0,0,0,t,0,\rho t,\frac12\rho^2 t^{-1}, a)]\ da\abs{t}\ d^*t.
\]
\end{lemma}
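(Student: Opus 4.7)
The plan is to follow the template of the proof of Lemma~\ref{lem sl2}, adapted for the extra coordinate $\xi_4 = \beta e_1$.

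\textbf{Orbit representatives.} For $\xi = (\xi_1, \xi_2, \xi_3, \xi_4) \in F^8$ with the stated invariants, one has $\xi_1 = 0$, $\xi_2 \ne 0$, $\xi_3 = \alpha \xi_2$ for a unique $\alpha \in F$, and $\xi_4$ linearly independent from $\xi_2$. The elementary lemma preceding Lemma~\ref{lem sl2}, applied to $\{\xi_2, \xi_4\}$, produces $h_0 \in \SL_2(F)$ and $\beta \in F^*$ with $\xi_2 h_0 = e_2$ and $\xi_4 h_0 = \beta e_1$, showing that the $H(F)$-orbit of $\xi$ contains $(0, e_2, \alpha e_2, \beta e_1)$. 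Applying \eqref{eq action} to this representative, a general $(h,n) \in H$ sends it to
\[
(0,\, e_2 h,\, \alpha e_2 h,\, (\beta e_1 + (\alpha x + \rho^2 y) e_2) h).
\]
Preservation of $\xi_2$ forces $h \in N_2$; then with $h = \left(\begin{smallmatrix} 1 & b \\ & 1 \end{smallmatrix}\right)$, preservation of $\xi_4$ reads $\beta b + \alpha x + \rho^2 y = 0$ (while $z$ is unconstrained). This shows simultaneously that distinct $(\alpha, \beta)$ give distinct orbits and that the stabilizer $H_{(0, e_2, \alpha e_2, \beta e_1)}$ is a three-dimensional unipotent subgroup.

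\textbf{Cocycle and relevance.} Substituting $\xi = (0,0,0,1,0,\alpha,\beta,0)$ into the displayed matrix for $\jmath(s) w_8$, only the entries indexed by the three positions $\{4, 6, 7\}$ contribute, and a short calculation yields
\[
\c(\xi, [h, \Ad(n)]) = \psi\bigl(-2\beta(\rho y + \alpha x)\bigr).
\]
Hence $\c_\psi(\xi, [h, \Ad(n)]) = \psi\bigl((1 - 2\alpha\beta) x + (1 - 2\beta\rho) y\bigr)$, independent of $z$ and of $h$. Since on $H_\xi(\A)$ the parameters $x, y$ range freely over $\A$ (with $b$ adjusting), triviality of $\c_\psi$ on the stabilizer forces $\alpha\beta = \beta\rho = 1/2$. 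Using $\rho^3 = 1$, this is equivalent to $\alpha = \rho$ and $\beta = \rho^2 / 2$, as asserted. Moreover, at this relevant point, $\c_\psi$ is identically $1$ on all of $H(\A)$.

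\textbf{Orbital integral.} With $\c_\psi \equiv 1$, the orbital integral reduces to $\int_{H_\xi(\A) \bs H(\A)} \phi(\xi^{[g]}) \, dg$. The main technical point is picking a convenient section for $H_\xi(\A) \bs H(\A)$; since the stabilizer couples the $\SL_2$ and $N$ coordinates via $\beta b + \alpha x + \rho^2 y = 0$ (in contrast to Lemma~\ref{lem sl2}, where the stabilizer was a direct product), some care is needed. I would use the stabilizer freedom to set $x = z = 0$ and to clear the $N_2$-part of $h$ via Iwasawa, leaving $h = \diag(t^{-1}, t) k$ and a residual parameter $s$ in the $y$-direction. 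Evaluating $\xi^{[g]}$ on this section gives $(0,0,0,t,0,\rho t, \rho^2 t^{-1}/2, \rho^2 s t) k$, and averaging over $k$ yields $\phi_{K_2}$. A Jacobian calculation comparing Haar on $H$ (decomposed via Iwasawa on $\SL_2$ and coordinates on $N$) to Haar on $H_\xi$, together with the substitution $a = \rho^2 s t$ so that $ds = |t|^{-1} da$, produces the measure factor $|t| \, d^* t \, da$ after normalizing $\vol([H_\xi]) = 1$, yielding the claimed formula.
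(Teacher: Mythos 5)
Your proposal is correct and follows essentially the same route as the paper: the same orbit representatives via the elementary basis lemma, the same stabilizer computation, the same cocycle value $\psi(-2\beta(\rho y+\alpha x))$ and hence the same relevance condition, and the same final formula. The only (cosmetic) difference is in the orbital integral, where the paper takes coset representatives $(h,1)$ with $h\in\SL_2(\A)$ and applies Iwasawa there (then substitutes $a\mapsto 2\rho t^{-1}a$), while you push the unipotent Iwasawa coordinate into the $y$-coordinate of $N$ via the stabilizer; both yield the measure $\abs{t}\,da\,d^*t$ and the stated answer.
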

\begin{proof}
Let $\xi\in F^8$ be such that $d_1(\xi)=0$, $d_{1,2}(\xi)=d_{1,2,3}(\xi)=1$ and $d_{1,2,3,4}(\xi)=2$. Then, there exists a basis $\{v,w\}$ of $F^2$ and $\alpha
\in F$ such that $\xi=(0,v,\alpha v,w)$. Applying Lemma \ref{lem sl2} there exist $h_0\in \SL_2(F)$ and $\beta\in F^*$ such that 
\[
\xi^{[h_0,I_8]}=(0,e_2,\alpha e_2,\beta e_1).
\]
 Now for a general $g=(h,n)\in H(F)$ we have
\[
(0,e_2,\alpha e_2,\beta e_1)^{[g]}=(0,e_2 h,\alpha e_2 h,[\beta e_1+(x\alpha+y\rho^2)e_2] h).
\]
This is of the form $(0,e_2,\alpha'e_2,\beta' e_1)$ for some $\alpha'\in F$ and $\beta'\in F^*$ if and only if $h=\sm{1}{a}{}{1}$where $\beta a+x\alpha+y\rho^2=0$ and in this case $(\alpha',\beta')=(\alpha,\beta)$. We conclude that $(0,e_2,\alpha e_2,\beta e_1)$ and $(0,e_2,\alpha'e_2,\beta' e_1)$ are in the same $H(F)$-orbit if and only if $(\alpha',\beta')=(\alpha,\beta)$ and furthermore 
\[
H_{(0,e_2,\alpha e_2,\beta e_1)}=\{(\begin{pmatrix} 1 & -\beta^{-1}(x\alpha+y\rho^2) \\  & 1 \end{pmatrix}, n): n\in N \}.
\] 
We further observe that 
\[
\c_{\psi}((0,e_2,\alpha e_2,\beta  e_1),[h,\Ad(n)])=\psi[(1-2\beta \rho)y+(1-2\beta\alpha)x].
\]
It follows that $(0,e_2, \alpha e_2,\beta  e_1)$ is relevant if and only if $1-2\beta\rho=1-2\beta\alpha=0$, i.e.,  if and only if $\alpha=\rho$ and $\beta=\frac12\rho^2$.
Thus, for $\xi=(0,e_2,\rho e_2,\frac12\rho^2 e_1)$ we have
\[
\O(\xi,\phi)= \int_{\SL_2(\A)} \phi[(0,0,0,1,0,\rho,\frac12\rho^2,0)\lambda(h)]\ dh.
\]
Using the Iwasawa decomposition we can write this as
\[
\int_{\A^*}\int_{\A}\phi_{K_2}[(0,0,0,t,0,\rho t,\frac12\rho^2 t^{-1},\frac12\rho^2 a t)\ da\abs{t}^2\ d^*t.
\]
Making the change of variables $a\mapsto 2\rho t^{-1} a$ the lemma follows. 

\end{proof}
\subsubsection{The $H(F)$-orbits with invariants $d_1=0$, $d_{1,2}=1$ and $d_{1,2,3}=2$ are not relevant}

\begin{lemma}
A complete set of representatives for $H(F)$-orbits with invariants $d_1=0$, $d_{1,2}=1$ and $d_{1,2,3}=2$ is 
\[
\{(0,e_2,\beta e_1,0):\beta\in F^*\}.
\]
None of them is relevant.
\end{lemma}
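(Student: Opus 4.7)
The plan is to mirror the strategy used in Lemmas~\ref{lem sl2} and~\ref{relevant-2}. From the invariants, $d_1(\xi)=0$ forces $\xi_1=0$, while $d_{1,2}(\xi)=1$ and $d_{1,2,3}(\xi)=2$ together say that $\{\xi_2,\xi_3\}$ is a basis of $F^2$. Applying the basis lemma stated just above, I can choose $h_0\in\SL_2(F)$ and $\beta\in F^*$ with $\xi_2h_0=e_2$ and $\xi_3h_0=\beta e_1$, so that $\xi^{[h_0,I_8]}=(0,e_2,\beta e_1,\xi_4')$ for some $\xi_4'\in F^2$.

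Next, by \eqref{eq action} and using $\xi_1=0$, the subgroup $\{I_2\}\times N$ acts on the fourth slot by the affine map $\xi_4'\mapsto \xi_4'+x\beta e_1+y\rho^2 e_2$, which is surjective onto $F^2$ since $\beta\ne 0$ and $\rho^2\ne 0$. Choosing $x,y\in F$ to kill $\xi_4'$ produces the announced representative $(0,e_2,\beta e_1,0)$.

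I would then determine the stabilizer and distinguish the orbits. Applying \eqref{eq action} to $\xi=(0,e_2,\beta e_1,0)$ under $(h,n)\in H(F)$, the condition $e_2h=e_2$ forces $h\in N_2$, say $h=\sm{1}{a}{0}{1}$; the third coordinate then equals $\beta(1,a)$, so matching with $\beta' e_1$ forces $\beta'=\beta$ and $a=0$; finally the fourth coordinate becomes $x\beta e_1+y\rho^2 e_2$, which vanishes iff $x=y=0$. Thus distinct $\beta$'s give distinct orbits, and $H_{(0,e_2,\beta e_1,0)}=\{I_2\}\times\{n\in N:x=y=0\}$ is a one-parameter subgroup (the center of $N$).

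Finally, for the relevance statement, I would read off the entries of the matrix $\jmath(s)w_8$ displayed just before this lemma at positions $(4,5)$ and $(5,4)$ (the only ones contributing, since $\xi=(0,0,0,1,\beta,0,0,0)\in F^8$ is supported at coordinates $4$ and $5$); both equal $xy\rho^2+z$. Hence $\c(\xi,[h,\Ad(n)])=\psi(2\beta(xy\rho^2+z))$, and combining with $\psi(n)=\psi(x+y)$ gives $\c_\psi(\xi,[h,\Ad(n)])=\psi(x+y+2\beta(xy\rho^2+z))$. Restricting to $H_\xi(\A)$ forces $x=y=0$, leaving the character $z\mapsto\psi(2\beta z)$ of $\A$, which is nontrivial since $\beta\in F^*$ and $\psi$ is nontrivial. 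Therefore no such $\xi$ is relevant. The only step requiring care is the bookkeeping with $\jmath(s)w_8$, but only two (symmetric) entries are needed and the remainder is routine.
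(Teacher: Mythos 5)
Your proof is correct and follows essentially the same route as the paper: reduce to $(0,e_2,\beta e_1,0)$ via the basis lemma and the translation action on the fourth slot, compute the stabilizer $\{I_2\}\times\{n\in N: x=y=0\}$, and observe that $\c_\psi$ restricts to the nontrivial character $z\mapsto\psi(2\beta z)$ there. The cocycle value $\psi(2\beta(xy\rho^2+z))$ you read off from $\jmath(s)w_8$ matches the paper's, so the relevance conclusion goes through exactly as you describe.
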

\begin{proof}
Let $\xi\in F^8$ be such that $d_1(\xi)=0$, $d_{1,2}(\xi)=1$ and $d_{1,2,3}(\xi)=2$. Then, there exists a basis $\{v,w\}$ of $F^2$ and $u\in F^2$ such that $\xi=(0,v,w,u)$. Applying Lemma \ref{lem sl2} let $h_0\in \SL_2(F)$ and $\beta\in F^*$ be such that $\{vh_0,wh_0\}=\{e_2,\beta e_1\}$. Let $x_0,
,y_0\in F$ be such that $-u=x_0 \beta v+y_0\rho^2 w$ and let
\[
n_0=\begin{pmatrix}
1 & x_0 &  \\ & 1 & y_0 \\ & & 1
\end{pmatrix}.
\] 
Then
\[
\xi^{[h_0,n_0]}=(0,e_2,\beta e_1,0).
\]
 Now for a general $g=(h,n)\in H(F)$ we have
\[
(0,e_2,\beta e_1,0)^{[g]}=(0,e_2 h,\beta e_1 h,[x\beta e_1+y\rho^2 e_2] h).
\]
This is of the form $(0,e_2,\beta' e_1,0)$ for some  $\beta'\in F^*$ if and only if $h=I_2$ and $x=y=0$ and in this case $\beta'=\beta$. We conclude that $(0,e_2,\beta e_1,0)$ and $(0,e_2,\beta' e_1,0)$ are in the same $H(F)$-orbit if and only if $\beta'=\beta$ and furthermore 
\[
H_{(0,e_2,\beta e_1,0)}=I_2 \times \{n\in N: x=y=0\}.
\] 
We further observe that 
\[
\c((0,e_2,\beta e_1,0),[h,\Ad(n)])=\psi[2\beta(xy\rho^2+z)], \ \ \ (h,n)\in H(\A).
\] 
The inclusion
\[
\{I_2\}\times \{\begin{pmatrix}
1 &  & z \\ & 1 &  \\ & & 1
\end{pmatrix}:z\in \A\}\subseteq H_{(0,e_2,\beta e_1,0)}(\A)
\]
therefore shows that ${(0,e_2,\beta e_1,0)}$ is not relevant for any $\beta\ne 0$. 
\end{proof}

This completes the analysis of orbits with $d_1=0$. 

\subsubsection{The $H(F)$-orbits with invariants $d_1=1=d_{1,2,3,4}$}

\begin{lemma}\label{relevant-47}
The set of all $\xi\in F^8$ with invariants $d_1=1=d_{1,2,3,4}$ consists of a unique $H(F)$-orbit with representative $(e_2,0,0,0)$ and it is relevant. The associated orbital integral is
\begin{multline*}
\O((0,1,0,0,0,0,0,0),\phi)=\\
\int_\A \int_\A\int_\A \int_{\A^*} \phi_{K_2}[(0,t,0,xt,0,-yt,0,zt)]\abs{t}^2\ d^*t \ dz\ \psi(x+y)\ dx\ dy.
\end{multline*}
\end{lemma}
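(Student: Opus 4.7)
The plan is to mimic the case analysis carried out in the earlier lemmas of this section, in three steps: reduce an arbitrary $\xi$ with the given invariants to a canonical representative, compute the stabilizer and verify relevance, and then evaluate the orbital integral via the Iwasawa decomposition.

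For the orbit reduction, I would write any $\xi$ with $d_1(\xi)=1=d_{1,2,3,4}(\xi)$ in the form $\xi=(\xi_1,\alpha\xi_1,\beta\xi_1,\gamma\xi_1)$ with $0\ne \xi_1\in F^2$ and $\alpha,\beta,\gamma\in F$. Since $\SL_2(F)$ acts transitively on nonzero vectors, I choose $h_0\in\SL_2(F)$ with $\xi_1 h_0=e_2$ to reduce to $(e_2,\alpha e_2,\beta e_2,\gamma e_2)$. Applying \eqref{eq action} with $h=I_2$ and $n_0\in N(F)$ of coordinates $(x_0,y_0,z_0)$, the last three slots become $(\alpha+x_0)e_2$, $(\beta-y_0)e_2$ and $(\gamma+x_0\beta+y_0\rho^2\alpha-(y_0x_0+z_0\rho))e_2$; the choices $x_0=-\alpha$, $y_0=\beta$, and $z_0=\rho^{-1}(\gamma+\alpha\beta\rho^2)$ then produce the single orbit representative $(e_2,0,0,0)$.

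Specializing \eqref{eq action} to $\xi=(e_2,0,0,0)$ gives $\xi^{[h,\Ad(n)]}=(e_2 h,xe_2 h,-ye_2 h,-(yx+z\rho)e_2 h)$, which equals $\xi$ if and only if $h\in N_2$ and $x=y=z=0$; thus $H_\xi=N_2\times\{I_3\}$, a unipotent group for which $[H_\xi]$ is compact. The $(2,2)$-entry of the displayed matrix for $\jmath(s)w_8$ vanishes, so $\c(\xi,[h,\Ad(n)])=\psi(\xi\,\jmath(s)w_8\,{}^t\xi)=1$ for every $(h,n)\in H(\A)$. Consequently $\c_\psi(\xi,[h,\Ad(n)])=\psi(n)=\psi(x+y)$, which equals $1$ on $H_\xi(\A)$, so $\xi$ is relevant.

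For the orbital integral itself, combining the preceding formulas yields
\[
\O(\xi,\phi)=\int_{N_2(\A)\bs\SL_2(\A)}\int_{N(\A)}\phi\bigl(e_2 h,\,xe_2 h,\,-ye_2 h,\,-(yx+z\rho)e_2 h\bigr)\psi(x+y)\,dn\,dh.
\]
I would then apply the Iwasawa decomposition $h=a(t)k$ with $a(t)=\diag(t,t^{-1})$, under which the measure on $N_2(\A)\bs\SL_2(\A)$ is $|t|^{-2}d^*t\,dk$ and $e_2 h=t^{-1}e_2 k$; the substitution $t\mapsto t^{-1}$ then turns $|t|^{-2}$ into $|t|^2$ and replaces $e_2 h$ by $te_2 k$. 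The affine change of variable $z\mapsto -\rho^{-1}(z+yx)$ converts the fourth slot into $zte_2 k$ with Jacobian $|\rho^{-1}|=1$, while leaving $\psi(x+y)$ untouched, and averaging over $k\in K_2$ produces $\phi_{K_2}$ evaluated at $(te_2,xte_2,-yte_2,zte_2)=(0,t,0,xt,0,-yt,0,zt)$, matching the stated formula. The only point requiring care is the bookkeeping of these changes of variable; there is no substantive obstacle, because the stabilizer is unipotent and the cocycle is trivial on it.
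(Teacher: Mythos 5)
Your proposal is correct and follows essentially the same route as the paper: the same choice of $h_0$ and $n_0$ to reach the representative $(e_2,0,0,0)$, the same stabilizer computation $H_\xi=N_2\times\{I_3\}$ and triviality of the cocycle (which the paper asserts directly rather than via the $(2,2)$-entry of $\jmath(s)w_8$, but the content is identical), and the same change of variable $z\mapsto -\rho^2(z+xy)$ followed by the Iwasawa decomposition. The only differences are notational conventions in the Iwasawa step, which wash out after your substitution $t\mapsto t^{-1}$.
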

\begin{proof}
Assume that $d_1(\xi)=1=d_{1,2,3,4}(\xi)$. Then there exists $0\ne v\in F^2$ and $\alpha_i\in F$, $i=1,2,3$ such that $\xi=(v,\alpha_1 v,\alpha_2 v,\alpha_3 v)$. 
Let $h_0\in \SL_2(F)$ be such that $vh_0=e_2$ and let
\[
n_0=\begin{pmatrix}
1 & -\alpha_1 & \rho^2\alpha_3+\rho\alpha_1\alpha_2\\  & 1 & \alpha_2 \\ & & 1
\end{pmatrix}.
\]
Then $\xi^{[h_0,n_0]}=(e_2,0,0,0)$.
For $g=(h,n)\in H(\A)$ we have
\[
(e_2,0,0,0)^{[g]}=(e_2 h,xe_2h,-ye_2h,-(xy+z\rho)e_2h)
\]
and therefore $H_{(e_2,0,0,0)}(\A)=N_2(\A)\times \{I_3\}$. Furthermore, $\c((e_2,0,0,0),[h])=1$, $h\in H(\A)$. It follows that $\xi=(e_2,0,0,0)$ is relevant and we have
\[
\O(\xi,\phi)=\int_\A \int_\A\int_\A\int_{N_2(\A)\bs \SL_2(\A)} \phi[(e_2,xe_2,-ye_2,-(xy+z\rho)e_2)\lambda(h)]\ dh\ \psi(x+y) \ dx\ dy\ dz.
\] 
The change of variables $z\mapsto -\rho^2(z+xy)$ gives
\[
\O(\xi,\phi)=\int_\A \int_\A\int_\A\int_{N_2(\A)\bs \SL_2(\A)} \phi[(e_2,xe_2,-ye_2,ze_2)\lambda(h)]\ dh\ dz\ \psi(x+y) \ dx\ dy.
\] 
The lemma follows by applying the Iwasawa decomposition. 
\end{proof}

\subsubsection{The $H(F)$-orbits with invariants $d_1=1=d_{1,2,3}$ and $d_{1,2,3,4}=2$ are not relevant}
\begin{lemma}
A complete set of representatives of $H(F)$-orbits with invariants $d_1=1=d_{1,2,3}$ and $d_{1,2,3,4}=2$ is 
\[
\{(e_2,0,0,\beta e_1):\beta\in F^*\}.
\]
None of them is relevant.
\end{lemma}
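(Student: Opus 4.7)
This statement fits the template of the preceding lemmas in Section~\ref{relevant-I-orbits}, and my plan is to prove it in the same three-stage fashion: identify a canonical orbit representative, compute the stabilizer and verify orbit distinctness, and then evaluate $\c_\psi$ on the stabilizer to show non-relevance.

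For the first stage, the hypotheses $d_1(\xi)=1=d_{1,2,3}(\xi)$ force $\xi_1\ne 0$ and $\xi_2,\xi_3\in F\xi_1$, while $d_{1,2,3,4}(\xi)=2$ forces $\xi_4\notin F\xi_1$. Thus $\xi=(v,\alpha_1 v,\alpha_2 v,w)$ for some basis $\{v,w\}$ of $F^2$ and $\alpha_1,\alpha_2\in F$. Applying Lemma~\ref{lem sl2}, there is $h_0\in \SL_2(F)$ and $\beta\in F^*$ with $vh_0=e_2$ and $wh_0=\beta e_1$, so after conjugating I may assume $\xi=(e_2,\alpha_1 e_2,\alpha_2 e_2,\beta e_1)$. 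A direct inspection of formula~\eqref{eq action} then shows that the element of $N(F)$ whose coordinates \eqref{eq n coord} are $(x,y,z)=(-\alpha_1,\alpha_2,\alpha_1\alpha_2\rho)$ sends this to $(e_2,0,0,\beta e_1)$.

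For the second stage, I substitute a general $(h,n)\in H(F)$ into \eqref{eq action} and demand that the image be of the form $(e_2,0,0,\beta' e_1)$. The first component forces $h=\sm{1}{a}{0}{1}\in N_2(F)$, and then the vanishing of the second and third components forces $x=y=0$. Using $e_1 h=e_1+a e_2$ and $e_2 h=e_2$, the fourth component simplifies to $\beta e_1+(\beta a-z\rho)e_2$, so the orbit condition yields $\beta'=\beta$ together with $z=\beta a\rho^2$. This shows that distinct $\beta\in F^*$ yield distinct orbits and identifies $H_{(e_2,0,0,\beta e_1)}$ as the one-parameter subgroup, isomorphic to $\mathbb{G}_a$, consisting of pairs in which $h=\sm{1}{a}{0}{1}$ and $n\in N$ has coordinates $(x,y,z)=(0,0,\beta a\rho^2)$; in particular it is unimodular.

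For the third stage, reading the matrix for $\jmath(s)w_8$ displayed above, the only nonzero coordinates of $\xi=(e_2,0,0,\beta e_1)$ are at positions $2$ and $7$, and the matrix entries $(2,7)$ and $(7,2)$ both equal $xy+z\rho^2$. Hence $\c(\xi,[h,\Ad(n)])=\psi(2\beta(xy+z\rho^2))$, and combining with the Whittaker factor gives $\c_\psi(\xi,[h,\Ad(n)])=\psi(x+y)\psi(2\beta(xy+z\rho^2))$. Restricting to the stabilizer, where $x=y=0$ and $z=\beta a\rho^2$, this becomes $a\mapsto\psi(2\beta^2\rho a)$ after using $\rho^3=1$. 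Since $\beta\ne 0$ and $\psi$ is nontrivial on $\A/F$, this character is nontrivial on $H_\xi(\A)\cong\A$, so $\xi$ is not relevant. I do not anticipate any essential obstacle; the only delicate point is keeping track of the powers of $\rho$ and recognizing that the stabilizer is a twisted diagonal subgroup (rather than lying entirely in $N$), so that verifying non-relevance requires computing $\c_\psi$ on this diagonal and not merely on its $N$-projection.
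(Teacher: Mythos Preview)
Your proof is correct and follows essentially the same approach as the paper's: you reduce to the representative $(e_2,0,0,\beta e_1)$ via the same $n_0$, determine the one-parameter stabilizer, and compute $\c_\psi$ on it to obtain the nontrivial character $a\mapsto\psi(2\beta^2\rho a)$. The only cosmetic difference is that the paper parametrizes the stabilizer by $z$ (with $a=\beta^{-1}z\rho$) rather than by $a$, and concludes directly from $\c(\xi,[g])=\psi(2\beta z\rho^2)$ being nontrivial in $z$; since $\psi(n)=1$ on the stabilizer and $\c$ is independent of $h$ by \eqref{eq hind}, the two formulations are equivalent.
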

\begin{proof}
Assume that $d_1(\xi)=1=d_{1,2,3}(\xi)$ and $d_{1,2,3,4}(\xi)=2$. Then there exists a basis $\{v,w\}$ of $F^2$ and $\alpha_i\in F$, $i=1,2$ such that $\xi=(v,\alpha_1 v,\alpha_2 v,w)$. 
Applying Lemma \ref{lem sl2} let $h_0\in \SL_2(F)$ and $\beta\in F^*$ be such that $vh_0=e_2$ and $wh_0=\beta e_1$. Let 
\[
n_0=\begin{pmatrix}
1 & -\alpha_1 & \rho\alpha_1\alpha_2\\  & 1 & \alpha_2 \\ & & 1
\end{pmatrix}.
\]
Then $\xi^{[h_0,n_0]}=(e_2,0,0,\beta e_1)$.
For $g=(h,n)\in H(\A)$ we have
\[
(e_2,0,0,\beta e_1)^{[g]}=(e_2 h,xe_2h,-ye_2h,\beta e_1h-(xy+z\rho)e_2 h).
\]
This is of the form $(e_2,0,0,\beta' e_1)$ for some $\beta'\in F^*$ if and only if $x=y=0$ and $h=\sm{1}{a}{}{1}$ with $\beta a-z\rho=0$.
It follows that $(e_2,0,0,\beta e_1)$ and $(e_2,0,0,\beta' e_1)$ are in the same $H(F)$-orbit if and only if $\beta=\beta'$ and
\[
H_{(e_2,0,0,\beta e_1)}(\A)=\{  (\begin{pmatrix} 1 & \beta^{-1}z\rho \\  & 1 \end{pmatrix},    \begin{pmatrix}
1 &  & z\\  & 1 &  \\ & & 1
\end{pmatrix}):z\in \A  \}.
\]
We further note that
\[
\c((e_2,0,0,\beta e_1),[g])=\psi[2\beta(xy+z\rho^2)]
\]
and therefore we can deduce that $(e_2,0,0,\beta e_1)$ is not relevant.

\end{proof}

\subsubsection{The $H(F)$-orbits with invariants $d_1=1=d_{1,2}$ and $d_{1,2,3}=2$}

\begin{lemma}\label{relevant-3}
A complete set of representatives of $H(F)$-orbits with invariants $d_1=1=d_{1,2}$ and $d_{1,2,3}=2$ is 
\[
\{(e_2,\alpha e_2,\beta e_1,0):\alpha\in F,\,\beta\in F^*\}.
\]
The vector $(e_2,\alpha e_2,\beta e_1,0)$ is relevant if and only if $1+2\beta\rho\alpha^2=0$. For $\alpha\in F^*$ and $\xi[\alpha]=(e_2,\alpha e_2,-\frac12 \rho^2\alpha^{-2} e_1,0)$ the associated orbital integral is
\begin{multline*}
\O(\xi[\alpha],\phi)=\\
\int_{\A^*}\int_\A\int_\A\int_\A \phi_{K_2}[(0,t,0,t(\alpha+x),-\frac12 \rho^2\alpha^{-2} t^{-1},-ty,-\frac12 \rho^2\alpha^{-2}t^{-1} x,t(y\rho^2\alpha-xy-z\rho))]\\ \psi[-\rho\alpha^{-1} xy-\rho^2\alpha^{-1}z+ \alpha^{-2}xz+x+y] \ dx\ dy\ dz\abs{t}^2\ d^*t.
\end{multline*}

\end{lemma}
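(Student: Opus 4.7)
The plan is to follow the pattern of the preceding lemmas in four steps: produce a standard representative, determine the orbit relations and the stabilizer, identify the relevance condition via the cocycle, and finally compute the orbital integral.

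First, given $\xi$ with these invariants, I write $\xi_2=\alpha\xi_1$ for some $\alpha\in F$ (since $d_{1,2}=1$) and note that $\{\xi_1,\xi_3\}$ is an $F$-basis of $F^2$ (since $d_{1,2,3}=2$). Applying the elementary observation preceding Lemma \ref{lem sl2} with $v_1=\xi_1$, $v_2=\xi_3$ yields $h_0\in\SL_2(F)$ and $\beta\in F^*$ with $\xi_1 h_0=e_2$, $\xi_3 h_0=\beta e_1$; automatically $\xi_2 h_0=\alpha e_2$ while $\xi_4 h_0\in F^2$. Using the fourth component of \eqref{eq action} with $h=I_2$, I can solve for $(x,y,z)\in F^3$ producing an $n_0\in N(F)$ that kills the fourth slot, placing $\xi$ into the standard shape $(e_2,\alpha e_2,\beta e_1,0)$.

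Second, for $\xi=(e_2,\alpha e_2,\beta e_1,0)$ and a general $g=(h,n(x,y,z))\in H$, I expand $\xi^{[g]}$ via \eqref{eq action}. The condition $\xi^{[g]}=(e_2,\alpha' e_2,\beta' e_1,0)$ forces in succession $h\in N_2$, say $h=\sm{1}{c}{}{1}$, then $\alpha'=\alpha+x$ and $\beta'=\beta$ with $y=\beta c$, and finally $x=0$ (from $x\beta=0$ in the fourth slot) together with $z=\beta c\rho\alpha$. Hence $(\alpha,\beta)$ are complete orbit invariants and
\[
H_\xi(\A)=\{(\sm{1}{c}{}{1},n(0,\beta c,\beta c\rho\alpha)):c\in\A\},
\]
a one-parameter additive subgroup.

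Third, writing $\xi=(0,1,0,\alpha,\beta,0,0,0)$ and reading off the relevant $(2,5)$ and $(4,5)$ entries of $\jmath(s)w_8$ from the explicit matrix given earlier, I compute $\c(\xi,[h,\Ad(n)])=\psi(2\beta(\alpha z+\alpha xy\rho^2-xz\rho))$, so
\[
\c_\psi(\xi,[h,\Ad(n)])=\psi\bigl(x+y+2\beta(\alpha z+\alpha xy\rho^2-xz\rho)\bigr).
\]
Substituting the stabilizer values $(x,y,z)=(0,\beta c,\beta c\rho\alpha)$ collapses this to $\psi\bigl(\beta c(1+2\beta\rho\alpha^2)\bigr)$, which is trivial on $\A/F$ exactly when $1+2\beta\rho\alpha^2=0$. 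Since $\rho^{-1}=\rho^2$, this forces $\beta=-\tfrac12\rho^2\alpha^{-2}$ and in particular $\alpha\in F^*$, giving the relevance condition and the representative $\xi[\alpha]$.

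Fourth, for $\xi=\xi[\alpha]$ I would evaluate the orbital integral by parametrizing $H_\xi(\A)\bs H(\A)$ via the decomposition $\SL_2(\A)=KAN$ placed on the right, writing $h=ka(t)n_0(u)$ with $a(t)=\diag(t^{-1},t)$. Right-multiplying by the stabilizer element with parameter $c=-u$ normalizes $h$ to $ka(t)$ while additively shifting $(y,z)$; since these are already $\A$-integration variables the shift is absorbed. Using the factorization $\xi^{[ka(t),\Ad(n)]}=\xi^{[a(t),\Ad(n)]}\lambda(k)$, formula \eqref{eq action} produces exactly the $8$-tuple displayed in the statement; the $K_2$-integration yields $\phi_{K_2}$, the Iwasawa Haar contribution is $|t|^2\,d^*t$, and substituting $\beta=-\tfrac12\rho^2\alpha^{-2}$ into the cocycle (and simplifying with $\rho^3=1$, $\rho^4=\rho$) produces the character $\psi[-\rho\alpha^{-1}xy-\rho^2\alpha^{-1}z+\alpha^{-2}xz+x+y]$.

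The main obstacle is the bookkeeping in the fourth step: unlike in Lemmas \ref{lem sl2} and \ref{relevant-47}, the stabilizer here is not a product but a one-parameter graph mixing the $\SL_2$ and $N$ factors, so one must verify that the Iwasawa-right fundamental domain, together with the induced additive shifts on $y$ and $z$, genuinely parametrizes the quotient with the correct Haar measure. The algebraic simplification of the cocycle using $\rho^3=1$ is an additional source of potential sign errors and must be tracked carefully.
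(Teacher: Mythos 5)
Your proposal is correct and follows essentially the same route as the paper: normalize to $(e_2,\alpha e_2,\beta e_1,0)$ via the elementary $\SL_2$ lemma plus a unipotent element with $y_0=0$, read off the orbit invariants and the one-parameter stabilizer from \eqref{eq action}, evaluate the cocycle on that stabilizer to get the relevance condition $1+2\beta\rho\alpha^2=0$, and then apply the Iwasawa decomposition, treating the quotient by the graph-type stabilizer through the same iterated-integral device the paper uses. All the computations you record (stabilizer $(x,y,z)=(0,\beta c,\beta c\rho\alpha)$ with $h=\sm{1}{c}{}{1}$, cocycle $\psi(2\beta\alpha(xy\rho^2+z)-2\beta xz\rho)$, and the simplified character after substituting $\beta=-\tfrac12\rho^2\alpha^{-2}$) agree with the paper's.
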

\begin{proof}
Assume that $d_1(\xi)=1=d_{1,2}(\xi)$ and $d_{1,2,3}(\xi)=2$. Then there exists a basis $\{v,w\}$ of $F^2$, $\alpha\in F$ and $u\in F^2$ such that $\xi=(v,\alpha v,w,u)$. 
Let $x_0,\,z_0\in F$ be such that $u=z_0\rho v-x_0 w$ and let
\[
n_0=\begin{pmatrix}
1 & x_0 & z_0 \\  & 1 &  \\ & & 1
\end{pmatrix}.
\]
Applying Lemma \ref{lem sl2}, let $h_0\in \SL_2(F)$ and $\beta\in F^*$ be such that $vh_0=e_2$ and $wh_0=\beta e_1$.
Then $\xi^{[h_0,n_0]}=(e_2,(\alpha+x_0)e_2,\beta e_1,0)$.
For $g=(h,n)\in H(\A)$ we have
\[
(e_2,\alpha e_2,\beta e_1,0)^{[g]}=(e_2 h,(\alpha+x)e_2h,(\beta e_1-ye_2)h,(\beta x e_1+(y\rho^2\alpha-xy-z\rho) e_2)h).
\]
This is of the form $(e_2,\alpha' e_2,\beta' e_1,0)$ for some $\alpha'\in F$ and $\beta'\in F^*$ if and only if $x=0$, $z\rho=y\rho^2\alpha$ and $h=\sm{1}{a}{}{1}$ with $\beta a=y$.
It follows that $(e_2,\alpha e_2,\beta e_1,0)$ and $(e_2,\alpha' e_2,\beta' e_1,0)$ are in the same $H(F)$-orbit if and only if $(\alpha,\beta)=(\alpha',\beta')$ and
\[
H_{(e_2,\alpha e_2,\beta e_1,0)}(\A)=\{  (\begin{pmatrix} 1 & \beta^{-1}y \\  & 1 \end{pmatrix},    \begin{pmatrix}
1 &  & y\rho\alpha\\  & 1 & y \\ & & 1
\end{pmatrix}):y\in \A  \}.
\]
We further note that
\[
\c((e_2,\alpha e_2,\beta e_1,0),[g])=\psi[2\beta\alpha (xy\rho^2+z)-2\beta xz\rho].
\]
We therefore have
\begin{multline*}
\c_\psi((e_2,\alpha e_2,\beta e_1,0),[g])=\psi[(1+2\beta\alpha^2\rho)y],\\ g=(\begin{pmatrix} 1 & \beta^{-1}y \\  & 1 \end{pmatrix},    \begin{pmatrix}
1 &  & y\rho\alpha\\  & 1 & y \\ & & 1
\end{pmatrix})\in H_{(e_2,\alpha e_2,\beta e_1,0)}(\A).
\end{multline*}
We conclude that $(e_2,\alpha e_2,\beta e_1,0)$ is relevant if and only if $1+2\beta\alpha^2\rho=0$ and for $\beta=-\frac12 \rho^2\alpha^{-2}$ we have
\begin{multline*}
\O(0,1,0,\alpha,\beta,0,0,0),\phi)=\\ \int_\A\int_\A\int_{\SL_2(\A)} \phi[(0,1,0, \alpha+x,\beta,0,\beta x,-z\rho)\lambda(h)]\ dh\ \psi(x-\rho^2 z-2\beta\rho xz)\ dx\ dz.
\end{multline*}
Alternatively, we can also express the orbital integral as the iterated integral
\begin{multline*}
\int_{N_2(\A)\bs \SL_2(\A)} \int_\A\int_\A\int_\A \phi[(0,1,0,\alpha+x,\beta,-y,\beta x,y\rho^2\alpha-xy-z\rho) \lambda(h)]\\ \psi[2\beta\alpha (xy\rho^2+z)-2\beta xz\rho+x+y] \ dx\ dy\ dz\ dh
\end{multline*}
where the integral over ${N_2(\A)\bs \SL_2(\A)}$ makes sense only after integration over $y$ and $z$. 
Applying the Iwasawa decomposition, this equals
\begin{multline*}
\int_{\A^*}\int_\A\int_\A\int_\A \phi_{K_2}[(0,t,0,t(\alpha+x),t^{-1}\beta,-ty,t^{-1}\beta x,t(y\rho^2\alpha-xy-z\rho))]\\ \psi[2\beta\alpha (xy\rho^2+z)-2\beta xz\rho+x+y] \ dx\ dy\ dz\abs{t}^2\ d^*t.
\end{multline*}
The lemma follows.

\end{proof}

\subsubsection{The $H(F)$-orbits with invariants $d_1=1=d_{1,3}$ and $d_{1,2}=2$}

\begin{lemma}\label{relevant-4}
A complete set of representatives of $H(F)$-orbits with invariants $d_1=1=d_{1,3}$ and $d_{1,2}=2$ is 
\[
\{(e_2,\beta e_1,\alpha e_2,0):\alpha\in F,\,\beta\in F^*\}.
\]
The vector $(e_2,\beta e_1,\alpha e_2,0)$ is relevant if and only if $\beta=\frac12 \rho\alpha^{-2}$. For $\alpha\in F^*$ and $\xi[\alpha]=(e_2,\frac12 \rho\alpha^{-2} e_1,\alpha e_2,0)$ the associated orbital integral is
\begin{multline*}
\O(\xi[\alpha],\phi)=\int_{\A^*}\int_\A\int_\A\int_\A \phi_{K_2}[(0,t,\frac12 \rho\alpha^{-2}t^{-1},tx,0,t(\alpha-y),\frac12\alpha^{-2}t^{-1}y,t(x\alpha-xy-z\rho))] \\ \psi[\alpha^{-2}(xy-z)y-\rho(xy\rho^2+z)\alpha^{-1}+x+y] \ dx\ dy\ dz\abs{t}^2\ d^*t.
\end{multline*}
\end{lemma}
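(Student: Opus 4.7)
The plan is to mirror the proof of Lemma~\ref{relevant-3}.

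\textbf{Normal form.} Given $\xi$ with the stated invariants, write $\xi=(v,w,\alpha v,av+bw)$ for some basis $\{v,w\}$ of $F^2$ and $\alpha,a,b\in F$. Using \eqref{eq action} together with $\rho^3=1$, one checks directly that applying $[I_2,\Ad(n_0)]$ with
\[
n_0=\left(\begin{smallmatrix}1 & 0 & a\rho^2\\ & 1 & -b\rho\\ & & 1\end{smallmatrix}\right)
\]
sends $\xi$ to $(v,w,(\alpha+b\rho)v,0)$. Then Lemma~\ref{lem sl2} applied to the pair $(v,w)$ produces $h_0\in\SL_2(F)$ and $\beta\in F^*$ with $vh_0=e_2$ and $wh_0=\beta e_1$, yielding a representative of the claimed form.

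\textbf{Stabilizer and relevance.} For $\xi=(e_2,\beta e_1,\alpha e_2,0)$ and $g=(h,n)\in H(\A)$, the equations $\xi^{[g]}=\xi$ read off from \eqref{eq action} impose successively: $h=\sm{1}{a}{}{1}\in N_2$; then $y=0$ from the $\xi_3$-component; then $x=-\beta a$ from the $\xi_2$-component (using $e_1 h=e_1+ae_2$); and finally $z=-\alpha\beta a\rho^2$ from the $\xi_4$-component. This determines a one-parameter stabilizer $H_\xi$ and shows the orbit is determined by $(\alpha,\beta)$. Reading the cubic form $\xi\jmath(s)w_8\,{}^t\xi$ off from the explicit $\jmath(s)w_8$ matrix (keeping only rows and columns in positions $2,3,6$ where $\xi$ is supported) gives
\[
\c((e_2,\beta e_1,\alpha e_2,0),[g])=\psi\bigl(2\beta y\rho^2(xy-z)-2\alpha\beta(xy\rho^2+z)\bigr).
\]
Including $\psi(n)=\psi(x+y)$ and restricting to $H_\xi$, the combined character $\c_\psi$ reduces to $\psi\bigl((-\beta+2\alpha^2\beta^2\rho^2)a\bigr)$. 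Triviality for all $a\in\A$ forces $2\alpha^2\beta\rho^2=1$, equivalently $\beta=\tfrac12\rho\alpha^{-2}$ since $\rho^{-2}=\rho$.

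\textbf{Orbital integral.} Substituting $\beta=\tfrac12\rho\alpha^{-2}$ into \eqref{eq action} produces the $8$-coordinates of $\xi[\alpha]^{[g]}$; the collapse $y\rho^2\cdot\tfrac12\rho\alpha^{-2}=\tfrac12 y\alpha^{-2}$ uses $\rho^3=1$. Applying the Iwasawa decomposition $h=\diag(t^{-1},t)k$ on $N_2(\A)\bs\SL_2(\A)$ and averaging $k\in K_2$ into $\phi_{K_2}$ gives the argument inside $\phi_{K_2}$ stated in the lemma, while the character factor $\psi[\alpha^{-2}(xy-z)y-\rho\alpha^{-1}(xy\rho^2+z)+x+y]$ is the specialization of the cocycle above after including $\psi(n)$.

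The hard part will be organizational rather than algebraic. Since $H_\xi$ embeds diagonally inside $N_2\times(\text{a one-parameter subgroup of }N)$, the quotient $H_\xi(\A)\bs H(\A)$ does not split as a direct product, so expressing the orbital integral as an iterated integral (first over all of $N(\A)$, then over $N_2(\A)\bs\SL_2(\A)$) requires the same careful justification used in Lemma~\ref{relevant-3}; once this is set up, the identity follows by direct substitution.
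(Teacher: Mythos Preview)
Your proposal is correct and follows essentially the same approach as the paper: the same choice of $n_0$ (your $(y_0,z_0)=(-b\rho,a\rho^2)$ is exactly the paper's, obtained by solving $u=z_0\rho v-y_0\rho^2 w$), the same cocycle formula, and the same Iwasawa step. The only cosmetic difference is that you parameterize the stabilizer by the $N_2$-entry $a$ while the paper uses the $N$-coordinate $x$; these are related by $x=-\beta a$, and your relevance condition $-\beta+2\alpha^2\beta^2\rho^2=0$ becomes the paper's $1-2\rho^2\alpha^2\beta=0$ after dividing by $-\beta$.
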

\begin{proof}
Assume that $d_1(\xi)=1=d_{1,3}(\xi)$ and $d_{1,2}(\xi)=2$. Then there exists a basis $\{v,w\}$ of $F^2$, $\alpha\in F$ and $u\in F^2$ such that $\xi=(v,w,\alpha v,u)$. 
Let $y_0,\,z_0\in F$ be such that $u=z_0\rho v-y_0\rho^2 w$ and let
\[
n_0=\begin{pmatrix}
1 &  & z_0 \\  & 1 & y_0 \\ & & 1
\end{pmatrix}.
\]
Applying Lemma \ref{lem sl2} let $h_0\in \SL_2(F)$ and $\beta\in F^*$ be such that $vh_0=e_2$ and $wh_0=\beta e_1$.
Then $\xi^{[h_0,n_0]}=(e_2,\beta e_1,(\alpha-y_0) e_2,0)$. 
For $g=(h,n)\in H(\A)$ we have
\[
(e_2,\beta e_1,\alpha e_2,0)^{[g]}=(e_2 h,(\beta e_1+xe_2)h,(\alpha-y) e_2 h,(x\alpha e_2+y\rho^2\beta  e_1-(xy+z\rho) e_2)h).
\]
This is of the form $(e_2,\beta' e_1,\alpha' e_2,0)$ for some $\alpha'\in F$ and $\beta'\in F^*$ if and only if
$h=\sm{1}{a}{}{1}$ with $\beta a+x=0$, $y=0$ and $z\rho=x\alpha$ and in this case $(\alpha,\beta)=(\alpha',\beta')$. 
It follows that $(e_2,\beta e_1,\alpha e_2,0)$ and $(e_2,\beta' e_1,\alpha' e_2,0)$ are in the same $H(F)$-orbit if and only if $(\alpha,\beta)=(\alpha',\beta')$ and
\[
H_{(e_2,\beta e_1,\alpha e_2,0)}(\A)=\{  (\begin{pmatrix} 1 & -\beta^{-1}x \\  & 1 \end{pmatrix},    \begin{pmatrix}
1 & x & \rho^2 x\alpha\\  & 1 &  \\ & & 1
\end{pmatrix}):x\in \A  \}.
\]
We further note that
\[
\c((e_2,\beta e_1,\alpha e_2,0),[g])=\psi[2\beta(xy-z)\rho^2y-2(xy\rho^2+z)\alpha \beta].
\]
We therefore have
\[
\c_\psi((e_2,\beta e_1,\alpha e_2,0),[g])=\psi[(1-2\rho^2\alpha^2\beta)x],\ \ \ g=(\begin{pmatrix} 1 & -\beta^{-1}x \\  & 1 \end{pmatrix},    \begin{pmatrix}
1 & x & \rho^2 x\alpha\\  & 1 &  \\ & & 1
\end{pmatrix}).
\]
We conclude that $(e_2,\beta e_1,\alpha e_2,0)$ is relevant if and only if $2\rho^2\alpha^2\beta=1$ and in this case, the associated (iterated, as in the previous lemma) orbital integral is
\begin{multline*}
\int_{N_2(\A)\bs \SL_2(\A)}\int_\A\int_\A\int_\A \phi[(0,1,\beta,x,0,\alpha-y,y\rho^2\beta,x\alpha-xy-z\rho)\lambda(h)] \\ \psi[2\rho^2\beta(xy-z)y-2(xy\rho^2+z)\alpha \beta+x+y] \ dx\ dy\ dz\ dh.
\end{multline*}
Again applying the Iwasawa decomposition, the lemma follows.
\end{proof}

\subsubsection{The generic $H(F)$-orbits}
\begin{lemma}\label{relevant-5}
A complete set of representatives of $H(F)$-orbits with invariants $d_1=1$ and $d_{1,2}=2=d_{1,3}$ is 
\[
\{(e_2,\beta e_1,\beta' e_1,0):\beta,\,\beta'\in F^*\}.
\]
They are all relevant. For $a,\,b\in F^*$ and $\xi[a,b]=(e_2,a e_1,be_1,0)$ the associated orbital integral is
\begin{multline*}
\O(\xi[a,b],\phi)= \int_{\A}\int_{\A}\int_{\A}\int_\A \int_{\A^*} \\ \phi_{K_2}[(0,t,t^{-1}a,t(x+as),t^{-1}b,t(bs-y),t^{-1}(xb+y\rho^2a),t[(xb+y\rho^2a)s-(xy+z\rho)])]\abs{t}^2\ d^*t \ ds\\ \psi[x+y+2a(xy-z)y\rho^2-2bxz\rho] \ dx\ dy\ dz.
\end{multline*}
\end{lemma}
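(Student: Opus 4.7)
The argument follows the same template as Lemmas~\ref{relevant-3} and \ref{relevant-4}: one reduces to a normal form, computes the stabilizer, checks relevance, and then unfolds the integral using the Iwasawa decomposition.

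For the classification, begin with $\xi$ satisfying $d_1=1$ and $d_{1,2}=d_{1,3}=2$. Using the transitivity of $\SL_2(F)$ on nonzero vectors in $F^2$, choose $h_0\in \SL_2(F)$ with $\xi_1h_0=e_2$; in the resulting coordinates the invariants force the first coordinates of $\xi_2$ and $\xi_3$ to both be nonzero. To reach the claimed normal form, further act by $N_2(F)\times N(F)\subset H(F)$, with parameter $s$ for $N_2$ and $(x,y,z)$ for $N$. Using \eqref{eq action}, the requirements $\xi_2\to ae_1$, $\xi_3\to be_1$, $\xi_4\to 0$ translate into four scalar equations in these four parameters. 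Two of the equations determine $x$ and $y$ as affine functions of $s$; the vanishing of the first coordinate of $\xi_4$ then becomes a linear equation in $s$ whose leading coefficient is a nonzero multiple of $\rho^2-1$, where the hypothesis that $\rho$ is a \emph{primitive} cube root of unity is essential; the remaining equation then determines $z$ uniquely. Hence every orbit contains a representative $\xi[a,b]$ of the stated form with $a,b\in F^*$.

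For the stabilizer, take $\xi=(e_2,ae_1,be_1,0)$ and inspect \eqref{eq action}: preserving $\xi_1$ forces $h=\sm{1}{s}{0}{1}\in N_2$, preserving $\xi_2$ and $\xi_3$ forces $x=-as$ and $y=bs$, and then the first coordinate of the transformed $\xi_4$ equals $abs(\rho^2-1)$, so $s=0$ and in turn $x=y=z=0$. Thus $H_{\xi[a,b]}$ is trivial, the orbit representatives are parametrized by $(a,b)\in (F^*)^2$, and relevance is automatic since the cocycle condition is vacuous on a trivial group.

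For the orbital integral itself, a direct substitution into the explicit matrix for $\jmath(s(n))w_8$ displayed earlier in this section (only the rows in positions $2$, $3$, $5$, where $\xi[a,b]$ has nonzero coordinates, contribute) gives
\[
\c(\xi[a,b],[h,\Ad(n)])=\psi\bigl(2a(xy-z)y\rho^2-2bxz\rho\bigr),
\]
independent of $h$ in accordance with \eqref{eq hind}, so that $\c_\psi(\xi[a,b],[g])=\psi\bigl(x+y+2a(xy-z)y\rho^2-2bxz\rho\bigr)$. Since the stabilizer is trivial the orbital integral is an integral over the full group $H(\A)=\SL_2(\A)\times N(\A)$; applying the Iwasawa decomposition $h=\sm{1}{s}{0}{1}\diag(t^{-1},t)k$ with $k\in K_2$ and the standard measure factor $\abs{t}^2$, and absorbing the $K_2$-average into $\phi_{K_2}$, produces the displayed formula. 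The only real subtlety is the normal-form computation in the classification step, where one must track the arithmetic of $\rho$ carefully to confirm that the resulting linear system in $s,x,y,z$ has a unique solution and that the stabilizer really collapses to the identity.
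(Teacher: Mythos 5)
Your proof is correct and follows essentially the same route as the paper: reduction to the normal form $(e_2,ae_1,be_1,0)$ via the action formula \eqref{eq action}, the computation showing the stabilizer is trivial (hence relevance is automatic), the same evaluation of $\c$ from the matrix $\jmath(s)w_8$, and the Iwasawa decomposition to reach the displayed integral. The only cosmetic difference is that you normalize $\xi_1=e_2$ first and then solve a $4\times4$ linear system in $(s,x,y,z)$ (with the invertibility coming from $\beta\beta'(\rho^2-1)\ne 0$), whereas the paper solves for $(x_0,y_0,z_0)$ from vector equations before applying its basis lemma -- the same computation in a different order.
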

\begin{proof}
Let $\xi\in F^8$ be such that $d_1(\xi)=1$ and $d_{1,2}(\xi)=2=d_{1,3}(\xi)$. Then there exist $v,\,w,\,w',\,u\in F^2$ such that $\{v,w\}$ and $\{v,w'\}$ are bases of $F^2$ and 
$\xi=(v,w,w',u)$. Let $a,\,b\in F$ be such that $w'=a v+b w$ and note that $b\ne 0$. Let $x_0,\,y_0,\,z_0\in F$ be the unique common solutions to the equations
\[
\begin{cases} w'-y_0 v=b(w+x_0 v) \\ u+x_0 w'+y_0\rho^2 w-(x_0y_0+z_0\rho)v=0.\end{cases}
\]
Applying Lemma \ref{lem sl2} let $h_0\in \SL_2(F)$ and $\beta\in F^*$ be such that $(vh_0, (w+x_0 v)h_0)=(e_2,\beta e_1)$ and let
\[
n_0=\begin{pmatrix} 1 & x_0 & z_0 \\ & 1 & y_0 \\ & & 1 \end{pmatrix}.
\] 
Then
\[
\xi^{[h_0,\Ad(n_0)]}=(e_2,\beta e_1, b\beta e_1,0).
\]
For $g=(h,n)\in H(F)$ we have
\[
(e_2,\beta e_1,\beta' e_1,0)^{[g]}=(e_2 h, (\beta e_1+x e_2)h, (\beta' e_1-y e_2)h, [(x\beta'+y\rho^2 \beta)e_1-(xy+z\rho)e_2]h)
\]
and we can deduce that this is of the form $(e_2,\beta_1 e_1,\beta_1' e_1,0)$ for some $\beta_1,\,\beta_1'\in F^*$ if and only if $g$ is the identity. 
Thus, $H_{(e_2,\beta e_1,\beta' e_1,0)}$ is trivial and therefore $(e_2,\beta e_1,\beta' e_1,0)$ is relevant. 
We also note that
\[
\c((e_2,\beta e_1,\beta' e_1,0),[g])=\psi[2\beta(xy-z)y\rho^2-2\beta'xz\rho].
\]
We therefore have
\begin{multline*}
\O((e_2,\beta e_1,\beta' e_1,0),\phi)=\\ \int_{\A}\int_{\A}\int_{\A}\int_{\SL_2(\A)} \phi[(0,1,\beta,x,\beta',-y,x\beta'+y\rho^2\beta,-(xy+z\rho))\lambda(h)]\ dh\\ \psi[x+y+2\beta(xy-z)y\rho^2-2\beta'xz\rho] \ dx\ dy\ dz.
\end{multline*}
The lemma follows by the Iwasawa decomposition. 
\end{proof}

This completes the proof of Theorem~\ref{I-relevant-orbits}. \qed

\section{Unfolding the distribution \texorpdfstring{$J$}~}{}\label{J-distribution-cals}
Recall that $G'(\A)$ denotes the adelic $3$-fold cover of $\SL_3(\A)$ and $G'(F)$ denotes the embedded image of $\SL_3(F)$ in $G'(\A)$.  For notational
convenience we suppress this embedding and write elements of $G'(F)$ by matrices.
We consider the action $g \cdot (n_1,n_2) =n_1^{-1} g n_2$ of $N(F)\times N(F)$ on $G'(F)$.  
The element $g$ or its orbit is called relevant if $(n_1,n_2)\mapsto \psi(n_1^{-1}n_2)$ is trivial on the stabilizer $(N\times N)_g(\A)$ of $g$. 
For $a,b\in F^*$ let $t(a,b)=\diag(a,a^{-1}b,b^{-1})$, and let
\[
w_1=\begin{pmatrix} & -1 & \\ 1 &  & \\ & & 1 \end{pmatrix}, \ \ \ w_2=\begin{pmatrix} 1 & & \\ & & -1 \\ & 1 & \end{pmatrix}.
\]
Then using the Bruhat decomposition, one sees that a complete set ${\Xi}'_\rel$ of representatives for the relevant orbits 
is given as follows:
\begin{enumerate}
\item $t(a,a^2)=aI_3$, $a\in \mu_3\subseteq F^*$;
\item $t(a^{-2},a^{-1})w_1w_2=\begin{pmatrix} &  & a^{-2}  \\ a &  & \\   & a & \end{pmatrix}$, $a\in F^*$;
\item $t(-a,a^2)w_2w_1=\begin{pmatrix}  & a &  \\  &  & a \\ a^{-2} &  & \end{pmatrix}$, $a\in F^*$;
\item $t(b^{-1},a^{-1})w_1w_2w_1=\begin{pmatrix}  &  & b^{-1} \\  & -a^{-1}b &  \\ a &  & \end{pmatrix}$, $a,\,b\in F^*$.
\end{enumerate}
(See for example \cite{MR945275}, where the determination of relevant orbits is carried out in the language of Poincar\'e series; the adelic ingredients are the same.)
Note that the notion of relevance does not depend on the cover, though the orbital integral that we consider does.

To give the needed orbital integrals, 
we fix once and for all an embedding of $\mu_3$ in $\C^*$ and consider the genuine Hecke algebra $\H_{\gen}(G'(\A))=\{f'\in C_c^\infty(G'(\A)): f'(zg)=zf'(g)\}$. 
For $f'\in \H_{\gen}(G'(\A))$ and $g\in G'(F)$ relevant we consider the orbital integral
\[
\O'(g,f')=\int_{N(\A)\times N(\A)/(N\times N)_g(\A)} f'(n_1^{-1}gn_2)\psi(n_1^{-1}n_2)\ dn_1\ dn_2.
\]
Here $N(\A)$ is embedded in $G'(\A)$ by the trivial section $n\mapsto (n,1)$.
Explicitly, according to the relevant orbits we have
\begin{enumerate}
\item $\O'(t(a,a^2),f')=\int_{N(\A)} f'(an)\psi(n)\ dn$, $a\in \mu_3$;
\item $\O'(t(a^{-2},a^{-1})w_1w_2,f')=\int_{N(\A)\times U_{2,1}(\A)} f'(n^{-1}t(a^{-2},a^{-1})w_1w_2u)\psi(n^{-1}u)\ dn\ du$;
\item $\O'(t(-a,a^2)w_2w_1,f')=\int_{U_{2,1}(\A)\times N(\A)} f'(u^{-1}t(-a,a^2)w_2w_1n)\psi(u^{-1}n)\ du\ dn$;
\item $\O'(t(b^{-1},a^{-1})w_1w_2w_1,f')=\\ \int_{N(\A)\times N(\A)} f'(n_1^{-1}t(b^{-1},a^{-1})w_1w_2w_1n_2))\psi(n_1^{-1}n_2)\ dn_1\ dn_2$
\end{enumerate}
where 
\[
U_{2,1}(\A)=\{\begin{pmatrix} I_2 & v \\  & 1 \end{pmatrix} : v\in \A^2\}.
\]

\begin{proposition}\label{eq unfold2} The distribution $J$ given in \eqref{metaplectic-distribution} is a sum of factorizable orbital integrals
\begin{equation*}
J(f')=\sum_{\xi'\in {\Xi}'_\rel} \O(\xi',f')
\end{equation*}
where the relevant orbits and the associated orbital integrals are given above.
\end{proposition}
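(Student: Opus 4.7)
The plan is to apply the general unfolding principle of Section~\ref{sec unfold} to the right action of $H = N\times N$ on $X = \SL_3$ defined by $\gamma \cdot (n_1, n_2) = n_1^{-1} \gamma n_2$, with the automorphic cocycle $\c(\gamma, (n_1, n_2)) = \psi(n_1^{-1} n_2)$. This is trivially an automorphic cocycle, being a unitary character of $H(\A)$ independent of $\gamma$ and trivial on $H(F)$; the integrability hypothesis \eqref{eq int cond} is satisfied by Remark~\ref{rmk nec cond} since each stabilizer $(N\times N)_\gamma$ is unipotent, hence $[(N\times N)_\gamma]$ has finite volume. To match \eqref{metaplectic-distribution} with the form required by the unfolding identity \eqref{eq unfolding}, I would first perform the change of variables $n_1 \mapsto n_1^{-1}$ in \eqref{metaplectic-distribution}, which has Jacobian one and transforms the character $\psi(n_1 n_2)$ into $\psi(n_1^{-1} n_2)$.

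Next I would enumerate the $N(F) \times N(F)$-orbits on $\SL_3(F)$ via the Bruhat decomposition $\SL_3(F) = \bigsqcup_{w \in W} B(F) w B(F)$ with $B = TN$. For each Weyl element $w$, writing an element of the cell as $n_1 t w n_2'$ with $n_2' \in (N \cap w^{-1} \overline{N} w)(F)$ and absorbing the residual $N(F)$-actions on both sides produces a parameterization of the orbits by torus representatives $tw$ together with an explicit unipotent stabilizer. This is the standard Bruhat-cell analysis underlying the Kuznetsov formula for $\SL_3$, carried out in the Poincar\'e series language in~\cite{MR945275}; since the relevance condition only involves the unitary character $\psi(n_1^{-1}n_2)$ and not the cover, the adelic setup here is identical.

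The last step is a direct case-by-case computation of the stabilizer $(N\times N)_{tw}(\A)$ and of the restriction of $(n_1, n_2) \mapsto \psi(n_1^{-1} n_2)$ to it, for the six elements $w \in \{1, w_1, w_2, w_1 w_2, w_2 w_1, w_1 w_2 w_1\}$. For $w = 1$ the stabilizer is $\{(n, t^{-1} n t) : n \in N\}$ and vanishing of $\psi$ on it forces $t \in \mu_3 \cdot I_3$, yielding family~(1). For $w \in \{w_1, w_2\}$ the character has a non-trivial component on a coordinate direction of the stabilizer, so no orbit is relevant. For $w \in \{w_1 w_2, w_2 w_1\}$ the stabilizer is a one-parameter unipotent subgroup and the relevance condition, combined with the torus normalization $t(a, b) = \diag(a, a^{-1}b, b^{-1})$, produces families~(2) and~(3) parametrized by $a \in F^*$. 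For the long element $w_1 w_2 w_1$ the stabilizer is trivial, every orbit is relevant, and one obtains family~(4) parametrized by $(a,b) \in (F^*)^2$. In each relevant case, integrating out the stabilizer in \eqref{eq unfolding} (using integration in stages) collapses the double integral to the explicit orbital integral displayed in the statement. The only nuance beyond the scalar setting of Section~\ref{sec unfold} is that the integrands live on the metaplectic cover; since $N$ splits canonically in $G'(\A)$ and $f' \in \H_\gen(G'(\A))$ is genuine, the orbital integrals $\O'(\xi', f')$ are well-defined complex numbers, and all of the book-keeping is handled by the trivial section $n \mapsto (n, 1)$ embedding $N(\A)$ in $G'(\A)$. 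The main potential obstacle here is purely computational---the case-by-case stabilizer analysis for the six Weyl elements---but it is already standard in the literature.
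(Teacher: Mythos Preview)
Your proposal is correct and follows essentially the same line as the paper: the paper does not give a proof at all here but simply invokes the Bruhat decomposition and refers to \cite{MR945275} for the relevant-orbit analysis, noting that relevance is insensitive to the cover. Your write-up makes this explicit by casting the argument in the language of Section~\ref{sec unfold} (with the constant cocycle $\c=\psi$), performing the change of variable $n_1\mapsto n_1^{-1}$ to reconcile the two conventions, and then doing the standard six-Weyl-element stabilizer check; this is exactly the computation the paper is appealing to.
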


\goodbreak

\section*{\it{Part II. Local Theory: The Fundamental Lemma}}\label{Local theory}

\section{Notation and main result}

Throughout Part II we set the following notation.  Let $F$ denote a non-archimedean local field, $\O$ the ring of integers of $F$,
$\p$ its maximal ideal, $\varpi$ a uniformizer in $F$ (a generator of $\p$), $k_F=\O/\p$ the residual field of $F$,  $p$ the characteristic of $k_F$,
$q$ the number of elements in $k_F$, $\val:F^*\rightarrow \Z$ the standard valuation so that $\val(u\varpi^n)=n$ for $u\in \O^*$, $n\in \Z$, and
$S^n=\{s^n:s\in S\}$ for any subgroup $S$ of $F^*$ and $n\in\N$.
Assume that $F$ contains a primitive cube root of unity $\rho$ and set $\mu_3=\langle\rho\rangle$.

Fix a character $\psi$ of $F$ with conductor $\O$ (that is, $\psi$ restricted to $\O$ is trivial and restricted to $\p^{-1}$ is non-trivial). 
We denote by $\triv_A$ the characteristic function of a set $A$. 
We normalize integration so that $\int_\O\,dx=1$, $\int_{\O^*} \,d^*x=1$.

The main work is to compare the two local orbital integrals corresponding to the big cells.
These are families of integrals parameterized by $F^* \times F^*$. 
The first is defined by the formula
\begin{multline}\label{eq I def}
I(a,b)=\int_{F}\int_{F}\int_{F}\int_F \int_{F^*} \\ \triv_{\O^8}[(0,t,t^{-1}a,t(x+as),t^{-1}b,t(bs-y),t^{-1}(xb+y\rho^2a),t[(xb+y\rho^2a)s-(xy+z\rho)])]\abs{t}^2\ d^*t \ ds\\ \psi[x+y+2a(xy-z)y\rho^2-2bxz\rho] \ dx\ dy\ dz.
\end{multline}
This is the local integral obtained from Lemma~\ref{relevant-5}.

In order to define the second we need to set some further notation. Consider the local metaplectic three fold cover $\widetilde\SL_3(F)$ of $\SL_3(F)$.
It is a central extension
\[
1\rightarrow \mu_3 \rightarrow \widetilde\SL_3(F) \rightarrow \SL_3(F) \rightarrow 1.
\]
As a set, we identify $\widetilde\SL_3(F)$ with $\SL_3(F) \times \mu_3$ and the group operation is given by $(g_1,z_1)(g_2,z_2)=(g_1g_2,z_1z_2\sigma(g_1,g_2))$ where $\sigma$ is a certain $2$-cocycle of $\SL_3(F)$.
This cocycle is described, following Matsumoto \cite{MR240214}, in Bump and Hoffstein \cite[\S 2]{MR904946}.

Let $N$ be the group of upper triangular unipotent matrices in $\SL_3(F)$ and $K=\SL_3(\O)$. The map $n\mapsto (n,1):N \rightarrow \widetilde\SL_3(F)$ is an imbedding of $N$ in $\widetilde\SL_3(F)$ and this way we view $N$ as a subgroup of $\widetilde\SL_3(F)$.
The group $K$ also admits a splitting in $\widetilde\SL_3(F)$. There is a map $\kappa:K\rightarrow \mu_3$ such that the map $g\mapsto (g,\kappa(g))$ imbeds $K$ into $\widetilde\SL_3(F)$. 
By abuse of notation we also denote by $\psi$ the character of $N$ that satisfies $\psi(u)=\psi(u_{1,2}+u_{2,3})$, $u\in N$.

Fix an embedding of $\mu_3$ in $\C^*$ and let $f_0:\widetilde\SL_3(F)\rightarrow \C$ be defined by 
\[
f_0(g,z)=\begin{cases} z\kappa(g)^{-1} & g\in K \\ 0 & \text{otherwise}\end{cases} \ \ \ g\in\SL_3(F), \ z\in\mu_3.
\]
The second family of integrals is defined by 
\begin{equation}\label{eq J def}
J(a,b)=\int_N \int_N f_0( (u_1g_{a,b}u_2,1) )\psi(u_1 u_2)\ du_1\ du_2
\end{equation}
where
\[
g_{a,b}=\begin{pmatrix} & & b^{-1} \\ & -a^{-1}b & \\ a & & \end{pmatrix}\in \SL_3(F).
\]

Let $(\cdot,\cdot)_3:F^* \times F^* \rightarrow \mu_3$ be the cubic Hilbert symbol.  Our main result is the following comparison.
\begin{theorem}[The fundamental lemma for the big cell orbital integrals-unit element]\label{thm main}
Assume $p>3$. For any $a,\,b\in F^*$ we have
\[
I(a,b)=(c,d)_3\ J(c,d), \ \ \ \text{where} \ \ \ c=-54 a,\ d=54 b.
\]
\end{theorem}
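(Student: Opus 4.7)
The plan is to compute $I(a,b)$ and $J(a,b)$ separately, to bring both into a common normal form built out of a product of two cubic exponential integrals, and then to match them using the Duke--Iwaniec type identity of Corollary~\ref{cor di}.

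First I would evaluate $I(a,b)$. The support condition $\triv_{\O^8}$ in \eqref{eq I def} forces $t,\,t^{-1}a,\,t^{-1}b\in\O$ from coordinates $2,3,5$, which pins $\val(t)$ into a finite range depending on $\val(a)$ and $\val(b)$; the remaining coordinates enforce integrality of $x+as$, $bs-y$, and $xb+y\rho^2 a$, while the eighth coordinate yields a linear condition in $z$ once the quadratic cross term is absorbed by a shift. After this shift the phase $\psi[x+y+2a(xy-z)y\rho^2-2bxz\rho]$ becomes linear in $z$, so the $z$-integration collapses to a delta-type relation, and the $s$-integral reduces to a volume factor. What remains is a double integral in $x$ and $y$ whose phase, after completing the cube, is a product of two cubic exponential integrals with explicit prefactors depending on $a,b$.

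Next I would evaluate $J(a,b)$. The essential difficulty is to make the metaplectic cocycle explicit. Parametrizing $u_1,u_2\in N$ and decomposing $u_1 g_{a,b} u_2$ by row and column reductions, I would first determine when this product lies in $K=\SL_3(\O)$ (so that $f_0$ is nonzero), and on that locus compute the section value $\kappa(u_1 g_{a,b}u_2)$ via the Matsumoto--Bump--Hoffstein algorithm of \cite{MR904946}. Because $\kappa$ is built from the cubic Hilbert symbol, the resulting double integral in the coordinates of $u_1,u_2$ is naturally a pair of Kloosterman integrals twisted by cubic characters. Applying Corollary~\ref{cor di} twice, once inside each Kloosterman integral, converts $J(a,b)$ into a product of two cubic exponential integrals of the same shape as on the $I$-side.

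Once both quantities are cubic exponential integrals with explicit prefactors, the claimed identity $I(a,b)=(c,d)_3\,J(c,d)$ with $c=-54a$, $d=54b$ reduces to a bookkeeping match. The factor $54=2\cdot 27$ should arise from the cube completions: aligning the two cubic exponential integrals requires rescaling by cube roots of constants whose denominators contribute the $27$ and the $2$, and the minus sign in $c$ collects the $\rho$'s that appear on the $I$-side but not on the $J$-side. The Hilbert symbol $(c,d)_3$ then packages the character-theoretic residues of the two applications of Corollary~\ref{cor di} together with the ones coming from rescaling inside the cubic phases. The hardest step will be the explicit evaluation of $J(a,b)$: both $\sigma$ and the spherical splitting $\kappa$ on $K$ are genuinely involved, and propagating them through the decomposition of $u_1 g_{a,b} u_2$ requires careful case analysis in the valuations of the entries of $u_1,u_2$ and in cubic residue classes; in particular, identifying the resulting double integral as a pair of cubic-twisted Kloosterman integrals, rather than as something more elaborate, is itself a nontrivial collapse that must be verified before Corollary~\ref{cor di} can be applied. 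Once that is done, and the analogous reduction for $I(a,b)$ in Section~\ref{The-integral-I(a,b)} is carried out, the final comparison in Section~\ref{The-big-cell-comparison-is-done} becomes essentially algebraic.
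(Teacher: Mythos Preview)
Your overall strategy matches the paper's: compute $I(a,b)$ and $(a,b)_3 J(a,b)$ separately into closed forms built from cubic exponential integrals $\Cu_\ell$, then match by the rescaling $x\mapsto 54x$ inside each $\Cu_\ell$. The final comparison (Section~\ref{The-big-cell-comparison-is-done}) is exactly the bookkeeping you anticipate, and the $54$ arises because $-3^{-3}\cdot 54^2=2$.

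However, your description of the intermediate normal forms is off in a way that would cost you real time. On the $I$-side, after integrating out $z$ and $s$, the remaining double integral in $x,y$ has phase cubic in $x$ but \emph{linear} in $y$; see \eqref{eq I}. So $I$ never becomes a product of two cubic exponential integrals: the $y$-integration cuts the $x$-domain into pieces, each giving a \emph{single} $\Cu_\ell$ (Lemmas~\ref{lem I1}--\ref{lem I3}), and Corollary~\ref{cor di} is not used on the $I$-side at all. On the $J$-side, the product-of-two-Kloostermans structure does appear (Lemma~\ref{lem j>0}), but it sits inside a residual integral over a variable $y_2$; after the two applications of Corollary~\ref{cor di} one still has to evaluate this $y_2$-integral of a product of two $\Cu$'s, and that is the substantial $G_\ell^\ell$ computation (Lemma~\ref{lem G comp}), which collapses the product into a single $\Cu_\ell$ summed over $\rho^k$, $k=0,1,2$. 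The final formulas on both sides (Propositions~\ref{prop J formula} and~\ref{prop I formula}) are piecewise sums $\sum_\ell\sum_{k=0}^2 \Cu_\ell(\cdots)$, not products.

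There is also a genuine gap: when $\ell=\val(b)-1$ and $3\mid\val(b)$, the cubic character in one of the Kloosterman integrals is trivial, so Corollary~\ref{cor di} does not apply. The paper handles this boundary case separately (Lemma~\ref{lem j1 case2}), using instead the elementary identity of Lemma~\ref{lem i=1 3val}. Without this you cannot close the computation of $J(a,b)$ when $3\mid\val(a)=\val(b)$.
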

In the final section, we also compare the orbital integrals for the other relevant orbits (Theorem~\ref{degenerate equality}).

\section{Ingredients needed for the comparison}\label{yummy-ingredients}

We develop the ingredients that are needed to establish Theorem~\ref{thm main}.  From now on we assume $p>3$. 

\subsection{The cubic Hilbert symbol}

We begin by recalling the basic properties of the cubic Hilbert symbol. They will be used throughout our computation without further mention. 
For $x,\,y,\,z\in F^*$ we have
\begin{itemize}
\item $(y,x)_3=\overline{(x,y)_3}$
\item $(xy,z)_3=(x,z)_3(y,z)_3$
\item $(x,1-x)_3=1$, $x\ne 1$
\item $(x,y)_3=1$ for all $y\in F^*$ if and only if $x\in F^{*3}$ 
\item $(x,u)_3=1$ for all $u\in \O^*$ if and only if $3| \val(x)$.
\end{itemize}
The last property is a consequence of our assumption that the residual characteristic $p>3$.
We further observe that by this assumption and Hensel's Lemma $1+\p\subseteq F^{*3}$ and consequently
\[
(x+y,z)_3=(x,z)_3\ \ \ \text{whenever} \ \ \  \abs{y}<\abs{x}.
\]

\subsection{Cubic Gauss sums} For future reference we recall here a formula that is essentially the calculation of the absolute value squared of a cubic Gauss sum. This result is standard and the proof is omitted.
\begin{lemma}\label{lem gauss}
Let $a\in F$ be such that $\abs{a}=q$. Then
\[
\int_{\O^*}\int_{\O^*} (a,uv^{-1})_3 \,\psi(a(u+v))\ du\ dv=q^{-1}.
\]
\qed
\end{lemma}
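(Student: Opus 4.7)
The plan is to recognize the integrand as essentially the squared absolute value of a classical cubic Gauss sum over the residue field $k_F$.

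First I would observe that since $\val(a)=-1$ is not divisible by $3$, the map $\chi:\O^*\to\mu_3$ defined by $\chi(u)=(a,u)_3$ is a nontrivial cubic character. Moreover, because $1+\p\subseteq F^{*3}$ under our hypothesis $p>3$, we have $\chi|_{1+\p}\equiv 1$, so $\chi$ factors through a nontrivial cubic character of $\O^*/(1+\p)\cong k_F^*$. Using bimultiplicativity of the Hilbert symbol, $(a,uv^{-1})_3=\chi(u)\overline{\chi(v)}$, so the double integral factorizes as
\[
\int_{\O^*}\int_{\O^*}(a,uv^{-1})_3\,\psi(a(u+v))\,du\,dv=\abs{G}^2,\qquad G:=\int_{\O^*}\chi(u)\,\psi(au)\,du.
\]

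Next I would compute $G$ by decomposing $\O^*$ into its $q-1$ cosets modulo $1+\p$. For $u_0\in\O^*$ a coset representative, $\chi$ is constant on $u_0+\p$ with value $\chi(u_0)$, and
\[
\int_{u_0+\p}\psi(au)\,du=\psi(au_0)\int_\p\psi(at)\,dt=q^{-1}\psi(au_0),
\]
where the last equality uses that $\psi|_\O\equiv 1$ together with the change of variable $t\mapsto at$, which carries $\p$ bijectively onto $\O$ with Jacobian $\abs{a}=q$. Summing over the cosets yields $G=q^{-1}\,g(\chi,\psi_a)$, where $\psi_a(x):=\psi(ax)$ defines an additive character of $k_F$ and $g(\chi,\psi_a)=\sum_{u_0\in k_F^*}\chi(u_0)\psi_a(u_0)$ is a classical Gauss sum on $k_F$.

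Finally, $\psi_a$ is a nontrivial character of $k_F$ since $au_0\in\p^{-1}\setminus\O$ and $\psi|_{\p^{-1}}$ is nontrivial; and $\chi$ is nontrivial on $k_F^*$ by the first step. The standard evaluation of a Gauss sum for a nontrivial multiplicative character paired with a nontrivial additive character gives $\abs{g(\chi,\psi_a)}^2=q$, so $\abs{G}^2=q^{-2}\cdot q=q^{-1}$ as claimed. There is no real obstacle here; the only subtlety is the careful tracking of Haar measure normalizations (in particular the factor $q^{-1}$ from $\int_\p\psi(at)\,dt$), but these follow directly from the convention $\int_\O dx=1$ and the assumption $\abs{a}=q$.
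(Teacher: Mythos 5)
Your proof is correct and is precisely the standard argument the paper has in mind (the paper states this lemma with the proof omitted as standard): reduce to a cubic Gauss sum over $k_F$ via the coset decomposition of $\O^*$ modulo $1+\p$, then invoke $\abs{g(\chi,\psi_a)}^2=q$ for nontrivial $\chi$ and $\psi_a$. The one step you elide is that the $v$-integral equals $\overline{G}$ rather than merely the Gauss sum attached to $\overline{\chi}$; the two differ by the factor $\chi(-1)=(a,-1)_3$, which equals $1$ because $-1=(-1)^3$ is a cube, so the identification with $\abs{G}^2$ is justified and the value $q^{-1}$ follows.
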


\subsection{Kloosterman integrals}
For $y\in F^*$ and $a,b\in F$ consider the integral
\[
\Kl(y;a,b)=\int_{\O^*} (y,u)_3\,\psi(au+bu^{-1})\ du.
\]
We provide a formula for $\Kl(y;a,b)$ in terms of Kloosterman sums over the residual field, the function
\[
\delta_{3\mid \val(x)}=\begin{cases} 1 & 3\mid \val(x) \\ 0 & \text{otherwise} \end{cases}, \ \ \ x\in F^*
\] 
and the function $\sq$ on $F^*$ defined by
\[
\sq(x)=\sum_{z\in \p^{\ell-e}/\p^{\ell}} \psi(xz^2)
\]
where $\ell\in\Z$ and $e\in\{0,1\}$ are such that $\abs{x}=q^{2\ell-e}$. Note that $\ell=\lfloor \frac{1-\val(x)}2 \rfloor$ and the summand $\psi(xz^2)$ is well defined (independent of the class of $z$ mod $\p^\ell$). In particular, $\sq(x)=1$ if $\val(x)$ is even.
It is easy to verify the basic property of $\sq$:
\begin{equation}\label{eq sq pr}
\sq(xy^2)=\sq(x),\ \ \ x,y\in F^*.
\end{equation}
In the evaluation below and subsequent formulas, if $x\in F^*$ is a square then we write $\sqrt{x}$ for a square root of $x$ in $F$; all formulas will be independent of the choice of square root.

\begin{lemma}\label{lem kl int}
We have
\[
\Kl(y;a,b)=\begin{cases} (1-q^{-1})\delta_{3\mid \val(y)} & \max(\abs{a},\abs{b})\le 1 \\
q^{-1}\sum_{u\in k_F^*}(y,u)_3\,\psi(au+bu^{-1}) &  \max(\abs{a},\abs{b})=q \\
q^{-\lfloor\frac{1-\val(a)}2\rfloor}(y,ab^{-1})_3\,\sum_{\epsilon=\pm 1} \psi(2\epsilon\sqrt{ab})\sq(\epsilon\sqrt{ab})& \max(\abs{a},\abs{b})>q \text{ and }a^{-1}b\in \O^{*2} \\
0 & \max(\abs{a},\abs{b})>q \text{ and }a^{-1}b\not\in \O^{*2}.
 \end{cases}
\]
\end{lemma}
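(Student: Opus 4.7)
The proof proceeds by case analysis on $\max(|a|,|b|)$, matching the four cases of the formula.

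\textbf{Cases 1 and 2} (the small-exponent range). If $\max(|a|,|b|)\le 1$, then $au, bu^{-1}\in\O$ for all $u\in\O^*$, so $\psi$ trivializes and $\Kl(y;a,b)=\int_{\O^*}(y,u)_3\,du$ equals $\vol(\O^*)=1-q^{-1}$ when $(y,\cdot)_3|_{\O^*}$ is trivial (equivalently $3\mid\val(y)$), and vanishes otherwise by orthogonality. If $\max(|a|,|b|)=q$, I would partition $\O^*=\bigsqcup u_0(1+\p)$ over lifts $u_0\in k_F^*$. Since $1+\p\subset F^{*3}$ (using $p>3$), the cubic symbol is constant on each coset; writing $u=u_0(1+\varpi w)$ with $w\in\O$, every non-constant term in the expansion of $au+bu^{-1}$ lies in $\O$ (because $|a|,|b|\le q$), so $\psi(au+bu^{-1})=\psi(au_0+bu_0^{-1})$, yielding the stated sum with coset-measure weight $q^{-1}$.

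\textbf{Cases 3 and 4} (stationary phase). The critical equation $\phi'(u)=a-bu^{-2}=0$ for $\phi(u)=au+bu^{-1}$ has roots in $\O^*$ iff $a^{-1}b\in\O^{*2}$, precisely the Case~3 versus Case~4 dichotomy. Via the involution $u\mapsto u^{-1}$ (which swaps $a,b$, conjugates $(y,u)_3$, and is compatible with the symmetry of the final formula) I may assume $|a|\ge|b|$, so $m:=-\val(a)\ge 2$. Set $s=\lfloor m/2\rfloor$ and partition $\O^*=\bigsqcup u_0(1+\p^s)$. On such a coset, parameterizing $u=u_0+\varpi^s v$ with $v\in\O$ gives $du=q^{-s}dv$ and a constant cubic symbol $(y,u)_3=(y,u_0)_3$; moreover, since $p>3$ together with $m\ge 2$ forces the degree-$k\ge 3$ Taylor terms $\tfrac{1}{k!}\phi^{(k)}(u_0)\varpi^{ks}v^k$ to have size $q^{m-ks}\le 1$, the reduction $\phi(u)\equiv \phi(u_0)+A_0 v+B_0 v^2\pmod{\O}$ holds with $A_0=\phi'(u_0)\varpi^s$, $B_0=bu_0^{-3}\varpi^{2s}$ and $|B_0|\in\{1,q\}$.

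The main obstacle is the vanishing of the coset contribution at \textbf{non-stationary cosets}: those not containing $\pm c$ modulo $\p^s$, where $c\in\O^*$ satisfies $c^2=b/a$ (and in Case~4 no such $c$ exists, so every coset is non-stationary). For these, $|u_0\mp c|\ge q^{-s+1}$, and $|2c|=1$ (as $p$ is odd) forces $\max(|u_0-c|,|u_0+c|)=1$, giving $|\phi'(u_0)|\ge q^{m-s+1}$ and hence $|A_0|\ge q^{m-2s+1}>\max(|B_0|,1)$. A standard splitting of $v\in\O$ into $\p^t$-cosets ($t=0$ if $|B_0|=1$ and $t=1$ if $|B_0|=q$) reduces the quadratic term to $\O$ while the effective linear coefficient $(A_0+2B_0\gamma)\varpi^t$ has size $|A_0|q^{-t}>1$, producing a non-trivial additive character of $\O$ whose integral vanishes. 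This proves Case~4 and localizes Case~3 to the two cosets $u_0=\epsilon c$ ($\epsilon=\pm 1$), at which $\phi'(\epsilon c)=0$ and $\phi(\epsilon c)=2\epsilon\beta$ with $\beta=ac$ (so $\beta^2=ab$). A short cubic-Hilbert-symbol calculation using $(y,-1)_3=1$, $(y,c)_3^3=1$, and $(y,c)_3^2=(y,c^2)_3=(y,ba^{-1})_3$ gives $(y,\epsilon c)_3=(y,c)_3=(y,ab^{-1})_3$. The remaining Gaussian $\int_\O\psi(\epsilon bc^{-3}\varpi^{2s}v^2)\,dv$ is identified with $q^{-(m-2s)}\sq(\epsilon\sqrt{ab})$ by the substitution $\alpha\mapsto c\alpha$ inside the sum-definition of $\sq$ (valid since $c\in\O^*$). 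Collecting the measure factor $q^{-s}$ with $q^{-(m-2s)}$ gives the exponent $q^{-(m-s)}=q^{-\lceil m/2\rceil}=q^{-\lfloor(1-\val(a))/2\rfloor}$; summing over $\epsilon=\pm 1$ completes the formula.
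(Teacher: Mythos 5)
Your proof is correct and follows essentially the same route as the paper's: direct reduction mod $\p$ for the small parameters and the method of stationary phase for $\max(\abs{a},\abs{b})>q$, the only cosmetic difference being that you split $\O^*$ into additive cosets of $\p^{\lfloor m/2\rfloor}$ and carry the quadratic Taylor term, whereas the paper splits into multiplicative cosets $v(1+\p^{\lceil m/2\rceil})$ so that only the linear term survives and the Gauss sum reappears when summing over the surviving residues. Two small details are worth making explicit but are not gaps: when $\abs{b}<\abs{a}$ one has $\abs{B_0}<1$ rather than $\abs{B_0}\in\{1,q\}$ (the sub-splitting with $t=0$ still applies), and in Case 4 the lower bound on $\abs{\phi'(u_0)}$ should be obtained from the separate observation that $u_0^2-a^{-1}b$ is a unit for every $u_0\in\O^*$ when $a^{-1}b$ is a nonsquare unit or $\abs{a}\ne\abs{b}$.
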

\begin{proof}
If $\max(\abs{a},\abs{b})\le q$ then the integrand $(y,u)_3\psi(au+bu^{-1})$ depends only on $u+\p$ and therefore $\Kl(y;a,b)=q^{-1}\sum_{u\in k_F^*}(y,u)_3\psi(au+bu^{-1}) $. If in addition $a,b\in\O$ then $\psi(au+bu^{-1})=1$ for all $u\in k_F^*$. This explains the formula when $\max(\abs{a},\abs{b})\le q$.

Assume now that $\max(\abs{a},\abs{b})\ge q^2$ and let $\ell\ge 1$ and $e\in\{0,1\}$ be such that $\max(\abs{a},\abs{b}) = q^{2\ell -e}$. We decompose the integral by writing $u=v(1+z)$ with $v\in \O^*/(1+\p^\ell)$ and $z\in \p^\ell$. Since $(1+z)^{-1}=\sum_{i=0}^\infty (-z)^i\in 1-z+\p^{2\ell}$ and $bv^{-1}\p^{2\ell}\subseteq \O$ we conclude that
\[
\Kl(y;a,b)=\sum_{v\in \O^*/(1+\p^\ell)} (y,v)_3\psi(av+bv^{-1})\int_{\p^{\ell}} \psi((av-bv^{-1})z)\ dz. 
\]
We have 
\[
\int_{\p^{\ell}} \psi((av-bv^{-1})z)\ dz=\begin{cases}
q^{-\ell} & av-bv^{-1}\in \p^{-\ell} \\ 
0 & \text{otherwise.}
\end{cases}
\]
Note that $av-bv^{-1}\in \p^{-\ell}$ if and only if $\varpi^{2\ell-e}(av^2-b)\in\p^{\ell-e}$ and by assumption $\ell-e\ge 1$. 
By Hensel's lemma, there exists $v\in \O^*$ such that $av-bv^{-1}\in \p^{-\ell}$ if and only if $a^{-1}b \in\O^{*2}$ and in this case $v$ is of this form if and only if $v\in \pm v_0+ \p^{\ell-e}$ where $v_0^2=a^{-1}b$.
Note that for $v=\pm v_0(1+z)$ with $z\in\p^{\ell-e}$ we have
\[
(y,v)_3=(y,\pm v_0)_3=(y,v_0^{-2})_3=(y,ab^{-1})_3,
\]
\[
av+bv^{-1}\in \pm(av_0+bv_0^{-1}+(av_0-bv_0^{-1})z+bv_0^{-1}z^2)+b\p^{3(\ell-e)}
\]
and by assumption $av_0-bv_0^{-1}=0$ and $b\p^{3(\ell-e)}\subseteq \O$ so that
\[
\psi(av+bv^{-1})=\psi(\pm(av_0+bv_0^{-1})+bv_0^{-1}z^2).
\]
Note further that $av_0+bv_0^{-1}=2av_0=2bv_0^{-1}$ and that $bv_0^{-1}$ is a square root of $ab$. The lemma readily follows.
\end{proof}

We remark that the analysis above of the case $\max(\abs{a},\abs{b})\ge q^2$ is tantamount to an evaluation of the sum using the method of stationary phase.

\subsection{Cubic exponential integrals}
For $a,b\in F$ consider the integrals
\[
\Cu(a,b)=\int_\O \psi(ax+bx^3)\ dx \ \ \ \text{and}\ \ \ \Cu_0(a,b)=\int_{\O^*} \psi(au+bu^3)\ du.
\]
\begin{lemma}\label{lem cobic exp}
We have
\[
\Cu(a,b)=\begin{cases} 
1 & \max(\abs{a},\abs{b}) \le 1 \\ 
0 & \abs{b}<\abs{a}=q \\
q^{-1}\sum_{x\in k_F} \psi(ax+bx^3) &  \abs{a}\le \abs{b}=q \\
 0 &\max(\abs{b},q)<\abs{a} \\
q^{-\lfloor\frac{1-\val(a)}2\rfloor}\sum_{\epsilon=\pm 1} \psi(\epsilon \frac29 a\sqrt{-3ab^{-1}})\sq(\epsilon \frac19 a\sqrt{-3ab^{-1}}) & \abs{a}=\abs{b}>q \text{ and }ab^{-1}\in \O^{*2} \\ 
0 & \abs{a}=\abs{b}>q\text{ and }ab^{-1}\not\in \O^{*2}\end{cases}
\]
and 
\[
\Cu_0(a,b)=\begin{cases} 1-q^{-1} & \max(\abs{a},\abs{b}) \le 1 \\ 
-q^{-1} & \abs{b}<\abs{a}=q\\
q^{-1}\sum_{u\in k_F^*} \psi(au+bu^3) &  \abs{a}\le \abs{b}=q 
\\ q^{-\lfloor\frac{1-\val(a)}2\rfloor} \sum_{\epsilon=\pm 1} \psi(\epsilon \frac29 a\sqrt{-3ab^{-1}})\sq(\epsilon \frac19 a\sqrt{-3ab^{-1}}) & \max(\abs{a},\abs{b})>q \text{ and }ab^{-1}\in \O^{*2} \\ 0 & \text{otherwise.}\end{cases}
\]
In particular, 
\[
\Cu(a,b)=\begin{cases}
q^{-1}+\Cu_0(a,b) & \max(\abs{a},\abs{b})\le q \\
\Cu_0(a,b) & \max(\abs{b},q^2)\le \abs{a}.
\end{cases}
\]
\end{lemma}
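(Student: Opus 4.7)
I would prove this by mirroring the structure of Lemma~\ref{lem kl int}: a case analysis on $\max(\abs{a},\abs{b})$ with the easy cases reducing to residue-field sums and the large cases handled by the $p$-adic method of stationary phase, now applied to the cubic phase $ax + bx^3$. In the regime $\max(\abs{a},\abs{b}) \le q$, the substitution $x = x_0 + z$ with $x_0 \in \O/\p$, $z \in \p$ expands $a(x_0+z) + b(x_0+z)^3$ into $ax_0 + bx_0^3$ plus $z$-dependent terms lying in $\O$ (using $\abs{b}\le q$ and $p>3$), so $\psi$ of the phase depends only on $x \bmod \p$; this yields the $k_F$-sum formula, and subtracting the integral over $\p$ (on which the phase lies in $\O$ so the contribution is $\vol(\p) = q^{-1}$) gives the formula for $\Cu_0$. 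When $\abs{b} < \abs{a} = q$ the cubic vanishes modulo $\O$ and $\sum_{x \in k_F}\psi(ax) = 0$, giving $\Cu(a,b) = 0$ and $\Cu_0(a,b) = -q^{-1}$.

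For $\abs{a} > q$, write $\abs{a} = q^{2\ell - e}$, $e \in \{0,1\}$, $\ell \ge 1$, and decompose $\O = \bigsqcup_{x_0 \in \O/\p^\ell}(x_0 + \p^\ell)$. Expanding
\[
a(x_0+z) + b(x_0+z)^3 = ax_0 + bx_0^3 + (a + 3bx_0^2)z + 3bx_0 z^2 + bz^3,
\]
one checks that when $\abs{b} \le \abs{a}$ the cubic term and (for $e=0$) the mixed quadratic term lie in $\p$ for $z \in \p^\ell$, reducing the $z$-integral to a Fourier or Gauss integral. If $\abs{b} < \abs{a}$, then $\abs{a + 3bx_0^2} = \abs{a} > q^\ell$ for every $x_0$, so the inner Fourier integral vanishes and $\Cu(a,b) = 0$. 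If $\abs{a} = \abs{b} > q$, the constraint $\abs{a + 3bx_0^2} \le q^\ell$ is the stationarity equation $x_0^2 \equiv -a/(3b) \pmod{\p^{\ell - e}}$. Using the identity $(2\rho+1)^2 = -3$ provided by $\rho \in F$, this is solvable if and only if $ab^{-1} \in \O^{*2}$; by Hensel's lemma (valid since $p > 3$), the solutions form two cosets $x_0 \equiv \pm v_0 \pmod{\p^{\ell - e}}$ with $v_0 = \sqrt{-3ab^{-1}}/3 \in \O^*$. Substituting $x = \pm v_0 + u$, $u \in \p^{\ell - e}$, and invoking $3bv_0^2 = -a$, the phase collapses to
\[
a(\pm v_0 + u) + b(\pm v_0 + u)^3 = \pm\tfrac{2av_0}{3} \pm 3bv_0\, u^2 + bu^3,
\]
whose stationary value $\pm 2av_0/3 = \pm\tfrac{2}{9}a\sqrt{-3ab^{-1}}$ factors out. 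The residual integral $\int_{\p^{\ell - e}} \psi(\pm 3bv_0 u^2 + bu^3)\,du$ evaluates to $q^{-\ell}\sq(\pm 3bv_0)$, which agrees with $q^{-\ell}\sq(\pm \tfrac{a}{9}\sqrt{-3ab^{-1}})$ via $\sq(xy^2) = \sq(x)$ applied with $y^2 = 9b/a$ (a square because $ab^{-1}$ is a unit square and $9 = 3^2$).

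The main obstacle is the clean evaluation of $\int_{\p^{\ell - e}} \psi(\pm 3bv_0 u^2 + bu^3)\,du$. For $\ell \ge 2e+1$ the cubic $bu^3$ already lies in $\p$ on $\p^{\ell-e}$ and one is integrating a pure quadratic character; in the borderline case $\ell = 2$, $e = 1$ however $\p^{\ell - e} = \p$ and $\abs{bu^3}$ can be as large as $1$, so one must further split $u = u_0 + w$ with $u_0 \in \p/\p^2$, $w \in \p^2$ and check that the expansion of $bu^3$ contributes trivially outside the Gauss sum $\sq$. Once this accounting is complete, the concluding identities $\Cu = q^{-1} + \Cu_0$ and $\Cu = \Cu_0$ follow from $\Cu(a,b) - \Cu_0(a,b) = \int_\p \psi(ax + bx^3)\,dx = q^{-1}\Cu(\varpi a, \varpi^3 b)$ by applying the preceding case analysis to $(\varpi a, \varpi^3 b)$.
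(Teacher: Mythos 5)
Your proposal is correct and follows essentially the same route as the paper's proof: decompose $\O$ into cosets of $\p^\ell$ where $\abs{a}=q^{2\ell-e}$, read off the stationarity condition $a+3bx_0^2\in\p^{-\ell}$ (whose failure gives the vanishing cases), solve it using $-3=(1+2\rho)^2$ and Hensel's lemma, and collect the $\sq$-factor from the quadratic term at the two critical points. The one ``obstacle'' you single out ($\ell=2$, $e=1$) is not actually one: for $u\in\p^{\ell-e}$ we have $\abs{bu^3}\le q^{-\ell+2e}\le 1$, and since $\psi$ has conductor $\O$ the cubic term already contributes trivially, so no further splitting of $u$ is needed.
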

\begin{remark}
The lemma excludes the computation of $\Cu(a,b)$ when $\max(\abs{a},q)<\abs{b}$. In fact, in this case we have
\[
\Cu(a,b)=\begin{cases} 
q^{-\ell}\sum_{x\in \p^{\lfloor \frac{\ell+1-e}2\rfloor}/\p^\ell} \psi(ax+bx^3) & \abs{a}\le q^\ell \\
0 & \text{otherwise}
\end{cases}
\] 
where $\ell\ge 1$ and $e\in\{0,1\}$ are such that $\abs{b}=q^{2\ell-e}$.
However, we omit the proof for this case since it is never used in our work. 
\end{remark}

\begin{proof}
The two formulas are straightforward when $\max(\abs{a},\abs{b})\le q$. Assume now that $\max(\abs{b},q^2)\le \abs{a}$ and let $\ell\ge 1$ and $e\in\{0,1\}$ be such that $\abs{a}=q^{2\ell-e}$. 
Since $b\p^{2\ell}\subseteq \O$, for $y\in \O$ we have
\[
\int_{y+\p^\ell}\psi(ax+bx^3)\ dx=\psi(ay+by^3)\int_{\p^\ell}\psi((a+3by^2)z)\ dz=\begin{cases} 
q^{-\ell}\psi(ay+by^3) & a+3by^2\in\p^{-\ell} \\
0 & \text{otherwise.}
\end{cases}
\]
It follows that
\[
\Cu(a,b)=q^{-\ell}\sum_{\substack{x\in \O/\p^\ell\\ a+3bx^2\in \p^{-\ell}}}\psi(ax+bx^3)\ \ \  \text{and}\ \ \  
\Cu_0(a,b)=q^{-\ell}\sum_{\substack{u\in \O^*/(1+\p^\ell)\\a+3bu^2\in \p^{-\ell}}}\psi(au+bu^3). 
\]

Note that for $y\in \O$ we have $a+3by^2\in\p^{-\ell}$ if and only if $\varpi^{2\ell-e}(a+3by^2)\in\p^{\ell-e}$ and that by assumption $\p^{\ell-e}\subseteq \p$ while $\varpi^{2\ell-e}a\in \O^*$. Consequently, if $y\in \O$ is such that $a+3by^2\in\p^{-\ell}$ then $|a|=|b|$ and $y\in \O^*$. It follows that indeed in this case $\Cu(a,b)=\Cu_0(a,b)$.
Note that $-3=(1+2\rho)^2\in\O^*$. It further follows from Hensel's lemma that there exists $y\in\O^*$ such that $a+3by^2\in\p^{-\ell}$ if and only if $ab^{-1}\in\O^{*2}$. 
The vanishing of $\Cu(a,b)$ when $\max(\abs{b},q)<\abs{a}$ and when $\abs{a}=\abs{b}>q$ and $ab^{-1}\not\in\O^{*2}$ follows. 

Assume now that $\abs{a}=\abs{b}>q$ and $ab^{-1}\in\O^{*2}$ and let
$v_0\in\O^*$ be such that $v_0^2=-3 ab^{-1}$. Then for $y\in\O$ we have $a+3by^2\in\p^{-\ell}$ if and only if $y\in \pm \frac13v_0+\p^{\ell-e}$.

Since $b\p^{3(\ell-e)}\subseteq \O$, for $z\in \p^{\ell-e}$ and $x=\pm (\frac13v_0+z)$ we have
\[
ax+bx^3\in \pm (\frac29av_0+bv_0z^2)+\O.
\]
Consequently,
\[
\Cu(a,b)=q^{-\ell}\sum_{\epsilon=\pm 1}\psi(\epsilon\frac29av_0) \sq(\epsilon bv_0).
\]
Since by assumption $\frac19 ab^{-1}\in F^{*2}$ it follows from \eqref{eq sq pr} that $\sq(\epsilon bv_0)=\sq(\epsilon \frac19 av_0)$ and the lemma follows. 
\end{proof}

Once again, the proof above may be regarded as a use of the method of stationary phase.

For future reference, for $\ell\in \Z$ let
\[
\Cu_\ell(a,b)=\int_{\val(x)=\ell} \psi(ax+bx^3)\ dx.
\]
The change of variables $x\mapsto t x$ shows that 
\begin{equation}\label{eq cu l}
\Cu_\ell(a,b)=\abs{t}\Cu_{\ell-\val(t)}(t a,t^3b),\ \ \ t\in F^*.
\end{equation}
As an immediate consequence of Lemma \ref{lem cobic exp} we record here that
\begin{equation}\label{eq cu l is zero}
\Cu_\ell(a,b)=0\ \ \ \text{whenever} \ \ \ \max(q^{-\ell}\abs{a},q^{-3\ell}\abs{b})>q\ \ \ \text{and} \ \ \ \abs{a^{-1}b}\ne q^{2\ell}.
\end{equation}

\subsection{The comparison of cubic exponential integrals and Kloosterman integrals}\label{cubic-comparison-123}
Duke and Iwaniec \cite{MR1210520} established the following identity between cubic exponential and Kloosterman sums.
Let $\chi$ be an order three character of $k_F^*$ ($\chi$ and $\chi^{-1}$ are the only such characters) and $\xi$ a non-trivial character of $k_F$. Then
\begin{equation}\label{eq di}
\sum_{x\in k_F} \xi(x+ax^3)= \chi(a)^{-1}\sum_{u\in k_F^*}\chi(u)\xi(u-3^{-3}a^{-1}u^{-1}).
\end{equation}
This identity, together with the work above which may be regarded as an application of the method of stationary phase, gives
the following key comparison of cubic exponential and Kloosterman sums.
\begin{corollary}\label{cor di}
We have
\[
\Cu(a,-3^{-3}c^{-1}d^{-1}a^3)=(t,c^{-1}d)_3\Kl(t;c,d)
\]
whenever
\begin{itemize}
\item either $\abs{a}=\abs{c}=\abs{d}=q$ and $3\nmid \val(t)$
\item or $\abs{a}=\abs{c}=\abs{d}>q$.
\end{itemize}
\begin{proof}
If $\abs{a}=\abs{c}=\abs{d}=q$ then
\[
\Cu(a,-3^{-3}c^{-1}d^{-1}a^3)=q^{-1}\sum_{x\in k_F}\psi(ax-3^{-3}c^{-1}d^{-1}a^3x^3) 
\]
while
\[
\Kl(t;c,d)=q^{-1}\sum_{u\in k_F^*}(t,u)_3\psi(cu+du^{-1}).
\]
If in addition $3\nmid \val(t)$ then the character $(t,\cdot)_3$ is not trivial on $\O^*$ and we apply \eqref{eq di} with 
\[
\xi(x+\p)=\psi(ax)\ \ \ x\in \O \ \ \  \text{and} \ \ \ \chi(u+\p)=(t,u)_3, \ \ \ u\in\O^*
\]
to obtain
\[
\Cu(a,-3^{-3}c^{-1}d^{-1}a^3)=q^{-1}(t,-3^3cda^{-2})_3\sum_{u\in k_F^*}(t,u)_3\xi(u+cda^{-2}u^{-1}). 
\]
Note that 
\[
(t,-3^3cda^{-2})_3(t,a^{-1}c)_3=(t,c^{-1}d)_3
\]
and therefore the change of variables $u\mapsto a^{-1}cu$ proves the desired identity. 

If $\abs{a}=\abs{c}=\abs{d}>q$ the lemma follows by comparing the formulas in Lemmas \ref{lem kl int} and \ref{lem cobic exp}.
\end{proof}
\end{corollary}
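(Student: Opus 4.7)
The plan is a two-case analysis, using the explicit evaluations in Lemmas~\ref{lem kl int} and~\ref{lem cobic exp} as the two endpoints and the Duke-Iwaniec identity~\eqref{eq di} as the bridge between cubic exponential sums over $k_F$ and Kloosterman sums twisted by a cubic character.

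For the first case $\abs{a}=\abs{c}=\abs{d}=q$ with $3\nmid\val(t)$, I would first note that $b' := -3^{-3}c^{-1}d^{-1}a^3$ satisfies $\abs{b'}=q$, so the third case of Lemma~\ref{lem cobic exp} gives $\Cu(a,b')=q^{-1}\sum_{x\in k_F}\psi(ax+b'x^3)$, while the second case of Lemma~\ref{lem kl int} gives $\Kl(t;c,d)=q^{-1}\sum_{u\in k_F^*}(t,u)_3\,\psi(cu+du^{-1})$. To connect the two, I would apply~\eqref{eq di} with $\xi(\bar x):=\psi(ax)$ and $\chi(\bar u):=(t,u)_3$. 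Both are well-defined characters of $k_F$ and $k_F^*$: $\xi$ because $\abs{a}=q$ matches the conductor of $\psi$, and $\chi$ because $1+\p\subset F^{*3}$; moreover $\chi$ is non-trivial of order $3$ exactly because $3\nmid\val(t)$. Matching the coefficient of $x^3$ forces the Duke-Iwaniec parameter to be $\alpha:=-3^{-3}c^{-1}d^{-1}a^2$, whence $-3^{-3}\alpha^{-1}=cda^{-2}$.

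With this substitution, identity~\eqref{eq di} rewrites the $\Cu$ sum as $(t,-3^3cda^{-2})_3\sum_{u\in k_F^*}(t,u)_3\,\psi(au+cda^{-1}u^{-1})$. A change of variables $u\mapsto (c/a)u$ turns the inner exponential into the Kloosterman form $\psi(cu+du^{-1})$, at the cost of an extra factor $(t,ca^{-1})_3$. Combining the two Hilbert symbol factors yields $(t,-3^3c^2da^{-3})_3$, which collapses to $(t,c^{-1}d)_3$ after using $(t,-1)_3=(t,3^3)_3=(t,a^{-3})_3=1$ (each argument lies in $F^{*3}$, noting that $-1=(-1)^3$). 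This matches the desired right-hand side.

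For the second case $\abs{a}=\abs{c}=\abs{d}>q$, I would compare the stationary-phase evaluations directly. The key preliminary observation is that $-27\in\O^{*2}$, which is already implicit in Lemma~\ref{lem cobic exp}'s use of $-3=(1+2\rho)^2$, giving $-27=(-3)\cdot 9\in\O^{*2}$. Therefore the two square-class conditions $ab^{-1}=-3^3cda^{-2}\in\O^{*2}$ (required for $\Cu\ne 0$) and $c^{-1}d\in\O^{*2}$ (required for $\Kl\ne 0$) are equivalent, since they differ by the square factor $-27\cdot(c/a)^2$. In the common vanishing case both sides are zero, and in the non-vanishing case a consistent choice of square roots gives $\sqrt{-3ab^{-1}}=9\sqrt{cd}/a$, so $\tfrac29 a\sqrt{-3ab^{-1}}=2\sqrt{cd}$ and $\tfrac19 a\sqrt{-3ab^{-1}}=\sqrt{cd}$; the two stationary-phase sums then agree term by term, the prefactors $q^{-\lfloor(1-\val(a))/2\rfloor}$ and $q^{-\lfloor(1-\val(c))/2\rfloor}$ coincide because $\val(a)=\val(c)$, and the residual cubic Hilbert factor $(t,cd^{-1})_3$ from $\Kl$ is exactly cancelled by the desired prefactor via $(t,c^{-1}d)_3(t,cd^{-1})_3=1$. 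The main obstacle is not analytic but combinatorial: tracking the Hilbert symbol factors produced by Duke-Iwaniec and by the change of variables in the first case, and ensuring they collapse to exactly $(t,c^{-1}d)_3$ using multiplicativity and the triviality of $(t,\cdot)_3$ on $F^{*3}$.
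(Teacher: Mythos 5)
Your proof is correct and follows essentially the same route as the paper: in the first case it applies the Duke--Iwaniec identity with the same choice of $\xi$, $\chi$ and parameter $\alpha=-3^{-3}c^{-1}d^{-1}a^2$, followed by the same change of variables $u\mapsto a^{-1}cu$, and in the second case it compares the stationary-phase evaluations of Lemmas~\ref{lem kl int} and~\ref{lem cobic exp} (which the paper leaves as a one-line remark but you correctly flesh out, including the key observation that $-27\in\O^{*2}$ makes the two square-class conditions equivalent). The only cosmetic omission is that collapsing $(t,-3^3c^2da^{-3})_3$ to $(t,c^{-1}d)_3$ also uses $(t,c^3)_3=1$ alongside the cube factors you list.
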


\subsection{An additional computation involving cubic exponential integrals}\label{cubic-additional-123}
We present one additional computation involving a cubic exponential integral.
We will use this evaluation when an orbital integral involves Kloosterman integrals without a cubic character -- that is, integrals $\Kl(t;c,d)$ with 
$\abs{c}=\abs{d}=q$ but with $3\mid \val(t)$ -- 
and so the result of Duke-Iwaniec is not applicable. 

\begin{lemma}\label{lem i=1 3val}
For $\abs{a}=\abs{b}\le q^{-3}$ we have
\[
3+\abs{a}^{-1}\sum_{k=0}^2\Cu_{\val(a)-1}(b^{-1}+a^{-1}\rho^k,-3^{-3}a^{-1}b^{-1})=\begin{cases} q& -(ab^{-1})^3\in1+\p \\ 0 & -(ab^{-1})^3\not\in1+\p.\end{cases}
\]
\end{lemma}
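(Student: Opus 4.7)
The plan is to reduce each $\Cu_{\val(a)-1}$ to a $\Cu_0$ via the scaling formula \eqref{eq cu l}, then directly apply the case-by-case evaluation of $\Cu_0$ from Lemma \ref{lem cobic exp}. Since $|a|=|b|\le q^{-3}$, set $\ell=\val(a)=\val(b)\ge 3$ and write $\alpha=\varpi^\ell/a$, $\beta=\varpi^\ell/b$, which are units in $\O^*$. Applying \eqref{eq cu l} with $t=\varpi^{\ell-1}$ yields
\[
\Cu_{\ell-1}\!\bigl(b^{-1}+a^{-1}\rho^k,\,-3^{-3}a^{-1}b^{-1}\bigr)=q^{1-\ell}\,\Cu_0\!\bigl(A_k,\,B\bigr),
\]
where $A_k=\varpi^{-1}(\beta+\alpha\rho^k)$ and $B=-3^{-3}\varpi^{\ell-3}\alpha\beta$. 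Combining with the prefactor $|a|^{-1}=q^\ell$ produces a clean overall factor $q$, so the left-hand side of the lemma equals $3+q\sum_{k=0}^{2}\Cu_0(A_k,B)$.

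Next I would evaluate $\Cu_0(A_k,B)$ using Lemma \ref{lem cobic exp}. Because $\ell\ge 3$, the valuation of $B$ is non-negative, so $|B|\le 1$. The size of $A_k$ is governed by whether $\beta+\alpha\rho^k$ lies in $\p$: if it does then $|A_k|\le 1$ and we are in the first case of Lemma \ref{lem cobic exp}, giving $\Cu_0(A_k,B)=1-q^{-1}$; if it does not then $|A_k|=q>|B|$ and we are in the second case, giving $\Cu_0(A_k,B)=-q^{-1}$.

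The last step is to count the number of $k\in\{0,1,2\}$ for which $\beta+\alpha\rho^k\in\p$, equivalently for which $\overline{a/b}=-\rho^k$ in $k_F^*$ (using $\beta/\alpha=a/b$). Since $p>3$, the images of $1,\rho,\rho^2$ in $k_F^*$ are distinct, so $-1,-\rho,-\rho^2$ are the three distinct cube roots of $-1$ in $k_F^*$. Hence the count is $1$ when $(a/b)^3\equiv -1\pmod\p$, i.e.\ when $-(ab^{-1})^3\in 1+\p$, and $0$ otherwise. Plugging in: in the first case $\sum_k\Cu_0(A_k,B)=(1-q^{-1})+2(-q^{-1})=1-3q^{-1}$, so $3+q(1-3q^{-1})=q$; in the second case $\sum_k\Cu_0(A_k,B)=-3q^{-1}$, so $3+q(-3q^{-1})=0$, matching the claim.

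No step looks genuinely hard, but the one requiring the most care is the counting in the third paragraph: one must verify that exactly one cube root of $-1$ hits the residue class of $a/b$ when $-(ab^{-1})^3\in 1+\p$, and this is where the hypothesis $p>3$ is used (without it the three cube roots would collapse and the count, as well as the right-hand side, would change). The calculational work is otherwise routine bookkeeping of valuations and a direct appeal to Lemma \ref{lem cobic exp}.
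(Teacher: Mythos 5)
Your proof is correct and follows essentially the same route as the paper: rescale via \eqref{eq cu l} to reduce each term to $q\,\Cu_0(\varpi^{-1}(\text{unit}),\,\text{integral cubic coefficient})$, evaluate with Lemma \ref{lem cobic exp}, and count the $k$ with $-ab^{-1}\equiv\rho^k \pmod{\p}$ (the paper uses $t=\varpi^{-1}a$ where you use $t=\varpi^{\ell-1}$, a cosmetic difference). The bookkeeping and the final case count agree with the paper's proof.
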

\begin{proof}
Note that by \eqref{eq cu l} with $t=\varpi^{-1}a$ we have
\[
\abs{a}^{-1}\Cu_{\val(a)-1}(b^{-1}+a^{-1}\rho^k,-3^{-3}a^{-1}b^{-1})=q\,\Cu_0(\varpi^{-1}(ab^{-1}+\rho^k),-3^{-3}\varpi^{-3}a^2b^{-1})
\]
and by assumption $-3^{-3}\varpi^{-3}a^2b^{-1}\in\O$ so that by Lemma \ref{lem cobic exp} we have 
\[
q\,\Cu_0(\varpi^{-1}(ab^{-1}+\rho^k),-3^{-3}\varpi^{-3}a^2b^{-1})=\begin{cases} q-1 & -ab^{-1}\in\rho^k+\p \\ -1 & -ab^{-1}\not\in\rho^k+\p.\end{cases}
\]
Since $ -ab^{-1}\in\rho^k+\p$ for at most one $k\in\{0,1,2\}$ and this is the case if and only if $(-ab^{-1})^3\in 1+\p$ the lemma follows.
\end{proof}

\section{The integral \texorpdfstring{$J(a,b)$}~}{}\label{The-integral-J(a,b)}

In this section we establish the following formula for $J(a,b)$.
\begin{proposition}\label{prop J formula}
For $\abs{b}\le \abs{a}$ we have
\[
(a,b)_3\, J(a,b)=
\]
\[
\begin{cases}
1 & \abs{a}=\abs{b}=1 \\
2q & \abs{a}=\abs{b}=q^{-1} \\
3\abs{a}^{-1}+\abs{ab}^{-1}\sum_{\ell=\lfloor\frac{\val(a)+1}2\rfloor}^{\val(a)-1} \sum_{k=0}^2 \Cu_\ell(b^{-1}+\rho^k a^{-1},-3^{-3}a^{-1}b^{-1}) & \abs{a}=\abs{b} \le q^{-2} \\
\abs{b}^{-1}\Cu(b^{-1},-3^{-3}a^{-1}b^{-1}) & \abs{b}< \abs{a}=1 \\
\abs{ab}^{-1}\sum_{k=0}^2 \Cu_{\frac{\val(a)}2}(b^{-1}+\rho^k a^{-1},-3^{-3}a^{-1}b^{-1}) & \abs{b}<\abs{a}<1 \text{ and }2|\val(a) \\
0 &  \abs{b}<\abs{a}<1 \text{ and }2\nmid \val(a) \\
0 & \abs{a}>1.
\end{cases}
\]
Furthermore,
\begin{equation}\label{eq J bar fe}
J(b,a)=\overline{J(a,b)}, \ \ \ a,\, b\in F^*.
\end{equation}
\end{proposition}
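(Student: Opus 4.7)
The plan is to carry out an explicit Bruhat/Iwasawa decomposition of $h = u_1 g_{a,b} u_2$, compute the metaplectic splitting $\kappa(h)$ using the Bump--Hoffstein algorithm \cite{MR904946}, and rewrite the resulting one-dimensional cubic Kloosterman integral in the cubic exponential form of the proposition via Duke--Iwaniec (Corollary~\ref{cor di}) and Lemma~\ref{lem i=1 3val}. Writing $u_i = \left(\begin{smallmatrix} 1 & x_i & z_i \\ & 1 & y_i \\ & & 1\end{smallmatrix}\right)$ and expanding the matrix product, the bottom row of $h$ is $(a, ax_2, az_2)$, forcing $a \in \O$ (so $J(a,b) = 0$ for $\abs{a} > 1$) and $x_2, z_2 \in a^{-1}\O$; the first column similarly gives $z_1, y_1 \in a^{-1}\O$; and the remaining entries impose linear constraints tying $x_1, y_2$ to $a, b$. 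After these changes of variables, the character $\psi(u_1 u_2) = \psi(x_1 + x_2 + y_1 + y_2)$ is trivial on all variables except one, leaving a single oscillatory integration of Kloosterman type weighted by a cubic character arising from $\kappa(h)$.

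For the computation of $\kappa(h)$, the Bump--Hoffstein algorithm expresses the splitting on $K$ as a product of cubic Hilbert symbols in the ``pivots'' of a Gaussian-elimination reduction of $h$. Since $p > 3$, the restriction of $\kappa$ to $N(\O)\subset K$ is trivial, so the splitting depends essentially only on the ``torus-Weyl'' part of the Bruhat decomposition of $h$. After bookkeeping, the integrand becomes (a constant, times $(a,b)_3^{-1}$, times) a single Kloosterman integral $\Kl(t;\alpha,\beta)$ with $\alpha,\beta$ proportional to $b^{-1}, a^{-1}$ and a cubic character of $ab^{-1}$. The enumeration of cases in the proposition then matches the branches of Lemma~\ref{lem kl int}: $\abs{a} = \abs{b} = 1$ forces $g_{a,b}\in K$ and all variables in $\O$, giving $J(a,b) = 1$; $\abs{a} = \abs{b} = q^{-1}$ falls into the ``$\max = q$'' branch with $3 \mid \val(t)$, where Duke--Iwaniec is inapplicable and Lemma~\ref{lem i=1 3val} supplies the factor $2q$; for $\abs{a} = \abs{b} \le q^{-2}$ the ``small'' branch of Lemma~\ref{lem kl int} contributes the constant $3\abs{a}^{-1}$ while the ``larger'' branch, after Corollary~\ref{cor di}, produces the sum of $\Cu_\ell(b^{-1} + \rho^k a^{-1}, -3^{-3}a^{-1}b^{-1})$ with $k$ ranging over the three cube roots; for $\abs{b} < \abs{a} \le 1$ the Kloosterman integral degenerates and Lemma~\ref{lem cobic exp} directly yields the stated $\Cu$ or $\Cu_{\val(a)/2}$ expressions, with the parity constraint $2 \mid \val(a)$ mirroring the squareness condition $ab^{-1}\in\O^{*2}$ there.

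The functional equation \eqref{eq J bar fe} is proved independently and does not require the explicit formula. Substitute $u_i \mapsto u_i^{-1}$ (matrix inverse in $N$) in the integral defining $J(b,a)$: since $g_{b,a} = g_{a,b}^{-1}$ in $\SL_3(F)$, the product $u_1 g_{b,a} u_2$ becomes $(v_1 g_{a,b} v_2)^{-1}$ with $v_1 = u_2^{-1}$, $v_2 = u_1^{-1}$, and $\psi(u_1 u_2)$ becomes $\overline{\psi(v_1 v_2)}$. The spherical genuine function satisfies $f_0(\tilde h^{-1}) = \overline{f_0(\tilde h)}$: for $h \in K$ the splitting-homomorphism relation $\kappa(h)\kappa(h^{-1})\sigma(h,h^{-1}) = 1$ yields $(h,1)^{-1} = (h^{-1}, \kappa(h)\kappa(h^{-1}))$ and hence $f_0((h,1)^{-1}) = \kappa(h) = \overline{\kappa(h)^{-1}} = \overline{f_0((h,1))}$, while both sides vanish for $h \notin K$. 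Combining the substitution with this identity delivers \eqref{eq J bar fe}. The main obstacle is the Bump--Hoffstein step: the algorithm branches on which entries of the row-echelon reduction of $h$ land in $\O^*$ versus $\O\setminus\O^*$, and one must track the resulting cubic Hilbert symbols carefully enough to isolate the single factor $(a,b)_3$ and match the Kloosterman form so that Corollary~\ref{cor di} applies with the cubic character of $ab^{-1}$ expected by the statement.
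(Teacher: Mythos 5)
Your overall strategy --- compute $\kappa$ on the big cell via the Bump--Hoffstein algorithm and then convert Kloosterman-type integrals into cubic exponential integrals via Duke--Iwaniec --- is the right one, but your proposal underestimates the analytic structure of the main case, and this is a genuine gap. After integrating out the variables on which $\psi$ is effectively trivial, the orbital integral for $1>\abs{a}\ge\abs{b}$ does \emph{not} collapse to a single one-dimensional Kloosterman integral. One must first split $J=J_1+J_2+J_3$ according to whether $\abs{ay_1}=1$, $\abs{ax_2}=1$, or neither, because $\kappa(u_1g_{a,b}u_2)$ has a different expression in each regime (Lemma~\ref{lem kappa}); the pieces $J_1,J_2$ reduce to absolute values squared of cubic Gauss sums (Lemma~\ref{lem gauss}), and it is these, not Lemma~\ref{lem i=1 3val}, that produce the value $2q$ at $\abs{a}=\abs{b}=q^{-1}$. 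The remaining piece $J_3$ reduces (Lemma~\ref{lem j>0}) to a sum over $\ell$ of integrals over a residual parameter $y_2$ of a \emph{product of two} Kloosterman integrals $\Kl(\cdot)\,\Kl(\cdot)$. Corollary~\ref{cor di} must therefore be applied twice, and the resulting two-dimensional cubic exponential integral is supported on the locus $x^3-y^3\in a\p^{-j}$, whose stratification into the three cosets $y\in\rho^kx+\p^{m-2n}$ (Lemmas~\ref{lem gamma} and~\ref{lem G comp}) is exactly what produces the sum over $k=0,1,2$ of $\Cu_\ell(b^{-1}+\rho^ka^{-1},-3^{-3}a^{-1}b^{-1})$ in the statement. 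None of this is visible from a ``single oscillatory integral'' reduction; moreover the term $\ell=\val(a)-1$ with $3\mid\val(a)$, where Duke--Iwaniec does not apply, requires the separate evaluation of Lemma~\ref{lem j1 case2} recombined with Lemma~\ref{lem i=1 3val} in order to yield the uniform constant $3\abs{a}^{-1}$.

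There is also a smaller but real gap in your functional equation argument. What appears in $J(b,a)$ after the substitution $u_i\mapsto u_i^{-1}$ is $f_0((h^{-1},1))$ with $h=v_1g_{a,b}v_2$, whereas your identity $f_0(\tilde h^{-1})=\overline{f_0(\tilde h)}$ concerns $\tilde h^{-1}=(h^{-1},\sigma(h,h^{-1})^{-1})$; since $f_0$ is genuine these differ by the factor $\sigma(h,h^{-1})$. You therefore still need $\sigma(h,h^{-1})=1$ on the big cell, which does not follow from the splitting relation alone but from the explicit formula for $\sigma$ on monomial matrices together with $g_{a,b}^{-1}=g_{b,a}$, as in Lemma~\ref{lem: kappa inv}.
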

\subsection{Fixing coordinates}
By definition
\[
J(a,b)=\int_{A[a,b]} \kappa(u_1 g_{a,b} u_2)^{-1}\psi(u_1 u_2)\ du_1\ du_2
\]
where
\[
A[a,b]=\{(u_1,u_2)\in N\times N: u_1 g_{a,b} u_2\in K\}.
\]
We fix coordinates on $N\times N$ as follows.
In our computation we set 
\[
u_i=\begin{pmatrix} 1 & x_i & z_i \\ & 1 & y_i \\ & & 1 \end{pmatrix}\in N, \ \ \ i=1,2
\]
so that
\begin{equation}\label{eq open cell}
u_1g_{a,b}u_2=\begin{pmatrix} az_1 & az_1x_2-a^{-1}bx_1 & az_1z_2 -a^{-1}bx_1y_2+b^{-1}  \\ ay_1 & ay_1x_2 -a^{-1}b & ay_1z_2-a^{-1}by_2 \\ a & ax_2 & az_2\end{pmatrix}.
\end{equation}

We also recall that an element $g=(g_{i,j})\in \SL_3(F)$ satisfies $g\in K$ if and only if $g_{i,j}\in \O$ for all $1\le i,\,j\le 3$.

\subsection{A functional equation}
We observe the following properties on the big Bruhat cell.
\begin{lemma}\label{lem: kappa inv}
Let $u_1,u_2\in N$, $a,b\in F^*$ and  $g=u_1g_{a,b} u_2$. Then
\begin{itemize}
\item $\sigma(g,g^{-1})=1$.
\item Consequently, if in addition $g\in K$ then $\kappa(g^{-1})=\kappa(g)^{-1}=\overline{\kappa(g)}$.
\end{itemize}
\end{lemma}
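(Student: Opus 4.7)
The plan is to prove $\sigma(g,g^{-1})=1$ by a direct cocycle computation using the explicit formulas of Bump--Hoffstein \cite[\S 2]{MR904946}, and to derive the second assertion as a formal consequence. First I would record the factorization $g_{a,b}=tw_0$ where $t=\diag(b^{-1},-a^{-1}b,a)$ lies in the diagonal torus and $w_0$ is the long Weyl element of $\SL_3$. A direct check gives $g_{a,b}^{-1}=g_{b,a}$, so $g^{-1}=u_2^{-1}g_{b,a}u_1^{-1}$ again lies in the big Bruhat cell with ``torus data'' $(b,a)$ inverse to that of $g$.

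For the first claim, the key ingredients are: the splitting of $N$ (giving $\sigma(n_1,n_2)=1$ for $n_1,n_2\in N$), the expression of $\sigma$ on torus-times-Weyl elements in terms of cubic Hilbert symbols of the diagonal entries, and the cocycle identity $\sigma(x,yz)\sigma(y,z)=\sigma(xy,z)\sigma(x,y)$. Using these iteratively I would strip the unipotent factors: the outer pair $u_1,u_1^{-1}$ and the inner pair $u_2,u_2^{-1}$ should cancel, reducing $\sigma(g,g^{-1})$ to $\sigma(g_{a,b},g_{b,a})$. The Bump--Hoffstein recipe then expresses this last value as a product of cubic Hilbert symbols in $a,b,-a^{-1}b$; these pair up to $1$ using the basic identities $(x,-x)_3=1$, $(x,x)_3=1$, and $(x,y)_3(y,x)_3^{-1}=1$ recorded in Section~\ref{yummy-ingredients}. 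The main obstacle will be the bookkeeping for the stripping steps: the required invariance of $\sigma$ under left and right multiplication by unipotent factors is not fully automatic from $\sigma|_{N\times N}\equiv 1$ plus the cocycle identity, and must be justified by an explicit check using the Bump--Hoffstein formulas or an appeal to how the Matsumoto section interacts with the Iwasawa decomposition.

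For the second claim the derivation is formal. Since $K$ splits in $\widetilde\SL_3(F)$ via the homomorphism $g\mapsto(g,\kappa(g))$ with $\kappa(I_3)=1$, the product $(g,\kappa(g))\cdot(g^{-1},\kappa(g^{-1}))$ in the cover equals $(I_3,\kappa(g)\kappa(g^{-1})\sigma(g,g^{-1}))$, and must also equal $(I_3,1)$. Combining with the first part forces $\kappa(g^{-1})=\kappa(g)^{-1}$. The remaining equality $\kappa(g)^{-1}=\overline{\kappa(g)}$ holds because $\kappa(g)\in\mu_3\subset\C^*$ consists of cube roots of unity, and any such $\zeta$ satisfies $\overline\zeta=\zeta^{-1}$.
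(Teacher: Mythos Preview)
Your proposal is correct and takes essentially the same approach as the paper. The paper's proof is terser---it asserts $\sigma(g,g^{-1})=\sigma(g_{a,b},g_{b,a})=1$ citing the Bump--Hoffstein formulas for monomial matrices without spelling out the unipotent-stripping step---so your more detailed account of the cocycle bookkeeping (and your correct observation that one needs $\sigma(n,\cdot)=\sigma(\cdot,n)=1$ for $n\in N$, not merely $\sigma|_{N\times N}\equiv 1$) is a sound elaboration of what the paper leaves implicit.
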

\begin{proof}
The second part is immediate from the first since 
\[
1=\kappa(I_3)=\kappa(gg^{-1})=\kappa(g)\kappa(g^{-1})\sigma(g,g^{-1}). 
\]
For the first part note that $g_{a,b}^{-1}=g_{b,a}$ and
\[
\sigma(g,g^{-1})=\sigma(g_{a,b},g_{b,a})=1
\]
by the explicit formulas for $\sigma$ on monomial matrices in \cite[\S 2]{MR904946}. 
\end{proof}
The following consequence of the lemma reduces the computation of $J$ to the case $\abs{b}\le \abs{a}$.

\begin{corollary}\label{cor: Jfeq}
We have
\[
J(b,a)=\overline{J(a,b)},\ \ \ a,\, b\in F^*
\]
\end{corollary}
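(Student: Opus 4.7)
The plan is to deduce the functional equation directly from Lemma~\ref{lem: kappa inv} by a change of variables that sends $u_1 g_{a,b} u_2$ to its inverse, exploiting the identity $g_{a,b}^{-1}=g_{b,a}$. Starting from
\[
J(b,a)=\int_N\int_N f_0((v_1 g_{b,a} v_2,1))\,\psi(v_1 v_2)\,dv_1\,dv_2,
\]
I would substitute $v_1=u_2^{-1}$ and $v_2=u_1^{-1}$. Haar measure on $N$ is invariant under inversion (since $N$ is unipotent, hence unimodular), so the integral over $N\times N$ is preserved. Under this substitution
\[
v_1 g_{b,a} v_2 = u_2^{-1} g_{a,b}^{-1} u_1^{-1} = (u_1 g_{a,b} u_2)^{-1}.
\]

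Next I would track what happens to each factor of the integrand. For the $f_0$ factor: setting $g=u_1 g_{a,b} u_2$, the element $g$ lies in $K$ if and only if $g^{-1}$ does, and by the second bullet of Lemma~\ref{lem: kappa inv},
\[
f_0(g^{-1},1)=\kappa(g^{-1})^{-1}=\kappa(g)=\overline{\kappa(g)^{-1}}=\overline{f_0(g,1)}.
\]
For the character factor: with coordinates $u_i=\begin{pmatrix}1&x_i&z_i\\&1&y_i\\&&1\end{pmatrix}$, a direct matrix calculation gives that $v_1 v_2=u_2^{-1}u_1^{-1}$ has $(1,2)$ and $(2,3)$ entries equal to $-(x_1+x_2)$ and $-(y_1+y_2)$ respectively, so $\psi(v_1 v_2)=\overline{\psi(u_1 u_2)}$.

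Substituting these two computations back gives
\[
J(b,a)=\int_N\int_N \overline{f_0((u_1 g_{a,b} u_2,1))}\cdot\overline{\psi(u_1 u_2)}\,du_1\,du_2=\overline{J(a,b)},
\]
which is the desired identity. There is no real obstacle here: the proof is just a careful bookkeeping of how the change of variables $(u_1,u_2)\mapsto(u_2^{-1},u_1^{-1})$ conjugates both the covering-group contribution (via Lemma~\ref{lem: kappa inv}) and the additive character, so I would simply present the argument in one short paragraph.
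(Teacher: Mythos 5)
Your proposal is correct and follows essentially the same route as the paper: the change of variables $(u_1,u_2)\mapsto(u_2^{-1},u_1^{-1})$ combined with $g_{a,b}^{-1}=g_{b,a}$, the identity $\kappa(g^{-1})=\overline{\kappa(g)}$ from Lemma~\ref{lem: kappa inv}, and $\psi(u_2^{-1}u_1^{-1})=\overline{\psi(u_1u_2)}$. The bookkeeping of the $f_0$ and character factors is accurate, so nothing needs to be changed.
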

\begin{proof}
Since $g_{a,b}^{-1}=g_{b,a}$ the map $(u_1,u_2)\mapsto (u_2^{-1},u_1^{-1})$ maps $A[a,b]$ bijectively to $A[b,a]$. Since $\psi(u_2^{-1}u_1^{-1})=\overline{\psi(u_1u_2)}$ the Corollary is now immediate from Lemma \ref{lem: kappa inv}.
\end{proof}

\subsection{Computation of $\kappa$}
We provide a formula for $\kappa(g)$ for almost all $g$ in the intersection of $K$ with the big Bruhat cell (outside a measure zero set). 
Throughout the computation we freely use coordinates of $g$ as in \eqref{eq open cell}. 

The fact that $g\mapsto (g,\kappa(g))$ is a splitting of $K$ in $\widetilde\SL_3(F)$ is expressed by the equality
\[
\kappa(g_1 g_2)=\kappa(g_1)\kappa(g_2)\sigma(g_1,g_2),\ \ \ g_1,\,g_2\in K.
\]
Our computation uses properties of $\kappa$ which we recall below and an algorithm of Bump and Hoffstein for computing $\sigma(g_1,g_2)$. The algorithm is based on an explicit Bruhat decomposition of $g_1$ and it will be more convenient to write such a decomposition with factors in $\GL_3(F)$ rather than in $\SL_3(F)$.
For this reason we consider $\widetilde\SL_3(F)$ as a subgroup of the cubic cover $\widetilde\GL_3(F)$ of $\GL_3(F)$ constructed in Bump and Hoffstein \cite{MR904946} (following Matsumoto). We then freely use the algorithm for the computation of the two-cocycle $\sigma$ in \cite[\S 2]{MR904946}. In particular, in terms of the Bruhat decomposition, for $h\in \GL_3(F)$ we denote by $R(h)$ the unique 
monomial (i.e., scaled permutation) matrix such that $h\in NR(h)N$. Furthermore, we continue to denote by $\kappa$ an extension to a splitting of $\GL_3(\O)$ in $\widetilde\GL_3(F)$ and freely use the following properties of $\kappa$ (see \cite[\S1]{MR743816}):
\begin{itemize}
\item $\kappa$ is trivial on signed permutation matrices and on upper-triangular matrices in $\GL_3(\O)$,
\item $\kappa(\diag(g,1))=\kappa(\diag(1,g))=\kappa_2(g)$, $g\in \GL_2(\O)$ where
\[
\kappa_2(g)=\begin{cases} (c,d\det g^{-1})_3 & 0<\abs{c}<1 \\ 1 & \text{otherwise}\end{cases}, \ \ \ g=\begin{pmatrix} a & b \\ c & d \end{pmatrix}\in \GL_2(\O). 
\]
 
\end{itemize}
\begin{lemma}\label{lem k comp}
Let 
\[
g=\begin{pmatrix} 0 & a & b \\  u & 0 & 0 \\ x & c & d \end{pmatrix}\in K
\]
with $x,\,c\ne 0$. Then 
\[
\kappa(g)=(u,x)_3\kappa_2\sm{a}{b}{c}{d}=\begin{cases} (u,x)_3 & \abs{c}=1 \\ (c,d)_3(u,xc^{-1})_3 & \text{otherwise.}\end{cases}
\]
\end{lemma}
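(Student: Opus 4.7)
The plan is to factor $g$ into simpler pieces and apply the cocycle identity $\kappa(g_1 g_2) = \kappa(g_1)\kappa(g_2)\sigma(g_1,g_2)$ iteratively, using the listed properties of $\kappa$ together with the Bump--Hoffstein algorithm to evaluate the intermediate cocycles. First I would peel off the bottom-right $2\times 2$ block by writing $g = \tilde g \cdot \diag(1, M_2)$ where $M_2 = \sm{a}{b}{c}{d} \in \GL_2(\O)$. A direct matrix multiplication gives
\[
\tilde g = g \cdot \diag(1, M_2^{-1}) = \begin{pmatrix} 0 & 1 & 0 \\ u & 0 & 0 \\ x & 0 & 1 \end{pmatrix},
\]
depending only on $u$ and $x$; both factors lie in $\GL_3(\O)$. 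Since $\kappa(\diag(1, M_2)) = \kappa_2(M_2)$ by the listed property, it remains to show $\kappa(\tilde g) = (u, x)_3$ and that the cocycle $\sigma(\tilde g, \diag(1, M_2))$ is trivial.

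For $\kappa(\tilde g)$, I would factor further as $\tilde g = P_{12} P_{23} \cdot \diag\bigl(\sm{u}{0}{x}{1}, 1\bigr) \cdot P_{23}$, where $P_{ij}$ denotes the permutation matrix swapping $i$ and $j$. Since $\kappa$ is trivial on signed permutations and on upper triangular matrices in $\GL_3(\O)$, repeated application of the cocycle identity reduces $\kappa(\tilde g)$ to $\kappa_2\sm{u}{0}{x}{1}$ times a product of cocycles. A direct check from the definition of $\kappa_2$ yields $\kappa_2\sm{u}{0}{x}{1} = (u, x)_3$ in both cases: when $|x| = 1$ both sides equal $1$ since $\val u = 0$, and when $|x| < 1$ one rewrites $(x, u^{-1})_3 = (u, x)_3$ using bilinearity and the conjugate property of the Hilbert symbol. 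The remaining cocycles would be computed using the Bump--Hoffstein algorithm, and the specific structure of the factors (permutations together with a well-structured block-diagonal matrix) will make each resulting Hilbert symbol trivial.

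The case-by-case formula then follows by expanding $\kappa_2(M_2)$ when $|c| < 1$. In this case $a, d \in \O^*$ since $ad = -u^{-1} + bc$ with $bc \in \p$, and the relation $\det g = -u(ad - bc) = 1$ gives $ad - bc = -u^{-1}$. Using that $-1 = (-1)^3$ is a cube, so $(c, -1)_3 = 1$, we compute
\[
\kappa_2(M_2) = (c, d(ad - bc)^{-1})_3 = (c, -du)_3 = (c, d)_3 (c, u)_3.
\]
Combining this with the identity $(u, x)_3 (c, u)_3 = (u, xc^{-1})_3$ (from bilinearity and $(u,c)_3 = (c,u)_3^{-1}$) yields $(u, x)_3 \kappa_2(M_2) = (c, d)_3 (u, xc^{-1})_3$, as claimed.

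The main technical obstacle will be the systematic verification that each cocycle $\sigma$ produced in the decomposition is trivial. This requires writing out the Bruhat decomposition of each intermediate factor and carefully applying the Bump--Hoffstein algorithm term by term; the specific block structure of the factors in our decomposition should ensure that no nontrivial Hilbert symbol arises along the way, so that the entire contribution to $\kappa(g)$ is captured by the two factors $(u,x)_3$ and $\kappa_2(M_2)$.
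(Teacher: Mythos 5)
Your factorization $g=\tilde g\cdot\diag(1,M_2)$ with $\tilde g=P_{12}P_{23}\,\diag\bigl(\sm{u}{0}{x}{1},1\bigr)P_{23}$ is correct as a matrix identity, the identification $\kappa_2\sm{u}{0}{x}{1}=(u,x)_3$ is right in both cases, and your end-game (using $\det g=1$ to get $ad-bc=-u^{-1}$, hence $\kappa_2(M_2)=(c,d)_3(c,u)_3$ and then $(u,x)_3\kappa_2(M_2)=(c,d)_3(u,xc^{-1})_3$) is a clean and correct derivation of the two-case formula. For comparison, the paper factors $g=k\,s\,h$ with $k$ lower unipotent, $s=P_{12}$ and $h=\diag(u,M_2)$, so that the entire factor $(u,x)_3$ arises as the \emph{nontrivial} cocycle $\sigma(ks,h)$, whereas in your scheme it is produced by the $\kappa_2$ of the lower-triangular block and you need every connecting cocycle to vanish.

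That last point is where the genuine gap lies. The whole content of the lemma is the evaluation of the $2$-cocycles, and you have not computed any of them: you need $\sigma\bigl(P_{12}P_{23},\diag(\sm{u}{0}{x}{1},1)\bigr)$, $\sigma\bigl(P_{12}P_{23}\,\diag(\sm{u}{0}{x}{1},1),P_{23}\bigr)$ and $\sigma(\tilde g,\diag(1,M_2))$, and you assert without justification that each is trivial. There is concrete reason to distrust this blanket assertion: in the paper's closely parallel decomposition, the cocycle between a ``permutation times unipotent'' factor and a block-diagonal factor is $\sigma(ks,h)=(u,x)_3\neq 1$ in general, and the paper's Lemmas~\ref{lem k comp 32} and~\ref{lem k comp 22} likewise produce nontrivial symbols such as $(c,u^{-1}x)_3$ from cocycles of exactly this shape. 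Note also that consistency with the (true) statement of the lemma only forces the \emph{product} of your three cocycles to be $1$, not each factor individually, so even the weaker claim you actually need cannot be inferred from the expected answer; it must be extracted from the Bump--Hoffstein algorithm. Until those computations are carried out, the proposal is a plausible plan rather than a proof.
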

\begin{proof}
Note that $g=ksh$ with
\[
k=\begin{pmatrix}  1 &  & \\  & 1 &  \\  & u^{-1}x & 1 \end{pmatrix}, \ \ \ s=\begin{pmatrix} & 1 & \\ 1 & & \\  & & 1 \end{pmatrix}, \ \ \ h=\begin{pmatrix} u & 0 & 0 \\  0 & a & b \\ 0 & c & d \end{pmatrix} \in K
\]
and therefore $\kappa(g)=\kappa(ks)\kappa(h)\sigma(ks,h)$.
Clearly, $\kappa(h)=\kappa_2\sm{a}{b}{c}{d}$ and $\kappa(ks)=\sigma(k,s)$. 
We compute that $\sigma(k,s)=1$ and $\sigma(ks,h)=(u,x)_3$ using the algorithm in \cite[\S 2]{MR904946}.
\end{proof}

\begin{lemma}\label{lem k comp 32}
Let 
\[
g=\begin{pmatrix}  a &0 & b \\ c & 0 &  d\\  x & u & 0  \end{pmatrix}\in K
\]
with $x,\,c\ne 0$. Then 
\[
\kappa(g)=\kappa_2\sm{a}{b}{c}{d}(c,u^{-1}x)_3=\begin{cases} (c,dx)_3 & \abs{c}<1 \\ (c,x)_3 & \abs{c}=1. \end{cases}
\]
\end{lemma}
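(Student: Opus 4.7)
I would mirror the proof of Lemma~\ref{lem k comp}. Set $\Delta := ad-bc$; expanding $\det g$ along the second column (which has only the entry $u$ in row $3$) yields $\det g = -u\Delta$, and since $g \in K$ we have $\Delta = -u^{-1} \in \O^*$, so $A := \sm{a}{b}{c}{d} \in \GL_2(\O)$. A direct verification gives the factorization
\[
g \;=\; k\,h\,s,\qquad
k = \begin{pmatrix} 1 & 0 & 0 \\ 0 & 1 & 0 \\ xd/\Delta & -xb/\Delta & 1 \end{pmatrix},\quad
h = \begin{pmatrix} A & 0 \\ 0 & u \end{pmatrix},\quad
s = \begin{pmatrix} 1 & 0 & 0 \\ 0 & 0 & 1 \\ 0 & 1 & 0 \end{pmatrix}.
\]
Since $x, b, d \in \O$ and $\Delta \in \O^*$, $k$ is lower unipotent in $K$, so $\kappa(k) = 1$; the permutation $s$ is signed, so $\kappa(s) = 1$; and writing $h = \diag(A,1)\cdot\diag(I_2, u)$ and using the splitting properties (exactly as for the analogous $h = \diag(u, A)$ in the proof of Lemma~\ref{lem k comp}) gives $\kappa(h) = \kappa_2(A)$.

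Applying the cocycle identity $\kappa(g_1 g_2) = \kappa(g_1)\kappa(g_2)\sigma(g_1,g_2)$ twice then yields $\kappa(g) = \kappa_2(A)\,\sigma(k,h)\,\sigma(kh,s)$, and the two remaining cocycles are evaluated by the algorithm of Bump and Hoffstein \cite[\S2]{MR904946}. The first, $\sigma(k,h)$, should equal $1$: since $k$ lies in the opposite unipotent radical and $h$ in the block Levi $\GL_2 \times \GL_1$, the algorithm contributes no nontrivial Hilbert symbols. The main computation is $\sigma(kh,s)$: one writes $kh$ in its explicit Bruhat decomposition and applies the Bump--Hoffstein formula for $\sigma(\cdot, s)$ at the simple transposition $s$, and the various Hilbert-symbol contributions collapse to the single factor $(c, u^{-1}x)_3$. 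This gives $\kappa(g) = \kappa_2(A)\,(c, u^{-1}x)_3$.

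The case-by-case formula now follows from the definition of $\kappa_2$. When $|c| = 1$ we have $\kappa_2(A) = 1$, and since $c \in \O^*$ the cubic Hilbert symbol paired with the unit $u^{-1}$ is trivial, giving $(c, u^{-1}x)_3 = (c, x)_3$. When $|c| < 1$ we have $\kappa_2(A) = (c, d\Delta^{-1})_3 = (c, -du)_3$, whence
\[
\kappa_2(A)\,(c, u^{-1}x)_3 \;=\; (c, -du)_3(c, u^{-1}x)_3 \;=\; (c, -dx)_3 \;=\; (c, dx)_3,
\]
where the last equality uses $-1 \in F^{*3}$: under the standing hypotheses $p > 3$ and $\mu_3 \subset F$, $q$ is odd and $3 \mid q-1$, so $(q-1)/3$ is an even integer, making $-1$ a cube in $k_F^*$ and hence (by Hensel) in $\O^*$. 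The main obstacle is the explicit Bump--Hoffstein computation of $\sigma(kh, s)$: one must write down the Bruhat decomposition of the specific matrix $kh$ and carefully track the Hilbert symbol contributions produced by the algorithm, verifying that they collapse exactly to $(c, u^{-1}x)_3$.
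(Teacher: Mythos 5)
Your factorization $g=khs$ is correct (one checks $kh=\left(\begin{smallmatrix} a&b&0\\c&d&0\\x&0&u\end{smallmatrix}\right)$ and $\Delta=-u^{-1}\in\O^*$, so $k\in K$), and your final reduction from $\kappa_2(A)(c,u^{-1}x)_3$ to the two-case formula is fine (indeed $(c,-1)_3=1$ simply because $-1=(-1)^3$). But the entire content of the lemma is the explicit evaluation of the splitting and the $2$-cocycle, and that is exactly what your argument does not supply. Two of your intermediate claims do not follow from anything stated. First, $\kappa(k)=1$: the only listed properties of $\kappa$ are triviality on signed permutations and on \emph{upper}-triangular matrices of $\GL_3(\O)$, plus the $\kappa_2$ formula for the two embedded copies of $\GL_2(\O)$. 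Your $k$ is a lower unipotent with \emph{two} nonzero entries in the bottom row, which is covered by none of these; compare the proof of Lemma~\ref{lem k comp 22}, where the paper has to derive $\kappa(k')=1$ for a non-upper-triangular $k'$ by an indirect cocycle argument rather than quoting it. Second, $\sigma(k,h)=1$ "because $k$ is in the opposite unipotent radical and $h$ in the Levi" is not how the Bump--Hoffstein algorithm works: the algorithm is driven by the Bruhat decomposition of the \emph{first} argument, and your $k$ lies in a nontrivial Bruhat cell --- which cell depends on whether $b$ or $d$ vanishes (the lemma only assumes $x,c\ne0$) --- so the claim requires an actual, case-dependent computation. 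The same goes for $\sigma(kh,s)$, which you explicitly defer; its first argument $kh$ is a generic matrix, making this the hardest possible place to run the algorithm. Your claimed values are merely reverse-engineered to be consistent with the stated answer.

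The paper sidesteps all of this by factoring from the other side: $g=hsk$ with $h=\diag\!\left(\sm{a}{b}{c}{d},u\right)$, $s$ the $(2\,3)$-permutation, and $k$ the elementary lower unipotent with $(2,1)$-entry $u^{-1}x$. Then $\kappa(k)=\kappa_2\sm{1}{0}{u^{-1}x}{1}=1$ follows directly from the listed $\GL_2(\O)$-property, and the two cocycles to be evaluated, $\sigma(s,k)$ and $\sigma(h,sk)$, have as first arguments a permutation matrix and a Levi element, for which the Bump--Hoffstein recipe is immediate and uniform in $a,b,d$. If you want to keep your factorization you must (i) justify $\kappa(k)=1$ for a full lower-unipotent row, and (ii) actually run the algorithm on $\sigma(k,h)$ and $\sigma(kh,s)$, separately in the subcases $b=0$, $d=0$, $bd\ne0$; as it stands the proof has a genuine gap precisely at the step the lemma exists to establish.
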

\begin{proof}
Note that $g=hsk$ with
\[
h=\begin{pmatrix}  a & b & 0 \\ c & d &  0\\  0&0  & u \end{pmatrix}, \ \ \ s=\begin{pmatrix}  1 &0 &0 \\ 0 & 0& 1\\  0& 1 &0  \end{pmatrix}, \ \ \ k=\begin{pmatrix} 1 & 0 & 0 \\  u^{-1}x & 1 & 0 \\ 0 & 0 & 1 \end{pmatrix} \in K
\]
and therefore $\kappa(g)=\kappa(h)\kappa(sk)\sigma(h,sk)$.
Clearly, $\kappa(h)=\kappa_2\sm{a}{b}{c}{d}$ and $\kappa(sk)=\sigma(s,k)$. 
We compute that $\sigma(s,k)=1$ and $\sigma(h,sk)=(c,u^{-1}x)_3$ using the algorithm in \cite[\S 2]{MR904946}.
\end{proof}

\begin{lemma}\label{lem k comp 22}
Let 
\[
g=\begin{pmatrix}  a &0 & b \\ x & u &  0\\  c & y & d  \end{pmatrix}\in K
\]
with $u\in \O^*$ and  $x,\,c,\,xy-cu\ne 0$. Then 
\[
\kappa(g)=(x,c)_3(c-u^{-1}xy,u^{-1}x)_3  \kappa_2\sm{a}{b}{c-u^{-1}xy}{d} =\begin{cases} (x,c)_3(c-u^{-1}xy,xd)_3 & \abs{cu-xy}<1 \\  (x,c)_3(c-u^{-1}xy,x)_3 & \abs{cu-xy}=1. \end{cases}
\]
\end{lemma}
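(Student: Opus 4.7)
The plan is to reduce this lemma to Lemma~\ref{lem k comp 32} via the factorization $g = L\, s\, h$, where
\[
L=\begin{pmatrix} 1 & 0 & 0 \\ 0 & 1 & 0 \\ 0 & yu^{-1} & 1 \end{pmatrix}, \quad s=\begin{pmatrix} 1 & 0 & 0 \\ 0 & 0 & 1 \\ 0 & 1 & 0 \end{pmatrix}, \quad h=\begin{pmatrix} a & 0 & b \\ c-u^{-1}xy & 0 & d \\ x & u & 0 \end{pmatrix}.
\]
First I would verify $g = Lsh$ by direct $3\times 3$ matrix multiplication, the key identity being $yu^{-1}\cdot x + (c-u^{-1}xy) = c$ in the $(3,1)$-entry. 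This relies on $u\in \O^*$, which guarantees $yu^{-1},\, u^{-1}xy\in \O$, so that both $L$ and $h$ lie in $K$. Crucially, $h$ is exactly of the form handled by Lemma~\ref{lem k comp 32}: its $(3,2)$-entry is $u$, its $(1,2)$- and $(2,2)$-entries are $0$, and both $x$ and $c-u^{-1}xy$ are nonzero (the latter being equivalent to the hypothesis $xy-cu\ne 0$). That lemma therefore yields
\[
\kappa(h)=\kappa_2\!\sm{a}{b}{c-u^{-1}xy}{d}\cdot(c-u^{-1}xy,\, u^{-1}x)_3.
\]

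Next I would apply the cocycle identity twice to obtain $\kappa(g)=\kappa(L)\kappa(s)\kappa(h)\sigma(s,h)\sigma(L,sh)$. Since $L$ embeds as $\diag(1,\sm{1}{0}{yu^{-1}}{1})$ and $\kappa_2$ of this $2\times 2$ block is $1$ in either branch of the definition of $\kappa_2$, we have $\kappa(L)=1$; similarly $\kappa(s)=1$ as $s$ is a signed permutation. The remaining cocycles $\sigma(s,h)$ and $\sigma(L,sh)$ are then computed using the Bump--Hoffstein algorithm of \cite[\S 2]{MR904946}. By analogy with the $\sigma(s,k)=1$ step in the proof of Lemma~\ref{lem k comp 32}, where a simple reflection acted cleanly on a block matrix, one expects $\sigma(s,h)=1$. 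The nontrivial cocycle $\sigma(L,sh)$ should evaluate to $(x,c)_3$: left multiplication by $L$ performs an elementary row operation adding $yu^{-1}$ times row~$2$ of $sh$ (which begins with $x$) to row~$3$ (which begins with $c-u^{-1}xy$), thereby producing the $(3,1)$-entry $c$ of $g$, and the Matsumoto cocycle attached to this Bruhat-altering operation produces the Hilbert symbol $(x,c)_3$; this parallels the computation $\sigma(ks,h)=(u,x)_3$ in the proof of Lemma~\ref{lem k comp}.

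Assembling these ingredients yields
\[
\kappa(g)=(x,c)_3\,(c-u^{-1}xy,\, u^{-1}x)_3\,\kappa_2\!\sm{a}{b}{c-u^{-1}xy}{d},
\]
which is the first formula in the statement. The two-case simplification follows from the explicit formula for $\kappa_2$. When $|cu-xy|=1$, the entry $c-u^{-1}xy$ is a unit, so $\kappa_2=1$ and $(c-u^{-1}xy, u^{-1})_3=1$, giving $\kappa(g)=(x,c)_3(c-u^{-1}xy, x)_3$. When $|cu-xy|<1$, the identity $\det g=1$ (which is a $3\times 3$ determinant expansion giving $\det g = u\cdot \det\sm{a}{b}{c-u^{-1}xy}{d}$) forces $\det\sm{a}{b}{c-u^{-1}xy}{d}=u^{-1}$, whence $\kappa_2\sm{a}{b}{c-u^{-1}xy}{d}=(c-u^{-1}xy, du)_3$, and this combines with $(c-u^{-1}xy, u^{-1}x)_3$ to yield $(c-u^{-1}xy, xd)_3$.

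The principal obstacle is the explicit Bump--Hoffstein computation of $\sigma(L,sh)$: one must produce a Bruhat decomposition of $sh=\begin{pmatrix} a & 0 & b \\ x & u & 0 \\ c-u^{-1}xy & 0 & d \end{pmatrix}$, which has a nonstandard shape (nonzero entries at $(1,1), (1,3), (2,1), (2,2), (3,1), (3,3)$), and then track the Hilbert symbols produced when $L$ is multiplied on the left. The proofs of Lemmas~\ref{lem k comp} and~\ref{lem k comp 32} provide the template for this bookkeeping, but extra vigilance is needed because of the unit entry $u$ appearing in the middle row of $sh$.
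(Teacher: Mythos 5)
Your factorization $g=Lsh$ is correct, your bookkeeping $\kappa(g)=\kappa(L)\kappa(s)\kappa(h)\sigma(s,h)\sigma(L,sh)$ with $\kappa(L)=\kappa(s)=1$ is correct, and the two-case endgame (using $\det g=u\det\sm{a}{b}{c-u^{-1}xy}{d}=1$) checks out. The value you want for $\kappa(h)$ is also the right one: the paper obtains exactly this identity for your matrix $h$, which appears in its proof as the product of a determinant-one block-diagonal matrix with a permutation-unipotent factor. Note, however, that your $h$ has determinant $-1$, so it does not lie in $K=\SL_3(\O)$ and Lemma~\ref{lem k comp 32} as stated does not literally apply to it; one must either check that its first formula extends to $\GL_3(\O)$ or redo that short computation for $h$ directly.

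The genuine gap is that the two cocycle values $\sigma(s,h)=1$ and $\sigma(L,sh)=(x,c)_3$, which carry the entire content of the lemma, are guessed rather than computed, and the analogy offered for the first guess points the wrong way. In the proof of Lemma~\ref{lem k comp 32} the vanishing cocycle $\sigma(s,k)$ has $k$ a lower unipotent, i.e.\ a right factor in a very small Bruhat cell; your $h$ lies in the cell of the transposition $(13)$, the same cell as the matrix $k=\left(\begin{smallmatrix}a&0&b\\ c&y&d\\ x&u&0\end{smallmatrix}\right)$ in the paper's own proof of the present lemma, for which the analogous cocycle $\sigma(s,k)$ equals $(x,c)_3$, \emph{not} $1$. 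So the symbol $(x,c)_3$ may well be distributed between $\sigma(s,h)$ and $\sigma(L,sh)$ differently from what you assert, and since the target answer is known in advance, choosing the two unverified factors so that their product comes out right is circular. The clean repair — which is the paper's actual route — is to reorder your factorization as $g=s\,n\,h$ with $n=s^{-1}Ls=\left(\begin{smallmatrix}1&0&0\\ 0&1&u^{-1}y\\ 0&0&1\end{smallmatrix}\right)\in N\cap K$ an \emph{upper} unipotent: then the triviality of $\sigma$ against $N$ and the left $N\cap K$-invariance of $\kappa$ give $\kappa(g)=\kappa(nh)\,\sigma(s,nh)=\kappa(h)\,\sigma(s,nh)$, leaving a single cocycle to run through the Bump--Hoffstein algorithm, and that one does evaluate to $(x,c)_3$. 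By pulling the lower unipotent $L$ out on the left you forfeit this invariance and create two nontrivial cocycle computations instead of one.
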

\begin{proof} 
Note first that $g=sk$ where
\[
s=\begin{pmatrix}  1 &0 &0 \\ 0 & 0& 1\\  0& 1 &0  \end{pmatrix}, \ \ \ k=\begin{pmatrix} a & 0 & b \\  c & y &d \\ x & u & 0 \end{pmatrix} \in K
\]
so that $\kappa(g)=\kappa(s)\kappa(k)\sigma(s,k)$. Note that $\kappa(s)=1$ and we compute that
\[
\sigma(s,k)=(x,c)_3 
\]
using the algorithm in \cite[\S 2]{MR904946}.
In order to compute $\kappa(k)$ we note that $k=nhk'$ where
\[
n=\begin{pmatrix}  1 &0  & 0\\ 0 & 1 & u^{-1}y\\   0 &0 & 1 \end{pmatrix}\in N\cap K,\ \ \ h=\begin{pmatrix}   a & b & 0  \\ c-u^{-1}xy & d & 0 \\ 0 & 0 & u \end{pmatrix},  \ \ \ k'=\begin{pmatrix}  1 & 0 & 0  \\  0 & 0 & 1 \\ u^{-1}x  & 1 & 0 \end{pmatrix} \in K
\]
and therefore $\kappa(k)=\kappa(hk')=\kappa(h)\kappa(k')\sigma(h,k')$.
Clearly, $\kappa(h)=\kappa_2\sm{a}{b}{c-u^{-1}xy}{d}$ (note that $\det h=\det g=1$). Furthermore, it is easy to see that $\kappa(sk')=1$ and therefore also that $\kappa(k')=\sigma(s,sk')=1$. 
We further compute that 
\[
\sigma(h,k')=(c-u^{-1}xy,u^{-1}x)_3   
\] 
using the algorithm in \cite[\S 2]{MR904946}.
The lemma follows.
\end{proof}

\begin{lemma}\label{lem kappa}
Let $g=u_1g_{a,b}u_2\in K$ with $a,\,b\in F^*$ and $u_1,\,u_2\in N$. 
\begin{enumerate}
\item if $\abs{a}=1=\abs{b}$ then $\kappa(g)=1$.
\item\label{case 1=a>b} if $\abs{a}=1>\abs{b}$ then $\kappa(g)=(b,a)_3(a^{-1}b,y_2)_3$.
\item if $\abs{a},\,\abs{b}<1$ then at most one of $ay_1$ and $ax_2$ is in $\O^*$ and 
\begin{enumerate}
\item if $\abs{ay_1}=1$ then $\kappa(g)=(b,a)_3(y_1y_2,ab^{-1})_3(y_2,y_1)_3$
\item if $\abs{ax_2}=1$ then $\kappa(g)=(b,ax_2)_3(z_2x_2^{-1}-y_2,b^{-1}ax_2)_3$
\item if $\abs{ay_1},\,\abs{ax_2}<1$ then $ay_1x_2-a^{-1}b\in\O^*$ and if in addition $y_1\ne 0$ then 
\[
\kappa(g)=(b,a)_3(y_1,ab^{-1})_3(ay_1(x_2y_2-z_2),ay_1x_2-a^{-1}b)_3(b,x_2y_2-z_2)_3.
\]
\end{enumerate}
\end{enumerate}
\end{lemma}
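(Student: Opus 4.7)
The plan is to perform a case-by-case Bruhat-type decomposition of $g = u_1 g_{a,b} u_2$ and reduce the computation of $\kappa(g)$ to one of the canonical forms already handled in Lemmas \ref{lem k comp}, \ref{lem k comp 32}, or \ref{lem k comp 22}. The key identity is the cocycle relation $\kappa(g_1 g_2) = \kappa(g_1) \kappa(g_2) \sigma(g_1, g_2)$ combined with the fact that $\kappa$ is trivial on upper-triangular integral matrices; the $2$-cocycles $\sigma$ arising in the reduction are computed by the Bump--Hoffstein algorithm in \cite[\S 2]{MR904946}, as was done in the preceding lemmas.

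For Case (1) with $a, b \in \O^*$, direct inspection of the entries of $g$ in \eqref{eq open cell} shows that $u_1, u_2 \in N(\O)$ automatically: the bottom row forces $x_2, z_2 \in \O$, the $(2,1)$ entry forces $y_1 \in \O$, and the remaining entries successively yield $z_1, x_1, y_2 \in \O$. A short cocycle calculation then gives $\kappa(g) = \kappa(g_{a,b}) = 1$, since $g_{a,b}$ is a monomial matrix with unit entries. For Case (2) with $\abs{a} = 1 > \abs{b}$, similar inspection shows $x_2, z_1, z_2, y_1 \in \O$ while $x_1, y_2$ may have absolute value up to $\abs{b}^{-1}$. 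After suitable left and right multiplications by elements of $N(\O)$ that clear non-essential entries, one reduces $g$ to the shape of Lemma \ref{lem k comp}, and the factors $(b, a)_3$ and $(a^{-1}b, y_2)_3$ arise from $\kappa_2$ and the $(u, x)_3$ term in that lemma.

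For Case (3) with $\abs{a}, \abs{b} < 1$, I would first verify that at most one of $ay_1, ax_2$ can be a unit by a short valuation argument using $\abs{b} \le \abs{a}$: if both were units, then $\val(ay_1 x_2) = -\val(a)$, and since the $(2,2)$ entry $ay_1x_2 - a^{-1}b$ must lie in $\O$, this forces $\val(a^{-1}b) = -\val(a)$, contradicting $\abs{b} < 1$. The three sub-cases correspond to the three possible locations of a unit in the middle row or left column (since $\val(a) > 0$ forbids the $(3,1)$ position). In sub-case (c), where neither $ay_1$ nor $ax_2$ is a unit, the $(2,2)$ entry $ay_1x_2 - a^{-1}b$ must itself be a unit, for otherwise the first two columns of $g$ would be proportional modulo $\p$, contradicting $g \in K$. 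Sub-cases (a), (b), (c) are then handled by Lemmas \ref{lem k comp}, \ref{lem k comp 32}, and \ref{lem k comp 22} respectively, after routine $N(\O)$-reductions. The main obstacle is sub-case (c): the four Hilbert symbols in the final formula arise from combining the two symbols in Lemma \ref{lem k comp 22} with the $\sigma$-contributions of the preliminary row and column clearings, and the principal technical challenge is keeping careful track of these cocycle contributions via the Bump--Hoffstein algorithm.
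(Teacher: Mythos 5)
Your proposal follows essentially the same route as the paper: the identical valuation arguments for the preliminary claims in case (3) (both units impossible because the $(2,2)$-entry lies in $\O$; in sub-case (c) the $(2,2)$-entry must be a unit since otherwise $g$ would be singular mod $\p$), followed by reduction via multiplication by elements of $N\cap K$ to the canonical forms of Lemmas~\ref{lem k comp}, \ref{lem k comp 32}, \ref{lem k comp 22}, with the residual $2$-cocycles computed by the Bump--Hoffstein algorithm. One small correction: in case (2) the reduced matrix $\left(\begin{smallmatrix}0&*&*\\0&*&*\\ a&0&0\end{smallmatrix}\right)$ is \emph{not} of the shape of Lemma~\ref{lem k comp} (the underlying permutation is different), and the paper instead evaluates it directly through the factorization $\sm{}{g_0}{a}{}=\sm{g_0}{}{}{1}\sm{}{I_2}{a}{}$, whose cocycle is trivial, so that $\kappa(g)=\kappa_2(g_0)$.
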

\begin{proof}
If $\abs{a}=1=\abs{b}$ then $u_1,\,u_2\in K$ and therefore $\kappa(g)=\kappa(g_{a,b})=1$. If $\abs{a}=1>\abs{b}$ then $x_2,\,y_1,\,z_1,\,z_2\in \O$ and therefore 
$\kappa(g)=\kappa\sm{}{g_0}{a}{}$ where $g_0=\sm{-a^{-1}bx_1}{b^{-1}-x_1y_2a^{-1}b}{-a^{-1}b}{-a^{-1}by_2}$. Note that $\sm{}{g_0}{a}{}=\sm{g_0}{}{}{1}\sm{}{I_2}{a}{}$ and that $\sigma(\sm{g_0}{}{}{1},\sm{}{I_2}{a}{})=1$. Therefore 
\[
\kappa\sm{}{g_0}{a}{}=\kappa_2(g_0)=(-a^{-1}b,-by_2)_3=(b,a)_3(a^{-1}b,y_2)_3.
\]
For the rest of the proof assume that $\abs{a},\,\abs{b}<1$. Since the (2,2)-entry of $g$ is $ay_1x_2-a^{-1}b\in \O$ we cannot have both $ay_1$ and $ax_2$ in $\O^*$. 
If $\abs{ay_1}=1$ then
\[
k=\begin{pmatrix} 1 & \frac{a^{-1}b-y_1a x_2}{y_1a} & \frac{a^{-1}by_2-y_1z_2 a}{y_1a} \\ & 1 & \\ & & 1 \end{pmatrix} 
, \ \ \ k'=\begin{pmatrix}  1 & -\frac{z_1a}{y_1 a} & \\ & 1 & \\ & & 1\end{pmatrix}\in N\cap K,
\]
so that $\kappa(g)=\kappa(k'gk)$. Note that
\[
k'gk=\begin{pmatrix}    0 & \frac{(z_1-x_1y_1)b}{y_1a} & \frac{(z_1-x_1y_1)y_2b}{y_1a}+b^{-1} \\ y_1a & 0 &0 \\ a & \frac{b}{y_1a} & \frac{y_2b}{y_1a}   \end{pmatrix}
\]
and from Lemma \ref{lem k comp} it follows that
\[
\kappa(g)=(b(ay_1)^{-1},b(ay_1)^{-1}y_2)_3(ay_1,ab^{-1}ay_1)_3=(b,a)_3(y_1y_2,ab^{-1})_3(y_2,y_1)_3.
\]

If $\abs{ax_2}=1$ then
\[
k=\begin{pmatrix} 1 &  &  \\ & 1 & \frac{-z_2a}{x_2a} \\ & & 1 \end{pmatrix} 
, \ \ \ k'=\begin{pmatrix}  1 & & \frac{x_1a^{-1}b-z_1x_2a}{x_2a} \\ & 1 & \frac{a^{-1}b-y_1x_2a}{x_2a}\\ & & 1\end{pmatrix}\in N\cap K,
\]
so that $\kappa(g)=\kappa(k'gk)$. Note that
\[
k'gk=\begin{pmatrix}    x_1x_2^{-1}a^{-1} b & 0 & (z_2x_2^{-1}-y_2)x_1a^{-1}b+b^{-1} \\ x_2^{-1}a^{-1} b & 0 & (z_2x_2^{-1}-y_2)a^{-1}b \\ a & x_2 a & 0   \end{pmatrix}
\]
and from Lemma \ref{lem k comp 32} it follows that
\[
\kappa(g)=(x_2^{-1}a^{-1}b,(z_2x_2^{-1}-y_2)b)_3=(b,ax_2)_3(z_2x_2^{-1}-y_2,b^{-1}ax_2)_3.
\]
Finally, assume that $\abs{ax_2},\,\abs{ay_1}<1$. Any two by two minor of an element of $K$ must have an entry in $\O^*$. Since the (1,3)-minor of $g$ is $\sm{ay_1}{ay_1x_2-a^{-1}b}{a}{ax_2}$, the condition $g\in K$ implies that $ay_1x_2-a^{-1}b\in\O^*$. 
Therefore
\[
k=\begin{pmatrix} 1 &  &  \\ & 1 & \frac{a^{-1}by_2-y_1z_2 a}{y_1a x_2-a^{-1}b} \\ & & 1 \end{pmatrix} 
, \ \ \ k'=\begin{pmatrix}  1 & \frac{x_1a^{-1}b-z_1x_2a}{ y_1a x_2-a^{-1}b} & \\ & 1 & \\ & & 1\end{pmatrix}\in N\cap K
\]
and $\kappa(g)=\kappa(k'gk)$. Note that $k'gk$ has the form
\[
k'gk=\begin{pmatrix}    
* & 0 & *
\\ y_1a & y_1a x_2-a^{-1}b & 0 \\ a & x_2 a & \frac{(x_2y_2-z_2)b}{y_1a x_2-a^{-1}b} \end{pmatrix}.
\]
Assuming furthermore that $y_1\ne 0$ the rest of the computation follows from Lemma \ref{lem k comp 22}.
\end{proof}

\subsection{Evaluation of $J(a,b)$: some simple cases}
\subsubsection{The case $\max(\abs{a},\abs{b})>1$}
\begin{lemma}\label{lem J not in O}
If $\max(\abs{a},\abs{b})>1$ then $J(a,b)=0$.
\end{lemma}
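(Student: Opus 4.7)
The plan is to observe that the integrand vanishes identically outside the set $A[a,b]$ where $u_1 g_{a,b} u_2 \in K$, so it suffices to show $A[a,b] = \emptyset$ whenever $\max(\abs{a},\abs{b}) > 1$.

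First I would handle the case $\abs{a} > 1$ by reading off the $(3,1)$-entry of the explicit formula \eqref{eq open cell} for $u_1 g_{a,b} u_2$. This entry is exactly $a$, independent of the choice of $u_1, u_2 \in N$. If $\abs{a} > 1$ then $a \notin \O$, so $u_1 g_{a,b} u_2 \notin K$ for any $(u_1, u_2)$, hence $A[a,b] = \emptyset$ and $J(a,b) = 0$.

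For the case $\abs{b} > 1$, the cleanest approach is to invoke the functional equation $J(b,a) = \overline{J(a,b)}$ from Corollary~\ref{cor: Jfeq}. If $\abs{b} > 1$, then the previous paragraph applied to the pair $(b,a)$ gives $J(b,a) = 0$, and taking the complex conjugate yields $J(a,b) = 0$. Alternatively, one can argue directly: if $u_1 g_{a,b} u_2 \in K$ then its inverse $u_2^{-1} g_{b,a} u_1^{-1} \in K$ as well (since $g_{a,b}^{-1} = g_{b,a}$), and the $(3,1)$-entry of this inverse, again by \eqref{eq open cell} applied with the roles of $a$ and $b$ swapped, equals $b$; so $\abs{b} \le 1$ is forced.

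There is no substantive obstacle here — the claim is an immediate consequence of the explicit Bruhat coordinatization \eqref{eq open cell} together with the support condition on $f_0$.
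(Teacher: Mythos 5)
Your proposal is correct and is essentially the paper's argument: the paper reads off the two $N\times N$-invariants $\Delta_1(g)=g_{3,1}=a$ and $\Delta_2(g)=\det\sm{g_{2,1}}{g_{2,2}}{g_{3,1}}{g_{3,2}}=b$ on $Ng_{a,b}N$ and concludes $A[a,b]=\emptyset$ unless $a,b\in\O$. Your inverse trick is the same second invariant in disguise, since for $g\in\SL_3$ one has $(g^{-1})_{3,1}=\Delta_2(g)$, and the functional-equation route via Corollary~\ref{cor: Jfeq} is also valid (it is established earlier, so there is no circularity).
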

\begin{proof}
For $g=(g_{i,j})\in \SL_3(F)$ let $\Delta_1(g)=g_{3,1}$ and $\Delta_2(g)=\det \sm{g_{2,1}}{g_{2,2}}{g_{3,1}}{g_{3,2}}$. Note that $\Delta_1(g)=a$ and $\Delta_2(g)=b$ for every $g\in Ng_{a,b}N$. Consequently, $A[a,b]$ is empty and therefore $J(a,b)=0$ unless $a,b \in \O$.  
\end{proof}
\subsubsection{The case $\abs{a}=\abs{b}=1$}
\begin{lemma}\label{lem J ab units}
If $a,b\in \O^*$ then
\[
(a,b)_3\,J(a,b)=1.
\]
\end{lemma}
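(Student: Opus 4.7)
The plan is to reduce $J(a,b)$ to an elementary integral over $(N\cap K)\times(N\cap K)$. First, since $a,b\in\O^*$, the matrix $g_{a,b}$ itself lies in $K$. Inspecting the explicit coordinates of $u_1g_{a,b}u_2$ given in \eqref{eq open cell} and using that $a^{\pm 1}, b^{\pm 1}\in\O^*$, one sees that every matrix entry lies in $\O$ if and only if each of $x_1,y_1,z_1,x_2,y_2,z_2$ lies in $\O$. Thus $A[a,b]=(N\cap K)\times (N\cap K)$.

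Second, by case (1) of Lemma~\ref{lem kappa}, whenever $\abs{a}=\abs{b}=1$ and $(u_1,u_2)\in A[a,b]$, we have $\kappa(u_1g_{a,b}u_2)=1$. Therefore
\[
J(a,b)=\int_{N\cap K}\int_{N\cap K} \psi(u_1u_2)\,du_1\,du_2.
\]
Since $\psi$ is a character of $N$, $\psi(u_1u_2)=\psi(u_1)\psi(u_2)$; concretely, multiplying out upper-triangular unipotent matrices gives $\psi(u_1u_2)=\psi(x_1+y_1+x_2+y_2)$. The integral factorizes as $\bigl(\int_{N\cap K}\psi(u)\,du\bigr)^{2}$, and because $\psi$ has conductor $\O$ and the measures are normalized so that $\int_\O dx=1$, each factor equals $\int_\O\psi(x)\,dx\cdot\int_\O\psi(y)\,dy\cdot\int_\O dz=1$. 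Hence $J(a,b)=1$.

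Finally, the basic property of the cubic Hilbert symbol $(x,u)_3=1$ for all $u\in\O^*$ when $3\mid\val(x)$ shows that $(a,b)_3=1$ for $a,b\in\O^*$, so $(a,b)_3\,J(a,b)=1$ as required.

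This case presents no real obstacle: it is the base of the table in Proposition~\ref{prop J formula}, where everything is in the identity coset of $K$ and the nontrivial features of the cocycle $\sigma$ and the splitting $\kappa$ do not yet appear. The computational difficulties enter only in the deeper cases $\abs{b}\le\abs{a}<1$, where Lemma~\ref{lem kappa} produces nontrivial Hilbert symbols and the integrals become cubic exponential or Kloosterman integrals.
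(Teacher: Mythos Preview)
Your proof is correct and follows essentially the same approach as the paper: both identify $A[a,b]=(N\cap K)\times(N\cap K)$ from the explicit coordinates in \eqref{eq open cell}, use that $\kappa\equiv 1$ on this set (you via Lemma~\ref{lem kappa}(1), the paper via bi-$(N\cap K)$-invariance of $\kappa$ and $\kappa(g_{a,b})=1$), and note that $\psi$ is trivial on $N\cap K$ and $(a,b)_3=1$ for units. Your version is simply a more explicit unpacking of the paper's three-line argument.
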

\begin{proof}
If $a,b\in \O^*$ then $(a,b)_3=1$ and it easily follows from \eqref{eq open cell} that $A[a,b]=(N\cap K)\times (N\cap K)$. Since $\kappa$ is bi-$(N\cap K)$-invariant, $\kappa(g_{a,b})=1$ and $\psi$ is trivial on $N\cap K$ the lemma follows.
\end{proof}

\subsubsection{The case $1=\abs{a}>\abs{b}$}
\begin{lemma}\label{lem J a unit}
If $1=\abs{a}>\abs{b}$ then 
\[
(a,b)_3\,J(a,b)=\abs{b}^{-1}\Cu(b^{-1},-3^{-3} a^{-1}b^{-1}).
\]

\end{lemma}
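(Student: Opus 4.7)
The overall strategy is to parametrize the support $A[a,b]$ in explicit coordinates, reduce the six-dimensional integral to a single Kloosterman-type integral on $\O^*$, and then invoke the Duke--Iwaniec comparison (Corollary~\ref{cor di}) to convert it to the desired cubic exponential integral $\Cu(b^{-1},-3^{-3}a^{-1}b^{-1})$. The main bookkeeping obstacle is the cubic Hilbert symbol coming from the splitting $\kappa$, which must be simplified using only the two facts that $(x,u)_3=1$ when both $x,u\in\O^*$ (tame symbol vanishes in unramified residue characteristic $>3$) and that $(-1,\cdot)_3\equiv 1$ because $-1=(-1)^3$ is always a cube.

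First I would use the explicit form \eqref{eq open cell} to read off the conditions $u_1g_{a,b}u_2\in K$ in the case $\abs{a}=1>\abs{b}$. Since $\abs{a}=1$, the entries of the third row force $x_2,z_2\in\O$, and the $(2,1)$-entry forces $y_1\in\O$; the $(1,1)$-entry forces $z_1\in\O$; the $(1,2)$ and $(2,3)$ entries then hold automatically. Writing $x_1=b^{-1}x_1'$ and $y_2=b^{-1}y_2'$, the only remaining condition, coming from the $(1,3)$-entry $az_1z_2-a^{-1}b^{-1}x_1'y_2'+b^{-1}$, is $x_1'y_2'\equiv a\pmod b$, which (since $a\in\O^*$) forces $x_1',y_2'\in\O^*$. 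Since $\psi(u_1u_2)=\psi(x_1+x_2+y_1+y_2)$ and $\psi$ is trivial on $\O$, the four integrations over $x_2,y_1,z_1,z_2\in\O$ each yield $1$. Changing variables $y_2'=a(x_1')^{-1}+bv$ with $v\in\O$ absorbs the constraint, the Jacobian contributes $\abs{b}^{-1}$, and the integration over $v$ again contributes $1$.

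Second I would compute the $\kappa$-factor using Lemma~\ref{lem kappa}\eqref{case 1=a>b}: $\kappa(g)=(b,a)_3(a^{-1}b,y_2)_3$. After the change of variables, $y_2=ab^{-1}(x_1')^{-1}+v$ with $\abs{v}\le 1<\abs{b}^{-1}=\abs{ab^{-1}(x_1')^{-1}}$, so by the ``$(x+y,z)_3=(x,z)_3$ when $\abs{y}<\abs{x}$'' property the Hilbert symbol is independent of $v$ and equals $(a^{-1}b,ab^{-1}(x_1')^{-1})_3$. Writing $ab^{-1}=(a^{-1}b)^{-1}$ and using $(c,c^{-1})_3=(c,-1)_3$ together with $(-1,c)_3=1$, this simplifies to $(a^{-1}b,x_1')_3^{-1}$. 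Therefore $\kappa(g)^{-1}=(a,b)_3(a^{-1}b,x_1')_3$, leaving
\[
J(a,b)=\abs{b}^{-1}(a,b)_3\int_{\O^*}(a^{-1}b,x_1')_3\,\psi\bigl(b^{-1}x_1'+ab^{-1}(x_1')^{-1}\bigr)\,dx_1'=\abs{b}^{-1}(a,b)_3\,\Kl(a^{-1}b;b^{-1},ab^{-1}).
\]

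Finally I would apply Corollary~\ref{cor di} with $a'=b^{-1}$, $c=b^{-1}$, $d=ab^{-1}$, $t=a^{-1}b$. The hypotheses are satisfied: if $\abs{b}=q^{-1}$ then $\abs{a'}=\abs{c}=\abs{d}=q$ and $\val(t)=\val(b)=-1$ is not divisible by $3$; if $\abs{b}\le q^{-2}$ then $\abs{a'}=\abs{c}=\abs{d}=\abs{b}^{-1}>q$. The identity then reads
\[
\Cu(b^{-1},-3^{-3}a^{-1}b^{-1})=(a^{-1}b,a)_3\,\Kl(a^{-1}b;b^{-1},ab^{-1})=(b,a)_3\,\Kl(a^{-1}b;b^{-1},ab^{-1}),
\]
where the last step uses $(a^{-1},a)_3=1$ (both units). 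Multiplying the formula for $J(a,b)$ by $(a,b)_3$ gives $(a,b)_3^2=(a,b)_3^{-1}=(b,a)_3$ in front of the Kloosterman integral, which combined with the displayed identity collapses the symbols and yields
\[
(a,b)_3\,J(a,b)=\abs{b}^{-1}\Cu(b^{-1},-3^{-3}a^{-1}b^{-1}),
\]
as desired.
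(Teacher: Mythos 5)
Your proof is correct and follows essentially the same route as the paper: explicit coordinates from \eqref{eq open cell}, the $\kappa$-evaluation of part \eqref{case 1=a>b} of Lemma~\ref{lem kappa}, reduction to a single Kloosterman integral over $\O^*$, and Corollary~\ref{cor di}; the only (cosmetic) difference is that you integrate out $y_2$ and keep $x_1'$ as the Kloosterman variable, producing $\Kl(a^{-1}b;b^{-1},ab^{-1})$ rather than the paper's $\Kl(ab^{-1};b^{-1},ab^{-1})$, and the Hilbert-symbol bookkeeping is consistent either way. Two harmless slips: the $(1,2)$ and $(2,3)$ entries do not hold automatically but impose $bx_1,\,by_2\in\O$ (which you in fact use when writing $x_1=b^{-1}x_1'$, $y_2=b^{-1}y_2'$ with $x_1',y_2'\in\O$), and when $\abs{b}=q^{-1}$ one has $\val(t)=\val(b)=+1$ rather than $-1$, though $3\nmid\pm1$ in either case.
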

\begin{proof}
It follows from \eqref{eq open cell} and Lemma \ref{lem kappa} part \eqref{case 1=a>b} that
\[
J(a,b)=(a,b)_3\int (ab^{-1},y_2)_3\psi(x_1+y_2) \ dx_1\ dy_2 
\]
where the integral is over $x_1$ and $y_2$ such that
\[
x_1b,y_2b,b^{-1}-x_1y_2a^{-1}b\in \O
\]
or equivalently such that 
\[
y_2 b\in \O^*,\ x_1\in (y_2b)^{-1}ab^{-1}+\O.
\]
Integrating over $x_1$ we obtain that
\[
J(a,b)=(a,b)_3\int_{\abs{y_2}=\abs{b}^{-1}} (ab^{-1},y_2)_3\psi(y_2+ab^{-2}y_2^{-1}) \ dy_2 
\]
and after the change of variables $y_2\mapsto b^{-1}y_2$ this becomes
\[
\abs{b}^{-1}\int_{\O^*}(ab^{-1},y_2)_3\psi(b^{-1}y_2+ab^{-1}y_2^{-1}) \ dy_2=\abs{b}^{-1}\Kl(ab^{-1};b^{-1},ab^{-1}).
\]
The lemma now follows from Corollary \ref{cor di}.
\end{proof}
\subsection{Calculation of $J(a,b)$ when $1>\abs{a}\ge\abs{b}$: subdivision and evaluations of $J_1$, $J_2$}
Assume now that $1>\abs{a}\ge \abs{b}$ and consider the conditions
\begin{enumerate}
\item $\abs{ay_1}=1$
\item $\abs{ax_2}=1$ 
\item $\abs{ay_1},\,\abs{ax_2}<1$.
\end{enumerate}
Let 
\[
J_i(a,b)=\int_{A_i[a,b]}\kappa(u_1g_{a,b}u_2)^{-1}\psi(u_1u_2)\ du_1\ du_2
\] 
where $A_i[a,b]$ is the intersection of $A[a,b]$ with the set defined by the condition $(i)$ for $i=1,2,3$. By Lemma \ref{lem kappa} we have
\begin{equation}\label{eq break j}
J(a,b)=J_1(a,b)+J_2(a,b)+J_3(a,b).
\end{equation}
\subsubsection{Computation of $J_1$} 
\begin{lemma}\label{lem J1}
Let $1>\abs{a}\ge\abs{b}$ then 
\[
(a,b)_3\,J_1(a,b)=\begin{cases} q & \abs{a}=\abs{b}=q^{-1} \\ 0 & \text{otherwise.}\end{cases}
\]
\end{lemma}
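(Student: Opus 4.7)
The plan is to substitute the explicit formula for $\kappa(g)$ from Lemma~\ref{lem kappa}(3a) into $J_1$ and reduce the resulting integral to an outer integral over $\mu_1$ of a Kloosterman integral that can be evaluated using Lemma~\ref{lem kl int}. First I use $\kappa(g)^{-1}=\overline{\kappa(g)}$ and elementary properties of the cubic Hilbert symbol to rewrite
\[
(a,b)_3\,\kappa(g)^{-1}=(b,a)_3\,(y_1y_2,a^{-1}b)_3\,(y_1,y_2)_3,
\]
combined with $\psi(u_1u_2)=\psi(x_1+x_2+y_1+y_2)$.

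Next I parametrize $A_1[a,b]$ following the decomposition $k'gk$ from the proof of Lemma~\ref{lem kappa}(3a). Write $y_1=a^{-1}\eta_1$, $y_2=b^{-1}\eta_2$, $z_1-x_1y_1=b^{-1}\mu_1$, $x_1=ab^{-1}\xi_1$, and $z_2=\eta_1^{-1}(a^{-1}by_2+t)$. Then $g\in K$ if and only if $\eta_1,\mu_1,\eta_2\in\O^*$ satisfy $\mu_1\eta_2+\eta_1\in b\O$, $\xi_1\in\O$ satisfies $\eta_1\xi_1+\mu_1\in a^{-1}b\O$, and $x_2,t\in\O$. The Jacobian is $\abs{b}^{-3}$, and setting $s=\val a$, $r=\val b$, the tame symbol formula together with $\eta_1\equiv-\mu_1\eta_2\pmod{\p}$ simplifies the product of Hilbert symbols to
\[
(b,a)_3(y_1y_2,a^{-1}b)_3(y_1,y_2)_3=\chi(u_b^{2s}u_a^{-2r}\mu_1^{-s}\eta_2^{r-s}),
\]
where $a=u_a\varpi^s$, $b=u_b\varpi^r$, and $\chi$ is the nontrivial cubic residue character on $k_F^*$.

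Integrating out $\xi_1$ (contributing $\abs{a^{-1}b}$, with $\psi(x_1)$ collapsing modulo $\O$ to $\psi(ab^{-1}\eta_2^{-1})$), $t$ and $x_2$ (each contributing $1$), and $\eta_1$ within its coset (contributing $\abs b$), the integral reduces to
\[
(a,b)_3J_1(a,b)=q^{s+r}\,\chi(u_b^{2s}u_a^{-2r})\int_{\O^*}\chi(\mu_1)^{-s}\,\Kl(y;A,B)\,d\mu_1,
\]
where $A=b^{-1}-a^{-1}\mu_1$, $B=ab^{-1}$, and $y\in F^*$ is any element with $\val y\equiv s-r\pmod 3$ so that $(y,u)_3|_{\O^*}=\chi(u)^{r-s}$.

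Finally I apply Lemma~\ref{lem kl int} case by case. If $\abs a>\abs b$, then $\val A=-r\le-2$ and $\val(A^{-1}B)=s\ge 1$ force $A^{-1}B\notin\O^{*2}$, so $\Kl(y;A,B)\equiv 0$ and $J_1(a,b)=0$. If $\abs a=\abs b=q^{-\ell}$ with $\ell\ge 2$, then $\Kl(y;A,B)$ vanishes outside $\mu_1\in ab^{-1}+\p^{\ell-1}$; on that region $\chi(\mu_1)^{-\ell}=\chi(ab^{-1})^{-\ell}$ is constant, and the contributions from $\val A=-1$ (of measure $q^{-\ell}(q-1)$, with $\Kl=-q^{-1}$) and $\val A\ge 0$ (of measure $q^{-\ell}$, with $\Kl=1-q^{-1}$) sum to zero. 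If $\abs a=\abs b=q^{-1}$, the analogous two-region analysis combined with the orthogonality $\int_{\O^*}\chi\,d\mu_1=0$ and the identity $\chi(u_bu_a^{-1})^3=1$ yields $(a,b)_3J_1(a,b)=q$. The main obstacle will be the careful bookkeeping of Hilbert symbols through the tame symbol formula and, especially, verifying the exact cancellation in the case $\ell\ge 2$.
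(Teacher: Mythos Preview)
Your proof is correct but takes a different route from the paper's. The paper does not parametrise $A_1[a,b]$ explicitly; instead it exploits an $\O^*\times\O^*$ scaling symmetry
\[
(x_1,y_1,z_1,y_2,z_2)\mapsto (ux_1,\,vy_1,\,uvz_1,\,u^{-1}y_2,\,u^{-1}v^{-1}z_2),\qquad u,v\in\O^*,
\]
under which both the domain and the Hilbert symbols transform nicely. Averaging over $u,v$ makes $J_1$ factor through $\int_{\O^*}(v,a^{-1}by_2)_3\psi(vy_1)\,dv$ and a second Kloosterman-type integral; the first vanishes by Lemma~\ref{lem kl int} unless $\abs{a}=q^{-1}$, and the second forces $\abs{b}=\abs{a}$. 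The surviving case $\abs{a}=\abs{b}=q^{-1}$ is then computed directly from the explicit description of the domain and finished with the Gauss-sum identity of Lemma~\ref{lem gauss}. Your approach, by contrast, integrates out all auxiliary variables at once via the concrete coordinates $(\eta_1,\eta_2,\mu_1,\xi_1,t)$, reducing everything to a single outer $\mu_1$-integral of Kloosterman integrals $\Kl(y;b^{-1}-a^{-1}\mu_1,\,ab^{-1})$, and then reads off all cases from Lemma~\ref{lem kl int}. The paper's symmetry argument is shorter and avoids the tame-symbol bookkeeping you rightly flag as delicate; your method is more mechanical but entirely self-contained (it does not need Lemma~\ref{lem gauss}) and makes the cancellation in the range $\abs{a}=\abs{b}\le q^{-2}$ explicit rather than implicit in an averaging step.
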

\begin{proof}

Note that according to Lemma \ref{lem kappa} we have
\[
J_1(a,b)=(a,b)_3\int (y_1y_2,a^{-1}b)_3(y_1,y_2)_3 \psi(x_1+x_2+y_1+y_2)\ dx_1\ dx_2\ dy_1\ dy_2\ dz_1\ dz_2
\]
where the integral is over
\[
ay_1\in \O^*,\, az_1, az_2, ax_2, y_1x_2 a-a^{-1}b, y_1z_2a-y_2a^{-1}b ,\,z_1x_2a-x_1a^{-1}b,  z_1z_2 a-x_1y_2a^{-1}b+b^{-1}\in \O
\]
or equivalently
\begin{equation}\label{eq j1 dom}
ay_1\in \O^*,\ az_1,\,az_2,\, x_2,\,y_1z_2a-y_2a^{-1}b ,\,x_1a^{-1}b,\,  z_1z_2 a-x_1y_2a^{-1}b+b^{-1}\in \O.
\end{equation}
We first show that $J_1(a,b)=0$ unless $\abs{a}=\abs{b}=q^{-1}$. Indeed, for $u,\,v\in \O^*$ the variable change
\[
(x_1,y_1,z_1,y_2,z_2)\mapsto (ux_1,vy_1,uvz_1,u^{-1}y_2,u^{-1}v^{-1}z_2)
\]
preserves the domain of integration. Averaging over $u,\,v$ we see that the integral $J_1(a,b)$ factors through
\[
\int_{\O^*}(v,a^{-1}by_2)_3\psi(vy_1)\ dv\int_{\O^*} (u,ab^{-1} y_1)_3\psi(ux_1+u^{-1}y_2)\ du.
\]
Note that in the domain of integration $\abs{y_1}=\abs{a}^{-1}>1$ and for such $y_1$ it follows from Lemma \ref{lem kl int} that $\int_{\O^*}(v,a^{-1}by_2)_3\psi(vy_1)\ dv=0$ unless $\abs{a}=q^{-1}$. Next, assume that $\abs{b}<\abs{a}=q^{-1}$.
Note that in the domain \eqref{eq j1 dom} we have 
\begin{equation}\label{eq absy2x1}
\abs{y_2}=\abs{b}^{-1}>q\ \ \ \text{and} \ \ \ \abs{x_1}=\abs{ab^{-1}}<\abs{y_2}.
\end{equation}
Indeed, in the domain of integration 
\[
\abs{ay_1z_2},\,\abs{az_1z_2}\le \abs{a}^{-1}<\abs{b}^{-1}
\]
and consequently,
\[
\abs{a^{-1}by_2}\le \abs{a}^{-1} \ \ \ \text{and}\ \ \ \abs{a^{-1}bx_1y_2}=\abs{b}^{-1}.
\]
Combined with the condition $a^{-1}bx_1\in \O$ we obtain \eqref{eq absy2x1}.
Consequently,  it follows from Lemma \ref{lem kl int} that 
$\int_{\O^*} (u,ab^{-1} y_1)_3\psi(ux_1+u^{-1}y_2)\ du=0$.

Assume now that $\abs{a}=\abs{b}=q^{-1}$. The domain of integration \eqref{eq j1 dom} is equivalently characterized by 
\[
\abs{y_1}=\abs{y_2}=q,\,x_2,\,az_1\in \O,\,z_2\in a^{-2}by_1^{-1}y_2+\O,\,x_1\in ab^{-1}y_2^{-1}(az_1z_2+b^{-1})+\p
\]
and in particular, the integrand is independent of $x_1,\,x_2,\,z_1$ and $z_2$ in this domain. After integrating over $x_1$ and $x_2$ we further integrate over $z_1$ and $z_2$. After the variable change 
\[
(y_1,y_2)\mapsto (a^{-1}y_1,a^{-1}y_2)
\]
we have
\[
J_1(a,b)=(b,a)_3 q^2\int_{\O^*}\int_{\O^*}(a^{-1},y_2y_1^{-1})_3\psi(a^{-1}(y_1+y_2))\ dy_1\ dy_2.
\]
The lemma now follows from Lemma \ref{lem gauss}.
\end{proof}

\subsubsection{Computation of $J_2$}

\begin{lemma}\label{lem J2}
Let $1>\abs{a}\ge\abs{b}$ then 
\[
(a,b)_3\,J_2(a,b)=\begin{cases} q & \abs{a}=\abs{b}=q^{-1} \\ 0 & \text{otherwise.}\end{cases}
\]
\end{lemma}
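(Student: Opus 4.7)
The argument would parallel Lemma \ref{lem J1} closely, using Lemma \ref{lem kappa}(3)(b) in place of (3)(a). On the domain $A_2[a,b]$ (where $ax_2\in\O^*$) we have $\kappa(g)=(b,ax_2)_3(z_2x_2^{-1}-y_2,b^{-1}ax_2)_3$; the constant factor $(b,a)_3$ factors out of $(b,ax_2)_3$ and combines with the explicit $(a,b)_3$ in front of $J_2(a,b)$. Unwinding \eqref{eq open cell} under $ax_2\in\O^*$, the membership $g\in K$ translates to $az_1, az_2, ay_1 \in \O$ together with relations that solve $x_1, z_1$ (modulo suitable lattices) in terms of the remaining variables, giving an explicit parameterization of $A_2[a,b]$ analogous to the characterization of $A_1[a,b]$ by \eqref{eq j1 dom}.

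To show $J_2(a,b)=0$ unless $\abs{a}=\abs{b}=q^{-1}$, I would first invoke Lemma \ref{lem J not in O} to restrict to $a,b\in\O$, and then perform two independent $\O^*$-scaling changes of variables --- one acting on $(x_2, y_2, z_2)$ and a second on $(x_1, z_1)$ --- each preserving the domain and multiplying the integrand by a cubic Hilbert character $(u,\cdot)_3$ after the linear $\psi$-phases are tracked. Averaging each scaling parameter produces a Kloosterman integral $\Kl(c;d,e)$, and the size estimates in Lemma \ref{lem kl int} force the size conditions $\abs{a}=\abs{b}=q^{-1}$ for non-vanishing, exactly as in the $J_1$ analysis around \eqref{eq absy2x1}.

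For $\abs{a}=\abs{b}=q^{-1}$, the constraints $ay_1, az_1, az_2\in\O$ localize $y_1, z_1, z_2$ to $\O$-cosets. Integrating over $x_1$ and $z_1$ is then trivial and produces a $q^2$ prefactor, and a substitution $z_2\mapsto z_2 + x_2 y_2$ decouples the dependence of the Hilbert symbol $(z_2x_2^{-1}-y_2,\cdot)_3$ on $z_2$, allowing the $z_2$-integration as well. After rescaling $(x_2,y_2)\mapsto(a^{-1}x_2, a^{-1}y_2)$ to place these variables in $\O^*$, the remaining integral takes the Gauss-sum form
\[
(b,a)_3\, q^2\int_{\O^*}\int_{\O^*}(a^{-1}, x_2 y_2^{-1})_3\,\psi(a^{-1}(x_2+y_2))\,dx_2\,dy_2 = (b,a)_3\, q
\]
by Lemma \ref{lem gauss}, and multiplying by $(a,b)_3$ and using $(a,b)_3(b,a)_3=1$ gives $(a,b)_3\, J_2(a,b)=q$ as required. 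The main obstacle will be handling the Hilbert symbol $(z_2x_2^{-1}-y_2, b^{-1}ax_2)_3$: unlike the character $(y_1y_2,a^{-1}b)_3(y_1,y_2)_3$ in the $J_1$ case, the argument $z_2 x_2^{-1}-y_2$ is a rational expression whose $z_2$-dependence must be cleared by a coordinated change of variables before the scaling-averaging argument cleanly exposes the Gauss sum structure.
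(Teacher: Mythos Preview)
Your overall strategy is right and matches the paper's: use Lemma \ref{lem kappa}(3)(b), exhibit a two-parameter $\O^*$-scaling that preserves $A_2[a,b]$ to force vanishing unless $\abs{a}=\abs{b}=q^{-1}$, and then reduce the remaining case to Lemma \ref{lem gauss}. Two points in your $\abs{a}=\abs{b}=q^{-1}$ computation are off, however.

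First, the substitution $z_2\mapsto z_2+x_2y_2$ does \emph{not} decouple the Hilbert symbol from $z_2$: it sends $z_2x_2^{-1}-y_2$ to $z_2x_2^{-1}$, which still depends on $z_2$. The paper instead shifts $y_2\mapsto y_2+x_2^{-1}z_2$, which sends $z_2x_2^{-1}-y_2$ to $-y_2$ and genuinely removes $z_2$ from the symbol. After this, the domain pins $y_2$ to the coset $ab^{-2}x_1^{-1}+\p\subset\O^*$, and the symbol $(b^{-1}ax_2,-y_2)_3$ simplifies to $(ax_1^{-1}x_2,b)_3$.

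Second, the roles of the variables in the final Gauss sum are swapped relative to what you describe. It is not $x_1,z_1$ that integrate out trivially with the Gauss sum left in $(x_2,y_2)$; rather $y_1,z_1,y_2,z_2$ are integrated (each over a coset of measure $1$ or $q^{-1}$), and the Gauss sum is over $(x_1,x_2)$ with $\abs{x_1}=\abs{x_2}=q$:
\[
(b,a)_3\,q^2\int_{\O^*}\int_{\O^*}(b,x_1x_2^{-1})_3\,\psi(a^{-1}(x_1+x_2))\,dx_1\,dx_2.
\]
Also, your description of the scaling as ``one on $(x_2,y_2,z_2)$, one on $(x_1,z_1)$'' is not quite what preserves the domain; the paper's two-parameter scaling is $(x_1,x_2,y_2,z_1,z_2)\mapsto(u^{-1}x_1,vx_2,uy_2,u^{-1}v^{-1}z_1,uvz_2)$, which mixes the two groups of variables through $z_1,z_2$.
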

\begin{proof}
Note that according to Lemma \ref{lem kappa} we have
\[
J_2(a,b)=(a,b)_3\int (x_2,b)_3(b^{-1}ax_2,z_2x_2^{-1}-y_2)_3  \psi(x_1+x_2+y_1+y_2)\ dx_1\ dx_2\ dy_1\ dy_2\ dz_1\ dz_2
\]
where the integral is over
\[
ax_2\in \O^*,\ az_1,\,az_2,\,ay_1,\, y_1x_2 a-a^{-1}b,\,y_1z_2a-y_2a^{-1}b ,\,z_1x_2a-x_1a^{-1}b,\,  z_1z_2 a-x_1y_2a^{-1}b+b^{-1}\in \O.
\]
Note further that
\[
z_1z_2 a-x_1y_2a^{-1}b+b^{-1}=x_2^{-1}z_2(z_1x_2a-x_1a^{-1}b)+a^{-1}bx_1(x_2^{-1}z_2-y_2)+b^{-1}
\]
and the domain of integration is equivalently characterized by
\[
ax_2\in \O^*,\ az_1,\,az_2,\,y_1,\,y_2a^{-1}b ,\,z_1x_2a-x_1a^{-1}b,\,  a^{-1}bx_1(x_2^{-1}z_2-y_2)+b^{-1}\in \O.
\]
Since the variable change 
\[
(x_1,\,x_2,\,y_2,\,z_1,\,z_2)\mapsto (u^{-1}x_1,\,vx_2,\,uy_2,\,u^{-1}v^{-1}z_1,\,uvz_2)
\]
preserves the domain of integration for every $u,\,v\in \O^*$ an argument analogous to the proof of Lemma \ref{lem J1} shows that $J_2(a,b)=0$ unless $\abs{a}=\abs{b}=q^{-1}$.

Assume now that $\abs{a}=\abs{b}=q^{-1}$. 
After the variable change $y_2\mapsto y_2+x_2^{-1}z_2$ we have
\[
J_2(a,b)=(a,b)_3\int (x_2,b)_3(b^{-1}ax_2,y_2)_3   \psi(x_1+x_2)\ dx_1\ dx_2\ dy_1\ dy_2\ dz_1\ dz_2
\]
where the integral is over
\[
\abs{x_2}=q,\ az_1,\,az_2,\,y_1,\,y_2 ,\,z_1x_2a-x_1a^{-1}b,\,  a^{-1}bx_1y_2-b^{-1}\in \O
\]
or equivalently
\[
\abs{x_1}=\abs{x_2}=q,\,az_2,\,y_1\in \O ,\,z_1\in a^{-2}bx_2^{-1}x_1+\O,\,  y_2\in ab^{-2}x_1^{-1}+\p.
\]
Note that in this domain $ax_2,\,y_2\in \O^*$ and therefore
\[
(x_2,b)_3(b^{-1}ax_2,y_2)_3=(x_2y_2,b)_3=(ax_1^{-1}x_2,b)_3.
\]
That is, the integrand is independent of $y_1,\,z_1,\,y_2,\,z_2$ in the domain of integration. Integrating over these variables and applying the variable change
\[
(x_1,x_2)\mapsto (a^{-1}x_1,a^{-1}x_2)
\]
we obtain that 
\[
J_2(a,b)=(b,a)_3q^2\int_{\O^*}\int_{\O^*} (b,x_1x_2^{-1})_3\psi(a^{-1}(x_1+x_2))\ dx_1\ dx_2.  
\]
The lemma now follows from Lemma \ref{lem gauss}.
\end{proof}

\subsection{The calculation of $J(a,b)$ when $1>\abs{a}\ge\abs{b}$:  evaluation of $J_3$}
We turn to the evaluation of the third summand that contributes to $J(a,b)$.
Recall that by definition
\[
J_3(a,b)= \int_{A_3[a,b]}\kappa(u_1g_{a,b}u_2)^{-1}\psi(u_1u_2)\ du_1\ du_2
\]
where $A_3[a,b]$ is defined by the conditions (coordinates are as in \eqref {eq open cell})
\begin{enumerate}[a)]
\item\label{c1} $ay_1\in \p$
\item\label{c2} $ax_2\in \p$
\item\label{c3} $az_1\in \O$
\item\label{c4} $az_2\in \O$
\item\label{c5} $y_1x_2 a-a^{-1}b\in \O$
\item\label{c6} $y_1z_2a-y_2a^{-1}b\in \O$
\item\label{c7} $z_1x_2a-x_1a^{-1}b\in \O$
\item\label{c8} $z_1z_2 a-x_1y_2a^{-1}b+b^{-1}\in \O$.
\end{enumerate}

In the proof of the following simple lemma we refer to this list of defining conditions.
\begin{lemma}\label{lem A3basic}
Let $a,b\in F^*$ be such that $\abs{a}<1$.
\begin{enumerate}
\item\label{part x1} Every element of $A_3[a,b]$ satisfies $bx_1\in\p$.
\item\label{part z12 units} Every element of $A_3[a,b]$ satisfies $ay_1x_2-a^{-1}b,az_1,az_2\in\O^*$.
\end{enumerate}
\end{lemma}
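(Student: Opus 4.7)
The plan is to exploit two consequences of $g := u_1 g_{a,b} u_2 \in K = \SL_3(\O)$: every row and column of $g$ must have an entry in $\O^*$ (because $g \bmod \p$ lies in $\SL_3(k_F)$), and $\det g = 1$ admits a cofactor expansion whose terms can be analyzed modulo $\p$.

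First I would dispose of the $az_1, az_2 \in \O^*$ half of (\ref{part z12 units}). By \eqref{eq open cell} the third row of $g$ is $(a, ax_2, az_2)$; since $\abs{a}<1$ and condition \eqref{c2} gives $ax_2 \in \p$, the entry $az_2$ must be in $\O^*$ lest the third row vanish mod $\p$. Symmetrically, the first column $(az_1, ay_1, a)$ has two entries in $\p$ by \eqref{c1} and $\abs{a}<1$, forcing $az_1 \in \O^*$.

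Next, for (\ref{part x1}), I would bound $\abs{x_1}$ using condition \eqref{c7}, namely $z_1 x_2 a - x_1 a^{-1}b \in \O$. From the previous step $\abs{z_1} = \abs{a}^{-1}$, so $\abs{z_1 x_2 a} = \abs{x_2}$, and \eqref{c2} gives $\abs{x_2} \le q^{-1}\abs{a}^{-1}$. Since $\abs{a}<1$ one has $\abs{a} \le q^{-1}$, hence $q^{-1}\abs{a}^{-1} \ge 1$ and the integrality condition yields $\abs{x_1 a^{-1}b} \le q^{-1}\abs{a}^{-1}$, i.e.\ $\abs{bx_1}\le q^{-1}$, as required.

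The remaining and least routine piece is showing $ay_1 x_2 - a^{-1}b \in \O^*$: the $(2,2)$ entry of $g$ does not sit in a row or column with two forced $\p$-entries, so it has to be controlled indirectly through the determinant. I would expand $\det g = 1$ along the third row,
$$
1 \;=\; a\, M_{3,1} \;-\; ax_2\, M_{3,2} \;+\; az_2\, M_{3,3},
$$
where the $M_{3,j}$ are $2 \times 2$ minors of $g$, hence lie in $\O$. The first two summands are in $\p$, so $az_2 M_{3,3} \equiv 1 \pmod{\p}$, giving $M_{3,3} \in \O^*$. Now
$$
M_{3,3} \;=\; g_{1,1}g_{2,2} - g_{1,2}g_{2,1} \;=\; (az_1)(ay_1 x_2 - a^{-1}b) \;-\; (az_1 x_2 - a^{-1}bx_1)(ay_1),
$$
and by \eqref{c1} the second summand lies in $\p$. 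Reducing mod $\p$ yields $M_{3,3} \equiv (az_1)(ay_1 x_2 - a^{-1}b)$, and since $az_1$ and $M_{3,3}$ are units, so is $ay_1 x_2 - a^{-1}b$. The main obstacle is precisely this last identification: recognising that although $g_{2,2}$ is not forced to be a unit by the row/column argument alone, it is exposed as one by isolating it inside the minor $M_{3,3}$, which the determinant equation already forces to be a unit.
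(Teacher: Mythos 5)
Your proof is correct and follows essentially the same route as the paper: part (1) is obtained exactly as in the paper from condition (g) together with the bounds $\abs{z_1}\le\abs{a}^{-1}$ and $\abs{ax_2}\le q^{-1}$, and your row/column and cofactor arguments for part (2) are just an unpacking of the paper's one-line observation that $g$ reduces mod $\p$ to an upper triangular matrix of determinant $1$, whose diagonal entries $az_1$, $ay_1x_2-a^{-1}b$, $az_2$ must therefore all be units.
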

\begin{proof}
It follows from \ref{c2} and \ref{c3} that $\abs{az_1x_2}<\abs{z_1}\le \abs{a}^{-1}$. Part \ref{part x1} of the lemma therefore follows from \ref{c7}.
For an element of $A_3[a,b]$ the matrix \eqref {eq open cell} reduces $\bmod \p$ to upper triangular form. Part \ref{part z12 units} follows.
\end{proof}

Note that the subset of $A_3[a,b]$ with $y_1=0$ is of measure zero (in fact, it is empty if $\abs{b}<\abs{a}$). It therefore follows from Lemma \ref{lem kappa} that 
\begin{multline}\label{eq j3}
J_3(a,b)=(a,b)_3\int_{A_3'[a,b]}  (y_1,a^{-1}b)_3(ay_1x_2-a^{-1}b,ay_1(x_2y_2-z_2))_3(x_2y_2-z_2,b)_3 \\ \psi(x_1+x_2+y_1+y_2)\ dx_1\ dx_2\ dy_1\ dy_2\ dz_1\ dz_2
\end{multline}
where $A_3'[a,b]$ is the subset of elements of $A_3[a,b]$ satisfying $y_1\ne 0$. 

\subsubsection{Integration over $z_1$}
We begin the computation of $J_3$ by integrating over the single variable $z_1$. We use the coordinates $(x_1,x_2,y_1,y_2,z_2)$ in $F^5$. For a set $D$ in $F^5$ let $D'$ be the set of elements in $D$ such that $y_1\ne 0$. Let $D[a,b]$ be the domain defined by the conditions
\begin{enumerate}[a)]
\item\label{dc1} $ay_1\in \p$
\item\label{dc2} $ax_2\in \p$
\item\label{dc3} $az_2\in \O^*$
\item\label{dc4} $y_1-a^{-1}by_2\in \O$
\item\label{dc5} $x_2-a^{-1}bx_1\in \O$
\item\label{dc6} $\abs{1-bx_1y_2}=\abs{ab^{-1}}$.
\end{enumerate}
In the rest of the section we refer to this list of conditions. 
\begin{lemma}\label{lem z1int}
Let $\abs{b}\le \abs{a}<1$. We have
\begin{multline*}
J_3(a,b)=(a,b)_3\abs{a^{-1}b}\int_{D'[a,b]}(y_1,ab^{-1}(ay_1x_2-a^{-1}b))_3(ax_2y_2-1,ay_1x_2-a^{-1}b)_3\\ (b,az_2(ax_2y_2-1))_3  \psi(-(b z_2)^{-1}(1-bx_1y_2)^{-1}x_1+y_1+x_2+az_2y_2)dx_1\ dx_2\ dy_1\ dy_2\ dz_2.
\end{multline*}
\end{lemma}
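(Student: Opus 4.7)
The plan is to integrate out $z_1$ in \eqref{eq j3} and then perform two explicit changes of variables on the remaining integral.

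Since the integrand in \eqref{eq j3} does not depend on $z_1$, I first compute the $z_1$-measure of the slice of $A_3'[a,b]$ cut out by the three conditions involving $z_1$, namely \ref{c3}, \ref{c7}, and \ref{c8}. Using that $az_2 \in \O^*$ (Lemma \ref{lem A3basic}), condition \ref{c8} determines $z_1$ modulo $\O$ as $z_1 \in c_1 + \O$ with $c_1 = (az_2)^{-1}(a^{-1}bx_1y_2 - b^{-1})$. Imposing \ref{c3} then forces $c_1 \in a^{-1}\O$, which is equivalent to the inequality $\abs{1 - a^{-1}b^2 x_1 y_2} \le \abs{b/a}$ on the remaining variables; when this compatibility holds, the $z_1$-integral contributes measure $1$. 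Substituting $z_1 = c_1 + t$ with $t \in \O$ into \ref{c7} and using $\abs{ax_2} < 1$ to absorb the $t$-dependence reduces \ref{c7} to a constraint purely on $(x_1, x_2, y_2, z_2)$.

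Next, I apply the change of variables $y_2 \mapsto (az_2)\,y_2$ (Jacobian $\abs{az_2} = 1$). This converts \ref{c6} into \ref{dc4}, transforms $x_2y_2 - z_2$ into $z_2(ax_2y_2 - 1)$, and produces the $az_2 y_2$ summand in the $\psi$-phase. I then substitute $x_1 \mapsto -(bz_2)^{-1}(1 - bx_1 y_2)^{-1}\,x_1$. Differentiating with respect to the new $x_1$ yields $-(bz_2)^{-1}(1 - bx_1 y_2)^{-2}$, whose absolute value equals $\abs{bz_2}^{-1}\abs{1 - bx_1 y_2}^{-2} = \abs{a/b}\cdot\abs{a/b}^{-2} = \abs{a^{-1}b}$ on the locus \ref{dc6}, accounting for the overall factor in the lemma. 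Combining the preceding constraints with \ref{c5}, one then checks that \ref{dc5} emerges from the simplification of \ref{c7}, while \ref{dc6} emerges from the compatibility inequality together with \ref{c5}, so that the effective domain of the transformed integral is precisely $D'[a,b]$.

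Finally, the Hilbert-symbol factors must be repackaged. After the $y_2$-substitution, $(x_2y_2 - z_2, b)_3$ factors as $(z_2, b)_3 (ax_2y_2 - 1, b)_3$, and $(ay_1x_2 - a^{-1}b, ay_1(x_2y_2 - z_2))_3$ expands via $ay_1(x_2y_2 - z_2) = ay_1 z_2 (ax_2y_2 - 1)$. Combining with $(y_1, a^{-1}b)_3$ and using bilinearity, the identity $(x,y)_3 = (y,x)_3^{-1}$, and the fact that both $ay_1 x_2 - a^{-1}b$ and $az_2$ lie in $\O^*$, one rearranges the factors into the target product $(y_1, ab^{-1}(ay_1x_2 - a^{-1}b))_3 (ax_2y_2 - 1, ay_1x_2 - a^{-1}b)_3 (b, az_2(ax_2y_2 - 1))_3$. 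The principal obstacle is the $x_1$-substitution: verifying that it maps the post-$z_1$-integration domain bijectively onto $D'[a,b]$ with constant Jacobian $\abs{a^{-1}b}$ on the sphere \ref{dc6}, which requires a delicate analysis combining \ref{c5} with the compatibility condition from the $z_1$-integration.
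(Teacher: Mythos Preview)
Your route reaches the goal but is considerably more involved than the paper's. The paper performs the two \emph{linear} substitutions $x_1\mapsto az_1x_1$ and $y_2\mapsto az_2y_2$ (each with unit Jacobian, since $az_1,az_2\in\O^*$) \emph{before} integrating $z_1$. These immediately turn \ref{c7} into \ref{dc5} and \ref{c8} into $az_1z_2(1-bx_1y_2)+b^{-1}\in\O$, which pins $z_1$ to a single coset of $a^{-1}b\,\O$; the factor $\abs{a^{-1}b}$ then drops out as the measure of that coset, and on it $\psi(az_1x_1)$ collapses to the target phase because $bx_1\in\p$. By integrating $z_1$ first you trade this for the M\"obius-type $x_1$-substitution $x_1^{\rm old}=-(bz_2)^{-1}(1-bx_1^{\rm new}y_2)^{-1}x_1^{\rm new}$, whose bijectivity and constant Jacobian on the sphere \ref{dc6} are precisely the ``principal obstacle'' you flag; the paper's ordering avoids this entirely.

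Two points in your sketch need repair. First, the compatibility you extract from \ref{c3} is only the inequality $\abs{1-a^{-1}b^2x_1y_2}\le\abs{b/a}$; after your substitutions this becomes $\abs{1-bx_1y_2}\ge\abs{ab^{-1}}$, the wrong direction. The equality \ref{dc6} actually comes from the sharper input $az_1\in\O^*$ of Lemma~\ref{lem A3basic} (which you invoke for $az_2$ but not for $az_1$), not from \ref{c5} as you write. Without \ref{dc6} in hand your derivation of \ref{dc5} from \ref{c7} also breaks, since that step divides by $\abs{1-bx_1y_2}$. Second, condition \ref{c5} (equivalently $ay_1x_2\in\O$, since $a^{-1}b\in\O$) is untouched by all your substitutions and must still be shown redundant on $D'[a,b]$; the paper does this explicitly in the closing paragraph of its proof.
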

\begin{proof}
Applying the variable change
\[
x_1\mapsto az_1x_1, \ \ \ y_2\mapsto az_2 y_2
\]
to \eqref{eq j3} and Lemma \ref{lem A3basic}  we have
\begin{multline*}
J_3(a,b)=(a,b)_3\int  (y_1,a^{-1}b)_3(ay_1x_2-a^{-1}b,y_1(ax_2y_2-1))_3(z_2(ax_2y_2-1),b)_3  \\ \psi(a z_1x_1+y_1+x_2+az_2y_2)\ dx_1\ dx_2\ dy_1\ dy_2\ dz_1\ dz_2
\end{multline*}
where integration is now over the domain defined by $y_1\ne 0$ and 
\[
ay_1,\,ax_2\in \p,\, az_1,\,az_2\in \O^*,\,ay_1x_2,\, y_1-a^{-1}by_2,\,x_2-a^{-1}bx_1, az_1z_2(1-bx_1y_2)+b^{-1}\in \O.
\]
Note that the conditions 
\[
az_1,\,az_2\in \O^*,\, az_1z_2(1-bx_1y_2)+b^{-1}\in \O
\] 
are equivalent to the conditions
\[
az_2\in \O^*,\abs{1-bx_1y_2}=\abs{ab^{-1}},\, z_1\in -(az_2)^{-1}(1-bx_1y_2)^{-1}b^{-1}+a^{-1}b\O.
\]
Furthermore, it follows from Lemma \ref{lem A3basic} that $bx_1\in \p\subseteq \O$ and therefore in the domain of integration
\[
\psi(a z_1x_1)=\psi(-(b z_2)^{-1}(1-bx_1y_2)^{-1}x_1)
\]
so that the integrand is independent of $z_1$ in its $a^{-1}b\O$ coset. Integrating over $z_1$ we obtain
\begin{multline*}
J_3(a,b)=(a,b)_3\abs{a^{-1}b}\int (y_1,a^{-1}b)_3(ay_1x_2-a^{-1}b,y_1(ax_2y_2-1))_3(z_2(ax_2y_2-1),b)_3  \\ \psi(-(b z_2)^{-1}(1-bx_1y_2)^{-1}x_1+y_1+x_2+az_2y_2)\ dx_1\ dx_2\ dy_1\ dy_2\ dz_2
\end{multline*}
where the integral is over the subdomain of elements of $D'[a,b]$ that satisfy $ay_1x_2\in \O$. However, this condition is satisfied by any element of $D[a,b]$. Indeed, if for an element of $D'[a,b]$ either $x_2$ or $y_1$ is in $\O$ this follows from the conditions \ref{dc1} and \ref{dc2}, otherwise it follows from \ref{dc4} and \ref{dc5} that $\abs{x_2}=\abs{a^{-1}bx_1}$ and $\abs{y_1}=\abs{a^{-1}by_2}$ and therefore $\abs{ay_1x_2}=\abs{a^{-1}b^2x_1y_2}$. The equality \ref{dc6} implies that $\abs{bx_1y_2}\le \abs{ab^{-1}}$. The statement and the lemma follow.  
\end{proof}
\subsubsection{On the domain $D[a,b]$}
Let $D_0[a,b]$ be the subset of elements in $D[a,b]$ that satisfy either $a^{-1}bx_1\in\O$ or $a^{-1}by_2\in \O$, or equivalently (by \ref{dc4} and \ref{dc5}) either $x_2\in \O$ or $y_1\in\O$ and let $D_{>0}[a,b]=D[a,b]\setminus D_0[a,b]$ be its complement. Further let $D_{0x}[a,b]$(resp. $D_{0y}[a,b]$) be the subset of elements in $D[a,b]$ that satisfy $x_2\in\O$ (resp. $y_1\in \O$).
\begin{lemma}\label{lem d}
Let $\abs{b}\le \abs{a}<1$.
\begin{enumerate}
\item\label{part dsym} The set $D[a,b]$ is preserved by the variable interchange
\begin{equation}\label{eq dsym}
(x_1,y_1)\leftrightarrow (y_2,x_2).
\end{equation}
\item\label{part dxy} Every element of $D[a,b]$ satisfies $by_2,\,bx_1\in \p$.
\item\label{part d0} The set $D_0[a,b]$ is empty unless $\abs{a}=\abs{b}$.
\item\label{part d1} If $\abs{a}=\abs{b}$ then the set $D_{0x}[a,b]$ is defined by the conditions
\begin{equation}\label{eq d0x}
x_1,\,x_2\in \O,ay_1\in \p,\, y_2\in ab^{-1}y_1+\O,\,az_2\in\O^*.
\end{equation}
\item\label{part d2} If $\abs{a}=\abs{b}$ then the set $D_{0y}[a,b]$  is defined by the conditions
\begin{equation}\label{eq d0y}
y_1,\,y_2\in \O,ax_2\in \p,\, x_1\in ab^{-1}x_2+\O,\,az_2\in\O^*.
\end{equation}
\item\label{part d0kappa} If $\abs{a}=\abs{b}$ then every element of $D'_0[a,b]$ satisfies
\[
(y_1,a^{-1}b)_3(ay_1x_2-a^{-1}b,y_1(ax_2y_2-1))_3(z_2(ax_2y_2-1),b)_3 =(z_2,b)_3.
\]
\item\label{part dkappa} For every element of $D'_{>0}[a,b]$ we have 
\[
(y_1,a^{-1}b)_3(ay_1x_2-a^{-1}b,y_1(ax_2y_2-1))_3(z_2(ax_2y_2-1),b)_3 =(x_1,bx_1y_2-1)_3(z_2,b)_3.
\]
\end{enumerate}
\end{lemma}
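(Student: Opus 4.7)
My plan is to treat the seven parts in order, with parts \eqref{part dsym}--\eqref{part d0} being essentially verifications directly from the defining conditions (a)--(f), and parts \eqref{part d0kappa}--\eqref{part dkappa} requiring Hilbert-symbol manipulation.

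For part \eqref{part dsym}, the interchange $(x_1,y_1)\leftrightarrow (y_2,x_2)$ visibly swaps (a) with (b) and (d) with (e) while fixing (c) and (f). For part \eqref{part dxy}, the claim $bx_1\in\p$ splits by whether $x_2\in\O$: if $x_2\in\O$, then (e) gives $bx_1\in a\O\subseteq \p$ directly; if $x_2\notin\O$, the ultrametric applied to (e) forces $|a^{-1}bx_1|=|x_2|$, and combined with (b) this gives $|bx_1|=|ax_2|\le q^{-1}$. The claim $by_2\in\p$ follows by the symmetry of \eqref{part dsym}. For part \eqref{part d0}, assuming $y_1\in\O$ (the case $x_2\in\O$ is symmetric) and $|b|<|a|$, condition (d) forces $|y_2|\le |ab^{-1}|$; then (f), which requires $|1-bx_1y_2|=|ab^{-1}|>1$, gives $|bx_1y_2|=|ab^{-1}|$, so $|x_1|\ge |b|^{-1}$; but then (e) forces $|x_2|=|a^{-1}bx_1|\ge |a|^{-1}$, contradicting $ax_2\in\p$.

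For parts \eqref{part d1} and \eqref{part d2}, the key simplification when $|a|=|b|$ is that $a^{-1}b,\,ab^{-1}\in\O^*$, so (e) with $x_2\in\O$ immediately forces $x_1\in\O$, and (d) becomes the congruence $y_2\in ab^{-1}y_1+\O$. Condition (f) is then automatic: the bound $|y_2|\le \max(|y_1|,1)\le q^{-1}|a|^{-1}$ combined with $|b|=|a|$ gives $|bx_1y_2|\le q^{-1}<1$, so $|1-bx_1y_2|=1=|ab^{-1}|$. Part \eqref{part d2} follows from \eqref{part d1} by part \eqref{part dsym}.

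Parts \eqref{part d0kappa} and \eqref{part dkappa} simplify the Hilbert-symbol product
\[
P=(y_1,a^{-1}b)_3\,(ay_1x_2-a^{-1}b,\,y_1(ax_2y_2-1))_3\,(z_2(ax_2y_2-1),b)_3.
\]
The master principle is that $1+\p\subseteq F^{*3}$ by Hensel (using $p>3$), so if $|u-v|<|u|$ then $(u,\cdot)_3=(v,\cdot)_3$; I would also use $-1=(-1)^3\in F^{*3}$ and $(x,x)_3=1$, which follows from $(x,y)_3=\overline{(y,x)_3}$ and the fact that the only self-conjugate element of $\mu_3$ is $1$. For part \eqref{part d0kappa}, on $D'_{0}$ the descriptions from \eqref{part d1}, \eqref{part d2} give $|ay_1x_2|,|ax_2y_2|\le q^{-1}$, hence $ay_1x_2-a^{-1}b\in -a^{-1}b(1+\p)$ and $ax_2y_2-1\in -(1+\p)$, so $P$ reduces (using $-1\in F^{*3}$) to $(y_1,a^{-1}b)_3\,(a^{-1}b,y_1)_3\,(z_2,b)_3=(z_2,b)_3$.

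Part \eqref{part dkappa} is the main obstacle. In $D'_{>0}$, where $x_2,y_1\notin\O$, the ultrametric applied to (d) and (e) forces $y_1=a^{-1}by_2(1+u)$ and $x_2=a^{-1}bx_1(1+v)$ with $u,v\in\p$. Expanding $ay_1x_2$ and using $bx_1,by_2\in\p$ from \eqref{part dxy}, I would show
\[
ay_1x_2\equiv a^{-1}b^2x_1y_2 \pmod{\p},
\]
so that $ay_1x_2-a^{-1}b\equiv a^{-1}b(bx_1y_2-1)\pmod{\p}$; by (f) this latter quantity has norm $1$, so the two agree modulo $F^{*3}$. A parallel (and easier) size bound shows $ax_2y_2-1\equiv bx_1y_2-1\pmod{F^{*3}}$. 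Writing $T=bx_1y_2-1$, substituting and expanding bilinearly, the symmetric factor $(a^{-1}b,T)_3(T,a^{-1}b)_3=1$ cancels (complex-conjugate pair), as do $(a^{-1}b,a^{-1}b)_3=1$ and $(T,T)_3=1$, leaving $P=(T,y_2)_3\,(T,b)_3\,(z_2,b)_3=(T,by_2)_3\,(z_2,b)_3$. The final and most delicate step is to apply the cubic Steinberg relation $(bx_1y_2,\,1-bx_1y_2)_3=1$: expanding the first argument via bilinearity and using $-1\in F^{*3}$ yields $(by_2,T)_3\,(x_1,T)_3=1$, which rearranges to $(T,by_2)_3=(x_1,T)_3$, completing the identity of part \eqref{part dkappa}.
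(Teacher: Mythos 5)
Your proposal is correct and follows essentially the same route as the paper: the same case analysis from the defining conditions (a)--(f) for parts (1)--(5), and for parts (6)--(7) the same key reductions $ay_1x_2-a^{-1}b\equiv a^{-1}b(bx_1y_2-1)$, $ax_2y_2-1\equiv bx_1y_2-1$, and $(y_1,\cdot)_3=(a^{-1}by_2,\cdot)_3$ modulo cubes. The only difference is that you spell out the final Hilbert-symbol bookkeeping (in particular the Steinberg relation $(bx_1y_2,1-bx_1y_2)_3=1$ converting $(T,by_2)_3$ into $(x_1,T)_3$), which the paper compresses into ``follows from the properties of Hilbert symbols.''
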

\begin{proof}
Part \ref{part dsym} is straightforward from the defining properties \ref{dc1}--\ref{dc6} of $D[a,b]$.
It follows from \ref{dc1} and \ref{dc4} that an element of $D[a,b]$ satisfies $a^{-1}by_2\in a^{-1}\p$ or equivalently $by_2\in\p$. Part \ref{part dxy} now follows further applying the symmetry \eqref{eq dsym}. 

By part \ref{part dxy} an element of $D_{0x}[a,b]$ satisfies $bx_1y_2\in ay_2\O\subseteq ab^{-1}\p$. If $\abs{b}<\abs{a}$ this contradicts \ref{dc6}. By the symmetry \eqref{eq dsym} if $\abs{b}<\abs{a}$ then $D_{0y}[a,b]$ is also empty. Part \ref{part d0} follows.

Suppose that $\abs{a}=\abs{b}$. It is easy to see that an element of $D_{0x}[a,b]$ satisfies the conditions \eqref{eq d0x}.  
For the reverse inclusion note that an element of $F^5$  satisfying \eqref{eq d0x} satisfies $by_2\in ay_1+b\O\subseteq \p$ and therefore also $bx_1y_2\in \p$. Consequently, $\abs{1-bx_1y_2}=1=\abs{ab^{-1}}$. Part \ref{part d1} now easily follows. Part \ref{part d2} follows from part \ref{part d1} and the  symmetry \eqref{eq dsym}.
Every element of $D_0[a,b]$ satisfies $ay_1x_2,ax_2y_2\in\p$ while, by assumption, $a^{-1}b\in\O^*$ and therefore
\[
(t,ay_1x_2-a^{-1}b)_3=(t,a^{-1}b)_3\ \ \ \text{and}\ \ \ (t,ax_2y_2-1)_3=1,\ \ \ t\in F^*.
\]
Part \ref{part d0kappa} now follows by properties of the Hilbert symbol. 

For part \ref{part dkappa} we return to the more general setting $\abs{b}\le \abs{a}<1$. For an element of $D_{>0}[a,b]$ the conditions \ref{dc4} and \ref{dc5} imply that
\[
\abs{x_2}=\abs{a^{-1}bx_1}>1\ \ \ \text{and}\ \ \ \abs{y_1}=\abs{a^{-1}by_2}>1.
\]
In particular, from conditions \ref{dc4} and \ref{dc5} it follows that
\[
(ay_1,t)_3=(by_2,t)_3,\ \ \ t\in F^*
\]
and it easily follows that
\[
ay_1x_2-a^{-1}b\in a^{-1}b(bx_1y_2-1)+\p \ \ \ \text{and} \ \ \ ax_2y_2-1\in bx_1y_2-1+ab^{-1}\p.
\]
Combined with \eqref{dc6} it follows that
\[
(t,ay_1x_2-a^{-1}b)_3=(t,a^{-1}b(bx_1y_2-1))_3\ \ \ \text{and}\ \ \ (t,ax_2y_2-1)_3=(t,bx_1y_2-1)_3,\ \ \ t\in F^*.
\]
Part  \ref{part dkappa} now follows from the properties of Hilbert symbols.
\end{proof}

Based on Lemma \ref{lem d}, if $\abs{b}\le \abs{a}<1$ we have
\begin{equation}\label{eq J3}
J_3(a,b)=\J_0(a,b)+\J_{>0}(a,b)
\end{equation}
where 
\begin{multline}\label{eq j>0}
\J_{>0}(a,b)=\abs{a^{-1}b}\int_{D'_{>0}[a,b]} (x_1,bx_1y_2-1)_3(az_2,b)_3\\ \psi(-(b z_2)^{-1}(1-bx_1y_2)^{-1}x_1+y_1+x_2+az_2y_2)\ dx_1\ dx_2\ dy_1\ dy_2\ dz_2
\end{multline}
and $\J_0(a,b)=0$ unless $\abs{a}=\abs{b}$ in which case
\[
\J_0(a,b)=\int_{D'_0[a,b]} (az_2,b)_3\psi(-(b z_2)^{-1}(1-bx_1y_2)^{-1}x_1+y_1+x_2+az_2y_2)\ dx_1\ dx_2\ dy_1\ dy_2\ dz_2.
\]
\subsubsection{The computation of $\J_0$}
\begin{lemma}\label{lem j0}
Let $\abs{a}=\abs{b}<1$. We have
\[
(a,b)_3\J_0(a,b)=\begin{cases} 2\abs{a}^{-1} & 3\nmid \val(a)>1 \\ (1+q^{-1})\abs{a}^{-1} & 3|\val(a)\\ 0 & \val(a)=1. \end{cases}
\]
\end{lemma}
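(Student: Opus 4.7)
The plan is to decompose $D_0'[a,b]$ using Lemma~\ref{lem d} parts~\ref{part d1}--\ref{part d2} as a disjoint union $D_0' = D_0^{00} \sqcup D_{0x}^\circ \sqcup D_{0y}^\circ$, where $D_0^{00} := D_{0x}\cap D_{0y}$ (characterized by $x_1,x_2,y_1,y_2 \in \O$ and $az_2 \in \O^*$), $D_{0x}^\circ := D_{0x} \setminus D_0^{00}$ consists of elements of $D_{0x}$ with $y_1 \notin \O$, and $D_{0y}^\circ := D_{0y} \setminus D_0^{00}$ consists of elements of $D_{0y}$ with $x_2 \notin \O$. Since $y_1 \in a^{-1}\p$, the condition $y_1 \notin \O$ requires $\val(a) \ge 2$, so both $D_{0x}^\circ$ and $D_{0y}^\circ$ are empty when $\val(a)=1$; this will yield the vanishing case.

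On $D_0^{00}$ one has $bx_1y_2 \in \p$, so $(1-bx_1y_2)^{-1} \in 1+\p$, and since $|bz_2|=1$ the whole term $(bz_2)^{-1}(1-bx_1y_2)^{-1}x_1$ lies in $\O$; the $\psi$-factor of this difficult term is trivial and the $x_i, y_i$-integrations each collapse to $1$. What remains is $\int_{\{az_2\in\O^*\}}(az_2,b)_3\,dz_2 = |a|^{-1}(1-q^{-1})\delta_{3\mid\val(a)}$ by the basic property that $(u,b)_3$ on $\O^*$ is trivial exactly when $3\mid\val(b)$. For $D_{0x}^\circ$, parameterize $y_2 = ab^{-1}y_1 + w$ with $w\in\O$, so that $1-bx_1y_2 = 1-ax_1y_1-bx_1w \in 1+\p$; the same sort of size estimate shows $\psi(-(bz_2)^{-1}u^{-1}x_1)$ is trivial throughout, and integrations in $x_2, x_1, w$ each collapse to $1$. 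The remaining integral in $y_1$ and $z_2$, after substitutions $z_2'=az_2$ and $s = ab^{-1}z_2'$, reduces to an inner Kloosterman integral $\Kl(b^{-1};y_1,0)$, which by Lemma~\ref{lem kl int} vanishes for $|y_1|>q$; only the stratum $|y_1|=q$ contributes, and evaluation yields $\int_{D_{0x}^\circ} = q^{\val(a)}(a,b)_3^{-1}$ if $3\nmid\val(a)$ and $q^{\val(a)-1}$ if $3\mid\val(a)$.

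For $D_{0y}^\circ$ an analogous but more intricate computation is required. Parameterize $x_1 = ab^{-1}x_2+v$ with $v\in\O$, integrate out $y_1$, and then change variables from $v$ to $u = 1-bx_1y_2$; the key algebraic identity $x_1 = (1-u)/(by_2)$ (valid after the substitution) reduces the awkward expression to $-(bz_2)^{-1}(1-bx_1y_2)^{-1}x_1 = (b^2 y_2 z_2)^{-1}(1-u^{-1})$. Integrating in $u$ produces a factor $\psi\bigl(-ax_2/(b^2 z_2 (1-ax_2 y_2))\bigr)$, and the subsequent $y_2$-integration, controlled by Taylor expansion of $(1-ax_2 y_2)^{-1}$, is shown to vanish outside $\val(x_2) = -1$ by a stationary-phase-type argument. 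Direct evaluation on the surviving stratum, split according to whether $a^2 b^{-2} \equiv az_2 \pmod\p$ or not, yields $\int_{D_{0y}^\circ} = q^{\val(a)}\bigl[(a,b)_3^{-1} - (1-q^{-1})\delta_{3\mid\val(a)}\bigr]$. Summing the three contributions, multiplying by $(a,b)_3$, and using that $(a,b)_3=1$ whenever $3\mid\val(a)$ (a consequence of $(\varpi,\varpi)_3=1$ and the triviality of $(u,u')_3$ for $u,u'\in\O^*$ when $p>3$), one recovers the three cases of the lemma.

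The principal obstacle will be the $D_{0y}^\circ$ computation: the phase structure lacks the symmetry enjoyed by $D_{0x}^\circ$, and verifying that strata with $\val(x_2) \le -2$ contribute zero requires a careful Taylor-expansion and stationary-phase-style cancellation argument, rather than a direct reduction to a single Kloosterman integral.
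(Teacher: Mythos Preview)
Your decomposition $D_0' = D_0^{00}\sqcup D_{0x}^\circ\sqcup D_{0y}^\circ$ is equivalent to the paper's inclusion--exclusion $\J_0=\text{\j}_1+\text{\j}_2-\text{\j}_3$, and your treatment of $D_0^{00}$ and $D_{0x}^\circ$ is correct and essentially parallel to the paper's.

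There is, however, a genuine gap in your $D_{0y}^\circ$ argument. After integrating out $y_1$ and $v$ you are left with
\[
\int_{\O}\psi\!\left(-\frac{\beta}{1-\gamma y_2}\right)dy_2,\qquad \beta=\frac{ax_2}{b^2z_2},\ \gamma=ax_2,
\]
and you assert that this $y_2$-integral vanishes whenever $\abs{x_2}>q$. That is false. Substituting $w=(1-\gamma y_2)^{-1}$ (a measure-preserving bijection of $1+\p^{m}$ onto itself, with $m=\val(a)-k$ and $\abs{x_2}=q^k$) turns the integral into
\[
q^{m}\int_{1+\p^{m}}\psi(-\beta w)\,dw
= q^{m}\psi(-\beta)\int_{\p^{m}}\psi(-\beta s)\,ds
=\begin{cases}\psi(-\beta)& k\le \val(a)/2,\\ 0& k>\val(a)/2.\end{cases}
\]
So for $2\le k\le \lfloor\val(a)/2\rfloor$ (which occurs as soon as $\val(a)\ge 4$) the $y_2$-integral equals $\psi(-\beta)\ne 0$, and no Taylor/stationary-phase argument in $y_2$ alone can make it vanish: the phase is slowly varying there. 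Your claimed final value for $\int_{D_{0y}^\circ}$ is nevertheless correct, because the \emph{subsequent} $z_2$-integration kills these strata: after the unit rescaling $z_2'=az_2$ one finds for $k\ge 2$ a Gauss-type integral $\int_{\O^*}(z,b)_3\psi(-\alpha x_2 z^{-1})dz$ with $\abs{\alpha x_2}=q^k>q$, which vanishes. So the argument can be salvaged, but not as written.

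The paper sidesteps this entirely with a single substitution in the $\text{\j}_2$ piece: replacing $z_2$ by $ab^{-2}(1-bx_1y_2)^{-1}z_2$ (a unit-modulus rescaling for each fixed $x_1,y_2$) turns the awkward factor $(1-bx_1y_2)^{-1}$ into $1$, leaving an integrand $(z_2,b)_3\psi\bigl((1-z_2^{-1})x_2\bigr)$ that is independent of $x_1,y_1,y_2$. This yields the same one-variable integral in $(z_2,x_2)$ that you eventually reach, without any stratification or stationary-phase reasoning.
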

\begin{proof}
Suppose that $\abs{a}=\abs{b}<1$. Write $\text{\j}_1$, resp.\ $\text{\j}_2$, resp.\ $\text{\j}_3$ for the integral of 
\[
(az_2,b)_3\psi(-(b z_2)^{-1}(1-bx_1y_2)^{-1}x_1+y_1+x_2+az_2y_2)
\]
over the domain $D'_{0x}[a,b]$, resp.\ $D'_{0y}[a,b]$, resp.\ $D'_{0x}[a,b]\cap D'_{0y}[a,b]$ so that
\[
\J_0(a,b)=\text{\j}_1+\text{\j}_2-\text{\j}_3.
\]
By Lemma \ref{lem d}, part~\ref{part d1},  an element of $D_{0x}[a,b]$ satisfies 
$-(b z_2)^{-1}(1-bx_1y_2)^{-1}x_1,\,x_2\in \O$ and $az_2y_2\in a^2b^{-1}z_2y_1+\O$. Therefore
\[
\text{\j}_1=\int_{a^{-1}\O^*}\int_{a^{-1}\p}\int_{ab^{-1}y_1+\O}\int_\O\int_\O (az_2,b)_3\psi((a^2b^{-1}z_2+1)y_1)\ dx_1\ dx_2\ dy_2\ dy_1\ dz_2.
\]
Integrating over $x_1,x_2$ and $y_2$ and applying the variable change $z_2\mapsto a^{-2}bz_2$ we have
\begin{equation}\label{eq j1}
(a,b)_3\,\text{\j}_1=\abs{a}^{-1}\int_{\O^*}(z_2,b)_3\int_{a^{-1}\p}\psi((z_2+1)y_1)\ dy_1\ dz_2.
\end{equation}

If $\abs{a}=q^{-1}$ then $a^{-1}\p=\O$ and
\[
\int_\O \psi((z_2+1)y_1)\ dy_1=1
\]
is independent of $z_2\in\O^*$. Since $\val(b)=1$, we conclude in this case that
\[
(a,b)_3\,\text{\j}_1=\abs{a}^{-1}\int_{\O^*}(z_2,b)_3\ dz_2=0.
\]
If $\abs{a}<q^{-1}$ then $a\p^{-1}\subseteq \p$ and
\[
\int_{a^{-1}\p}\psi((z_2+1)y_1)\ dy_1=\begin{cases} q^{-1}\abs{a}^{-1} & z_2\in -1+a\p^{-1} \\ 0 & z_2\in \O^* \setminus (-1+a\p^{-1}).\end{cases}
\]
Consequently, 
\[
(a,b)_3\,\text{\j}_1=\abs{a}^{-2}q^{-1} \int_{-1+a\p^{-1}} (z_2,b)\ dz_2=\abs{a}^{-1}.
\]

By part \ref{part d2} of Lemma \ref{lem d} an element of $D_{0y}[a,b]$ satisfies 
$y_1,\,az_2y_2\in \O$ and $-(b z_2)^{-1}(1-bx_1y_2)^{-1}x_1\in -ab^{-1}(b z_2)^{-1}(1-bx_1y_2)^{-1})x_2+ \O$. Therefore
\begin{multline*}
\text{\j}_2=\int_{a^{-1}\O^*}\int_{a^{-1}\p}\int_{ab^{-1}x_2+\O}\int_\O\int_\O (az_2,b)_3\psi((1-ab^{-1}(b z_2)^{-1}(1-bx_1y_2)^{-1})x_2)\\ dy_1\ dy_2\ dx_1\ dx_2\ dz_2.
\end{multline*}
Note that in the domain of integration we have $bx_1y_2\in \p$. Applying the variable change $z_2\mapsto ab^{-2}(1-bx_1y_2)^{-1}z_2$ 
the integrand becomes independent of $x_1\in ab^{-1}x_2+\O$ and $ y_1,y_2\in \O$ and after integrating over these three variables we obtain
\[
(a,b)_3\, \text{\j}_2=\abs{a}^{-1}\int_{\O^*}\int_{a^{-1}\p} (z_2,b)_3\psi((1-z_2^{-1})x_2)\ dx_2\ dz_2.
\]
If $\abs{a}=q^{-1}$ we again see that the integral factors through $\int_{\O^*} (z_2,b)\ dz_2$ and therefore $\text{\j}_2=0$.
If $\abs{a}<q^{-1}$ then the computation is similar to that of $\text{\j}_1$. We have
\[
(a,b)_3\, \text{\j}_2=\abs{a}^{-2}q^{-1}\int_{1+a\p^{-1}}(z_2,b)\ dz_2=\abs{a}^{-1}.
\]

Finally, it easily follows from parts \ref{part d1} and \ref{part d2} of Lemma \ref{lem d} that $D_{0x}[a,b]\cap D_{0y}[a,b]$ is characterized by the conditions 
\[
x_1,\,x_2,\,y_1,\,y_2\in \O,\,az_2\in\O^*.
\]
Consequently, after integrating over $x_1, x_2, y_1$ and $y_2$ and applying the variable change $z_2\mapsto a^{-2}bz_2$ we have
\[
(a,b)_3\, \text{\j}_3=\abs{a}^{-1}\int_{\O^*}(z_2,b)\ dz_2=\abs{a}^{-1}(1-q^{-1})\delta_{3|\val(b)}.
\]
The lemma follows.
\end{proof}

\subsubsection{A simplification of $\J_{>0}(a,b)$}
\begin{lemma}\label{lem j>0}
Let $\abs{b}\le \abs{a}< 1$. Then
\[
\J_{>0}(a,b)=\sum_{\ell=\lfloor \frac{\val(a)+1}2\rfloor}^{\val(a)-1}\J_\ell(a,b)
\]
where
\begin{multline*}
\J_\ell(a,b)=\abs{a}^{-2}\int_{\abs{y_2}=q^{-2\ell}\abs{b}^{-1},\,\abs{1-y_2}=\abs{ab^{-1}}} (b^{-1}\varpi^\ell,y_2-1)_3\\ \Kl(b;-ab^{-2}\varpi^\ell(1-y_2)^{-1},\varpi^{-\ell} y_2)\Kl(b^{-1}(y_2-1)^{-1};a^{-1}\varpi^\ell ,a^{-1}by_2\varpi^{-\ell} )\ dy_2.
\end{multline*}
In particular, $\J_{>0}(a,b)=0$ if $\abs{a}=q^{-1}$.
\end{lemma}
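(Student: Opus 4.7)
\textit{Plan of proof.} The expression for $\J_{>0}(a,b)$ in \eqref{eq j>0} is already a triple integral in $(x_1,y_2,z_2)$; the plan is to introduce a discrete parameter $\ell=\val(bx_1)$ together with three changes of variables that convert it into the double Kloosterman integral of the target. First, I will substitute $z_2=a^{-1}u$ with $u\in\O^*$ and $x_1=\varpi^\ell v/b$ with $v\in\O^*$. The defining conditions $bx_1\in\p$ and $|x_1|>|ab^{-1}|$ restrict $\ell\in[1,\val(a)-1]$. The resulting Jacobians collect to $|a^{-1}b|\cdot|a|^{-1}\cdot q^{-\ell}|b|^{-1}=q^{-\ell}|a|^{-2}$ in front.

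Next I will substitute $y_2\mapsto y_2^T/(\varpi^\ell v)$ where $y_2^T:=bx_1y_2$. Its Jacobian $q^\ell$ cancels the surviving $q^{-\ell}$, and the overall constant becomes $|a|^{-2}$ as in the target formula. The constraint $|1-bx_1y_2|=|ab^{-1}|$ becomes $|1-y_2^T|=|ab^{-1}|$, while $by_2\in\p$ and $|y_2|>|ab^{-1}|$ translate directly into $|y_2^T|=q^{-2\ell}|b|^{-1}$. Joint analysis of these two size conditions (splitting into the cases $|a|>|b|$ and $|a|=|b|$) shows that the $y_2^T$-domain is non-empty exactly when $\ell\geq\lceil\val(a)/2\rceil=\lfloor(\val(a)+1)/2\rfloor$. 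This pinpoints the advertised range of summation and, in particular, forces the empty sum and the vanishing of $\J_{>0}(a,b)$ when $|a|=q^{-1}$.

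In the new coordinates the phase equals
\[
u\cdot\tfrac{y_2^T}{\varpi^\ell v}-u^{-1}\cdot\tfrac{a\varpi^\ell v}{b^2(1-y_2^T)}+v\cdot a^{-1}\varpi^\ell+v^{-1}\cdot a^{-1}by_2^T\varpi^{-\ell},
\]
which separates into a $u$-part (whose coefficients contain $v$ only multiplicatively) and a genuine $v$-part. The $u$-integration against the factor $(u,b)_3=(b^{-1},u)_3$ is the Kloosterman integral $\Kl(b^{-1};y_2^T/(\varpi^\ell v),-a\varpi^\ell v/(b^2(1-y_2^T)))$; applying the identity $\Kl(y;c,d)=\Kl(y^{-1};d,c)$ and the scaling rule $\Kl(y;c,d)=(y,s)_3\Kl(y;cs,ds^{-1})$ with $s=v^{-1}$ eliminates $v$ and produces $(b,v)_3^{-1}\cdot\Kl(b;-ab^{-2}\varpi^\ell(1-y_2^T)^{-1},\varpi^{-\ell}y_2^T)$, matching the first Kloosterman in the target.

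Finally, the remaining $v$-integrand carries the Hilbert character
\[
(v,y_2^T-1)_3\cdot(b,v)_3^{-1}=(v,y_2^T-1)_3(v,b)_3=(v,b(y_2^T-1))_3=(b^{-1}(y_2^T-1)^{-1},v)_3,
\]
where the last equality uses $(x,y)_3=(y^{-1},x)_3$ to put $v$ into the second slot of the symbol as required by the Kloosterman convention. The $v$-integration then yields precisely $\Kl(b^{-1}(y_2^T-1)^{-1};a^{-1}\varpi^\ell,a^{-1}by_2^T\varpi^{-\ell})$, the second target Kloosterman. Relabelling $y_2^T$ as $y_2$ reproduces $\J_\ell(a,b)$ and summing over admissible $\ell$ completes the proof. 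The main delicate step is the Hilbert-symbol bookkeeping during the $u$-scaling, which produces exactly the factor $(b,v)_3^{-1}$ needed to convert the $v$-Hilbert into the character whose Kloosterman $y$-parameter is the inverse $b^{-1}(y_2^T-1)^{-1}$; all other steps are straightforward changes of variable and verification of the domain.
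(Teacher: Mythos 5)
There is a genuine gap at the step where you restrict the $y_2^T$-integration to the single sphere $\abs{y_2^T}=q^{-2\ell}\abs{b}^{-1}$. You assert that the conditions $by_2\in\p$ and $\abs{y_2}>\abs{ab^{-1}}$ ``translate directly into $\abs{y_2^T}=q^{-2\ell}\abs{b}^{-1}$'', but with $y_2^T=bx_1y_2$ and $\abs{bx_1}=q^{-\ell}$ these conditions only give the range $q^{-\ell}\abs{ab^{-1}}<\abs{y_2^T}<q^{-\ell}\abs{b}^{-1}$, which contains $\val(a)-1$ distinct absolute values, not one. (For instance, with $\val(a)=\val(b)=3$ and $\ell=2$ this range allows $\abs{y_2^T}\in\{q^{-1},1\}$, while the target domain requires $\abs{y_2^T}=q^{-1}$.) The reduction to the single sphere is not a feature of the domain: it is the assertion that the rest of the domain contributes zero, and this requires the stationary-phase vanishing of the second Kloosterman integral. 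Concretely, since $\abs{a^{-1}\varpi^\ell}\ge q$ for $1\le\ell\le\val(a)-1$ and $\abs{a^{-1}by_2\varpi^{-\ell}}\ge q$ throughout the domain, Lemma~\ref{lem kl int} forces $\Kl(b^{-1}(y_2-1)^{-1};a^{-1}\varpi^\ell,a^{-1}by_2\varpi^{-\ell})=0$ unless the two arguments have equal absolute value, i.e.\ unless $\abs{y_2}=q^{-2\ell}\abs{b}^{-1}$. Only after this vanishing is invoked can one combine $\abs{y_2}=q^{-2\ell}\abs{b}^{-1}$ with $\abs{y_2}\le\abs{ab^{-1}}$ (which follows from $\abs{1-y_2}=\abs{ab^{-1}}$) to obtain the lower limit $\ell\ge\lfloor(\val(a)+1)/2\rfloor$ of the summation. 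As written, your argument yields a version of $\J_\ell(a,b)$ with a strictly larger $y_2$-domain, so the claimed identity is not established.

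Apart from this, your route is essentially the paper's (split according to $\ell=\val(bx_1)$, multiplicative substitutions, recognition of the two Kloosterman integrals), and your phase and Hilbert-symbol bookkeeping, including the transfer of the factor $(b,v)_3^{-1}$ into the $v$-character, are correct. One expository inaccuracy: \eqref{eq j>0} is a five-fold integral, not a triple integral; the variables $y_1$ and $x_2$ must first be shifted by $a^{-1}by_2$ and $a^{-1}bx_1$ and integrated over $\O$, which is precisely what produces the summands $v\,a^{-1}\varpi^\ell$ and $v^{-1}a^{-1}by_2^T\varpi^{-\ell}$ in your phase. Since your phase does contain them, this is a slip of exposition rather than of computation, but it should be stated.
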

\begin{proof}
Note that the domain $D_{>0}[a,b]$ is characterized by the conditions
\[
\abs{ab^{-1}}<\abs{x_1},\abs{y_2}<\abs{b}^{-1};\, az_2\in \O^*;\, y_1-a^{-1}by_2,\,x_2-a^{-1}bx_1\in \O;\, \abs{1-bx_1y_2}=\abs{ab^{-1}}.
\]
In particular, if $\abs{a}=q^{-1}$ then $D_{>0}[a,b]$ is empty and $\J_{>0}(a,b)=0$. For the rest of this section assume that $\abs{a}<q^{-1}$.
Applying the variable changes $y_1\mapsto y_1+a^{-1}by_2$ and $x_2\mapsto x_2+a^{-1}bx_1$ to \eqref{eq j>0},
the integrand becomes independent of $y_1,\,x_2\in\O$. After integrating over these two variables we obtain the expression
\begin{multline*}
\J_{>0}(a,b)=\abs{a^{-1}b}\int (x_1,bx_1y_2-1)_3  (az_2,b)_3\\
 \psi((a^{-1}b-(b z_2)^{-1}(1-bx_1y_2)^{-1})x_1+(a^{-1}b+az_2)y_2)\ dx_1\ dy_2\ dz_2
\end{multline*}
where integration is over the domain defined by the conditions
\[
az_2\in\O^*,\ \ \ \abs{ab^{-1}}<\abs{x_1},\abs{y_2}<\abs{b}^{-1}\ \ \  \text{and} \ \ \ \abs{1-bx_1y_2}=\abs{ab^{-1}}. 
\]

Next, applying the variable change 
\[
x_1\mapsto az_2x_1, \ y_2\mapsto (az_2)^{-1}y_2
\]
we have
\begin{multline*}
\J_{>0}(a,b)=\abs{a^{-1}b}\int (x_1,bx_1y_2-1)_3  (az_2,b(bx_1y_2-1))_3 \\ \psi(bz_2x_1-ab^{-1}(1-bx_1y_2)^{-1}x_1+a^{-2}by_2z_2^{-1}+y_2)\ dx_1\ dy_2\ dz_2
\end{multline*}
where integration is over the same domain.
Now applying the variable change 
\[
z_2\mapsto z_2x_1^{-1},\,y_2\mapsto y_2(bx_1)^{-1}
\]
we have
\begin{multline*}
\J_{>0}(a,b)=\abs{a}^{-1}\int \abs{x_1}^{-2}(b,x_1)_3  (az_2,b(y_2-1))_3 \\ \psi(bz_2-ab^{-1}(1-y_2)^{-1}x_1+a^{-2}y_2z_2^{-1}+b^{-1}y_2x_1^{-1})\ dx_1\ dy_2\ dz_2
\end{multline*}
where integration is over the domain defined by
\[
\abs{ab^{-1}}<\abs{x_1}=\abs{az_2}<\abs{b}^{-1},\ \ \ \abs{ax_1}<\abs{y_2}<\abs{x_1}\ \ \  \text{and} \ \ \ \abs{1-y_2}=\abs{ab^{-1}}. 
\]
Note further that the condition $\abs{1-y_2}=\abs{ab^{-1}}$ implies that $\abs{y_2}\le \abs{ab^{-1}}$ and combined with the condition $\abs{ab^{-1}}<\abs{x_1}$ it 
implies that $\abs{y_2}<\abs{x_1}$. Thus the condition $\abs{y_2}<\abs{x_1}$ may be omitted.

We express the integral $\J_{>0}(a,b)$ as a sum over $\ell=1,\dots,\val(a)-1$ of integrals over the subdomain 
where $\abs{x_1}=\abs{b}^{-1}q^{-\ell}$, that is, over the domain defined by the conditions
\[
\abs{x_1}=\abs{b}^{-1}q^{-\ell},\ \abs{z_2}=\abs{ab}^{-1}q^{-\ell},\ q^{-\ell}\abs{b^{-1}a}<\abs{y_2}\ \ \ \text{and}\ \ \ \abs{1-y_2}=\abs{ab^{-1}}.
\]
Applying the variable change
\[
x_1\mapsto b^{-1}\varpi^\ell x_1,\ z_2\mapsto a^{-1}b^{-1}\varpi^\ell z_2
\]
to the $\ell$-th summand we have
\begin{multline*}
\J_{>0}(a,b)=\abs{a}^{-2}\sum_{\ell=1}^{\val(a)-1}\int (b^{-1}\varpi^\ell,y_2-1)_3\int_{\O^*}\int_{\O^*}(b,x_1)_3  (z_2,b(y_2-1))_3  \\ \psi(a^{-1}\varpi^\ell z_2-ab^{-2}\varpi^\ell(1-y_2)^{-1} x_1+a^{-1}b \varpi^{-\ell} y_2 z_2^{-1}+\varpi^{-\ell} y_2x_1^{-1})\ dx_1\ dz_2 \ dy_2\\=
 \abs{a}^{-2}\sum_{\ell=1}^{\val(a)-1}\int  (b^{-1}\varpi^\ell,y_2-1)_3\\ \Kl(b;-ab^{-2}\varpi^\ell(1-y_2)^{-1},\varpi^{-\ell} y_2)\Kl(b^{-1}(y_2-1)^{-1};a^{-1}\varpi^\ell ,a^{-1}by_2\varpi^{-\ell} )\ dy_2
\end{multline*}
where in the $\ell$-th integral $y_2$ is integrated over the domain defined by the conditions
\[
q^{-\ell}\abs{b^{-1}a}<\abs{y_2}\ \ \ \text{and}\ \ \ \abs{1-y_2}=\abs{ab^{-1}}.
\]

It follows from Lemma \ref{lem kl int} that in the domain of integration above we have 
\[
\Kl(b^{-1}(y_2-1)^{-1};a^{-1}\varpi^\ell ,a^{-1}by_2\varpi^{-\ell} )=0
\] 
unless 
\[
\abs{y_2}=q^{-2\ell}\abs{b}^{-1}.
\] 
Indeed, if $\ell<\val(a)-1$ then $\abs{a^{-1}\varpi^\ell}>q$ and for $\ell=\val(a)-1$ we have $\abs{a^{-1}\varpi^\ell}=q$ while in the domain of integration $\abs{a^{-1}by_2\varpi^{-\ell}}\ge q$. Since in the domain of the $\ell$-th integral we have $\abs{y_2}\le \abs{ab^{-1}}$ and since $q^{-2\ell}\le \abs{a}$ if and only if $\ell\ge \lfloor \frac{\val(a)+1}2\rfloor$ the lemma follows.
\end{proof}

In order to proceed with the computation of $\J_\ell(a,b)$ we separate into the two cases:
\begin{itemize}
\item Case 1: $\ell<\val(b)-1$ or $\ell=\val(b)-1$ and $3\nmid \val(b)$.
\item Case 2: $\ell=\val(b)-1$ and $3|\val(b)$.
\end{itemize}
The computation in Case 1 requires some preparation that we carry out first.
\subsubsection{}\label{ss G int} Let $a,\,b\in F^*$ and let $\ell$  be an integer such that
\begin{itemize}
\item either $\abs{a}=\abs{b}\le q^{-2}$ and $\frac{\val(a)}2\le \ell\le \val(a)-1$ 
\item or $\abs{b}<\abs{a}<1$ and $\ell=\frac{\val(a)}2$.
\end{itemize}
For $j\ge 1$ let 
\[
\Gamma_j=\{(x,y)\in\O\times \O: x^3-y^3 \in a\p^{-j}\}
\]
and let
\[
G_j^\ell(a,b)=\int_{\Gamma_j}\psi(\varpi^\ell b^{-1}x-3^{-3}\varpi^{3\ell} a^{-1}b^{-1}x^3+\varpi^\ell a^{-1} y)\ dx\ dy.
\]
Note that the dependence of $\Gamma_j$ on $a$ is only via $\val(a)$. 
Our goal in this subsection is to compute $G_\ell^\ell(a,b)$ and $G_{\ell+1}^\ell(a,b)$. Let $j\in\{\ell,\ell+1\}$ and set
\[
m=\val(a)-j. 
\]
Note that with the above assumptions $m\ge 0$. We begin with the explication of $\Gamma_j$.
\begin{lemma}\label{lem gamma}
We have the disjoint union
\[
\Gamma_j=\left(\p^{-\lfloor -\frac{m}3\rfloor} \times \p^{-\lfloor -\frac m3\rfloor}\right)\sqcup \mathop{\sqcup}\limits_{n=0}^{\lfloor\frac{m-1}3\rfloor} \mathop{\sqcup}\limits_{k=0}^2 \{(x,y):\abs{x}=q^{-n},\ y\in \rho^k x+\p^{m-2n}\}.
\]
(In particular, if $m=0$ then $\Gamma_j=\O\times \O$.)
\end{lemma}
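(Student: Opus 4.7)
The plan is first to rewrite the membership condition for $\Gamma_j$. Since $a\p^{-j}$ consists of the elements of valuation at least $m=\val(a)-j$, we have
\[
\Gamma_j=\{(x,y)\in\O\times\O:\val(x^3-y^3)\ge m\}.
\]
The main tool will be the factorization $x^3-y^3=(x-y)(x-\rho y)(x-\rho^2 y)$, available since $\rho\in F$. Because $p>3$, the quantities $1-\rho,\,\rho-\rho^2,\,1-\rho^2$ are all units. Consequently, when $x$ and $y$ have a common valuation $n$, the three factors $x-\rho^i y$ pairwise differ by units times $\varpi^n$, so at most one of them can lie in $\p^{n+1}$, and the other two then have valuation exactly $n$. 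This observation is the key lever in everything that follows.

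With it in hand I would split into two cases according to $n:=\min(\val(x),\val(y))$. When $n\ge\lceil m/3\rceil=-\lfloor -m/3\rfloor$, both $x^3$ and $y^3$ lie in $\p^m$ automatically, which produces the first piece of the decomposition. When $n\le\lfloor(m-1)/3\rfloor$ I would first observe that $\val(x)\ne\val(y)$ is impossible, as it would force $\val(x^3-y^3)=3n<m$. So $\val(x)=\val(y)=n$, and writing $x=\varpi^n u$, $y=\varpi^n v$ with $u,v\in\O^*$, the membership condition becomes $\val(u^3-v^3)\ge m-3n>0$. Reducing modulo $\p$ in the residue field $k_F$, which contains all cube roots of unity since $\rho\in\O^*$, we get $(u/v)^3=1$, so $u\equiv\rho^{-k}v\pmod\p$ for a unique $k\in\{0,1,2\}$. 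The key observation then upgrades this congruence: exactly one factor of $v^3-u^3$ lies in $\p$, namely $v-\rho^k u$, and since the other two are units we get $\val(v-\rho^k u)\ge m-3n$, i.e.\ $y\in\rho^k x+\p^{m-2n}$. This places $(x,y)$ in the corresponding summand of the second piece.

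For the reverse inclusions I would verify directly from the factorization that each piece lies in $\Gamma_j$: points of the first piece satisfy $\val(x^3),\val(y^3)\ge 3\lceil m/3\rceil\ge m$; in the $(n,k)$-summand of the second piece the factor $y-\rho^k x$ has valuation at least $m-2n$ while the other two factors have valuation exactly $n$ (by the key observation), summing to at least $m$. Finally disjointness: the first piece is separated from the second because the second forces $\val(x)=n<\lceil m/3\rceil$; within the second, different $n$ give different $\val(x)$; for fixed $n$, two cosets $\rho^k x+\p^{m-2n}$ and $\rho^{k'}x+\p^{m-2n}$ can meet only if $(\rho^k-\rho^{k'})x\in\p^{m-2n}$, which forces $x\in\p^{m-2n}$, contradicting $\val(x)=n<m-2n$. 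There is no serious obstacle; the only delicate point is bookkeeping the exponent $m-2n$ in the second piece, which arises as the valuation $m-3n$ of $v-\rho^k u$ inflated by the shared factor $\varpi^n$.
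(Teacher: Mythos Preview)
Your argument is correct and follows essentially the same route as the paper: both proofs reduce to the factorization $x^3-y^3=\prod_k(x-\rho^k y)$ and the observation that, when $\val(x)=\val(y)=n$, at most one factor can drop below valuation $n$, forcing the other two to have valuation exactly $n$. The paper's version is terser (it leaves the verification that $|x|=|y|$ and the disjointness of the pieces to the reader), whereas you spell out these points explicitly; your organizing principle $n=\min(\val x,\val y)$ is equivalent to the paper's dichotomy on whether $x^3\in\p^m$.
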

\begin{proof}
The case $m=0$ is straightforward. Assume that $m>0$. By definition we have
\[
\Gamma_j=\{(x,y)\in\O\times \O: x^3-y^3\in\p^m\}.
\]
Note that for $(x,y)\in \Gamma_j$ we have $x^3\in \p^m$ if and only if $y^3\in \p^m$ and that for $x\in F$ we have $x^3\in \p^m$ if and only if $x\in \p^{-\lfloor -\frac{m}3\rfloor}$. Suppose that $(x,y)\in \Gamma_j$ with $x^3\not\in \p^m$ and set $\abs{x}=q^{-n}$. By assumption $0\le n\le  \lfloor\frac{m-1}3\rfloor$ and thus $\abs{x}=\abs{y}$. Note that in the decomposition
\[
y^3-x^3=(y-x)(y-\rho x)(y-\rho^2 x)\in \p^m
\]
two of the three factors must also be of absolute value $q^{-n}$ and the third must therefore be in $\p^{m-2n}$. This gives the inclusion of $\Gamma_j$ in the disjoint union. The above decomposition also explains the other inclusion. 
\end{proof}

\begin{lemma}\label{lem G comp}
With the above assumptions we have
\[
G_{\ell+1}^\ell(a,b)=0
\]
and
\[
G_\ell^\ell(a,b)=q^{-2}\delta_{\ell,\val(a)-1}\delta_{\val(a),\val(b)}+q^{2\ell}\abs{a}\sum_{k=0}^2 \Cu_\ell(b^{-1}+a^{-1}\rho^k,-3^{-3}a^{-1}b^{-1}).
\]
In particular, if $\abs{a}=\abs{b}$ then
\[
G_\ell^\ell(a,b)=G_\ell^\ell(b,a).
\]
\end{lemma}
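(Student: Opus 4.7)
The plan is to substitute the disjoint-union decomposition of $\Gamma_j$ from Lemma \ref{lem gamma} into the integral defining $G_j^\ell(a,b)$, exploiting the fact that the integrand factorizes as $\psi(\varpi^\ell b^{-1}x-3^{-3}\varpi^{3\ell}a^{-1}b^{-1}x^3)\,\psi(\varpi^\ell a^{-1}y)$, so only the domain couples $x$ and $y$. On each $(n,k)$-piece $\{\abs{x}=q^{-n},\ y\in\rho^k x+\p^{m-2n}\}$ I will write $y=\rho^k x+u$ with $u\in\p^{m-2n}$ and perform the inner $u$-integral first. It equals $q^{-(m-2n)}$ when $\varpi^\ell a^{-1}\in\p^{-(m-2n)}$, i.e.\ when $2n\le\ell-j$, and vanishes otherwise. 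For $j=\ell+1$ no $n\ge 0$ satisfies this inequality, so every $(n,k)$-piece is killed; for $j=\ell$ only $n=0$ survives, producing $q^{-m}\sum_{k=0}^{2}\int_{\O^*}\psi(\varpi^\ell(b^{-1}+a^{-1}\rho^k)x-3^{-3}\varpi^{3\ell}a^{-1}b^{-1}x^3)\,dx$. The substitution $x\mapsto\varpi^\ell x$ combined with \eqref{eq cu l} rewrites each summand as $q^\ell\,\Cu_\ell(b^{-1}+a^{-1}\rho^k,-3^{-3}a^{-1}b^{-1})$, and $q^{-m+\ell}=q^{2\ell-\val(a)}=q^{2\ell}\abs{a}$, giving exactly the sum that appears in the claimed formula.

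For the bulk piece $\p^{\lceil m/3\rceil}\times\p^{\lceil m/3\rceil}$, integrating $y$ first forces $\varpi^\ell a^{-1}\in\p^{-\lceil m/3\rceil}$, equivalently $m+j-\ell\le\lceil m/3\rceil$. For $j=\ell+1$ this is impossible; for $j=\ell$ it reduces to $m\le\lceil m/3\rceil$, hence $m\in\{0,1\}$. The standing hypotheses on $\ell$ exclude $m=0$, leaving only $m=1$, i.e.\ $\ell=\val(a)-1$. In case 1 ($\val(a)=\val(b)$), a direct valuation check shows that the full integrand is identically $1$ on $\p\times\p$, giving the contribution $q^{-2}$. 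In case 2 ($\val(b)>\val(a)=2$, $\ell=1$), the substitution $x=\varpi x'$ converts the $x$-integral to $q^{-1}\Cu(\varpi^2 b^{-1},-3^{-3}\varpi^6 a^{-1}b^{-1})$; since $\abs{\varpi^2 b^{-1}}\ge q$ and $\abs{\varpi^2 b^{-1}}>\abs{\varpi^6 a^{-1}b^{-1}}$, the vanishing clauses of Lemma \ref{lem cobic exp} annihilate it. Thus the bulk piece contributes exactly $q^{-2}\delta_{\ell,\val(a)-1}\delta_{\val(a),\val(b)}$, and together with the $(n,k)=(0,k)$ pieces this yields the desired formulas for $G_\ell^\ell(a,b)$ and $G_{\ell+1}^\ell(a,b)=0$.

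The symmetry $G_\ell^\ell(a,b)=G_\ell^\ell(b,a)$ when $\abs{a}=\abs{b}$ will then be read off the explicit formula. The delta term is manifestly symmetric and $-3^{-3}a^{-1}b^{-1}$ is symmetric under $a\leftrightarrow b$; for the $\Cu_\ell$ sum I use the invariance $\Cu_\ell(c\rho,d)=\Cu_\ell(c,d)$, obtained from the substitution $x\mapsto\rho^{-1}x$ in the definition ($\rho\in\O^*$), to compute $\Cu_\ell(b^{-1}+a^{-1}\rho^k,d)=\Cu_\ell(a^{-1}+b^{-1}\rho^{-k},d)$; summing over $k\in\{0,1,2\}$ permutes the exponents and yields the swapped sum. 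I expect the main bookkeeping obstacle to be the case analysis for the bulk piece, in particular verifying that the case-2 cubic integral lies in the vanishing regime of Lemma \ref{lem cobic exp}; the $(n,k)$-pieces, by contrast, dispatch uniformly without case subtleties.
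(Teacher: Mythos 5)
Your proposal is correct and follows essentially the same route as the paper: substitute the decomposition of $\Gamma_j$ from Lemma \ref{lem gamma}, evaluate the (now decoupled) $y$-integral on each piece to show that only the $n=0$ pieces with $j=\ell$ and (when $m=1$) the bulk piece survive, identify the surviving $x$-integrals as $q^\ell\Cu_\ell(b^{-1}+a^{-1}\rho^k,-3^{-3}a^{-1}b^{-1})$ via \eqref{eq cu l}, and dispose of the bulk term in the $\abs{b}<\abs{a}$ case by the vanishing clauses of Lemma \ref{lem cobic exp}. Your symmetry argument via $x\mapsto\rho^{-1}x$ is the same as the paper's application of \eqref{eq cu l} with $t=\rho^{-k}$.
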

\begin{proof}
It follows from Lemma \ref{lem gamma} that
\[
G_j^i(a,b)=\G_0(a,b)+\sum_{n=0}^{\lfloor\frac{m-1}3\rfloor}\sum_{k=0}^2 \G_{n,k}(a,b)
\]
where
\[
\G_0(a,b)=\int_{\p^{-\lfloor -\frac{m}3\rfloor}}\psi(\varpi^\ell  a^{-1} y)\ dy
\int_{\p^{-\lfloor -\frac{m}3\rfloor}}\psi(\varpi^\ell b^{-1} x-3^{-3}\varpi^{3\ell}a^{-1}b^{-1}x^3)\ dx
\]
and
\[
\G_{n,k}(a,b)=\int_{\p^{m-2n}}\psi(\varpi^\ell a^{-1}z)\ dz\int_{\abs{x}=q^{-n}} \psi(\varpi^\ell (b^{-1}+\rho^k a^{-1}) x-3^{-3}\varpi^{3\ell}a^{-1}b^{-1}x^3)\ dx.
\]
Note that writing $j=\ell+e$ with $e\in \{0,1\}$, since $\abs{\varpi^\ell a^{-1}}=q^{m+e}$ we have
\[
\int_{\p^{-\lfloor -\frac{m}3\rfloor}}\psi(\varpi^\ell  a^{-1}  y)\ dy=\begin{cases} q^{\lfloor -\frac{m}3\rfloor} & e=0\text{ and }m\le 1 \\ 0 &  e=1\text{ or }m>1. \end{cases}
\]
In our set up, the condition $e=0$ and $m\le 1$ is met if and only if one of these conditions holds:
\begin{enumerate}
\item\label{case b=a} $\val(b)=\val(a)$, $\ell=\val(a)-1$ and $e=0$, so that $m=1$;
\item\label{case b<a} $\val(b)>\val(a)=2$ and $e=0$, so that $\ell=1$ and $m=1$.
\end{enumerate}
Note that in both these cases $-\lfloor -\frac{m}3\rfloor=1$ and 
\[
\int_\p \psi(\varpi^\ell b^{-1} x-3^{-3}\varpi^{3\ell}a^{-1}b^{-1}x^3)\\ dx=q^{-1}\Cu(\varpi^{\ell+1}b^{-1},-3^{-3}\varpi^{3(\ell+1)}a^{-1}b^{-1})=\begin{cases} q^{-1} &  \text{ in case }\eqref{case b=a} \\ 0 & \text{ in case }\eqref{case b<a}.\end{cases}
\]
The last equality is a consequence of Lemma \ref{lem cobic exp}. Indeed, in case \eqref{case b=a} both $\varpi^{\ell+1}b^{-1}$ and $-3^{-3}\varpi^{3(\ell+1)}a^{-1}b^{-1}$ are in $\O$ and in case \eqref{case b<a} we have $1<\abs{\varpi^{\ell+1}b^{-1}}=q^{-2}\abs{b}^{-1}$ while $\abs{-3^{-3}\varpi^{3(\ell+1)}a^{-1}b^{-1}}=q^{-4}\abs{b}^{-1}$.
We conclude that
\[
\G_0(a,b)=\begin{cases} q^{-2} & \val(a)=\val(b), \,\ell=j=\val(a)-1 \\ 0 & \text{otherwise.}\end{cases}
\]

Similarly,
\[
\int_{\p^{m-2n}}\psi(\varpi^\ell a^{-1}z)\ dz=\begin{cases} q^\ell\abs{a}& n=0\text{ and }\ell=j \\ 0 & \text{otherwise.} \end{cases}
\]
Also, applying the variable change $x\mapsto \varpi^{-\ell}x$ we have
\[
\int_{\O^*}\psi(\varpi^\ell (b^{-1}+\rho^k a^{-1}) x-3^{-3}\varpi^{3\ell}a^{-1}b^{-1}x^3)\ dx=q^\ell\Cu_\ell(b^{-1}+a^{-1}\rho^k,-3^{-3}a^{-1}b^{-1}).
\]
Therefore, 
\[
\G_{n,k}(a,b)=\begin{cases} q^{2\ell}\abs{a}\Cu_\ell(b^{-1}+a^{-1}\rho^k,-3^{-3}a^{-1}b^{-1}) & n=0\\ 0 & \text{otherwise.}\end{cases}
\]
The formulas for $G_{\ell+1}^\ell(a,b)$ and $G_\ell^\ell(a,b)$ follow. Applying \eqref{eq cu l} with $t=\rho^{-k}$ we observe that if $\abs{a}=\abs{b}$ then the sum $\sum_{k=0}^2 \Cu_\ell(b^{-1}+a^{-1}\rho^k,-3^{-3}a^{-1}b^{-1})$ is symmetric with respect to $a$ and $b$. The last equality and the lemma therefore follow from the formula for $G_\ell^\ell(a,b)$.
\end{proof}

\subsubsection{Computation of $\J_\ell(a,b)$-Case 1:} Here we apply Corollary \ref{cor di} in order to provide a formula for $\J_\ell(a,b)$ whenever 
either $\ell<\val(b)-1$ or $\ell=\val(b)-1$ and $3\nmid \val(b)$.
\begin{lemma}\label{lem ji}
If 
\begin{itemize}
\item either $\abs{a}=\abs{b}\le q^{-2}$ and $ \frac{\val(a)}2\le \ell\le \val(a)-2$ 
\item or  $\abs{a}=\abs{b}\le q^{-2}$, $\ell=\val(a)-1$ and $3\nmid \val(b)$ 
\item or $\abs{b}<\abs{a}<1$ and $\ell=\frac{\val(a)}2$ (here necessarily $2\mid\val(a)$)
\end{itemize}
then
\[
(a,b)_3\,\J_\ell(a,b)=
\delta_{\ell,\val(a)-1}\delta_{\val(a),\val(b)}\abs{a}^{-1}+\abs{ab}^{-1}\sum_{k=0}^2 \Cu_\ell(b^{-1}+a^{-1}\rho^k,-3^{-3}a^{-1}b^{-1}).
\]
If $\abs{b}<\abs{a}<1$ and $\frac{\val(a)}2< \ell \le \val(a)-1$ then $\J_\ell(a,b)=0$.
\end{lemma}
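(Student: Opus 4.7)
The plan is to apply Corollary~\ref{cor di} to each of the two Kloosterman integrals in the formula for $\J_\ell(a,b)$ from Lemma~\ref{lem j>0}, converting them to cubic exponential integrals, and then to recognize the resulting double integral as (a scaling of) $G_\ell^\ell(a,b)$ computed in Lemma~\ref{lem G comp}. Writing the outer Kloosterman as $\Kl(b;c_1,d_1)$ and the inner as $\Kl(t_2;c_2,d_2)$ with $t_2=b^{-1}(y_2-1)^{-1}$, one checks on the domain $\abs{y_2}=q^{-2\ell}\abs{b}^{-1}$, $\abs{1-y_2}=\abs{ab^{-1}}$ that $\abs{c_1}=\abs{d_1}=q^{\val(b)-\ell}$ and $\abs{c_2}=\abs{d_2}=q^{\val(a)-\ell}$. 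In cases (1) and (3) of the lemma both magnitudes exceed $q$ and the second clause of Corollary~\ref{cor di} applies. In case (2) both magnitudes equal $q$, so the first clause applies; its hypothesis $3\nmid\val(t_i)$ reduces to the assumption $3\nmid\val(b)$, since $\val(1-y_2)=\val(a)-\val(b)=0$ on the domain gives $\val(t_2)=-\val(b)$.

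Next, I would choose the cube-root parameters $a'_1=\varpi^\ell b^{-1}$ and $a'_2=\varpi^\ell a^{-1}$ when applying Corollary~\ref{cor di}. Using the basic properties of the cubic Hilbert symbol together with $(x,x)_3=(-1,x)_3=1$ and the Steinberg identity $(x,1-x)_3=1$, the product $(b^{-1}\varpi^\ell,y_2-1)_3\cdot (b,c_1d_1^{-1})_3\cdot (t_2,c_2d_2^{-1})_3$ collapses to $(a,b)_3^{-1}$, independent of $y_2$; this is the key algebraic simplification. Unfolding each $\Cu$ as an integral over $\O$ in auxiliary variables $u,v$, Fubini presents the $y_2$-integration against a phase of the form $\psi(\lambda(u^3-v^3)/y_2)$ for $\lambda=3^{-3}a^{-1}b^{-1}\varpi^{3\ell}$. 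The substitution $y_2=\varpi^{2\ell-\val(b)}/w$, valid because $\val(y_2)=2\ell-\val(b)$ on the domain, converts this to an integral against a linear phase in $w$. Integrating $w$ over the resulting compact set produces the characteristic function of $\Gamma_j$ for $j\in\{\ell,\ell+1\}$; by Lemma~\ref{lem G comp} the $j=\ell+1$ contribution is zero, and the $j=\ell$ contribution yields $(a,b)_3\J_\ell(a,b)=\abs{a}^{-2}q^{\val(b)-2\ell}G_\ell^\ell(a,b)$, which upon applying the formula of Lemma~\ref{lem G comp} is precisely the claimed identity.

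For the vanishing when $\abs{b}<\abs{a}<1$ and $\ell>\val(a)/2$: the condition $\abs{1-y_2}=\abs{ab^{-1}}>1$ forces $\abs{y_2}=\abs{1-y_2}=\abs{ab^{-1}}$, and combining with $\abs{y_2}=q^{-2\ell}\abs{b}^{-1}$ requires $\val(a)=2\ell$, so the domain of the $y_2$-integration is empty when $\ell>\val(a)/2$. The main point of delicacy in the argument will be the bookkeeping of the Hilbert symbols together with the verification that the $w$-integration after the substitution produces exactly the characteristic function of $\Gamma_\ell$ rather than some rescaled or shifted variant, and that the boundary piece of the $y_2$-domain which would produce $\Gamma_{\ell+1}$ is controlled by the vanishing of $G_{\ell+1}^\ell(a,b)$.
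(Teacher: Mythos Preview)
Your approach matches the paper's proof essentially line for line: apply Corollary~\ref{cor di} to both Kloosterman integrals, verify that the Hilbert symbols collapse to $(b,a)_3$, expand the resulting $\Cu$'s, integrate in $y_2$, and identify the answer with $G_\ell^\ell(a,b)$ via Lemma~\ref{lem G comp}.

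Two small points of caution. First, your statement that in case (3) both magnitudes exceed $q$ is not quite right when $\val(a)=2$: then $\abs{c_2}=\abs{d_2}=q$, and you need the first clause of Corollary~\ref{cor di}, checking $3\nmid\val(t_2)=-\val(a)=-2$, which holds. Second, and more substantively, when $\abs a=\abs b$, $\val(a)$ is even and $\ell=\val(a)/2$, the domain of the $y_2$-integral is $\O^*\setminus(1+\p)$, not all of $\O^*$; your substitution $y_2=\varpi^{2\ell-\val(b)}/w$ does not by itself enforce the extra constraint $\abs{1-y_2}=1$. The paper handles this by computing the $y_2$-integral over $\O^*\setminus(1+\p)$ directly, which produces, in addition to the $G_\ell^\ell(a,b)$ and $G_{\ell+1}^\ell(a,b)$ terms you anticipate, an extra piece $-q^{-1}[G_{\ell+1}^\ell(b,a)-G_\ell^\ell(b,a)]$ (note the swap of $a$ and $b$, coming from the $\psi(t)$ term in the evaluation of $\int_{\O^*\setminus(1+\p)}\psi(ty_2^{-1})\,dy_2$). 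This is then absorbed using both $G_{\ell+1}^\ell=0$ and the symmetry $G_\ell^\ell(a,b)=G_\ell^\ell(b,a)$ from Lemma~\ref{lem G comp}. Your sketch should make explicit that this case calls for that symmetry, not only the vanishing of $G_{\ell+1}^\ell$.
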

\begin{proof}
With the assumption that either $\ell<\val(b)-1$ or $\ell=\val(b)-1$ and $3\nmid \val(b)$, by Corollary \ref{cor di}, in the domain of integration we have
\[
\Kl(b;-ab^{-2}\varpi^\ell(1-y_2)^{-1},\varpi^{-\ell} y_2)=(b,\varpi^{2\ell}ay_2^{-1}(1-y_2)^{-1})_3 \Cu(\varpi^\ell b^{-1},3^{-3}\varpi^{3\ell} a^{-1}b^{-1}(y_2^{-1}-1))
\]
and
\[
\Kl(b^{-1}(y_2-1)^{-1};a^{-1}\varpi^\ell ,a^{-1}by_2\varpi^{-\ell} )=(b^{-1}(y_2-1)^{-1},\varpi^{2\ell}b^{-1}y_2^{-1})_3\Cu(a^{-1}\varpi^\ell,-3^{-3}\varpi^{3\ell}a^{-1}b^{-1}y_2^{-1}).
\]
Note further that
\[
(b,\varpi^{2\ell}ay_2^{-1}(1-y_2)^{-1})_3(b^{-1}(y_2-1)^{-1},\varpi^{2\ell}b^{-1}y_2^{-1})_3=(b,a)_3(y_2-1,b^{-1}\varpi^\ell)_3.
\]
Plugging this into the formula in Lemma \ref{lem j>0} defining $\J_\ell(a,b)$ we obtain that
\begin{multline*}
(a,b)_3\,\J_\ell(a,b)=\abs{a}^{-2}\int_{\abs{y_2}=q^{-2\ell}\abs{b}^{-1},\,\abs{1-y_2}=\abs{ab^{-1}}} \Cu(\varpi^\ell b^{-1},3^{-3}\varpi^{3\ell} a^{-1}b^{-1}(y_2^{-1}-1)) \\\Cu(a^{-1}\varpi^\ell,-3^{-3}\varpi^{3\ell}a^{-1}b^{-1}y_2^{-1})\ dy_2.
\end{multline*}
Note that if $\abs{b}<\abs{a}<1$ then the domain of integration is empty (so that $\J_\ell(a,b)=0$) unless $2\ell=\val(a)$. For the rest of the proof we assume that either $\abs{a}=\abs{b}$ or $2\ell=\val(a)$.

The domain of integration over $y_2$ is explicated further as follows. 
If either 
\begin{itemize}
\item $\abs{b}<\abs{a}$, $\val(a)$ is even and $\ell=\frac{\val(a)}2$ or 
\item $\abs{a}=\abs{b}$ and $\ell>\frac{\val(a)}2$ 
\end{itemize}
then the domain is characterized by $\abs{y_2}=q^{-2\ell}\abs{b}^{-1}$. If $\abs{b}=\abs{a}$, $\val(a)$ is even and $\ell=\frac{\val(a)}2$ then the domain is $\O^*\setminus (1+\p)$. 
In order to unify notation let
\[
\delta=\begin{cases} 1 & \abs{b}=\abs{a}, \val(a)\text{ is even and }\ell=\frac{\val(a)}2 \\ 0 & \text{otherwise.} \end{cases}
\]
Writing the cubic exponential integrals attached to the functions $\Cu$, we obtain
\begin{multline*}
(a,b)_3\,\J_\ell(a,b)=\abs{a}^{-2}\int_\O \int_\O \left[\int_{\abs{y_2}=q^{-2\ell}\abs{b}^{-1},\,\abs{1-y_2}=\abs{ab^{-1}}} \psi(3^{-3}\varpi^{3\ell}a^{-1}b^{-1}(x^3-y^3)y_2^{-1})\ dy_2 \right]\\ \psi(\varpi^\ell b^{-1}x-3^{-3}\varpi^{3\ell} a^{-1}b^{-1}x^3+a^{-1}\varpi^\ell y)\ dx\ dy.
\end{multline*}
The inner integral over $y_2$ may be evaluated using the straightforward formulas
\begin{equation}\label{eq psiy}
\int_{\abs{y_2}=q^{-2\ell}\abs{b}^{-1}}\psi(ty_2^{-1})\ dy_2=\begin{cases} q^{-2\ell}\abs{b}^{-1}(1-q^{-1}) & t\in b^{-1}\p^{2\ell} \\ -q^{-2\ell-1}\abs{b}^{-1}& \abs{t}=q^{2\ell+1}\abs{b} \\ 0 & \abs{t}>q^{2\ell+1}\abs{b} \end{cases} 
\end{equation}
and
\[
\int_{\O^*\setminus (1+\p)}\psi(ty_2^{-1})\ dy_2=\begin{cases} 1-2q^{-1} & t\in \O \\ -q^{-1}[1+\psi(t)] & \abs{t}=q \\ 0 & \abs{t}>q. \end{cases}
\]
In the notation of \S\ref{ss G int} we conclude that
\begin{multline*}
(a,b)_3\,\abs{a^2b}q^{2\ell}\J_\ell(a,b)
=(1-\delta)\left[ (1-q^{-1}) G_\ell^\ell(a,b)-q^{-1}(G_{\ell+1}^\ell(a,b)-G_\ell^\ell(a,b))\right]+\\ \delta\left[(1-2q^{-1})G_\ell^\ell(a,b)-q^{-1}(G_{\ell+1}^\ell(a,b)-G_\ell^\ell(a,b))-q^{-1}(G_{\ell+1}^\ell(b,a)-G_\ell^\ell(b,a))\right]=G_\ell^\ell(a,b).
\end{multline*}
The last equality and the lemma follow from Lemma \ref{lem G comp}.
\end{proof}
\subsubsection{Computation of $\J_{\val(a)-1}(a,b)$-Case 2:} 
\begin{lemma}\label{lem j1 case2}
Assume that $\abs{a}=\abs{b}<1$ and $3 |\val(b)$. 
Then
\[
\abs{a}(a,b)_3\,\J_{\val(a)-1}(a,b)=\begin{cases} q-(1+q^{-1}) & -(ab^{-1})^3\in1+\p \\ -(1+q^{-1}) & -(ab^{-1})^3\in \O^*\setminus (1+\p). \end{cases}
\]
\end{lemma}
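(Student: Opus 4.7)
The plan is to mirror the strategy used in Lemma~\ref{lem ji} (Case~1) but to replace the use of Corollary~\ref{cor di} (Duke--Iwaniec), which fails here because $3\mid\val(b)$, by a direct application of Lemma~\ref{lem kl int}. Setting $m=\val(a)=\val(b)\ge 3$ with $3\mid m$ and $\ell=m-1$, the starting point is the expression for $\J_\ell(a,b)$ in Lemma~\ref{lem j>0}. Both Kloosterman integrals in that integrand have arguments of absolute value $q$, and both cubic characters $(b,\cdot)_3$ and $(b^{-1}(y_2-1)^{-1},\cdot)_3$ are trivial on $\O^*$ since $3\mid m$. So Lemma~\ref{lem kl int} expresses each Kloosterman integral as $q^{-1}$ times an ordinary (untwisted) Kloosterman sum over $k_F^*$. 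Moreover, since $\val(y_2)=m-2\ge 1$, we have $y_2-1\in -(1+\p)\subseteq -F^{*3}$ (using $p>3$ and Hensel), so the Hilbert symbol $(b^{-1}\varpi^{m-1},y_2-1)_3$ collapses to the $y_2$-independent constant $(b^{-1}\varpi^{m-1},-1)_3$.

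Writing $a=\alpha\varpi^m$, $b=\beta\varpi^m$, and $y_2=\varpi^{m-2}\eta$ with $\alpha,\beta,\eta\in\O^*$, the observation $(1-y_2)^{-1}-1\in\p^{m-2}$ shows that the parts $\psi(c_1u)$ and $\psi(c_2v)$ of the Kloosterman exponents are independent of $y_2$ modulo $\O$, while the $y_2$-dependent part is the linear exponential $\psi\bigl(\varpi^{-m+1}y_2(u^{-1}+\alpha^{-1}\beta v^{-1})\bigr)$. The inner $y_2$-integral therefore evaluates to $q^{-m+2}\delta_{r\in\p}-q^{-m+1}$ with $r=u^{-1}+\alpha^{-1}\beta v^{-1}$. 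In the resulting double sum over $u,v\in k_F^*$, the constant piece factors as a product of two sums each equal to $-1$, contributing $1$; the delta piece restricts to $u\equiv -\alpha\beta^{-1}v\pmod\p$, and the residual exponent collapses to $\varpi^{-1}v\alpha^{-1}\beta^{-3}(\alpha^3+\beta^3)$. Summing over $v$ then gives $q-1$ or $-1$ according as $\alpha^3+\beta^3\in\p$ or not, which is the same as $-(ab^{-1})^3\in 1+\p$ or not.

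Putting these pieces together yields
\[
\abs{a}\,\J_{m-1}(a,b)=(b^{-1}\varpi^{m-1},-1)_3\bigl[q\delta-(1+q^{-1})\bigr],\qquad \delta=\delta_{-(ab^{-1})^3\in 1+\p},
\]
and it remains to verify $(a,b)_3\,(b^{-1}\varpi^{m-1},-1)_3=1$ using the tame symbol formula for the cubic Hilbert symbol. For $3\mid m=\val(a)=\val(b)$, the symbol $(a,b)_3$ is trivial because $\alpha^m\beta^{-m}$ is a cube in $k_F^*$ and the remaining factors involving $\varpi$ reduce to $1$ via $(x,-x)_3=1$ together with $\chi_3(-1)=1$. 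Similarly $(b^{-1}\varpi^{m-1},-1)_3$ reduces to $\chi_3(-1)=1$ since $-1=(-1)^3$ is always a cube in $k_F^*$. The main obstacle is the careful bookkeeping of the Hilbert symbol identities and ensuring that all substitutions respect the conductor of $\psi$; the rest is a direct unwinding that parallels the computation in Lemma~\ref{lem ji}.
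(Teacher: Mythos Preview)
Your proof is correct and follows essentially the same route as the paper. Both arguments start from the expression for $\J_\ell(a,b)$ in Lemma~\ref{lem j>0} with $\ell=\val(a)-1$, observe that on the domain $\abs{y_2}=q^2\abs{a}$ the Hilbert symbol $(b^{-1}\varpi^{\ell},y_2-1)_3$ and the cubic characters in the two Kloosterman integrals are all trivial (since $3\mid\val(b)$ and $y_2-1\in -(1+\p)$), replace $(1-y_2)^{-1}$ by $1$ in the first Kloosterman argument (valid because $m\ge 3$), integrate over $y_2$ first, and then split the remaining double integral over $\O^*\times\O^*$ according to whether $u^{-1}+a^{-1}bv^{-1}\in\p$. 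The only cosmetic differences are that you phrase the Kloosterman integrals as $q^{-1}$ times sums over $k_F^*$ via Lemma~\ref{lem kl int}, whereas the paper writes them directly as integrals over $\O^*$, and you spell out the Hilbert-symbol cancellations more explicitly than the paper (which simply asserts $(a,b)_3=1$ at the end). Your final bookkeeping with the tame symbol is a bit more elaborate than needed: since $3\mid m$ one has $\varpi^m\in F^{*3}$, so $(a,b)_3=(\alpha,\beta)_3=1$ immediately, and $(b^{-1}\varpi^{m-1},-1)_3=1$ since $-1=(-1)^3\in F^{*3}$.
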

\begin{proof}
Since $3|\val(b)=\val(a)>0$, for $\ell=\val(a)-1$ we have $q^{-2\ell}\abs{b}^{-1}=q^{2}\abs{a}<1$. It follows from Lemma \ref{lem j>0} that
\begin{multline*}
\J_{\val(a)-1}(a,b)=\abs{a}^{-2}\int_{\O^*}\int_{\O^*} \psi(-ab^{-2}\varpi^{\val(a)-1}u+a^{-1}\varpi^{\val(a)-1} v) \\
\int_{\abs{y_2}=q^2\abs{a}} \psi(\varpi^{1-\val(a)} (u^{-1}+a^{-1}bv^{-1})y_2)\ dy_2\ du\ dv.
\end{multline*}
Note that for $u,v\in\O^*$ we have
\[
u^{-1}  +a^{-1}bv^{-1}\in\begin{cases} \p & v\in -a^{-1}bu+\p \\ \O^* & \text{otherwise.}\end{cases}
\]
Applying \eqref{eq psiy} to integrate over $y_2$ we deduce that
\[
\int_{\abs{y_2}=q^2\abs{a}} \psi(\varpi^{1-\val(a)} (u^{-1}+a^{-1}bv^{-1})y_2)\ dy_2=\begin{cases} (q^2-q)\abs{a} & v\in -a^{-1}bu+\p \\ -q\abs{a}& \text{otherwise} \end{cases}
\]
and therefore
\begin{multline*}
\J_{\val(a)-1}(a,b)=\abs{a}^{-1} \left[(q-1)\int_{\O^*} \psi(-\varpi^{\val(a)-1}(ab^{-2}+a^{-2}b)u)\ du \right. \\ \left.
-q\int_{\O^*} \psi(-ab^{-2}\varpi^{\val(a)-1}u) \int_{\O^*\setminus (-a^{-1}bu+\p)} \psi(a^{-1}\varpi^{\val(a)-1} v) \ dv \ du \right].
\end{multline*}

We have
\[
-q \int_{\O^*\setminus (-a^{-1}bu+\p)} \psi(a^{-1}\varpi^{\val(a)-1} v) \ dv=1+\psi(-a^{-2}b\varpi^{\val(a)-1} u).
\]
Applying \eqref{eq psiy} again we obtain
\[
\J_{\val(a)-1}(a,b)=\abs{a}^{-1} \left[q\int_{\O^*} \psi(-\varpi^{\val(a)-1}(ab^{-2}+a^{-2}b)u)\ du-q^{-1}\right]
\]
and
\[
\int_{\O^*} \psi(-\varpi^{\val(a)-1}(ab^{-2}+a^{-2}b)u)\ du=\begin{cases} 1-q^{-1} & -(ab^{-1})^3\in1+\p \\ -q^{-1} & -(ab^{-1})^3\in \O^*\setminus (1+\p). \end{cases}
\]
Since $(a,b)_3=1$ in this case, the lemma readily follows.
\end{proof}

\subsubsection{Completion of the proof of Proposition \ref{prop J formula}.}
The functional equation \eqref{eq J bar fe} is proved in Corollary \ref{cor: Jfeq}. Assume that $\abs{b}\le \abs{a}$. The computation of $J(a,b)$ for the cases $\abs{a}>1$, $1=\abs{a}=\abs{b}$ and $1=\abs{a}>\abs{b}$ is respectively taken care of in Lemma \ref{lem J not in O}, Lemma \ref{lem J ab units} and Lemma \ref{lem J a unit}.
In the case $\abs{a}=q^{-1}$ it follows from \eqref{eq J3}, Lemma \ref{lem j0} and Lemma \ref{lem j>0} that $J_3(a,b)=0$ and the formula is now a consequence of \eqref{eq break j} and Lemmas \ref{lem J1} and \ref{lem J2}. 

Assume now that either $\abs{b}\le\abs{a}\le q^{-2}$.
It follows from Lemmas \ref{lem J1} and \ref{lem J2} that $J_1(a,b)=J_2(a,b)=0$ and therefore from \eqref{eq break j} and \eqref{eq J3} that 
\[
J(a,b)=J_3(a,b)=\J_0(a,b)+\J_{>0}(a,b).
\]
Combining Lemmas \ref{lem i=1 3val}, \ref{lem j0}, \ref{lem ji} for $\ell=\val(a)-1$ and \ref{lem j1 case2}  
we conclude that
\begin{multline}
(a,b)_3[\J_0(a,b)+\J_{\val(a)-1}(a,b)]=\\ \begin{cases} 3\abs{a}^{-1}+\abs{ab}^{-1}\sum_{k=0}^2\Cu_{\val(a)-1}(b^{-1}+a^{-1}\rho^k,-3^{-3}a^{-1}b^{-1}) & \abs{a}=\abs{b} \\ \delta_{\val(a),2} \abs{ab}^{-1}\sum_{k=0}^2\Cu_1(b^{-1}+a^{-1}\rho^k,-3^{-3}a^{-1}b^{-1}) & \abs{b}<\abs{a}.
\end{cases}
\end{multline}
Applying  Lemmas \ref{lem j>0} and \ref{lem ji} for $\ell<\val(a)-1$ the formula follows. This completes the proof of the proposition. \qed

\section{The integral \texorpdfstring{$I(a,b)$}~}{}\label{The-integral-I(a,b)}
In this section we provide the following formula for $I(a,b)$.
\begin{proposition}\label{prop I formula}
For $\abs{b}\le \abs{a}$ we have
\[
I(a,b)=\begin{cases}
1 & \abs{a}=\abs{b}=1 \\
2q & \abs{a}=\abs{b}=q^{-1} \\
3\abs{a}^{-1}+\abs{ab}^{-1}\sum_{\ell=\lfloor\frac{\val(a)+1}2\rfloor}^{\val(a)-1} \sum_{k=0}^2 \Cu_\ell(b^{-1}-\rho^k a^{-1},2a^{-1}b^{-1}) & \abs{a}=\abs{b} \le q^{-2} \\
\abs{b}^{-1}\Cu(b^{-1},2a^{-1}b^{-1}) & \abs{b}< \abs{a}=1 \\
\abs{ab}^{-1}\sum_{k=0}^2 \Cu_{\frac{\val(a)}2}(b^{-1}-\rho^k a^{-1},2a^{-1}b^{-1}) & \abs{b}<\abs{a}<1 \text{ and }2|\val(a) \\
0 &  \abs{b}<\abs{a}<1 \text{ and }2\nmid \val(a) \\
0 & \abs{a}>1.
\end{cases}
\]
Furthermore,
\begin{equation}\label{eq I bar fe}
I(b,a)=\overline{I(-a,-b)}, \ \ \ a,\, b\in F^*.
\end{equation}
\end{proposition}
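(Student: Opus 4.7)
The plan is to evaluate $I(a,b)$ directly by iterated integration over $(x,y,z,s,t)$, in parallel with (but considerably simpler than) the computation of $J(a,b)$ in Section~\ref{The-integral-J(a,b)}; the simplification comes from the absence of the cubic Hilbert symbol and of the metaplectic cocycle $\kappa$, so that the analysis reduces to careful bookkeeping of the constraints imposed by $\triv_{\O^8}$ and of the $\psi$-argument. The functional equation will be deduced at the end from the explicit formula.

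I would first unpack the seven conditions from $\triv_{\O^8}$ and observe that $t\in\O$, $t^{-1}a\in\O$, $t^{-1}b\in\O$ force $0\le \val(t)\le \val(a)$ (using $\abs{b}\le\abs{a}$), so that $I(a,b)=0$ whenever $\abs{a}>1$. The boundary cases $\abs{a}=\abs{b}=1$, $\abs{a}=\abs{b}=q^{-1}$, and $1=\abs{a}>\abs{b}$ are then handled by direct inspection; the last produces a single cubic exponential integral via a completing-the-cube substitution.

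For the main case $\abs{b}\le\abs{a}\le q^{-2}$ I would proceed as follows. The variable $z$ enters the phase only through $-2z\rho(bx+ay\rho)$ and is confined by the seventh characteristic-function condition to a single coset of $t^{-1}\O$. Integrating $z$ therefore yields an additional delta-type constraint $bx+ay\rho\in t\O$ which, combined with condition $t^{-1}(bx+ay\rho^2)\in\O$, is equivalent to $ay,bx\in t\O$, while also modifying the residual phase by a term proportional to $(bx+ay\rho)(bx+ay\rho^2)=b^2x^2-abxy+a^2y^2$ (using $\rho+\rho^2=-1$). Next, the two linear conditions $t(x+as),t(bs-y)\in\O$ pin $s$ to a coset of $(at)^{-1}\O$, and integrating $s$ against its linear phase produces a congruence between $x$ and $y$. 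After stratifying by $\val(t)=\val(a)-e$, the residual two-variable integral is evaluated by a completing-the-cube substitution that produces the cubic exponential integrals $\Cu_\ell$. The three summands $k=0,1,2$ in the formula arise from the stratification of the affine locus $\{(x,y):b^3x^3+a^3y^3\in t^3\O\}$, which factors over $F$ as $\prod_k(bx+ay\rho^k)\in t\O$, in direct analogy with the role of the variety $\Gamma_j$ and the factorization $x^3-y^3=\prod_k(x-\rho^k y)$ in Lemmas~\ref{lem gamma} and~\ref{lem G comp} of the $J$-computation. The coefficients $b^{-1}-\rho^k a^{-1}$ and $2a^{-1}b^{-1}$ emerge after rescaling to normalize the cubic phase.

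The main technical obstacle is the extremal stratum $\val(t)=\val(a)-1$, where an additional degenerate contribution (analogous to $\J_0$ in Lemma~\ref{lem j0}) must be isolated and summed in order to produce the extra $3\abs{a}^{-1}$ summand in the case $\abs{a}=\abs{b}\le q^{-2}$. Finally, the functional equation $I(b,a)=\overline{I(-a,-b)}$ is verified directly from the explicit formula: the identity $\overline{\Cu_\ell(A,B)}=\Cu_\ell(-A,-B)$ together with the rescaling \eqref{eq cu l} applied with $t=\rho^{-k}$ matches the summands of $\overline{I(-a,-b)}$ term-by-term with those of $I(b,a)$, while the scalar prefactors and the boundary cases are checked by inspection.
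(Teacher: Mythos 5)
Your route for the explicit formula is essentially the paper's: vanishing for $\abs{a}>1$ from the conditions on $t$, stratification by $\val(t)=j$, integration over $z$ to impose $bx+a\rho y,\ bx+a\rho^2y\in t\O$ (equivalently $ay,bx\in t\O$) and to produce the quadratic $b^2x^2-abxy+a^2y^2$ in the $s$-phase, then integration over $s$ and a splitting of the residual $(x,y)$-domain along the three branches of the factorization $X^2-XY+Y^2=(X+\rho Y)(X+\rho^2Y)$, yielding the three cubic exponential integrals $\Cu_\ell(b^{-1}-\rho^ka^{-1},2a^{-1}b^{-1})$. This is exactly the paper's sequence of Lemmas (the $z$- and $s$-integrations, the decomposition $I(j;a,b)=I_1+I_2+I_3$, and the final reassembly), though your sketch leaves the two genuinely delicate points — the precise description of the domains for the $I_2$- and $I_3$-type pieces, and the bookkeeping of the boundary strata — at the level of intention. (One small inaccuracy: the constant $3\abs{a}^{-1}$ does not come from the single stratum $\val(t)=\val(a)-1$; it accumulates from the three strata $j\in\{0,\val(a)-1,\val(a)\}$, i.e.\ exactly those where the auxiliary parameter $m=\min(j,\val(a)-j,\lfloor\frac{\val(a)-j}{2}\rfloor)$ vanishes.)

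The genuine gap is your treatment of the functional equation $I(b,a)=\overline{I(-a,-b)}$. You propose to ``verify it directly from the explicit formula,'' but the explicit formula is only established for $\abs{b}\le\abs{a}$, and your main computation (like the paper's Lemma on the $s$-integration) uses that inequality to pin $s$ to a coset of $(at)^{-1}\O$. For $\abs{b}<\abs{a}$ the quantity $I(b,a)$ lies outside the computed regime, so there is nothing to compare it with: the functional equation is precisely the extra input needed to extend the evaluation to all of $F^*\times F^*$ (and it is used for exactly that purpose in the final comparison with $J$). It must therefore be proved independently of the formula, from symmetries of the integral itself. The paper does this cheaply: the substitution $(x,y,s)\mapsto(-x,-y,-s)$ in the defining integral shows $I(a,b)$ is real, and interchanging $x$ and $y$ in the intermediate expression obtained after the $z$-integration shows $I(a,b)=I(-b,-a)$; combining the two gives \eqref{eq I bar fe}. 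You should replace your circular verification with these two symmetry arguments, which fit naturally right after your $z$-integration step.
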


\subsection{A first simplification and functional equation}

In this section we show that $I(a,b)$ satisfies a functional equation that allows us to reduce the computation to the case $\abs{b}\le \abs{a}$.
\subsubsection{} We begin by observing some basic properties of the integral $I(a,b)$.
\begin{lemma}\label{lem I props}
We have the following properties of $I(a,b)$.
\begin{enumerate}
\item\label{part I real} The integral $I(a,b)$ takes real values, that is, 
\[
\overline{I(a,b)}=I(a,b).
\]
\item\label{part zero not O} $I(a,b)=0$ unless $a,\,b\in \O$.
\item For $a,\,b\in \O$ we have
\begin{equation}\label{eq I as  sum Ij}
I(a,b)=\sum_{j=0}^{\min(\val(a),\val(b))}I(j;a,b)
\end{equation}
where
\[
I(j;a,b)=q^{-2j}\int \psi[x+y+2a\rho^2xy^2-(2\rho bx+2\rho^2ay)z]  \ ds\ dx\ dy\ dz
\]
and the integral is over $x,y,z,s\in F$ such that
$$
x+as,\,y-bs,\,(xb+y\rho^2a)s-(xy+z\rho)\in \p^{-j},\ xb+y\rho^2a\in \p^j.
$$ 
\end{enumerate}
\end{lemma}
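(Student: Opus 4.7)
The plan is to establish the three claims by elementary manipulations of the definition of $I(a,b)$ in \eqref{eq I def}, with no genuinely hard step; each part is essentially bookkeeping.

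For part~\ref{part I real}, I would perform the change of variables $(x,y,s)\mapsto(-x,-y,-s)$ (leaving $z$ and $t$ alone), which preserves the measure $ds\,dx\,dy\,dz\,d^*t$. I then check that each of the eight coordinates of the argument of $\triv_{\O^8}$ either remains unchanged or is merely negated: the entries $0,\,t,\,t^{-1}a,\,t^{-1}b$ are untouched; the entries $t(x+as),\,t(bs-y),\,t^{-1}(xb+y\rho^2 a)$ all change sign; and the last entry $t[(xb+y\rho^2a)s-(xy+z\rho)]$ is fixed, since $(xb+y\rho^2a)s$ has two sign flips and $xy+z\rho$ has two sign flips as well. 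Membership in $\O^8$ is therefore preserved. For the phase $x+y+2a(xy-z)y\rho^2-2bxz\rho=x+y+2a\rho^2 xy^2-2a\rho^2 yz-2b\rho xz$, each monomial contains an odd number of factors drawn from $\{x,y,s\}$, so the entire phase is negated; hence $\psi$ is replaced by $\overline\psi$, giving $I(a,b)=\overline{I(a,b)}$.

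For part~\ref{part zero not O}, the second, third and fifth coordinates of the indicator force $t\in\O$, $t^{-1}a\in\O$, $t^{-1}b\in\O$, i.e.\ $0\le\val(t)\le\min(\val(a),\val(b))$. If $a\notin\O$ or $b\notin\O$ the permissible range for $\val(t)$ is empty, so $I(a,b)=0$.

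For part~\ref{eq I as sum Ij}, assuming $a,b\in\O$ I stratify the $t$-integral by $j=\val(t)$, writing $t=\varpi^j u$ with $u\in\O^*$. The bounds $0\le j\le\min(\val(a),\val(b))$ come from part~\ref{part zero not O}. The factor $\abs t^2$ yields $q^{-2j}$, and $\int_{\O^*}d^*u=1$ by normalization. Since $t^{-1}\O=\p^{-j}$ and $t\O=\p^{j}$ depend only on $j$ (not on $u$), the remaining conditions from $\triv_{\O^8}$ on coordinates $4,6,7,8$ become exactly $x+as,\,y-bs,\,(xb+y\rho^2a)s-(xy+z\rho)\in\p^{-j}$ and $xb+y\rho^2 a\in\p^j$. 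Rewriting the phase as
\[
x+y+2a\rho^2 xy^2-(2\rho bx+2\rho^2 ay)z
\]
matches the expression in $I(j;a,b)$, and summing over $j$ gives \eqref{eq I as  sum Ij}.

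There is no substantial obstacle: the only point requiring a little care is checking that the conditions defining the domain of $I(j;a,b)$ are precisely what the indicator $\triv_{\O^8}$ cuts out on the slice $\val(t)=j$, and that the phase in the statement is just the original phase rearranged. Both are immediate from inspection.
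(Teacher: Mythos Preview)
Your proposal is correct and follows essentially the same approach as the paper: the same change of variables $(x,y,s)\mapsto(-x,-y,-s)$ for part~\ref{part I real}, the same observation that the indicator forces $\max(\abs{a},\abs{b})\le\abs{t}\le 1$ for part~\ref{part zero not O}, and the same stratification of the $t$-integral by $\val(t)=j$ for part~\ref{eq I as sum Ij}. The paper is slightly terser (it first rewrites $I(a,b)$ with the $t$-integral outermost and then treats all three parts at once), but the content is identical.
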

\begin{proof}
Changing order of integration, it follows from the definition \eqref{eq I def} that 
\[
I(a,b)=\int_{\max(\abs{a},\abs{b})\le \abs{t}\le 1}\abs{t}^2 \left[\int \psi[x+y+2a(xy-z)y\rho^2-2bxz\rho]  \ ds\ dx\ dy\ dz\right]\ d^*t
\]
where the inner integral is over $x,y,z,s\in F$ such that
\[
t(x+as),t(bs-y),t^{-1}(xb+y\rho^2a),t[(xb+y\rho^2a)s-(xy+z\rho)])\in \O.
\]
The second part immediately follows.
Furthermore, the change of variables 
\[
(x,y,s)\mapsto -(x,y,s)
\] 
preserves the domain of integration and transforms the integrand into its conjugate. The first part follows. 

Assume that $a,b\in \O$. The inner integral over $x,\,y,\,z,\,s$ depends only on $\abs{t}$ and the third part of the lemma also follows, observing that $I(j;a,b)$ is the integral over the part of the domain where $\abs{t}=q^{-j}$.
\end{proof}
\subsubsection{} 
In the next lemma, a variable change in $z$ simplifies the domain of integration and allows us to integrate over $z$ and simplify the expression for $I(j;a,b)$. We then observe that the resulting expression satisfies a symmetry between $a$ and $b$.   
\begin{lemma}
For $a,\,b\in\O$ and $0\le j\le \min(\val(a),\val(b))$ we have
\begin{equation}\label{eq Ij} 
I(j;a,b)=q^{-j} \int \psi[x+y+2\left(bx^2y-axy^2+(abxy-b^2x^2-a^2y^2)s\right)]  \ ds\ dx\ dy
\end{equation}
where the integral is over $x,y,s\in F$ such that
\[
x+as,\,y-bs\in \p^{-j},\ bx,\,ay \in \p^j.
\]
Consequently
\begin{equation}\label{eq I fe} 
I(a,b)=I(-b,-a), \ \ \ a,\, b\in F^*.
\end{equation}
\end{lemma}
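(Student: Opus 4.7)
The plan for \eqref{eq Ij} is to execute the $z$-integration explicitly in the expression for $I(j;a,b)$ in part (3) of Lemma \ref{lem I props}, after a linear change of variables that replaces the polynomial constraint involving $z$ by a simple constraint on the new variable. Specifically, I would set $w = (xb + y\rho^2 a)s - (xy + z\rho)$: the Jacobian is $1$ since $|\rho|=1$, the defining condition on $z$ becomes $w \in \p^{-j}$, and $\rho z$ is expressed as $(xb+y\rho^2 a)s - xy - w$. Substituting this into the $z$-dependent part of the exponent, namely $-(2\rho bx + 2\rho^2 ay)z = -(2bx + 2\rho ay)(\rho z)$, produces a $w$-linear term $(2bx + 2\rho ay)w$ plus terms independent of $w$, so the Haar integral $\int_{\p^{-j}} \psi((2bx + 2\rho ay)w)\,dw = q^j\,\triv_{\p^j}(2bx + 2\rho ay)$ contributes a factor $q^j$ together with the extra constraint $bx + \rho ay \in \p^j$.

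Combined with the existing constraint $bx + \rho^2 ay \in \p^j$ and the fact that $\rho - \rho^2 \in \O^*$ when $p > 3$, this pair is equivalent to the cleaner pair $bx,\,ay \in \p^j$. Simplifying the residual exponent using $\rho + \rho^2 = -1$ gives $(2bx + 2\rho ay)(xb + y\rho^2 a) = 2(b^2 x^2 - abxy + a^2 y^2)$ and $(2bx + 2\rho ay)xy + 2a\rho^2 xy^2 = 2bx^2 y - 2axy^2$, exactly matching the exponent in \eqref{eq Ij}. The total measure factor is $q^{-2j}\cdot q^j = q^{-j}$, as asserted. No step here is a real obstacle; the only care needed is in tracking the cube-root-of-unity identities.

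For the consequence \eqref{eq I fe}, the plan is to exploit the manifest symmetry of \eqref{eq Ij} under the involution $(x,y,s) \mapsto (y,x,s)$. This involution sends the domain $\{x+as,\,y-bs \in \p^{-j};\ bx,\,ay \in \p^j\}$ bijectively onto $\{y+as,\,x-bs \in \p^{-j};\ by,\,ax \in \p^j\}$, which is precisely the domain defining $I(j;-b,-a)$; meanwhile the exponent $x+y + 2(bx^2y - axy^2 + (abxy - b^2x^2 - a^2y^2)s)$ is sent to $x+y + 2(-ax^2y + bxy^2 + (abxy - a^2 x^2 - b^2 y^2)s)$, which is exactly the exponent of $I(j;-b,-a)$. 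Hence $I(j;a,b) = I(j;-b,-a)$ for every $j$, and summing via \eqref{eq I as sum Ij} gives \eqref{eq I fe} when $a, b \in \O$. If either of $a, b$ lies in $F^*\setminus \O$, both sides vanish by the second part of Lemma \ref{lem I props}, completing the argument.
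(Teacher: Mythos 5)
Your proof is correct and follows essentially the same route as the paper: the substitution $w=(xb+y\rho^2a)s-(xy+z\rho)$ is, up to multiplication by the unit $-\rho$, exactly the paper's shift $z\mapsto z+b\rho^2xs+a\rho ys-\rho^2xy$, and the subsequent $z$-integration producing the factor $q^j$ and the extra constraint $bx+\rho ay\in\p^j$ (hence $bx,ay\in\p^j$), as well as the $x\leftrightarrow y$ symmetry argument for \eqref{eq I fe}, all match the paper's proof. All the cube-root-of-unity computations check out.
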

\begin{proof}
After the variable change 
\[
z\mapsto z+b\rho^2 xs+a\rho ys-\rho^2xy
\]
we have
\[
I(j;a,b)=q^{-2j}\int \psi[x+y+2a\rho^2xy^2-(2\rho bx+2\rho^2ay)(z+b\rho^2 xs+a\rho ys-\rho^2xy)]  \ ds\ dx\ dy\ dz
\]
where the integral is over $x,y,z,s\in F$ such that
\[
x+as,\,y-bs,\,z\in \p^{-j},\ xb+y\rho^2a\in \p^j.
\]
After integrating over $z$ this becomes
\[
q^{-j} \int \psi[x+y+2a\rho^2xy^2-(2\rho bx+2\rho^2ay)(b\rho^2 xs+a\rho ys-\rho^2xy)]  \ ds\ dx\ dy=
\]
\[
q^{-j} \int \psi[x+y+2\left(bx^2y-axy^2+(abxy-b^2x^2-a^2y^2)s\right)]  \ ds\ dx\ dy
\]
where the integral is now over $x,y,s\in F$ such that
\[
x+as,\,y-bs\in \p^{-j},\ xb+y\rho^2a,\,xb+y\rho a \in \p^j
\]
or equivalently
\[
x+as,\,y-bs\in \p^{-j},\ xb,\,ya \in \p^j.
\]
This completes the proof of \eqref{eq Ij}. 

Applying \eqref{eq Ij}, an interchange between $x$ and $y$ shows that $I(j;a,b)=I(j;-b,-a)$.
The functional equation \eqref{eq I fe} immediately follows from \eqref{eq I as  sum Ij} and Lemma \ref{lem I props} \eqref{part zero not O}.
\end{proof}

This reduces the computation of $I(a,b)$ to the case where $\abs{b}\le \abs{a}\le 1$.

\subsection{The case $\abs{b}\le \abs{a}\le 1$}
Our goal in this section is to compute $I(j;a,b)$ for $\abs{b}\le \abs{a}\le 1$ and $0\le j\le \val(a)$.

\subsubsection{} First, we further simplify the expression for $I(j;a,b)$ for the case $\abs{b}\le \abs{a}\le 1$. In the following lemma we first apply to \eqref{eq Ij} a variable change 
in $s$ that under the assumption $\abs{b}\le \abs{a}$ simplifies the domain of integration and allows us to integrate over $s$. Motivated by the polynomial decomposition
\[
X^2+Y^2-XY=(X+\rho Y)(X+\rho^2 Y)
\]
we follow up with a variable change in $y$ that further simplifies the domain of integration.
\begin{lemma}\label{lem s int}
For $\abs{b}\le \abs{a}\le 1$ and $0\le j\le \min(\val(a),\val(b))$ we have
\begin{equation}\label{eq I}
I(j;a,b)
=\abs{a}^{-1} \int \psi[(1-\rho a^{-1}b)x-(1+2\rho)y+2a^{-1}b^2x^3]  \ dx\ dy
\end{equation}
where the integral is over $x,y\in F$ such that
\begin{equation}\label{eq Ij domain}
bx-\rho ay\in a\p^{-j},\ bx,\,ay, y(bx+ay) \in \p^j.
\end{equation}
\end{lemma}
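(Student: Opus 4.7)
The plan is to pass from \eqref{eq Ij} to \eqref{eq I} in two steps: an $s$-substitution that allows us to integrate out the linear variable $s$, followed by an affine substitution in $y$ motivated by the cube root of unity factorization.

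First I will apply the translation $s\mapsto s-a^{-1}x$ in \eqref{eq Ij}. The domain conditions transform: $x+as\in\p^{-j}$ becomes $as\in\p^{-j}$, and $y-bs\in\p^{-j}$ becomes $y+a^{-1}bx-bs\in\p^{-j}$. The two cubic terms $bx^2y$ and $-axy^2$ in the phase cancel against the contribution $-a^{-1}x(abxy-b^2x^2-a^2y^2)$ from the $s$-linear term, leaving a single new cubic $+2a^{-1}b^2x^3$; the phase takes the form
\[
x+y+2a^{-1}b^2x^3+2s\bigl(abxy-b^2x^2-a^2y^2\bigr).
\]

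Next I will carry out the $s$-integration. For fixed $x,y$ the range of $s$ is the intersection of the cosets $a^{-1}\p^{-j}$ and $b^{-1}(y+a^{-1}bx)+b^{-1}\p^{-j}$. Since $\abs{b}\le\abs{a}$, we have $a^{-1}\p^{-j}\subseteq b^{-1}\p^{-j}$, so this intersection is either empty or equals all of $a^{-1}\p^{-j}$, and it is nonempty precisely when $bx+ay\in a\p^{-j}$. When nonempty, the Fourier integral of $\psi\bigl(2s(abxy-b^2x^2-a^2y^2)\bigr)$ over $a^{-1}\p^{-j}$ equals $\abs{a}^{-1}q^j$ if $abxy-b^2x^2-a^2y^2\in a\p^j$ and vanishes otherwise; recall here the factorization
\[
b^2x^2+a^2y^2-abxy=(bx+\rho ay)(bx+\rho^2 ay),
\]
which sits behind this quadratic condition. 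Combining with the prefactor $q^{-j}$ yields
\[
I(j;a,b)=\abs{a}^{-1}\int \psi\bigl(x+y+2a^{-1}b^2x^3\bigr)\,dx\,dy
\]
over the domain $bx,\,ay\in\p^j$, $bx+ay\in a\p^{-j}$, $abxy-b^2x^2-a^2y^2\in a\p^j$.

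Finally I will apply the affine substitution $y\mapsto -(1+2\rho)y-\rho a^{-1}bx$, which is measure-preserving since $\abs{1+2\rho}^2=\abs{-3}=1$ (using $p>3$). The key arithmetic identity $(1-\rho)\rho=\rho-\rho^2=1+2\rho$, i.e.\ $(1+2\rho)/(1-\rho)=\rho$, turns the condition $bx+ay\in a\p^{-j}$ into $(1-\rho)bx-(1+2\rho)ay\in a\p^{-j}$, and hence into $bx-\rho ay\in a\p^{-j}$. A direct expansion, using $(1+2\rho)^2=-3$ and $1+\rho+\rho^2=0$, shows that the quadratic $abxy-b^2x^2-a^2y^2$ transforms into $3ay(bx+ay)$, so the condition $\in a\p^j$ becomes $y(bx+ay)\in\p^j$; the phase transforms into $(1-\rho a^{-1}b)x-(1+2\rho)y+2a^{-1}b^2x^3$, producing \eqref{eq I}.

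The main obstacle is the last step: one must guess the correct affine substitution, and then three outputs (the linear constraint, the quadratic constraint, and the phase) must all simplify to the normal forms in \eqref{eq Ij domain} and \eqref{eq I}. The matching relies on the arithmetic identities $(1+2\rho)/(1-\rho)=\rho$, $(1+2\rho)^2=-3$, and $1+\rho+\rho^2=0$, and on the hypothesis $p>3$, which ensures that $1+2\rho$, $1-\rho$, and $3$ are all units in $\O$.
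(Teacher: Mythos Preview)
Your proof is correct and follows essentially the same approach as the paper: the same translation $s\mapsto s-a^{-1}x$, the same integration over $s$ using $\abs{b}\le\abs{a}$ to collapse the domain to $a^{-1}\p^{-j}$, and the same affine substitution $y\mapsto -(1+2\rho)y-\rho a^{-1}bx$, with the identities $(1+2\rho)=(1-\rho)\rho$, $(1+2\rho)^2=-3$, and $1+\rho+\rho^2=0$ doing the work. Your write-up is slightly more explicit than the paper's in justifying the $s$-range and in computing how the quadratic form $abxy-b^2x^2-a^2y^2$ becomes $3ay(bx+ay)$, but the argument is the same.
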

\begin{proof}
Apply to \eqref{eq Ij} the variable change 
\[
s\mapsto s-a^{-1}x
\] 
to obtain
\[
I(j;a,b)=q^{-j} \int \psi[x+y+2\left(bx^2y-axy^2+(abxy-b^2x^2-a^2y^2)(s-a^{-1}x)\right)]  \ ds\ dx\ dy
\]
\[
=q^{-j} \int \psi[x+y+2\left(a^{-1}b^2x^3+(abxy-b^2x^2-a^2y^2)s\right)]  \ ds\ dx\ dy
\]
where the integral is now over $x,y,s\in F$ such that
\[
as,\,bs-(a^{-1}bx+y)\in \p^{-j},\ xb,\,ya \in \p^j
\]
or, under our assumption that $\abs{b}\le \abs{a}$, equivalently such that 
\[
as,\,a^{-1}bx+y\in \p^{-j},\ xb,\,ya \in \p^j.
\]
Now integrating over $s$ we have
\[
I(j;a,b)
=\abs{a}^{-1} \int \psi[x+y+2a^{-1}b^2x^3]  \ dx\ dy
\]
where the integral is now over $x,y\in F$ such that
\[
a^{-1}bx+y\in \p^{-j},\ xb,\,ya \in \p^j,\, abxy-b^2x^2-a^2y^2\in a\p^j.
\]
After the variable change  
\[
y\mapsto -(\rho a^{-1}b x+(1+2\rho)y)
\] 
we have
\[
I(j;a,b)
=\abs{a}^{-1} \int \psi[(1-\rho a^{-1}b)x-(1+2\rho)y+2a^{-1}b^2x^3]  \ dx\ dy
\]
where the integral is now over $x,y\in F$ such that
\[
(1-\rho)bx-(1+2\rho)ay\in a\p^{-j},\ bx,\,ay, y(bx+ay) \in \p^j.
\]
Note that $1+2\rho=(1-\rho)\rho$ and $1-\rho\in\O^*$ so that the first condition is equivalent to $bx-\rho ay\in a\p^{-j}$ and the lemma follows.
\end{proof}

\subsubsection{}\label{ss split dom} By splitting the domain of integration three ways we express $I(j;a,b)$ as a sum of three integrals that we subsequently evaluate separately.

Let $\abs{b}\le \abs{a}\le 1$, fix an integer $j$ such that $0\le j\le \val(a)$ and set
\[
m=\min(j,\val(a)-j,\lfloor \frac{\val(a)-j}2 \rfloor).
\] 
By definition 
\[
\p^{-m}= \p^{-j}\cap \p^{j-\val(a)}\cap \p^{-\lfloor \frac{\val(a)-j}2 \rfloor}. 
\]
Note further that $y\in \p^{-\lfloor \frac{\val(a)-j}2 \rfloor}$ if and only if $ay^2\in\p^j$. Consequently, we have 
\begin{equation}\label{eq in m dom}
y\in \p^{-m}\ \ \ \text{if and only if}\ \ \ ay\in a\p^{-j}\cap \p^j\ \ \ \text{and} \ \ \ ay^2\in \p^j. 
\end{equation}
It follows that the condition 
\[
y\not\in \p^{-m}\ \ \  \text{and}\ \ \ ay\in \p^j
\] 
is equivalent to the condition
\[
(y\not\in \p^{-j}\ \ \ \text{or}\ \ \ ay^2\not\in \p^j)\ \ \ \text{and}\ \ \ ay\in \p^j.
\] 
Consequently, the set of $x,\,y\in F$ satisfying \eqref{eq Ij domain} partitions into three parts with the disjoint extra conditions:
\begin{enumerate}
\item\label{cond in pm} $y\in \p^{-m}$
\item\label{cond not in pj} $y\not\in \p^{-j}$
\item\label{cond the rest} $y\in \p^{-j}$ and $ay^2\not\in \p^j$.
\end{enumerate}

Write 
\[
I(j;a,b)=I_1(a,b)+I_2(a,b)+I_3(a,b)
\]
where $I_\ell(a,b)$ is defined by the integral \eqref{eq I} over $x,\,y\in F$ satisfying \eqref{eq Ij domain} as well as the condition ($\ell$) above, $\ell=1,2,3$. 

For future reference, we further point out that it is immediate from the definitions that
\begin{equation}\label{eq m prop}
m\ge 0\ \ \ \text{and}\ \ \ m=0\ \ \ \text{if and only if}\ \ \ j\in \{0,\val(a)-1,\val(a)\}.
\end{equation}

\subsubsection{} In the notation of \S\ref{ss split dom} we compute $I_1(a,b)$.

\begin{lemma}\label{lem I1}
We have
\[
I_1(a,b)=\begin{cases}
\abs{b}^{-1}\Cu(ab^{-1},2a^2b^{-1}) & j\in\{0,\val(a)\} \\ 
q\abs{b}^{-1}\Cu(\varpi^{-1}(\rho^2ab^{-1}-1),2\varpi^{-3}a^2b^{-1}) & 1\le j=\val(a)-1 \\
\abs{ab}^{-1}\Cu_j(b^{-1}-\rho a^{-1},2a^{-1}b^{-1}) & \lfloor\frac{\val(a)+1}2\rfloor\le j\le \val(a)-2 \\ 
0 & 1\le j\le \lfloor\frac{\val(a)-1}2\rfloor.
\end{cases}
\]
\end{lemma}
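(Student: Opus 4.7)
The plan is to evaluate $I_1(a,b)$ by performing the $y$-integration first, since the integrand depends on $y$ only through the linear factor $\psi(-(1+2\rho)y)$ with $|1+2\rho|=1$ (indeed $1+2\rho=\rho(1-\rho)$, and $(1-\rho)(1-\rho^2)=3\in\O^*$ under our assumption $p>3$). First I would apply \eqref{eq in m dom} to note that the restriction $y\in\p^{-m}$ already encodes $ay\in\p^j\cap a\p^{-j}$ and $ay^2\in\p^j$, so that the defining conditions \eqref{eq Ij domain} reduce to $bx\in\p^j$, $bx-\rho ay\in a\p^{-j}$, and $ybx\in\p^j$ (using $y(bx+ay)=ybx+ay^2$ and $ay^2\in\p^j$). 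Combined with the automatic $\rho ay\in a\p^{-j}$, the displacement condition further forces $bx\in a\p^{-j}$, so the admissible $x$ satisfy $bx\in\p^j\cap a\p^{-j}$.

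Next, for fixed admissible $x$, the condition $bx-\rho ay\in a\p^{-j}$ becomes $y\in\rho^{-1}a^{-1}bx+\p^{-j}$; intersecting with $y\in\p^{-m}$ and using $m\le j$ together with $\rho^{-1}a^{-1}bx\in\p^{-j}$ reduces this to $y\in\p^{-m}$. Imposing also $ybx\in\p^j$ gives $y\in\p^{-n}$ with $n=\min(m,\val(bx)-j)$. Since $|1+2\rho|=1$, the Gaussian computation $\int_{\p^{-n}}\psi(-(1+2\rho)y)\,dy$ equals $1$ when $n=0$ and $0$ otherwise. Thus the integral survives only when $m=0$, which by \eqref{eq m prop} corresponds to $j\in\{0,\val(a)-1,\val(a)\}$, or when $\val(bx)=j$. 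In the second case, combining with $\val(bx)\ge\val(a)-j$ forces $j\ge\val(a)-j$; for $1\le j\le\lfloor(\val(a)-1)/2\rfloor$ this is impossible, giving $I_1(a,b)=0$ as claimed.

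In the remaining cases the problem reduces to a one-dimensional cubic exponential integral. When $j\in\{0,\val(a)\}$ the admissible $x$ satisfy $bx\in\p^{\max(j,\val(a)-j)}=a\O$; substituting $x=ab^{-1}t$, $t\in\O$, gives $|b|^{-1}\Cu(ab^{-1}-\rho,\,2a^2b^{-1})$, which equals $|b|^{-1}\Cu(ab^{-1},\,2a^2b^{-1})$ because $\rho\in\O$ and $\Cu(c+e,d)=\Cu(c,d)$ for $e\in\O$. When $j=\val(a)-1\ge 1$ the domain is $bx\in\p^{\val(a)-1}$; substituting $x=b^{-1}\varpi^{\val(a)-1}t$, $t\in\O$, yields
\[
q\,|b|^{-1}\,\Cu\bigl(\varpi^{\val(a)-1}(b^{-1}-\rho a^{-1}),\;2a^{-1}b^{-1}\varpi^{3(\val(a)-1)}\bigr),
\]
and rescaling $\Cu(c,d)=\Cu(ct,dt^3)$ with the unit $t=\rho^2\varpi^{-\val(a)}a\in\O^*$ produces exactly $\varpi^{-1}(\rho^2 ab^{-1}-1)$ and $2\varpi^{-3}a^2b^{-1}$. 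For $\lfloor(\val(a)+1)/2\rfloor\le j\le\val(a)-2$ the domain is $\val(bx)=j$; substituting $x=b^{-1}\varpi^j t$, $t\in\O^*$, and using the definition of $\Cu_j$ gives $|ab|^{-1}\Cu_j(b^{-1}-\rho a^{-1},\,2a^{-1}b^{-1})$.

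The main obstacle is the careful tracking of the nested ideal intersections that describe the $y$-domain, since without the reduction via \eqref{eq in m dom} the conditions $bx,ay,y(bx+ay)\in\p^j$ and $bx-\rho ay\in a\p^{-j}$ interact nontrivially. A secondary subtlety is identifying the correct $\O^*$-rescaling in the case $j=\val(a)-1$: the unit $t=\rho^2\varpi^{-\val(a)}a$ is forced by matching the coefficient of $t$ against the target $\varpi^{-1}(\rho^2 ab^{-1}-1)$, after which the coefficient of $t^3$ automatically reduces to $2\varpi^{-3}a^2b^{-1}$ because $\rho^6=1$.
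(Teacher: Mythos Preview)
Your proof is correct and follows essentially the same approach as the paper: both use \eqref{eq in m dom} to reduce the domain to $bx\in\p^j\cap a\p^{-j}$ and $y\in\p^{-m}\cap(bx)^{-1}\p^j$, perform the $y$-integration first (surviving precisely when $m=0$ or $\val(bx)=j$), and then reduce to cubic exponential integrals via the same changes of variable. The only differences are cosmetic---you make the identity $\Cu(c+e,d)=\Cu(c,d)$ for $e\in\O$ explicit where the paper absorbs it silently, and in the case $j=\val(a)-1$ you use the substitution $x=b^{-1}\varpi^{\val(a)-1}t$ followed by the unit rescaling $t=\rho^2\varpi^{-\val(a)}a$, whereas the paper uses $x\mapsto b^{-1}a\varpi^{-1}x$ directly (the two outputs differ by the unit rescaling $u=\rho^2$ in \eqref{eq C unit}, hence agree).
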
 
\begin{proof}
It follows from \eqref{eq in m dom} that the conditions 
\[
x\ne 0,\,y\in \p^{-m},\,bx-\rho ay\in a\p^{-j},\ bx,\,ay, y(bx+ay) \in \p^j
\]
and
\[
x\ne 0,\,x\in b^{-1}(\p^j\cap a\p^{-j}),\,y\in\p^{-m}\cap (bx)^{-1}\p^j
\]
are equivalent. Excluding the measure zero set where $x=0$ we conclude that 
\[
I_1(a,b)=\abs{a}^{-1} \int_{b^{-1}(\p^j\cap a\p^{-j})} \psi[(1-\rho a^{-1}b)x+2a^{-1}b^2x^3]  \left\{\int_{\p^{-m}\cap(bx)^{-1}\p^j}\psi[-(1+2\rho)y]\ dy\right\}\ dx.
\]
We have
\[
\int_{\p^{-m}\cap(bx)^{-1}\p^j}\psi[-(1+2\rho)y]\ dy=\begin{cases} 1 & m=0\text{ or } \abs{bx}=q^{-j} \\ 0 & m>0\text{ and } \abs{bx}<q^{-j}.\end{cases}
\]
Note that the set of $x\in b^{-1}(\p^j\cap a\p^{-j})$ such that $\abs{bx}=q^{-j}$ is empty unless $\frac{\val(a)}2\le j$ and in this case it is precisey the set of $x$ such that $\abs{bx}=q^{-j}$.
 Taking \eqref{eq m prop} into consideration we have that
\[
I_1(a,b)=\begin{cases} 
\abs{a}^{-1} \int_{b^{-1}(\p^j\cap a\p^{-j})} \psi[(1-\rho a^{-1}b)x+2a^{-1}b^2x^3]   \ dx & j\in \{0,\val(a)-1,\val(a)\} \\ 
\abs{a}^{-1} \int_{\abs{bx}=q^{-j}} \psi[(1-\rho a^{-1}b)x+2a^{-1}b^2x^3] \ dx &\max(1,\,\frac{\val(a)}2)\le j\le \val(a)-2 \\ 0 & 1\le j <\frac{\val(a)}2 . \end{cases}
\]
We further observe that
\[
\p^j\cap a\p^{-j}=\begin{cases} a\O & j=0\text{ or }j=\val(a) \\ a\p^{-1} & j=\val(a)-1\ge 1, \end{cases}
\]
that after the variable change $x\mapsto b^{-1}a x$ we have that
\[
\abs{a}^{-1} \int_{b^{-1}a\O} \psi[(1-\rho a^{-1}b)x+2a^{-1}b^2x^3]   \ dx=\abs{b}^{-1} \Cu(ab^{-1},2a^2b^{-1}),
\]
that after the variable change $x\mapsto b^{-1}a\varpi^{-1} x$ we have that
\[
\abs{a}^{-1}\int_{b^{-1}a\p^{-1}} \psi[(1-\rho a^{-1}b)x+2a^{-1}b^2x^3] \ dx=q\abs{b}^{-1}\,\Cu(\varpi^{-1}( ab^{-1}-\rho),2\varpi^{-3}a^2b^{-1})
\]
and that after the variable change $x\mapsto b^{-1} x$ we have that 
\[
\abs{a}^{-1} \int_{\abs{bx}=q^{-j}} \psi[(1-\rho a^{-1}b)x+2a^{-1}b^2x^3] \ dx=\abs{ab}^{-1} \Cu_j(b^{-1}-\rho a^{-1},2a^{-1}b^{-1}).
\]
The lemma readily follows.
\end{proof}

\subsubsection{} In the notation of \S\ref{ss split dom} we compute $I_2(a,b)$.

\begin{lemma}\label{lem dom D}
Let
\[
D=\{(x,y)\in F^2:y\not\in \p^{-j},\,bx-\rho ay\in a\p^{-j},\ bx,\,ay, y(bx+ay) \in \p^j\}.
\]
and
\[
D'=\{(x,y)\in F^2:\abs{ab^{-1}}q^j<\abs{x}\le \abs{a}^{\frac12}\abs{b}^{-1}q^{-\frac{j}2},\,y\in \rho^2 a^{-1}bx+\p^{-j}\}.
\]
Then $D=D'$.
\end{lemma}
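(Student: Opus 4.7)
The plan is to observe that $D$ and $D'$ share a common reformulation of the condition involving $bx-\rho ay$: dividing by the unit $\rho a$ and using $\rho^{-1}=\rho^2$, the membership $bx-\rho ay\in a\p^{-j}$ is exactly $y\in \rho^2 a^{-1}bx + \p^{-j}$, which is the $y$-condition defining $D'$. So I would write $y=\rho^2 a^{-1}bx+z$ with $z\in\p^{-j}$ and translate each remaining condition of $D$ into a condition on $\abs{x}$.

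First, since $\abs{z}\le q^j$, the condition $y\notin\p^{-j}$ (i.e.\ $\abs{y}>q^j$) is equivalent, by the ultrametric inequality, to $\abs{\rho^2 a^{-1}bx}>q^j$, i.e.\ $\abs{x}>\abs{ab^{-1}}q^j$. This is the lower bound in $D'$, and under it one has $\abs{y}=\abs{a^{-1}bx}$. In particular $\abs{ay}=\abs{bx}$, so the two conditions $bx\in\p^j$ and $ay\in\p^j$ both reduce to the single bound $\abs{x}\le\abs{b}^{-1}q^{-j}$.

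Next, using the identity $1+\rho^2=-\rho$, a direct computation gives
\[
bx+ay = bx + a(\rho^2 a^{-1}bx+z) = -\rho\, bx + az,
\]
so that
\[
y(bx+ay) = -\rho\, bxy + a y z.
\]
Here $\abs{\rho bxy}=\abs{bx}\abs{y}=\abs{a^{-1}b^2x^2}$ while $\abs{ayz}\le\abs{ay}\,q^j=\abs{bx}q^j$. The lower bound $\abs{a^{-1}bx}>q^j$ established above forces $\abs{a^{-1}b^2x^2}>\abs{bx}q^j$, so the first summand strictly dominates and $\abs{y(bx+ay)}=\abs{a^{-1}b^2x^2}$. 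The condition $y(bx+ay)\in\p^j$ therefore reads $\abs{x}^2\le\abs{a}\abs{b}^{-2}q^{-j}$, i.e.\ $\abs{x}\le\abs{a}^{1/2}\abs{b}^{-1}q^{-j/2}$, the upper bound in $D'$.

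To finish, I would note that this upper bound automatically implies $\abs{x}\le\abs{b}^{-1}q^{-j}$: since $j\le \val(a)$, we have $\abs{a}^{1/2}q^{-j/2}\le q^{-\val(a)/2-j/2}\le q^{-j}$. Hence the conditions $bx,ay\in\p^j$ are redundant given the bound from $y(bx+ay)\in\p^j$, and the equivalence $D=D'$ follows in both directions. There is no real obstacle here: the argument is bookkeeping with non-archimedean absolute values, the one substantive input being the cubic identity $1+\rho^2=-\rho$, which makes the otherwise competing $bx$ terms in $bx+ay$ collapse into a single dominant term.
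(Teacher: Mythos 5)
Your proof is correct and follows essentially the same route as the paper's: both arguments rewrite $bx-\rho ay\in a\p^{-j}$ as the coset condition $y\in\rho^2a^{-1}bx+\p^{-j}$, deduce $\abs{ay}=\abs{bx}$ from the lower bound, show $\abs{y(bx+ay)}=\abs{a^{-1}b^2x^2}$ (you via the explicit substitution and $1+\rho^2=-\rho$, the paper via $\abs{bx-\rho ay}<\abs{bx}=\abs{ay}$ forcing $\abs{bx+ay}=\abs{bx}$), and observe that the resulting upper bound on $\abs{x}$ subsumes $bx,ay\in\p^j$ using the standing hypothesis $j\le\val(a)$. The only stylistic difference is that you run a single chain of equivalences where the paper checks the two inclusions separately.
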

\begin{proof}
It is easy to see that 
\[
y\not\in \p^{-j},\,bx-\rho ay\in a\p^{-j}
\]
if and only if
\[
\abs{ab^{-1}}q^j<\abs{x},\,y\in \rho^2 a^{-1}bx+\p^{-j}
\]
and that these conditions imply 
\[
\abs{bx}=\abs{ay}>\abs{a}q^j.
\]
Assume now that $(x,y)\in D$. 
From the first two conditions defining $D$ we have that $\abs{bx-\rho ay}<\abs{bx}=\abs{ay}$ and therefore we must have $\abs{bx+ay}=\abs{bx}$. Consequently, the condition $y(bx+ay) \in \p^j$ implies that $\abs{a^{-1}b^2x^2}=\abs{y(bx+ay)}\le q^{-j}$ and it follows that $(x,y)\in D'$. 

Conversely, if $(x,y)\in D'$
then the first two conditions defining $D$ are satisfied and $\abs{ay}=\abs{bx}$. The weak inequality in the definition of $D'$ now implies $bx,\,ay,\,bxy,\,ay^2\in \p^j$ and therefore also $y(bx+ay) \in \p^j$ so that $(x,y)\in D$. Then lemma follows. 
\end{proof}
\begin{lemma}\label{lem I2}
We have
\[
I_2(a,b)=\begin{cases}
\abs{ab}^{-1}\sum_{\ell=\lfloor\frac{\val(a)+1}2\rfloor}^{\val(a)-1}\Cu_\ell(b^{-1}-a^{-1},2a^{-1}b^{-1}) & j=0 \\ 
0 & j\ge 1.
\end{cases}
\]
\end{lemma}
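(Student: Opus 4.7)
The plan is to apply Lemma \ref{lem dom D} to replace the domain of integration defining $I_2(a,b)$ by the explicit set $D'$, then to integrate over $y$ first (which is elementary because the integrand is linear in $y$ on each fiber), and finally to change variables in $x$ to recognize the resulting cubic exponential integrals $\Cu_\ell$.

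More concretely, by Lemma \ref{lem dom D} we have
\[
I_2(a,b)=\abs{a}^{-1}\int \psi[(1-\rho a^{-1}b)x+2a^{-1}b^2x^3]\,\Psi_j(x)\,dx
\]
where the outer integral runs over $\abs{ab^{-1}}q^j<\abs{x}\le \abs{a}^{1/2}\abs{b}^{-1}q^{-j/2}$ and
\[
\Psi_j(x)=\int_{\rho^2 a^{-1}bx+\p^{-j}}\psi[-(1+2\rho)y]\,dy=\psi[-(1+2\rho)\rho^2 a^{-1}bx]\int_{\p^{-j}}\psi[-(1+2\rho)z]\,dz.
\]
Since $p>3$ we have $1+2\rho\in\O^*$, so $\int_{\p^{-j}}\psi[-(1+2\rho)z]\,dz$ equals $1$ when $j=0$ and vanishes for $j\ge 1$. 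This already yields $I_2(a,b)=0$ for $j\ge 1$.

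For $j=0$ the remaining factor is $\psi[-(1+2\rho)\rho^2 a^{-1}bx]$. Using $\rho^2=-1-\rho$ one computes $-(1+2\rho)\rho^2=\rho-1$, so the full exponent simplifies to
\[
[(1-\rho a^{-1}b)+(\rho-1)a^{-1}b]x+2a^{-1}b^2x^3=(1-a^{-1}b)x+2a^{-1}b^2x^3.
\]
Applying the change of variables $x\mapsto b^{-1}x$ transforms the phase into $(b^{-1}-a^{-1})x+2a^{-1}b^{-1}x^3$ and the domain $\abs{ab^{-1}}<\abs{x}\le \abs{a}^{1/2}\abs{b}^{-1}$ becomes $\abs{a}<\abs{x}\le\abs{a}^{1/2}$, i.e.\ $\val(x)\in\{\lfloor(\val(a)+1)/2\rfloor,\dots,\val(a)-1\}$. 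Splitting according to $\val(x)=\ell$ and recognizing each piece as $\Cu_\ell(b^{-1}-a^{-1},2a^{-1}b^{-1})$ gives the claimed formula.

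The only subtle point is the arithmetic simplification via $\rho^2+\rho+1=0$ that collapses the two linear terms in $x$ into the clean coefficient $1-a^{-1}b$; everything else is domain bookkeeping and change of variables, with the hypothesis $p>3$ entering exactly to guarantee $(1+2\rho)\in\O^*$ so that the $y$-integral dichotomy above is valid.
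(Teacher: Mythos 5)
Your proof is correct and follows essentially the same route as the paper's: invoke Lemma \ref{lem dom D} to pass to the explicit domain $D'$, observe that the $y$-integral over a coset of $\p^{-j}$ vanishes for $j\ge 1$ and contributes the phase $\psi((\rho-1)a^{-1}bx)$ for $j=0$, simplify the linear coefficient to $1-a^{-1}b$, and substitute $x\mapsto b^{-1}x$ to identify the pieces as $\Cu_\ell$. The only detail worth making explicit is the Jacobian factor $\abs{b}^{-1}$ from that substitution, which supplies the $\abs{ab}^{-1}$ in the final formula.
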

\begin{proof}
As a consequence of Lemma \ref{lem dom D} we have that $\abs{a}I_2(a,b)$ equals
\[
\sum_{\ell=\lfloor\frac{\val(a)+j+1}2 \rfloor-\val(b)}^{\val(a)-\val(b)-j-1} \int_{\val(x)=\ell} \psi[(1-\rho a^{-1}b)x+2a^{-1}b^2x^3]\left\{\int_{\rho^2 a^{-1}bx+\p^{-j}} \psi(-(1+2\rho)y)\ dy\right\}  \ dx.
\]
Since $-(1+2\rho)\rho^2=\rho-1$ we have that
\[
\int_{\rho^2 a^{-1}bx+\p^{-j}} \psi(-(1+2\rho)y)\ dy=\begin{cases}
\psi((\rho-1)a^{-1}bx) & j=0 \\
0 & j\ge 1.
\end{cases}
\]
We conclude that if $j\ge 1$ then $I_2(a,b)=0$ and if $j=0$ then 
\[
I_2(a,b)=\abs{a}^{-1}\sum_{\ell=\lfloor\frac{\val(a)+1}2 \rfloor-\val(b)}^{\val(a)-\val(b)-1} \int_{\val(x)=\ell} \psi[(1- a^{-1}b)x+2a^{-1}b^2x^3]\ dx
\]
The variable change $x\mapsto b^{-1}x$ shows that
\[
\int_{\val(x)=\ell} \psi[(1- a^{-1}b)x+2a^{-1}b^2x^3]\ dx=\abs{b}^{-1}\Cu_{\ell+\val(b)}(b^{-1}-a^{-1},2a^{-1}b^{-1}).
\]
After the change $\ell\mapsto \ell-\val(b)$ in the index of summation the lemma follows. 
\end{proof}

\subsubsection{} In the notation of \S\ref{ss split dom} we compute $I_3(a,b)$.
\begin{lemma}\label{lem dom D3}
Let
\[
D=\{(x,y)\in F^2:y\in \p^{-j},\,ay^2\not\in \p^j,\,bx-\rho ay\in a\p^{-j},\ bx,\,ay, y(bx+ay) \in \p^j\}
\]
and
\[
D'=\{(x,y)\in F^2:\abs{b}^{-1}\abs{a}^{\frac12}q^{-\frac{j}2}<\abs{x}\le \abs{b}^{-1}\min(\abs{a}q^j,q^{-j}),\,y\in - a^{-1}bx+(bx)^{-1}\p^{j}\}.
\]
Then $D=D'$ and $D$ is empty unless $j<\val(a)<3j$.
\end{lemma}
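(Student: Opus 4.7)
\medskip

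\noindent\textbf{Proof plan for Lemma \ref{lem dom D3}.} My approach is to solve the simultaneous conditions defining $D$ for $y$ as an explicit function of $x$ (modulo a small ball), exactly paralleling the analysis in Lemma \ref{lem dom D}; the difference here is that the condition $y \in \p^{-j}$ no longer forces $|bx| = |ay|$ to be large enough to make $|bx+ay|$ shrink trivially, so the cancellation will now come from the condition $y(bx+ay) \in \p^j$ combined with $ay^2 \notin \p^j$.

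Starting with $(x,y)\in D$, I would write $bx = \rho ay + a\alpha$ with $\alpha \in \p^{-j}$ (from $bx-\rho ay \in a\p^{-j}$). Then
\[
y(bx+ay) = y\bigl((1+\rho)ay + a\alpha\bigr) = -\rho^2 ay^2 + a\alpha y.
\]
Since $|ay^2| > q^{-j}$ by the hypothesis $ay^2\notin \p^j$, for the sum to lie in $\p^j$ the two terms must have equal absolute value and their leading parts must cancel. This forces $|\alpha|=|y|$ and, writing $\alpha = \rho^2 y + \beta$, the residual condition $a\beta y \in \p^j$ yields $\beta \in (ay)^{-1}\p^j$. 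Substituting back gives $bx = \rho ay + a\rho^2 y + a\beta = -ay + a\beta$, so that $bx + ay \in y^{-1}\p^j$ and therefore
\[
y \in -a^{-1}bx + (ay)^{-1}\p^j.
\]
Because $a\beta$ is small relative to $ay$ (precisely, $|y^{-1}\p^j| = |y|^{-1}q^{-j} < |a||y| = |ay|$ is equivalent to $|ay^2|>q^{-j}$), we obtain $|ay|=|bx|$, whence $(ay)^{-1}\p^j = (bx)^{-1}\p^j$, giving the description of $y$ in $D'$. The bounds on $|x|$ in $D'$ then follow immediately from $|ay|=|bx|$ together with $|y|\le q^j$, $|ay|\le q^{-j}$, and $|ay^2| > q^{-j}$.

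For the reverse inclusion $D' \subseteq D$, I would start with $y = -a^{-1}bx + \delta$ where $\delta \in (bx)^{-1}\p^j$. The lower bound $|x| > |b|^{-1}|a|^{1/2}q^{-j/2}$ is exactly $|bx|^2 > |a|q^{-j}$, i.e.\ $|a^{-1}bx| > |(bx)^{-1}q^{-j}|$, so the leading term dominates and $|y|=|a|^{-1}|bx|$. The conditions $y\in\p^{-j}$, $ay,\,bx\in\p^j$, and $ay^2\notin \p^j$ then follow directly from the bounds on $|x|$. The only delicate point is verifying $bx-\rho ay \in a\p^{-j}$: expanding gives $bx - \rho ay = -\rho^2 bx - \rho a\delta$, where $|\rho^2 bx|=|bx|\le |a|q^j$ is immediate, but controlling $|\rho a\delta| \le |a||bx|^{-1}q^{-j}$ requires $|bx|\ge q^{1-2j}$. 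This in turn reduces to $|a|^{1/2}q^{-j/2} \ge q^{1/2-2j}$, i.e.\ $|a|\ge q^{1-3j}$, which holds precisely when $\val(a)\le 3j-1$.

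Finally, for the emptiness claim, I would read off the two constraints on the $|x|$-interval in $D'$: non-emptiness of $|b|^{-1}|a|^{1/2}q^{-j/2} < |x| \le |b|^{-1}|a|q^j$ forces $|a|^{1/2}q^{-j/2} < |a|q^j$, i.e.\ $\val(a) < 3j$; and non-emptiness of $|b|^{-1}|a|^{1/2}q^{-j/2} < |x| \le |b|^{-1}q^{-j}$ forces $|a|^{1/2}q^{-j/2} < q^{-j}$, i.e.\ $\val(a) > j$. So $D=D'$ is empty unless $j<\val(a)<3j$; conversely when this holds the previous paragraph goes through, completing the proof. The main bookkeeping obstacle is the last ultrametric estimate for the error term $\rho a\delta$, where the constraint $\val(a)<3j$ is used in an essential (though sharp) way.
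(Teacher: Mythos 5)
Your argument is correct and follows essentially the same route as the paper: both directions hinge on forcing $\abs{ay}=\abs{bx}$ (from $ay^2\notin\p^j$ together with $y(bx+ay)\in\p^j$ in the forward direction, and from the strong inequality in the reverse), after which the parametrization of $y$ and the bounds on $\abs{x}$ drop out, and the emptiness condition is read off from the non-degeneracy of the interval for $\abs{x}$. Two cosmetic points: in the reverse inclusion you never check the defining condition $y(bx+ay)\in\p^j$ (immediate, since $bx+ay=a\delta$ gives $\abs{y(bx+ay)}=\abs{bx}\,\abs{\delta}\le q^{-j}$), and your error estimate for $\rho a\delta$ only really requires $\abs{bx}\ge q^{-2j}$, which already follows from combining the strong and weak inequalities, so the reverse inclusion is in fact unconditional, as in the paper's argument.
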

\begin{proof}
Note that if $ay^2\not\in \p^j$ and $y(bx+ay) \in \p^j$ then $\abs{bx}=\abs{ay}$. It follows that $bx\not\in y^{-1}\p^j=a(bx)^{-1}\p^j$, and therefore $x$ satisfies the strong inequality in the definition of $D'$.
Furthermore, $y\in \p^{-j}$ and $bx-\rho ay\in a\p^{-j}$ imply that $bx\in a\p^{-j}$ and together with $bx\in \p^j$ this gives that $x$ also satisfies the weak inequality.
Consequently, for $(x,y)\in D$ we have $y^{-1}\p^j=ab^{-1}x^{-1}\p^j$ and therefore it is easy to see that $D\subseteq D'$.

For $(x,y)\in D'$ the strong inequality in the definition of $D'$ implies that $(bx)^2\not\in a\p^j$, while the condition on $y$ implies that $bx(ay+bx)\in a\p^j$. Therefore $\abs{ay}=\abs{bx}$ and from the weak inequality it follows that $ay,\,bx\in a\p^{-j}\cap \p^j$ and from the strong inequality that $\abs{ay^2}=\abs{a^{-1}(bx)^2}>q^{-j}$ and therefore $D'\subseteq D$.

The condition on $j$ follows from the fact that if $D$ is not empty then $\abs{b}^{-1}\abs{a}^{\frac12}q^{-\frac{j}2}< \abs{b}^{-1}\min(\abs{a}q^j,q^{-j})$.
\end{proof}
\begin{lemma}\label{lem I3}
We have
\[
I_3(a,b)=\begin{cases}
\abs{ab}^{-1}\Cu_j(b^{-1}-\rho^2a^{-1},2a^{-1}b^{-1}) &  \lfloor \frac{\val(a)+1}2 \rfloor\le j\le \val(a)-1 \\
0 & j=\val(a)\text{ or } j\le \lfloor \frac{\val(a)-1}2 \rfloor.
\end{cases}
\]
\end{lemma}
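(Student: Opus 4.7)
The plan is to mirror the proofs of Lemmas~\ref{lem I1} and \ref{lem I2}, now using the explicit description of the $I_3$-domain from Lemma~\ref{lem dom D3}, and to integrate first over $y$. Recall from Lemma~\ref{lem s int} that
\[
I_3(a,b)=\abs{a}^{-1}\int_{D'}\psi\bigl[(1-\rho a^{-1}b)x-(1+2\rho)y+2a^{-1}b^2x^3\bigr]\,dx\,dy,
\]
where $D'$ consists of pairs $(x,y)$ with $\abs{b}^{-1}\abs{a}^{1/2}q^{-j/2}<\abs{x}\le \abs{b}^{-1}\min(\abs{a}q^j,q^{-j})$ and $y\in -a^{-1}bx+(bx)^{-1}\p^j$. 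In particular $D'=\emptyset$ unless $j<\val(a)<3j$, which is why only the middle range of $j$ will contribute.

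For fixed $x$ in the $x$-range, I will substitute $y=-a^{-1}bx+y'$ with $y'\in(bx)^{-1}\p^j$. Using $1+\rho=-\rho^2$, the linear coefficient of $x$ becomes
\[
(1-\rho a^{-1}b)+(1+2\rho)a^{-1}b=1+(1+\rho)a^{-1}b=1-\rho^2 a^{-1}b,
\]
so the remaining phase in $x$ is $\psi[(1-\rho^2 a^{-1}b)x+2a^{-1}b^2x^3]$. The $y'$-integral is
\[
\int_{(bx)^{-1}\p^j}\psi(-(1+2\rho)y')\,dy',
\]
which, since $1+2\rho\in\O^*$ (we use here the assumption $p>3$), equals the measure $\abs{bx}^{-1}q^{-j}$ precisely when $(bx)^{-1}\p^j\subseteq\O$, i.e., when $\abs{bx}\ge q^{-j}$, and vanishes otherwise. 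Combined with the weak inequality $\abs{bx}\le q^{-j}$ from $D'$ this forces $\abs{bx}=q^{-j}$, and on that locus the factor $\abs{bx}^{-1}q^{-j}$ is simply $1$.

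Thus the surviving $x$-domain is cut out by $\abs{bx}=q^{-j}$ together with the upper bound $\abs{bx}\le \abs{a}q^j$ (equivalently $\val(a)\le 2j$, i.e.\ $j\ge\lfloor(\val(a)+1)/2\rfloor$) and the strict lower bound $\abs{bx}>\abs{a}^{1/2}q^{-j/2}$ (equivalently $\val(a)>j$, i.e.\ $j\le \val(a)-1$). On this range
\[
I_3(a,b)=\abs{a}^{-1}\int_{\abs{bx}=q^{-j}}\psi\bigl[(1-\rho^2 a^{-1}b)x+2a^{-1}b^2x^3\bigr]\,dx,
\]
and substituting $x\mapsto b^{-1}x$ (which contributes a Jacobian $\abs{b}^{-1}$ and transforms $\abs{bx}=q^{-j}$ into $\val(x)=j$) gives exactly $\abs{ab}^{-1}\Cu_j(b^{-1}-\rho^2 a^{-1},2a^{-1}b^{-1})$. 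Outside the range $\lfloor(\val(a)+1)/2\rfloor\le j\le \val(a)-1$ the above analysis shows the integration locus is empty, so $I_3(a,b)=0$, completing the lemma. There is no real obstacle here beyond the bookkeeping of the two competing inequalities on $\abs{bx}$; the only algebraic input is the identity $1+\rho=-\rho^2$ to recognize $1-\rho^2 a^{-1}b$ as the correct coefficient appearing in $\Cu_j$.
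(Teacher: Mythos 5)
Your argument is correct and follows essentially the same route as the paper: both use the domain description $D=D'$ from Lemma~\ref{lem dom D3}, integrate over $y$ in the coset $-a^{-1}bx+(bx)^{-1}\p^j$ (which forces $\abs{bx}=q^{-j}$ and shifts the linear coefficient from $1-\rho a^{-1}b$ to $1-\rho^2 a^{-1}b$ via $1+\rho=-\rho^2$), and then substitute $x\mapsto b^{-1}x$ to land on $\abs{ab}^{-1}\Cu_j(b^{-1}-\rho^2a^{-1},2a^{-1}b^{-1})$. The translation of the two inequalities on $\abs{bx}$ into $\lfloor(\val(a)+1)/2\rfloor\le j\le\val(a)-1$ matches the paper's bookkeeping exactly.
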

\begin{proof}
It follows from Lemma \ref{lem dom D3} that $I_3(a,b)=0$ unless $\lfloor \frac{\val(a)}3 \rfloor+1\le j\le \val(a)-1$. For the rest of the proof assume that these inequalities hold. Note that
\[
\int_{- a^{-1}bx+(bx)^{-1}\p^{j}}\psi(-(1+2\rho)y)\ dy=\begin{cases}
\psi((1+2\rho)a^{-1}bx) & \abs{bx}=q^{-j} \\ 
0 & \abs{bx}<q^{-j}.
\end{cases}
\]
It therefore further follows from Lemma \ref{lem dom D3} that
\[
I_3(a,b)=\abs{a}^{-1}\int \psi[(1-\rho^2 a^{-1}b)x+2a^{-1}b^2 x^3]\ dx
\]
where the integral is over all $x\in F$ such that 
\[
\abs{bx}=q^{-j}\ \ \ \text{and} \ \ \ \abs{b}^{-1}\abs{a}^{\frac12}q^{-\frac{j}2}<\abs{x}\le \abs{b}^{-1}\min(\abs{a}q^j,q^{-j}).
\]
Note that this domain is empty and therefore $I_3(a,b)=0$ unless $j\ge \frac{\val(a)}2$ and when this is the case the domain is defined by $\abs{bx}=q^{-j}$. 
Applying the variable change $x\mapsto b^{-1}x$ we obtain in this case that
\[
I_3(a,b)=\abs{ab}^{-1}\Cu_j(b^{-1}-\rho^2a^{-1},2a^{-1}b^{-1}).
\]
The lemma follows.
\end{proof}

\subsection{Completion of proof of Proposition \ref{prop I formula}}
The functional equation  \eqref{eq I bar fe} is a consequence of part \eqref{part I real} of Lemma \ref{lem I props}  and equation \eqref{eq I fe}.
If $a\not\in\O$ then $I(a,b)=0$ by part \eqref{part zero not O} of Lemma \ref{lem I props}. 
 
Assume for the rest of this section that $\abs{b}\le \abs{a}\le 1$. 
Combining Lemmas \ref{lem I1}, \ref{lem I2}, \ref{lem I3} and \eqref{eq I as  sum Ij} we have that
\begin{multline*}
I(a,b)=(1+\delta_{\val(a)\ge 1})\abs{b}^{-1}\Cu(ab^{-1},2a^2b^{-1})+ \\
\delta_{\val(a)\ge 2}\left\{
q\abs{b}^{-1}\Cu(\varpi^{-1}(\rho^2ab^{-1}-1),2\varpi^{-3}a^2b^{-1})+
\abs{ab}^{-1}\Cu_{\val(a)-1}(b^{-1}-a^{-1},2a^{-1}b^{-1})+ \right. \\
\left. \abs{ab}^{-1}\Cu_{\val(a)-1}(b^{-1}-\rho^2a^{-1},2a^{-1}b^{-1})\right\}+ 
\abs{ab}^{-1}\sum_{\ell=\lfloor\frac{\val(a)+1}2\rfloor}^{\val(a)-2} \sum_{k=0}^2 \Cu_\ell(b^{-1}-\rho^k a^{-1},2a^{-1}b^{-1})
\end{multline*}
where for $i=1,2$
\[
\delta_{\val(a)\ge i}=\begin{cases}
1 & \val(a)\ge i \\
0 & \text{otherwise.}
\end{cases}
\]

The evaluation of $I(a,b)$ may now 
 be obtained from this expression by applying Lemma~\ref{lem cobic exp}, equations \eqref{eq cu l} and \eqref{eq cu l is zero}, and the identity
\begin{equation}\label{eq C unit}
\Cu(x,y)=\Cu(ux,u^3y),\ \ \ x,\,y\in F,\ u\in\O^*.
\end{equation}
In detail, Proposition \ref{prop I formula} is a straightforward consequence of the following seven identities:
\begin{enumerate}
\item $\Cu(ab^{-1},2a^2b^{-1})=\begin{cases}
1 &\abs{a}=\abs{b}\le 1 \\ 
\Cu(b^{-1},2a^{-1}b^{-1}) & 1=\abs{a}>\abs{b} \\
0 & \abs{b}<\abs{a}\le q^{-1}.
\end{cases}$

Indeed, for the case $1=\abs{a}>\abs{b}$ apply \eqref{eq C unit} with $u=a^{-1}$ and in the other cases apply Lemma \ref{lem cobic exp}.
\item 
If $\abs{b}\le \abs{a}=q^{-2}$ then 
\begin{equation}\label{id a=2}
\Cu(\varpi^{-1}(\rho^2ab^{-1}-1),2\varpi^{-3}a^2b^{-1})=\Cu(\varpi(b^{-1}-\rho a^{-1}),2\varpi^3a^{-1}b^{-1}).
\end{equation}

Indeed, in this case apply \eqref{eq C unit} with $u=\rho\varpi^2 a^{-1}\in\O^*$.

\item If $\abs{b}< \abs{a}=q^{-2}$ then $\Cu(\varpi^{-1}(\rho^2ab^{-1}-1),2\varpi^{-3}a^2b^{-1})=q\,\Cu_1(b^{-1}-\rho a^{-1},2a^{-1}b^{-1})$.

Indeed, apply the identity \eqref{id a=2} and note that 
\[
\abs{\varpi(b^{-1}-\rho a^{-1})}=\abs{2\varpi^3a^{-1}b^{-1}}=q^{-1}\abs{b}^{-1}\ge q^2
\] 
so that by Lemma \ref{lem cobic exp} we have
\[
\Cu(\varpi^{-1}(\rho^2ab^{-1}-1),2\varpi^{-3}a^2b^{-1})=\Cu_0(\varpi(b^{-1}-\rho a^{-1}),2\varpi^3a^{-1}b^{-1}).
\]
The identity now follows from \eqref{eq cu l} applied with $t=\varpi$.
\item If $\abs{b}=\abs{a}=q^{-2}$ then $\Cu(\varpi^{-1}(\rho^2ab^{-1}-1),2\varpi^{-3}a^2b^{-1})=q^{-1}+q\,\Cu_1(b^{-1}-\rho a^{-1},2a^{-1}b^{-1})$.

Indeed, apply the identity \eqref{id a=2} and note that
\[
\abs{\varpi(b^{-1}-\rho a^{-1})}\le \abs{2\varpi^3a^{-1}b^{-1}}=q
\] 
so that by Lemma \ref{lem cobic exp} we have
\[
\Cu(\varpi^{-1}(\rho^2ab^{-1}-1),2\varpi^{-3}a^2b^{-1})=q^{-1}+\Cu_0(\varpi(b^{-1}-\rho a^{-1}),2\varpi^3a^{-1}b^{-1}).
\]
The identity now follows from \eqref{eq cu l} applied with $t=\varpi$.
\item If $\abs{b}<\abs{a}\le q^{-3}$ then $\Cu(\varpi^{-1}(\rho^2ab^{-1}-1),2\varpi^{-3}a^2b^{-1})=0$.

Indeed, since $\max(\abs{2\varpi^{-3}a^2b^{-1}},q)<\abs{\varpi^{-1}(\rho^2ab^{-1}-1)}$ this is immediate from Lemma \ref{lem cobic exp}.

\item If $\abs{b}<\abs{a}\le q^{-3}$ then $\Cu_\ell(b^{-1}-\rho^k a^{-1},2a^{-1}b^{-1})=0$ whenever $\ell\le \val(a)-1$, $\val(a)\not=2\ell$ and $k\in\{0,1,2\}$. 

Indeed, since 
\[
\abs{b^{-1}-\rho^k a^{-1}}=\abs{b}^{-1}>\abs{a}^{-1}\ge q^{\ell+1}
\]
this is immediate from \eqref{eq cu l is zero}.
\item
If $\abs{b}=\abs{a}\le q^{-3}$ then 
\[
q\,\Cu(\varpi^{-1}(\rho^2ab^{-1}-1),2\varpi^{-3}a^2b^{-1})=1+\abs{a}^{-1}\Cu_{\val(a)-1}(b^{-1}-\rho a^{-1}, 2a^{-1}b^{-1}). 
\]

Indeed, in this case 
\[
\abs{\varpi^{-1}(\rho^2ab^{-1}-1)}\le q \ \ \ \text{while}\ \ \ 2\varpi^{-3}a^2b^{-1}\in \O
\] 
so that by Lemma \ref{lem cobic exp} we have
\[
q\,\Cu(\varpi^{-1}(\rho^2ab^{-1}-1),2\varpi^{-3}a^2b^{-1})=1+q\,\Cu_0(\varpi^{-1}(\rho^2ab^{-1}-1),2\varpi^{-3}a^2b^{-1}).
\]
The identity now follows from \eqref{eq cu l} applied with $t=\rho^2\varpi^{-1}a$.
\end{enumerate}

This completes the proof of Proposition \ref{prop I formula}. \qed

\section{The comparison of \texorpdfstring{$I(a,b)$}{}~and \texorpdfstring{$J(a,b)$}{}: Proof of Theorem~\ref{thm main}}\label{The-big-cell-comparison-is-done}
Let $a,\,b\in F^*$, $c=-54a$ and $d=54b$. It follows from the functional equations \eqref{eq J bar fe} and \eqref{eq I bar fe} that without loss of generality we may assume that $\abs{b}\le \abs{a}$. Applying \eqref{eq C unit} with $u=54$ we have 
\[
\Cu(d^{-1},-3^{-3}c^{-1}d^{-1})=\Cu(b^{-1},2a^{-1}b^{-1})
\]
and similarly applying \eqref{eq cu l} with  $t=54$ we have
\[
\Cu_\ell(d^{-1}+\rho^k c^{-1},-3^{-3}c^{-1}d^{-1})=\Cu_\ell(b^{-1}-\rho^k a^{-1},2a^{-1}b^{-1}).
\]
The proof of the theorem now follows from Propositions \ref{prop J formula} and \ref{prop I formula}. \qed

\section{The comparison of orbital integrals for the other relevant orbits}\label{Comparison for smaller cells}
In this Section we compute and match the local contributions to the non-big-cell relevant orbits for the unit elements in the appropriate Hecke algebras.
\subsection{The I integrals}
We begin with the $I$ integrals.  The relevant orbits are given in Theorem~\ref{I-relevant-orbits}.  

There are three relevant orbits indexed by a single power of $\rho$
(see Lemmas~\ref{lem sl2}, \ref{relevant-2}, and \ref{relevant-47}).  
It is straightforward to check that for each such orbit the local integral evaluates to $1$.  
For example, the integral arising from Lemma~\ref{lem sl2} is
\begin{multline*}
\int_F\int_{F^*}\int_{K_2} \mathbf{1}_{\O^8}[(0,0,0,t,0,\rho^2t,0,xt)\lambda(k)]\abs{t}^2\,dk\, d^*t\ \psi(x) \,dx\\
=\int_{K_2}\,dk\int_{|t|, |xt|\leq1} |t|^2\psi(x)\,d^*t\,dx=\int_{0<|t|\leq1}\Big(\int_{|x|\leq |t|^{-1}} \psi(x)\,dx\Big)\,|t|^2\,d^*t.
\end{multline*}
The inner integral in $x$ is zero unless $|t|=1$ and so the evaluation follows.

We turn to the local contribution to
Lemma~\ref{relevant-3} when the local test function is the
characteristic function of $\PGL_3(\O)$.  This is given, for $a\in F^*$ ($a$ replaces $\alpha$ in the notation of the Lemma), by
\[
I_1(a)=\int \psi(-\rho a^{-1}xy-\rho^2a^{-1}z+a^{-2}xz+x+y)\ dx\ dy \ dz \abs{t}^2 \ d^*t
\]
where the integral is over the domain
\[
t,t(a+x),a^{-2}t^{-1},ty,a^{-2}t^{-1}x,t(\rho ay-\rho^2 xy-z)\in \O.
\]
Note that the domain of integration is empty unless $\abs{a}\ge 1$. When $\abs{a}=1$ the domain is 
\begin{equation}\label{degenerate-one}
t\in \O^*,x,y,z\in \O,
\end{equation}
and we deduce that $I_1(a)=1$. 

Assume now that $\abs{a}=q^n>1$.
We can write 
\[
I_1(a)=\sum_{j=0}^{2n} I_{1,j}(a)
\]
where 
\[
I_{1,j}(a)=q^{-2j} \int \psi(-\rho a^{-1}xy-\rho^2a^{-1}z+a^{-2}xz+x+y)\ dx\ dy \ dz
\]
where the domain of integration is 
\[
\abs{y},\abs{a+x},\abs{\rho ay-\rho^2 xy-z}\le q^j,\ \abs{x}\le q^{2n-j}.
\]
After the variable change $z\mapsto z+\rho a y-\rho^2 xy$ we have
\[
I_{1,j}(a)=q^{-2j} \int_{\abs{x}\le q^{2n-j},\,\abs{a+x}\le q^j}\psi(x)\int_{\p^{-j}}\psi[\rho a^{-1}x(1-\rho a^{-1}x)y]\ dy \int_{\p^{-j}}\psi[-\rho^2a^{-1}(1-\rho a^{-1}x)z]\ dz \ dx
\]
\[
=\int\psi(x)\ dx
\]
where the last integral is over the domain
\[
\abs{x}\le q^{2n-j},\,\abs{a+x}\le q^j,\,\abs{a^{-1}x(1-\rho a^{-1}x)},\,\abs{a^{-1}(1-\rho a^{-1}x)}\le q^{-j}
\]
or equivalently
\[
\abs{x},\,\abs{x(a-\rho x)},\,\abs{a-\rho x} \le q^{2n-j},\,\abs{a+x}\le q^j.
\]
If $j<n$ then the condition $\abs{a+x}\le q^j<q^n=\abs{a}$ implies that $\abs{x}=\abs{a-\rho x}=q^n$ and therefore $\abs{x(a-\rho x)}=q^{2n}$. The domain of integration is therefore empty unless $j=0$ in which case the domain is $x\in -a+\O$. We conclude that 
\[
I_{1,j}(a)=\begin{cases} \psi(-a)& j=0 \\  0 & 0<j<n.\end{cases}
\]
If $j>n$ then the condition $\abs{a-\rho x} \le q^{2n-j}<q^n=\abs{a}$ implies that $\abs{x}=q^n>q^{2n-j}$ and therefore again the domain of integration is empty so that $I_{1,j}(a)=0$.

For $j=n$ note that the domain of integration becomes $\O\sqcup \rho^2 a+\O$ and therefore
\[
I_{1,n}(a)=1+\psi(\rho^2a).
\]
We conclude that the following holds.
\begin{lemma} If $a\in F^*$ then
\[
I_1(a)=\begin{cases} \psi(-a)+1+\psi(\rho^2a) & \abs{a}>1 \\ 1 & \abs{a}=1 \\ 0 & \abs{a}<1. \end{cases}
\] 
\end{lemma}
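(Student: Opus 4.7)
The proof will essentially be a bookkeeping exercise that collects the computations already carried out in the paragraphs preceding the lemma. My plan is to split into the three cases according to $|a|$, starting from the explicit description of the domain of integration.

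For $|a|<1$, the condition $a^{-2}t^{-1}\in\O$ forces $|t|\geq|a|^{-2}>1$, which contradicts $t\in\O$. So the domain of integration is empty and $I_1(a)=0$. For $|a|=1$, the defining conditions collapse to \eqref{degenerate-one}, that is, $t\in\O^*$ and $x,y,z\in\O$; the character $\psi$ is then trivial on the domain of integration and the integral evaluates to $1$.

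The substantive case is $|a|=q^n>1$. I will use the decomposition $I_1(a)=\sum_{j=0}^{2n}I_{1,j}(a)$ already established, where $I_{1,j}(a)$ is the piece with $|t|=q^{-j}$. After the change of variables $z\mapsto z+\rho ay-\rho^2 xy$ (which removes the $y$- and $z$-dependence inside the relevant linear forms), the inner integrals over $y$ and $z$ reduce to elementary integrals of $\psi$ over $\p^{-j}$-cosets; these vanish unless both $|a^{-1}x(1-\rho a^{-1}x)|$ and $|a^{-1}(1-\rho a^{-1}x)|$ are at most $q^{-j}$, which (clearing the $a^{-1}$ factors) gives the clean condition
\[
|x|,\,|a-\rho x|,\,|x(a-\rho x)|\le q^{2n-j},\quad |a+x|\le q^j.
\]
Thus $I_{1,j}(a)$ reduces to $\int \psi(x)\,dx$ over this domain.

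The remaining step is a valuation analysis in the three ranges $j<n$, $j=n$, $j>n$. For $j<n$ the inequality $|a+x|<|a|$ forces $|x|=|a-\rho x|=q^n$, hence $|x(a-\rho x)|=q^{2n}$, which is compatible with $q^{2n-j}$ only when $j=0$; in that case the domain is $-a+\O$ and we get $I_{1,0}(a)=\psi(-a)$. For $j>n$ the condition $|a-\rho x|<|a|$ forces $|x|=q^n>q^{2n-j}$, so the domain is empty. For $j=n$ the domain splits as $\O\sqcup(\rho^2 a+\O)$, giving $I_{1,n}(a)=1+\psi(\rho^2 a)$. Summing produces $I_1(a)=\psi(-a)+1+\psi(\rho^2 a)$ as claimed. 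The only real care needed is in the valuation analysis just described; once one notices that the two-sided constraints $|a+x|\le q^j$ and $|a-\rho x|\le q^{2n-j}$ force $x$ to be close to two different elements of valuation $-n$, the case distinction is forced.
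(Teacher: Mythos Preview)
Your proposal is correct and follows essentially the same approach as the paper's own argument: the same decomposition $I_1(a)=\sum_{j=0}^{2n}I_{1,j}(a)$, the same change of variable $z\mapsto z+\rho ay-\rho^2 xy$, and the same valuation analysis splitting into $j<n$, $j=n$, $j>n$. The reasoning and the intermediate conclusions match the paper line for line.
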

\qed

Next we turn to the local integral coming from the relevant orbits of Lemma~\ref{relevant-4} in the case that the local component of $\phi$ is the characteristic function of $\PGL_3(\O)$.  For $a\in F^*$, this is
$$
I_2(a)=\int \psi[a^{-2}(xy-z)y-\rho(xy\rho^2+z)a^{-1}+x+y] \ dx\ dy\ dz\abs{t}^2\ d^*t.
$$
where the integral is over the domain
\[
t,a^{-2}t^{-1},tx,t(a-y),a^{-2}t^{-1}y,t(xa-xy-z\rho)\in \O.
\]
The first two conditions imply that the domain is empty unless $|a|\geq1$.
When $\abs{a}=1$ the domain is once again \eqref{degenerate-one}
and it is easy to see that $I_2(a)=1$.  If $|a|=q^n>1$, then since $|t|\leq1$ and $|t|^{-1}\leq |a|^2=q^{2n}$ 
we may write 
\[
I_2(a)=\sum_{j=0}^{2n} I_{2,j}(a)
\]
where in $I_{2,j}(a)$ the domain of integration is limited to $|t|=q^{-j}$.  After the variable change $z\mapsto z+\rho^2 a x -\rho^2 x y$, we see that
\[
I_{2,j}(a)=q^{-2j} \int \psi(-a^{-2}y(\rho x y + z + \rho^2 a x)-\rho z a^{-1}+ y)\ dx\ dy \ dz
\]
where the domain of integration is 
\[
\abs{x},\abs{a-y},\abs{z}\le q^j,\ \abs{y}\le q^{2n-j}.
\]
The integration in $z$ gives zero unless $|1+\rho^2 a^{-1} y|\leq q^{-j} |a|$ and the integral in $x$ gives zero unless
$|a^{-1}y(a^{-1}y+\rho)|\leq q^{-j}$.  Thus we arrive at 
$$I_{2,j}(a)=\int \psi(y)\,dy$$
where the domain of integration is
$$|y|\leq q^{2n-j}, |a-y|\leq q^j, |1+\rho^2a^{-1}y|\leq q^{n-j}, |y(1+\rho^2a^{-1}y)|\leq q^{n-j}.$$

If $j<n$ then since $|a|=q^n$ the second inequality implies that $|y|=q^n$ and $|a^{-1}y-1|<1$.  But this
condition implies that $|1+\rho^2 a^{-1}y|=1$, so the last inequality is possible
only when $j=0$. In that case the domain reduces to $y\in a+\O$.  Thus
\[
I_{2,j}(a)=\begin{cases} \psi(a)& j=0 \\  0 & 0<j<n.\end{cases}
\]
If $j>n$ then in the domain of integration we have $|1+\rho^2 a^{-1}y|<1$,
 and this implies that $|y|=|a|=q^n$ which contradicts $|y|\leq q^{2n-j}$.  In this case the domain is thus empty.
Finally if $j=n$ then 
the inequality $|y|\leq q^n$ implies the second and third inequalities above, and in view of the inequality 
$\abs{y(1+\rho^2 a^{-1}y)}\leq1$ we see that the domain of integration is the union of the two $\O$-cosets $y\in \O$ and $y\in -\rho a+\O$.
It follows that $I_{2,n}(a)=1+\psi(-\rho a)$.  We conclude that the following holds.
\begin{lemma}
If $a\in F^*$ then
$$I_2(a)=\begin{cases} 1+\psi(a)+\psi(-\rho a) & \abs{a}>1 \\ 1 & \abs{a}=1 \\ 0 & \abs{a}<1. \end{cases}$$
\end{lemma}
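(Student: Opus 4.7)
The plan is to closely parallel the computation of $I_1(a)$ that was just carried out, exploiting the fact that the relevant orbits in Lemmas~\ref{relevant-3} and \ref{relevant-4} differ essentially by a swap of the roles of the parameters $x$ and $y$ in the unipotent $n\in N$. First, I would observe from the constraints $t\in\O$ and $a^{-2}t^{-1}\in\O$ that the integration domain is empty unless $|a|\ge 1$, and handle the easy case $|a|=1$ directly: the remaining defining conditions collapse to $t\in\O^*$ and $x,y,z\in\O$, so that the phase vanishes on $\O$ and $I_2(a)=1$.

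For $|a|=q^n>1$, I would decompose $I_2(a)=\sum_{j=0}^{2n} I_{2,j}(a)$ according to the value $|t|=q^{-j}$, with $0\le j\le 2n$ since both $|t|\le 1$ and $|t^{-1}|\le|a|^2$. In each $I_{2,j}(a)$ I would first perform the change of variables $z\mapsto z+\rho^2 ax-\rho^2 xy$ to remove the coupling between $z$ and the product $xy$, arriving at an integrand of the form $\psi(y)\psi(-a^{-2}y(\rho xy+z+\rho^2 ax))\psi(-\rho za^{-1})$ on a product domain. Integrating over $z\in\p^{-j}$ produces the constraint $|1+\rho^2 a^{-1}y|\le q^{n-j}$, and integrating over $x\in\p^{-j}$ produces $|a^{-1}y(a^{-1}y+\rho)|\le q^{-j}$, reducing $I_{2,j}(a)$ to $\int \psi(y)\,dy$ over
\[
|y|\le q^{2n-j},\quad |a-y|\le q^j,\quad |1+\rho^2 a^{-1}y|\le q^{n-j},\quad |y(1+\rho^2 a^{-1}y)|\le q^{n-j}.
\]

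The crux is then a careful case analysis in $j$. For $0<j<n$, the bound $|a-y|\le q^j<|a|$ forces $|y|=q^n$ and $|1+\rho^2 a^{-1}y|=1$, which is incompatible with the last inequality, so $I_{2,j}(a)=0$; the case $j=0$ survives by reducing the domain to $y\in a+\O$ with $\psi(y)=\psi(a)$. For $j>n$, the constraint $|1+\rho^2 a^{-1}y|\le q^{n-j}<1$ forces $|y|=q^n>q^{2n-j}$, an empty condition. For the borderline case $j=n$, the bounds streamline to $|y|\le q^n$ together with $|y(1+\rho^2 a^{-1}y)|\le 1$, whose solution set is the disjoint union of the two cosets $y\in\O$ and $y\in -\rho a+\O$, producing the contribution $1+\psi(-\rho a)$. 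Summing yields the claimed formula.

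The main obstacle is the bookkeeping in the case analysis: one has to verify that the various valuation constraints interact consistently, in particular identifying precisely which cosets survive at $j=0$ and $j=n$, and ensuring that no additional terms are lost in the intermediate Fourier integrations over $x$ and $z$. Once this is carried out carefully, the evaluation drops out, and the symmetry between the result here and that for $I_1(a)$ (with $-a$ replaced by $a$ and $\rho^2 a$ replaced by $-\rho a$) is a consistency check reflecting the analogous structure of Lemmas~\ref{relevant-3} and \ref{relevant-4}.
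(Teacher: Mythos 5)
Your proposal is correct and follows essentially the same route as the paper: the same reduction for $|a|\le 1$, the same decomposition $I_2(a)=\sum_{j=0}^{2n}I_{2,j}(a)$ by $|t|=q^{-j}$, the same substitution $z\mapsto z+\rho^2ax-\rho^2xy$, and the identical case analysis in $j$ yielding $\psi(a)$ at $j=0$, $1+\psi(-\rho a)$ at $j=n$, and vanishing otherwise. No substantive difference from the paper's argument.
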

\qed

\subsection{The J integrals}
The non-big cell relevant orbits are given by
\begin{enumerate}
\item $\zeta I_3, \zeta\in \mu_3$
\item $\begin{pmatrix} &   a^{-2}  \\ aI_2 &    \end{pmatrix}, a\in F^*$
\item $\begin{pmatrix}  & aI_2  \\ a^{-2}   & \end{pmatrix}, a\in F^*.$
\end{enumerate}

For the first family of three orbits, the local integral for the unit element is given by
$$\int_{N(F)} f_0(\zeta n)\psi(n)\ dn, \quad \zeta\in \mu_3.$$
Since
$$f_0(\zeta n)=\begin{cases} 1&n\in N\cap K\\0&\text{otherwise}\end{cases}$$
each integral is $1$.

We turn to the second family of orbits.  For $i=1,2$, write the $n_i$ that appears in the integration  as
$n_i=\left(\begin{smallmatrix}1&x_i&z_i\\&1&y_i\\&&1\end{smallmatrix}\right).$ Since we are modding out by the stabilizer we may take $y_1=0$. Multiplying, we see that the local integral is given by
$$J_1(a):=\int_{F^5} f_0\left(\begin{pmatrix}-ax_1&-ax_1x_2-az_1&a^{-2}-a(z_1y_2+x_1z_2)\\a&ax_2&az_2\\0&a&ay_2\end{pmatrix}\right)
\psi (-x_1+x_2+y_2)\,d(*).$$

Since $f_0$ vanishes away from $K\times \mu_3$ we see that $J_1(a)=0$ unless $|a|\leq1$.  In addition, in the support of the integrand,  the variables of integration must satisfy the inequalities
$$\abs{x_1},\abs{x_2},\abs{y_2},\abs{z_2}, \abs{x_1x_2+z_1}\leq |a|^{-1}, \abs{a^{-2}-a(z_1y_2+x_1z_2)}\leq1.$$

If $|a|=1$ the domain of integration is $\O^5$ and it follows directly from the factorization of the argument of $f_0$ that $\kappa=1$ in this domain.  Thus the integral is $1$.

Suppose from now on that $|a|<1$. Then in the support of the integrand the diagonal entries must be units.
Thus the domain of integration is given by
\begin{multline}\label{domain-degenerate1}
\abs{x_1}=\abs{x_2}=\abs{y_2}=\abs{a}^{-1},\\ z_1\in -x_1x_2+a^{-1}\O, z_2\in x_1^{-1}a^{-3}-x_1^{-1}z_1y_2+\O, \abs{a^{-3}+x_1x_2y_2}\leq \abs{a}^{-2}.
\end{multline}
Note that the last condition is required since $\abs{z_2}\leq \abs{a}^{-1}$.
We have the following evaluation.
\begin{lemma}\label{first-kappa} Suppose that $\abs{a}<1$ and that the conditions \eqref{domain-degenerate1} hold.  Then
$$\kappa\left(\begin{pmatrix}-ax_1&-ax_1x_2-az_1&a^{-2}-a(z_1y_2+x_1z_2)\\a&ax_2&az_2\\0&a&ay_2\end{pmatrix}\right)=(a,x_2y_2^2)_3.$$
\end{lemma}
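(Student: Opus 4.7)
The plan is to follow the pattern of the proofs of Lemmas~\ref{lem k comp}, \ref{lem k comp 32}, and \ref{lem k comp 22}: exhibit an explicit Bruhat-type decomposition of the given matrix and apply the algorithm of Bump and Hoffstein \cite[\S 2]{MR904946} to compute the associated cocycle factors. A direct multiplication verifies that
\[
g = u_1^{-1}\cdot w\cdot u_2,\qquad w = t(a^{-2},a^{-1})\,w_1 w_2 = \begin{pmatrix}&&a^{-2}\\a&&\\&a&\end{pmatrix},
\]
with $u_1 = \left(\begin{smallmatrix}1 & x_1 & z_1\\ & 1 & 0\\ & & 1\end{smallmatrix}\right)$ and $u_2 = \left(\begin{smallmatrix}1 & x_2 & z_2\\ & 1 & y_2\\ & & 1\end{smallmatrix}\right)$. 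This is precisely the Bruhat data for the relevant orbit of type (2) in Section~\ref{J-distribution-cals}. Because $\abs{x_1}=\abs{x_2}=\abs{y_2}=\abs{a}^{-1}>1$, neither $u_1^{-1}$ nor $u_2$ lies in $N\cap K$, so one cannot simply suppress them and the cocycles must be tracked.

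The computation proceeds by expressing $\kappa(g)$ in terms of the cocycle factors arising from the three-term decomposition $g = u_1^{-1}\cdot w\cdot u_2$, after further decomposing $w = t\cdot w_1\cdot w_2$ with torus element $t = \diag(a^{-2},a,a)$ and signed permutations $w_1, w_2\in K$ (on which $\kappa$ is trivial). Applying the Bump-Hoffstein rules for passing a unipotent across a simple reflection, each of the cocycles $\sigma(u_1^{-1}, w)$, $\sigma(u_1^{-1}w, u_2)$, and $\sigma(t, w_1 w_2)$ reduces to an explicit product of cubic Hilbert symbols in $a$ and the entries $x_1, z_1, x_2, y_2, z_2$ of the unipotents.

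The final step is the algebraic simplification using the constraints in~\eqref{domain-degenerate1} and basic properties of the cubic Hilbert symbol. The hypothesis $\abs{a^{-3}+x_1x_2y_2}\leq \abs{a}^{-2}$ implies $-a^3 x_1 x_2 y_2\in 1+\p\subseteq F^{*3}$, so $(a, x_1 x_2 y_2)_3 = 1$, giving $(a, x_1)_3 = (a, x_2 y_2)_3^{-1}$. The constraints $z_1\in -x_1x_2 + a^{-1}\O$ and $z_2\in x_1^{-1}a^{-3} - x_1^{-1}z_1y_2 + \O$ make the Hilbert symbols involving $z_1, z_2$ trivial (their valuations are divisible by~$3$ modulo~$1+\p$-corrections). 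After collecting the surviving symbols one arrives at $(a, x_2)_3\,(a, y_2)_3^{-1} = (a, x_2 y_2^{-1})_3 = (a, x_2 y_2^2)_3$, where the last equality uses $(a, y_2^3)_3 = 1$.

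The principal obstacle is the careful bookkeeping in the length-two Bruhat decomposition, since the Bump-Hoffstein algorithm must be applied twice (once for each simple reflection in $w_1 w_2$), and the intermediate matrix entries lie in the regime $\abs{a}^{-1}$ precisely where the Hilbert symbols become nontrivial. The technique is an exact parallel of the arguments in the proofs of Lemmas~\ref{lem k comp}--\ref{lem k comp 22}, with the essential ingredients being the triviality of $(1+\p,\cdot)_3$ and the cubic identity $(a,\cdot)_3^3 = 1$.
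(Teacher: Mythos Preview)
Your approach has a fundamental gap. The relation $\kappa(g_1g_2)=\kappa(g_1)\kappa(g_2)\sigma(g_1,g_2)$ that lets one compute $\kappa$ from a factorization requires $g_1,g_2\in K=\SL_3(\O)$, since $\kappa$ is defined only as the splitting of $K$. In your Bruhat decomposition $g=u_1^{-1}\cdot w\cdot u_2$, none of the factors lie in $K$: by hypothesis $\abs{x_1}=\abs{x_2}=\abs{y_2}=\abs{a}^{-1}>1$, so $u_1^{-1},u_2\notin K$, and the monomial $w$ has the entry $a^{-2}\notin\O$. Hence $\kappa(u_1^{-1})$, $\kappa(w)$, $\kappa(u_2)$ are undefined, and the cocycle identity cannot be invoked to assemble $\kappa(g)$ from these pieces. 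You could compute the product $(u_1^{-1},1)(w,1)(u_2,1)$ in $\widetilde{\SL}_3(F)$ via $\sigma$ to obtain some $(g,z)$, but there is no reason a priori for $z$ to equal $\kappa(g)$: the trivial section on $N$ and the $\kappa$-section on $K$ agree only on $N\cap K$. This is exactly why the proofs of Lemmas~\ref{lem k comp}--\ref{lem k comp 22} and~\ref{lem kappa} always manipulate factorizations \emph{inside} $K$.

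The paper's argument follows that pattern. Since $ay_2\in\O^*$, left multiplication by an element of $N\cap K$ clears the $(1,3)$ entry, reducing to $\kappa(g_1)$ for a matrix $g_1\in K$. Right multiplication by the lower-triangular unipotent $g_2=\left(\begin{smallmatrix}1&&\\0&1&\\0&-y_2^{-1}&1\end{smallmatrix}\right)\in K$ produces a block-diagonal $g_1g_2$, whose $\kappa$ is read off from the $\kappa_2$ formula as $(a,x_2y_2-z_2)_3=(a,x_2y_2)_3$ (using $\abs{z_2}<\abs{x_2y_2}$ in the domain). One application of the Bump--Hoffstein algorithm gives $\sigma(g_1,g_2)=(y_2,a)_3$, and since $\kappa(g_2)=1$ one obtains $\kappa(g)=\kappa(g_1)=(a,x_2y_2)_3\cdot(y_2,a)_3^{-1}=(a,x_2y_2^2)_3$. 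Note also that your final simplification step is not justified as written: from~\eqref{domain-degenerate1} one has $\val(z_1)=-2\val(a)$, which need not be divisible by~$3$, and in any case a Hilbert symbol $(z_1,\cdot)_3$ depends on more than the valuation of $z_1$.
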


\begin{proof}We describe the proof in brief. Since $ay_2\in\O^*$, left multiplication by an element of $N\cap K$ reduces the computation to the evaluation of $\kappa(g_1)$
where
$$g_1=\begin{pmatrix}-ax_1&-ax_1x_2-y_2^{-1}a^2+y_2^{-1}ax_1z_2&0\\a&ax_2-ay_2^{-1}z_2&0\\0&a&ay_2\end{pmatrix}.$$
We have $y_2^{-1}\in\O$. If
$$g_2=\begin{pmatrix}1&&\\0&1\\0&-y_2^{-1}&1\end{pmatrix},\quad\text{then}\quad
g_1g_2=\begin{pmatrix}-ax_1&-ax_1x_2-y_2^{-1}a^2+y_2^{-1}ax_1z_2&0\\a&ax_2-ay_2^{-1}z_2&0\\0&0&ay_2\end{pmatrix}.$$
Since $g_1g_2$ is block diagonal it is easy to evaluate 
$$
\kappa(g_1g_2)=(a,a(x_2-y_2^{-1}z_2)ay_2)_3=(a,x_2y_2-z_2)_3=(a,x_2y_2)_3.
$$
In the last equality above we have used that $\abs{z_2}<\abs{x_2y_2}$ in the domain \eqref{domain-degenerate1}. Furthermore, $\kappa(g_2)=1$. We conclude that the value we require is given by
$(a,x_2y_2)_3 \, \sigma(g_1,g_2)^{-1}.$
Applying the algorithm of Bump-Hoffstein to compute $\sigma$, one finds that $\sigma(g_1,g_2)=(y_2,a)_3$.  The result follows.
\end{proof}

We arrive at the expression
$$J_1(a)=\int (a,x_2^2 y_2)_3 \psi(-x_1+x_2+y_2)\, dz_2\,dz_1\,dx_1\,dx_2\,dy_2,$$
where the integration is over the domain \eqref{domain-degenerate1}.  We carry out the inner integrations in $z_2$ and then $z_1$ to obtain
$$\abs{a}^{-1}\int (a,x_2^2 y_2)_3 \psi(-x_1+x_2+y_2)\,dx_1\,dx_2\,dy_2,$$
where the integration is over 
$$\abs{x_1}=\abs{x_2}=\abs{y_2}=\abs{a}^{-1}, \abs{a^{-1}+a^2 x_1x_2y_2}\leq 1.$$

Now we change $x_1\to a^{-1}x_1$, $x_2\to a^{-1}x_2$, and $y_2\to a^{-1}y_2$ to obtain 
$$\abs{a}^{-4}\int (a,x_2^2 y_2)_3\, \psi(a^{-1}(-x_1+x_2+y_2))\,dx_1\,dx_2\,dy_2,$$
where the integration is over 
$$\abs{x_1}=\abs{x_2}=\abs{y_2}=1, \abs{1+x_1x_2y_2}\leq \abs{a}.$$
Changing $x_1\mapsto x_1/(x_2y_2)$ the last inequality becomes $\abs{1+x_1}\leq\abs{a}$.  This allows us to do the $x_1$ integral. We arrive at
$$J_1(a)=\abs{a}^{-3}\int_{\abs{x_2}=\abs{y_2}=1} (a,x_2^2y_2)_3\, \psi\left(a^{-1}\left(x_2+y_2+\frac{1}{x_2y_2}\right)\right)\,dx_2\,dy_2.$$

We now make use of Corollary~\ref{cor di} to replace the integral in $x_2$  with the integral of a cubic (so the parameters $c,d,t$ of the 
Corollary are given by $c=a^{-1}$, $d=(ay_2)^{-1}$, $t=a^2$; note that $(t,c^{-1}d)_3=(a^2,y_2)_3^{-1}$).  We see that
$$J_1(a)=\abs{a}^{-3}\int_{\abs{y_2}=1} \Cu(a^{-1},-3^{-3}y_2a^{-1})\psi(a^{-1}y_2)\,dy_2.$$
Note that in the integrand above, the symbol $(a^2,y_2)_3$ obtained from the Corollary cancels the symbol $(a,y_2)_3$ obtained from $\kappa$.

We evaluate the last integral by using the definition of $\Cu$.  Substituting this definition and changing $x\to 3 x$, we have
\begin{multline*}
J_1(a)=\abs{a}^{-3}\int_{\O^*}\int_{\O} \psi(a^{-1}x-3^{-3}y_2a^{-1}x^3+y_2a^{-1})\,dx\,dy_2\\
=\abs{a}^{-3}\int_{\O}\psi(3a^{-1}x)\,\int_{\O^*}\psi(y_2a^{-1}(1-x^3))\,dy_2\,dx.
\end{multline*}
Note that
\begin{itemize}
\item $\abs{a^{-1}(1-x^3)}\leq1$ if and only if $x\in \rho^k+a\O$, $k=0,1,2$
\item $\abs{a^{-1}(1-x^3)}=q$ if and only if $x\in \rho^k+\varpi^{-1}a\O^*$, $k=0,1,2$,
\end{itemize}
and the inner integral is zero if $\abs{a^{-1}(1-x^3)}>q$. 
If $x\in\rho^k+a\O$, $k=0,1,2$, then the inner integral is $1-q^{-1}$, and so this piece contributes
$$(1-q^{-1})\abs{a}^{-2}\sum_{k=0}^2 \psi(3\rho^k a^{-1}).$$
 If $x\in\rho^k+\varpi^{-1}a\O^*$ for some $k=0,1,2$ then the inner integral is $-q^{-1}$. If in addition $\abs{a}<q^{-1}$ then the sets $\rho^k+\varpi^{-1}a\O^*$ are disjoint for $k=0,1,2$, so this piece contributes
 $$-q^{-1}\abs{a}^{-3}\sum_{k=0}^2\psi(3\rho^ka^{-1})\int_{\varpi^{-1}a\O^*}\psi(3a^{-1}x)\,dx=q^{-1}\abs{a}^{-2}\sum_{k=0}^2\psi(3\rho^k a^{-1}) .$$
If $\abs{a}=q^{-1}$ then the sets $\rho^k+\varpi^{-1}a\O^*=\rho^k+\O^*$, $k=0,1,2$ are no longer disjoint, however, their union equals $\O\setminus (\sqcup_{k=0}^2 (\rho^k+\p))$. Consequently, in this case this piece contributes
\[
-q^2\int_{\O\setminus (\sqcup_{k=0}^2 (\rho^k+\p))}\psi(3a^{-1}x)\ dx=q^2\sum_{k=0}^2 \int_{\rho^k+\p} \psi(3a^{-1}x)\ dx=q\sum_{k=0}^2 \psi(3\rho^k a^{-1}).
\]
We arrive at the following evaluation.
\begin{lemma}  The orbital integral for the orbit $\begin{pmatrix} &   a^{-2}  \\ aI_2 &    \end{pmatrix}$, $a\in F^*$, is given by
$$J_1(a)=\begin{cases} \abs{a}^{-2}\sum_{k=0}^2 \psi(3\rho^ka^{-1})&\abs{a}<1\\
1&\abs{a}=1\\
0&\abs{a}>1.
\end{cases}$$
\end{lemma}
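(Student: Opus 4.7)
The plan is to parameterize the two copies of $N$ in $J_1(a)$ using coordinates $(x_i,y_i,z_i)$ for $u_i$, set $y_1=0$ using the stabilizer of the orbit representative, and multiply out to write $J_1(a)$ as an integral over five scalar variables of $f_0$ applied to the resulting $3\times 3$ matrix. Since $f_0$ is supported on $K\times\mu_3$ (and is $\kappa^{-1}$ there), the integrand vanishes unless the matrix lies in $K=\SL_3(\O)$, and the integrand on the support is $\kappa^{-1}\psi(-x_1+x_2+y_2)$.

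For $\abs{a}>1$, the $(1,3)$-entry of the matrix contains $a^{-2}$ and cannot be made to lie in $\O$ consistently with the other constraints, so $J_1(a)=0$. For $\abs{a}=1$, the domain of support reduces to $\O^5$; direct inspection (the matrix factors as a product of three elements of $K$) shows $\kappa=1$ throughout, and a simple integration gives $J_1(a)=1$.

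For the main case $\abs{a}<1$, I would observe that the diagonal entries of the matrix must be units, which pins down $\abs{x_1}=\abs{x_2}=\abs{y_2}=\abs{a}^{-1}$ and restricts $z_1,z_2$ to specific affine $\O$-cosets; the condition that the $(1,3)$-entry lies in $\O$ then reduces to $\abs{a^{-3}+x_1x_2y_2}\leq\abs{a}^{-2}$. Next I would compute $\kappa$ on the matrix using the algorithm of Bump--Hoffstein: after left multiplication by an element of $N\cap K$ to clear the $(3,2)$-slot and a Bruhat decomposition isolating a $\GL_2$ block, $\kappa$ evaluates to $(a,x_2y_2^2)_3$ (analogous to Lemmas~\ref{lem k comp}--\ref{lem kappa}), using in particular that $\abs{z_2}<\abs{x_2y_2}$ to replace $(a,x_2y_2-z_2)_3$ by $(a,x_2y_2)_3$.

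After integrating $z_1,z_2$ (contributing a factor $\abs{a}^{-1}$), rescaling $x_1,x_2,y_2$ by $a^{-1}$, and substituting $x_1\mapsto x_1/(x_2y_2)$, the constraint becomes $\abs{1+x_1}\leq\abs{a}$, which allows the $x_1$-integral to be carried out explicitly. What remains is a double integral in $x_2,y_2\in\O^*$ whose inner $x_2$-integral is a cubic Kloosterman integral. Applying Corollary~\ref{cor di} converts this into the cubic exponential integral $\Cu(a^{-1},-3^{-3}y_2a^{-1})$, and the cubic symbol introduced by the transfer cancels the $(a,y_2)_3$ factor from $\kappa$. Expanding $\Cu$ as $\int_\O\psi(a^{-1}x-3^{-3}y_2a^{-1}x^3)\,dx$, swapping orders and doing the inner $y_2$-integral over $\O^*$ produces the characteristic function of $\abs{a^{-1}(1-x^3)}\leq q$, which forces $x\in\rho^k+a\p^{-1}$ for some $k\in\{0,1,2\}$. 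A case analysis separating $\abs{a}<q^{-1}$ (where the three cosets $\rho^k+\varpi^{-1}a\O^*$ are disjoint) from $\abs{a}=q^{-1}$ (where they overlap, forcing one to reorganize using $\O\setminus\bigsqcup_k(\rho^k+\p)$) yields in both subcases the stated value $\abs{a}^{-2}\sum_{k=0}^2\psi(3\rho^k a^{-1})$.

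The main obstacle will be the computation of $\kappa$: one must track carefully which minors are units and apply the Bump--Hoffstein algorithm to the correct Bruhat factors so as to simplify the Hilbert symbols that arise, especially to see that the symbol $(y_2,a)_3$ emerging from a cocycle evaluation matches what is needed to collapse the answer. Once $\kappa$ is settled, the remaining work is a sequence of routine changes of variables, the application of Corollary~\ref{cor di} to trade a Kloosterman integral for a cubic exponential integral (as in the big-cell comparison), and a finite case check on the size of $\abs{a}$.
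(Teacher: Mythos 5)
Your proposal is correct and follows essentially the same route as the paper's proof: the same parameterization with $y_1=0$, the same case split on $\abs{a}$, the same Bump--Hoffstein computation yielding $\kappa=(a,x_2y_2^2)_3$, the same application of Corollary~\ref{cor di} with the cancellation of cubic symbols, and the same final case analysis distinguishing $\abs{a}=q^{-1}$ from $\abs{a}<q^{-1}$. No substantive differences to report.
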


We turn to the evaluation of the orbital integral for the orbit $\left(\begin{smallmatrix}  & aI_2  \\ a^{-2}   & \end{smallmatrix}\right)$ with $a\in F^*.$  In this case modding out by the stabilizer we may take $x_1=0$.
Then the integral of concern is
$$J_2(a):=\int_{F^5}f_0\left(\begin{pmatrix} -a^{-2}z_1&a-a^{-2}x_2z_1&ay_2-a^{-2}z_1z_2\\-a^{-2}y_1&-a^{-2}x_2y_1&a-a^{-2}y_1z_2\\a^{-2}&a^{-2}x_2&a^{-2}z_2\end{pmatrix}\right)
\psi(-y_1+x_2+y_2)\,d(*).$$
Since $f_0$ is supported in $K$, we see that $J_2(a)=0$ unless $|a|\geq1$ and in this case the domain of integration is determined by the conditions 
$$\abs{x_2},\abs{y_1},\abs{z_2},\abs{z_1},\abs{x_2y_1}\leq\abs{a}^2,\abs{a-a^{-2}y_1z_2},\abs{a-a^{-2}x_2z_1},\abs{ay_2-a^{-2}z_1z_2}\leq1.$$

If $\abs{a}=1$ then the domain of integration is $\O^5$, $n_1,n_2\in N\cap K$, and the integrand is identically $1$, so the integral is $1$.

Suppose from now on that $\abs{a}>1$. Then since $\abs{a-a^{-2}y_1z_2}\leq1$ we must have $\abs{y_1z_2}=\abs{a}^3$ and since $\abs{a-a^{-2}x_2z_1}\leq1$
we must have $\abs{x_2z_1}=\abs{a}^3$. But then $\abs{x_2y_1z_1z_2}=\abs{a}^6$. Since $\abs{x_2y_1}\leq\abs{a}^2$ this gives $\abs{z_1z_2}\geq\abs{a}^4$. Since
$\abs{z_1},\abs{z_2}\leq\abs{a}^2$ we conclude that $\abs{z_1}=\abs{z_2}=\abs{a}^2$, $\abs{x_2}=\abs{y_1}=\abs{a}$.  
The last condition now also implies that $\abs{y_2}=\abs{a}$.

We conclude that the domain may be rewritten
\begin{equation}\label{domain-2345}
\abs{z_1}=\abs{z_2}=\abs{a}^2, y_1\in a^3z_2^{-1}+\O, x_2\in a^3z_1^{-1}+\O, y_2\in a^{-3}z_1z_2+a^{-1}\O,
\end{equation}
and on this domain we have
\begin{equation}\label{character-2345}
\psi(-y_1+x_2+y_2)=\psi(a^3z_1^{-1}-a^3z_2^{-1}+a^{-3}z_1z_2).
\end{equation}

The next step is the evaluation of $f_0$ in this domain.  
\begin{lemma}\label{second-kappa}  In domain \eqref{domain-2345} we have
$$\kappa\left(\begin{pmatrix} -a^{-2}z_1&a-a^{-2}x_2z_1&ay_2-a^{-2}z_1z_2\\-a^{-2}y_1&-a^{-2}x_2y_1&a-a^{-2}y_1z_2\\a^{-2}&a^{-2}x_2&a^{-2}z_2\end{pmatrix}\right)=
(a,z_1^{-1}z_2)_3(z_1^{-1},z_2)_3.$$
\end{lemma}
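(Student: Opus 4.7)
Plan. The proof follows the pattern of Lemma~\ref{first-kappa}. The key observation is that in the domain \eqref{domain-2345}, the $(3,3)$-entry $a^{-2}z_2$ of the matrix in question is a unit, so we can reduce to a block diagonal form via row and column operations encoded as multiplications by elements of $K$.

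First, we left-multiply by an element of $N \cap K$ to clear the $(1,3)$ and $(2,3)$ entries. The quantities $c = a^3 z_2^{-1} - y_1$ and $c' = a^3 z_2^{-1} y_2 - z_1$ both lie in $\O$ by \eqref{domain-2345} (using $y_1 \in a^3 z_2^{-1} + \O$, $y_2 \in a^{-3}z_1 z_2 + a^{-1}\O$, and $\abs{z_1/a^3}=\abs{a}^{-1}<1$), so this reduction is legitimate and produces the matrix
\[
g_1 = \begin{pmatrix} -a y_2 z_2^{-1} & a(1 - x_2 y_2 z_2^{-1}) & 0 \\ -a z_2^{-1} & -a x_2 z_2^{-1} & 0 \\ a^{-2} & a^{-2} x_2 & a^{-2}z_2 \end{pmatrix}.
\]
We have $\kappa(g)=\kappa(g_1)$ since for any $k \in N$ the cocycle $\sigma(k,\cdot)$ is trivial (Bruhat decomposition of $k\cdot h$ adjusts only the unipotent part of $h$).

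Second, we right-multiply $g_1$ by
\[
g_2 = \begin{pmatrix} 1 & 0 & 0 \\ 0 & 1 & 0 \\ -z_2^{-1} & -x_2 z_2^{-1} & 1 \end{pmatrix} \in K,
\]
where $g_2\in K$ because $\abs{z_2^{-1}} = \abs{a}^{-2} < 1$ and $\abs{x_2 z_2^{-1}} = \abs{a}^{-1} < 1$. A direct calculation shows $g_1 g_2 = \diag(h, a^{-2}z_2)$ where
\[
h = \begin{pmatrix} -a y_2 z_2^{-1} & a(1 - x_2 y_2 z_2^{-1}) \\ -a z_2^{-1} & -a x_2 z_2^{-1} \end{pmatrix}, \qquad \det h = a^2 z_2^{-1}.
\]
On this block diagonal matrix $\kappa$ equals $\kappa_2(h)$, and since the $(2,1)$-entry of $h$ satisfies $\abs{-a z_2^{-1}} = \abs{a}^{-1} < 1$, we obtain $\kappa_2(h) = (-a z_2^{-1}, -x_2 a^{-1})_3$.

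Third, from $\kappa(g) = \kappa(g_1 g_2)\kappa(g_2)^{-1}\sigma(g_1,g_2)^{-1}$, we evaluate $\kappa(g_2)$ and the cocycle $\sigma(g_1,g_2)$ via the Bump-Hoffstein algorithm of \cite[\S 2]{MR904946}, applied after decomposing $g_2$ into its elementary lower unipotent factors. Finally, we use the domain conditions to simplify the resulting Hilbert symbols: since $x_2 \in a^3 z_1^{-1} + \O$ with $\abs{x_2} = \abs{a}$, we have $x_2/(a^3 z_1^{-1}) \in 1+\p \subseteq F^{*3}$, hence $(x_2, \cdot)_3 = (z_1^{-1}, \cdot)_3$, and analogous identities hold for $y_1$ and $y_2$. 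Combined with $(-1,\cdot)_3 = 1$ (since $-1 = (-1)^3$) and the bimultiplicativity and skew-symmetry of the cubic Hilbert symbol, these identities collapse the expression to the claimed value $(a, z_1^{-1}z_2)_3 (z_1^{-1}, z_2)_3$.

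The main obstacle is the careful bookkeeping of $\sigma(g_1, g_2)$ via the Bump-Hoffstein algorithm and the subsequent simplification of a product of cubic Hilbert symbols; the simplification relies crucially on the multiplicative proximity of $x_2$, $y_1$, $y_2$ to specific monomials in $a, z_1, z_2$ provided by the domain \eqref{domain-2345}, as well as on the observation that $-1$ is a cube in $F$.
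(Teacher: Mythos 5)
Your proposal is correct and follows essentially the same route as the paper: the same matrices $g_1$ and $g_2$, the same reduction to $\kappa_2$ of the block-diagonal product (your value $(-az_2^{-1},-x_2a^{-1})_3$ agrees with the paper's $(a,x_2z_2^{-1})_3(x_2,z_2)_3$ after using $(-1,\cdot)_3=1$), the same appeal to the Bump--Hoffstein algorithm for $\sigma(g_1,g_2)$ (whose value is $(a,z_2)_3$), and the same final simplification via $x_2\in a^3z_1^{-1}+\O$. The only difference is one of bookkeeping, not of method.
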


\begin{proof} The proof is similar to the proof of Lemma~\ref{first-kappa}, but in place of $g_1,g_2$ there we use
$$g_1=\begin{pmatrix} -ay_2z_2^{-1}&a-ax_2y_2z_2^{-1}&0\\-az_2^{-1}&-x_2z_2^{-1}a&0\\a^{-2}&a^{-2}x_2&a^{-2}z_2\end{pmatrix},\quad
g_2=\begin{pmatrix}1&&\\&1&\\-z_2^{-1}&-x_2z_2^{-1}&1\end{pmatrix}.$$
Since $g_1g_2$ is block diagonal, one sees that $\kappa(g_1g_2)=(a,x_2z_2^{-1})_3(x_2,z_2)_3$ while again $\kappa(g_2)=1$ and so the value we require is 
$(a,x_2z_2^{-1})_3(x_2,z_2)_3\sigma(g_1,g_2)^{-1}$. For this case, a computation using the algorithm of Bump-Hoffstein shows 
that $\sigma(g_1,g_2)=(a,z_2)_3$.  Using the conditions \eqref{domain-2345}, the Lemma follows.
\end{proof}

Using this expression and performing the integrations over $x_2$, $y_1$ and $y_2$ (after using \eqref{character-2345} the integrand is independent of these variables), we obtain
$$J_2(a)=\abs{a}^{-1}\int_{\abs{z_1}=\abs{z_2}=\abs{a}^2} (a,z_1z_2^{-1})_3(z_1,z_2)_3\,  \psi(a^3z_1^{-1}-a^3z_2^{-1}+a^{-3}z_1z_2)\,dz_1\,dz_2.$$
Changing $z_i$ to $a^2z_i$ for $i=1,2$, we get
$$J_2(a)=\abs{a}^{3}\int_{\O^*\times\O^*} (z_1,az_2)_3 (a,z_2)_3 \psi(a(z_1^{-1}-z_2^{-1}+z_1z_2))\,dz_1\,dz_2.$$

We again make use of Corollary~\ref{cor di}.  Now we replace the integral in $z_1$  with the integral of a cubic 
(the parameters $c,d,t$ of the Corollary are given by $c=a z_2$, $d=a$, $t=a^2z_2^2$; note that $(t,c^{-1}d)_3=(a,z_2)_3$).  
Just as in the prior case, the remaining cubic residue symbols cancel. We see that
\begin{multline*}
J_2(a)=\abs{a}^{3}\int_{\O^*} \Cu(a,-3^{-3}az_2^{-1})\, \psi(-az_2^{-1})dz_2\\
=\abs{a}^3\int_{\O^*}\int_{\O} \psi(ax-3^{-3}az_2^{-1}x^3-az_2^{-1})\,dx\,dz_2.
\end{multline*}
Sending $x\to 3x$, $z_2\to -z_2^{-1}$ and reordering the integral gives
$$J_2(a)=
\abs{a}^3\int_{\O}\psi(3ax)\int_{\O^*} \psi(az_2(1+x^3))\,dz_2\,dx.$$
The evaluation proceeds as in the prior case of computing $J_1$, with contributions whenever either $x\in -\rho^k+a^{-1}\O$ or $x\in -\rho^k+\varpi^{-1}a^{-1}\O^*$, $k=0,1,2$. We obtain the following evaluation.
\begin{lemma}  The orbital integral for the orbit $\begin{pmatrix}  & aI_2  \\ a^{-2}   & \end{pmatrix}$, $a\in F^*$, is given by
$$J_2(a)=\begin{cases} \abs{a}^{2}\sum_{k=0}^2 \psi(-3\rho^ka)&\abs{a}>1\\
1&\abs{a}=1\\
0&\abs{a}<1.
\end{cases}$$
\end{lemma}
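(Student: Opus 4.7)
The proof of the evaluation of $J_2(a)$ closely parallels the analysis of $J_1(a)$ carried out just above in the excerpt; I would follow exactly the same strategy and in the same order. First, since $f_0$ is supported on $K\times\mu_3$ and the $(3,1)$-entry of the argument is $a^{-2}$, one immediately has $J_2(a)=0$ unless $\abs{a}\ge 1$. The case $\abs{a}=1$ is trivial: the integration variables are forced into $\O^5$, both $n_1$ and $n_2$ lie in $N\cap K$, the argument of $f_0$ lies in $K$, and one checks $\kappa=1$ and $\psi$ trivial on $N\cap K$, giving $J_2(a)=1$.

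The substantive case is $\abs{a}>1$. Here I would first pin down the domain of integration by exploiting the unit constraints on the diagonal entries of an $\SL_3(\O)$-matrix. The conditions $\abs{a-a^{-2}y_1z_2}\le 1$ and $\abs{a-a^{-2}x_2z_1}\le 1$ force $\abs{y_1z_2}=\abs{x_2z_1}=\abs{a}^3$, and combining these with the bound $\abs{x_2y_1}\le \abs{a}^2$ and the box constraints $\abs{z_i}\le\abs{a}^2$ yields precisely the refined domain \eqref{domain-2345}, in which $\abs{z_1}=\abs{z_2}=\abs{a}^2$, $\abs{x_2}=\abs{y_1}=\abs{a}$, and $\abs{y_2}=\abs{a}$. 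On this domain the character value simplifies to \eqref{character-2345}, depending only on $z_1,z_2$.

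The most delicate step is the computation of $\kappa$ on this domain (Lemma \ref{second-kappa}). The plan is to mirror the proof of Lemma \ref{first-kappa}: left-multiply by an element of $N\cap K$ to reduce to a matrix $g_1$ whose $(3,2)$-entry can be cleared by a lower-triangular $g_2\in K$ so that $g_1g_2$ is block-diagonal. Then $\kappa(g_1g_2)$ is easily read off via $\kappa_2$, $\kappa(g_2)=1$ is immediate, and the nontrivial ingredient $\sigma(g_1,g_2)$ must be computed using the Bump--Hoffstein algorithm from \cite{MR904946}. I expect this cocycle computation to be the main technical obstacle, as in Lemma \ref{first-kappa}; after it is done, all relevant Hilbert-symbol identities follow from the size conditions of the domain.

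Once $\kappa$ is known, the integrand becomes independent of $x_2$, $y_1$, $y_2$ (after using \eqref{character-2345}), so those three integrations contribute a factor $\abs{a}^{-3}\cdot\abs{a}^{-1}$ (volume times the $a^{-1}\O$-coset length in $y_2$), leaving an integral over $z_1,z_2$ with $\abs{z_i}=\abs{a}^2$. After the change $z_i\mapsto a^2 z_i$ the result has the shape
\[
J_2(a)=\abs{a}^{3}\int_{\O^*\times\O^*} (z_1,az_2)_3 (a,z_2)_3 \,\psi\!\left(a(z_1^{-1}-z_2^{-1}+z_1z_2)\right)\ dz_1\ dz_2.
\]
The inner integral in $z_1$ is a Kloosterman integral with cubic character and parameters $c=az_2$, $d=a$, $t=a^2z_2^2$. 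Applying Corollary~\ref{cor di} converts it to $\Cu(a,-3^{-3}az_2^{-1})$, and the transfer factor $(t,c^{-1}d)_3=(a,z_2)_3$ exactly cancels the cubic symbols present. Finally, expanding $\Cu$, swapping the order of integration, sending $x\mapsto 3x$ and $z_2\mapsto -z_2^{-1}$, one is left with evaluating
\[
\abs{a}^3\int_{\O}\psi(3ax)\int_{\O^*}\psi(az_2(1+x^3))\ dz_2\ dx,
\]
which by the standard cases $|a^{-1}(1+x^3)|\le 1$ versus $=q$ (as in the evaluation of $J_1$) contributes only when $x$ lies in $-\rho^k+a^{-1}\O$ or $-\rho^k+\varpi^{-1}a^{-1}\O^*$ for some $k\in\{0,1,2\}$. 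Summing the three contributions yields $\abs{a}^{2}\sum_{k=0}^{2}\psi(-3\rho^k a)$, completing the proof.
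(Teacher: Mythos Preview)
Your proposal is correct and follows essentially the same approach as the paper, step for step: domain analysis via unit constraints, the $\kappa$ computation by block-diagonalization and the Bump--Hoffstein cocycle algorithm, the application of Corollary~\ref{cor di}, and the final evaluation by splitting according to $\abs{a(1+x^3)}$. One minor bookkeeping slip: the integrations over $x_2$, $y_1$, $y_2$ contribute volume $1\cdot 1\cdot\abs{a}^{-1}=\abs{a}^{-1}$ (not $\abs{a}^{-3}\cdot\abs{a}^{-1}$), though the formula you then write after the change $z_i\mapsto a^2z_i$ is correct.
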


\subsection{Comparison}
The comparison is given by the following Theorem

\begin{theorem}\label{degenerate equality} The relevant $I$ and $J$ orbitals integrals match up to transfer factors:
\begin{enumerate}
\item The orbital integrals attached to the 3 singleton relevant orbits for $I$ and for $J$ each have value $1$.
\item Let $a=3(\rho-\rho^2)c^{-1}.$ Then $I_1(a)=\abs{c}^{2}\psi(-3\rho c^{-1})J_1(c).$
\item Let $a=3(\rho-1)c$. Then
$I_2(a)=\abs{c}^{-2}\psi(3\rho c) J_2(c)$.
\end{enumerate}
\end{theorem}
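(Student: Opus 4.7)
The plan is to verify the theorem by direct substitution into the explicit formulas already established for $I_1, I_2, J_1, J_2$ (and the singleton-orbit integrals), carrying out a short case analysis on $\abs{c}$ in each of parts (2) and (3). Under the running assumption $p>3$ one has $\abs{3}=\abs{1-\rho}=\abs{1-\rho^2}=1$, so the transfer-parameter map $c\mapsto a$ is valuation-preserving (up to sign of valuation), and the transfer characters $\psi(\pm 3\rho c^{\mp 1})$ are trivial whenever the arguments lie in $\O$. I will use these facts without comment throughout.

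First I would dispose of part (1). For each $\zeta\in\mu_3$ the $J$-integral is $\int_N f_0(\zeta n)\psi(n)\,dn$, which reduces by the support condition on $f_0$ to the integral of $\psi$ over $N\cap K$; by bi-invariance of the chosen measure this is $1$. The three $I$-side singleton orbital integrals (from Lemmas~\ref{lem sl2}, \ref{relevant-2}, \ref{relevant-47}) have already been seen to collapse, after the inner compact integrations, to integrals of $\psi$ over compact subgroups that are conductor-controlled by $\psi$, each giving the value $1$.

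For part (2), the identity $\abs{a}=\abs{c}^{-1}$ yields three cases. If $\abs{c}>1$ then $\abs{a}<1$, so $I_1(a)=0=J_1(c)$ and both sides vanish. If $\abs{c}=1$ then $\abs{a}=1$, both sides are $1$, and the transfer character $\psi(-3\rho c^{-1})=1$ since $3\rho c^{-1}\in\O$. The substantive case is $\abs{c}<1$. Here multiplying $J_1(c)=\abs{c}^{-2}\sum_{k=0}^{2}\psi(3\rho^k c^{-1})$ by the transfer factor $\abs{c}^2\psi(-3\rho c^{-1})$ yields
\[
\sum_{k=0}^{2}\psi\bigl(3(\rho^k-\rho)c^{-1}\bigr)=\psi\bigl(3(1-\rho)c^{-1}\bigr)+1+\psi\bigl(3(\rho^2-\rho)c^{-1}\bigr).
\]
Since $a=3(\rho-\rho^2)c^{-1}$ and $\rho^3=1$, one computes $\rho^2 a=3(1-\rho)c^{-1}$ and $-a=3(\rho^2-\rho)c^{-1}$, so the sum above is precisely $\psi(\rho^2 a)+1+\psi(-a)=I_1(a)$.

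Part (3) is entirely parallel, now with $\abs{a}=\abs{c}$. The cases $\abs{c}<1$ and $\abs{c}=1$ go by the same zero/trivial-character argument. For $\abs{c}>1$, the transfer of $J_2(c)=\abs{c}^{2}\sum_{k=0}^{2}\psi(-3\rho^k c)$ is
\[
\abs{c}^{-2}\psi(3\rho c)\,J_2(c)=\sum_{k=0}^{2}\psi\bigl(3(\rho-\rho^k)c\bigr),
\]
and using $a=3(\rho-1)c$ together with $\rho^3=1$ identifies the three summands as $\psi(a)$, $1$, and $\psi(-\rho a)$ respectively, summing to $I_2(a)$. I expect no real obstacle: the only bookkeeping care needed is tracking the powers of $\rho$ that convert between the parameters $a$ and $c$, which is what dictates the precise transfer factors stated in the theorem.
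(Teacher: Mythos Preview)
Your proposal is correct and follows exactly the same approach as the paper: the paper's proof reads simply ``The Theorem follows at once from the evaluations above,'' and you have carried out precisely that verification case by case. Your computations of the $\rho$-identities ($\rho^2 a = 3(1-\rho)c^{-1}$, $-a = 3(\rho^2-\rho)c^{-1}$ in part~(2), and the analogous identifications in part~(3)) are all correct, as is the observation that $\abs{1-\rho}=1$ under $p>3$.
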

The Theorem follows at once from the evaluations above.  We remark that there are other possible matchings available. First, for all $c\in F^*$ we have
$J_i(\rho^k c)=J_i(c)$ for $i=1,2$ and $k=1,2$. In addition, we have the following matchings.
\begin{enumerate}
\item 
If $a=3(\rho-1)c$ then $I_1(a)=\abs{c}^{-2}\psi(3c) J_2(c)$.
\item If $a=3(\rho^2-1)c^{-1}$ then $I_2(a)=\abs{c}^2 \psi(-3c^{-1}) J_1(c)$.
\end{enumerate}

This completes the proof of the Fundamental Lemma.

\def\cprime{$'$} \def\Dbar{\leavevmode\lower.6ex\hbox to 0pt{\hskip-.23ex
  \accent"16\hss}D} \def\cftil#1{\ifmmode\setbox7\hbox{$\accent"5E#1$}\else
  \setbox7\hbox{\accent"5E#1}\penalty 10000\relax\fi\raise 1\ht7
  \hbox{\lower1.15ex\hbox to 1\wd7{\hss\accent"7E\hss}}\penalty 10000
  \hskip-1\wd7\penalty 10000\box7}
  \def\polhk#1{\setbox0=\hbox{#1}{\ooalign{\hidewidth
  \lower1.5ex\hbox{`}\hidewidth\crcr\unhbox0}}} \def\dbar{\leavevmode\hbox to
  0pt{\hskip.2ex \accent"16\hss}d}
  \def\cfac#1{\ifmmode\setbox7\hbox{$\accent"5E#1$}\else
  \setbox7\hbox{\accent"5E#1}\penalty 10000\relax\fi\raise 1\ht7
  \hbox{\lower1.15ex\hbox to 1\wd7{\hss\accent"13\hss}}\penalty 10000
  \hskip-1\wd7\penalty 10000\box7}
  \def\ocirc#1{\ifmmode\setbox0=\hbox{$#1$}\dimen0=\ht0 \advance\dimen0
  by1pt\rlap{\hbox to\wd0{\hss\raise\dimen0
  \hbox{\hskip.2em$\scriptscriptstyle\circ$}\hss}}#1\else {\accent"17 #1}\fi}
  \def\bud{$''$} \def\cfudot#1{\ifmmode\setbox7\hbox{$\accent"5E#1$}\else
  \setbox7\hbox{\accent"5E#1}\penalty 10000\relax\fi\raise 1\ht7
  \hbox{\raise.1ex\hbox to 1\wd7{\hss.\hss}}\penalty 10000 \hskip-1\wd7\penalty
  10000\box7} \def\lfhook#1{\setbox0=\hbox{#1}{\ooalign{\hidewidth
  \lower1.5ex\hbox{'}\hidewidth\crcr\unhbox0}}}
\providecommand{\bysame}{\leavevmode\hbox to3em{\hrulefill}\thinspace}
\providecommand{\MR}{\relax\ifhmode\unskip\space\fi MR }
\providecommand{\MRhref}[2]{%
  \href{http://www.ams.org/mathscinet-getitem?mr=#1}{#2}
}
\providecommand{\href}[2]{#2}

\end{document}